\newtheorem{theorem}{Theorem}[section]
\newtheorem{lemma}[theorem]{Lemma}
\newtheorem{proposition}[theorem]{Proposition}
\theoremstyle{definition}
\newtheorem{definition}[theorem]{Definition}
\theoremstyle{remark}
\newtheorem{remark}[theorem]{Remark}
\numberwithin{equation}{section}
\begin{document}


\title{Invariance of the Gibbs measure for the Benjamin-Ono equation}

\author{Yu Deng}
\address{Department of Mathematics, Princeton University, Princeton, NJ 08544}
\email{yudeng@math.princeton.edu}

\thanks{}

\subjclass[2010]{37L50, 37K05, 37L40}
\keywords{Benjamin-Ono equation, random data, Gibbs measure.}

\date{}

\dedicatory{Dedicated to Sekai Saionji}

\begin{abstract}
In this paper we consider the periodic Benjemin-Ono equation. We will establish the invariance of the Gibbs measure associated to this equation, thus answering a question raised in Tzvetkov \cite{Tz10}. As an intermediate step, we also obtain a local well-posedness result in Besov-type spaces rougher than $L^{2}$, extending the $L^{2}$ well-posedness result of Molinet \cite{Mo08}.

\end{abstract}

\maketitle


\setlength{\parskip}{0.8ex}

\section{Introduction}\label{intro}
In this paper we study the periodic Benjamin-Ono equation\begin{equation}\label{bo}u_{t}+Hu_{xx}=uu_{x},\,\,\,\,\,\,(t,x)\in I\times\mathbb{T},\end{equation} where $I$ is a time interval, and $\mathbb{T}=\mathbb{R}/2\pi\mathbb{Z}$. Also the Hilbert transform $H$ is defined by $\widehat{Hu}(n)=-\mathrm{i}\cdot\mathrm{sgn}(n)\widehat{u}(n)$, where we understand that $\mathrm{sgn}(0)=0$. Since equation (\ref{bo}), as well as the truncated version to be introduced below, preserves both the reality and the mean value of $u$, we shall assume throughout this paper that $u$ is real-valued and has mean zero. Under this restriction, (\ref{bo}) is a Hamiltonian PDE with conserved energy\begin{equation}\label{hamiltonian}E[u]=\int_{\mathbb{T}}\frac{1}{2}|\partial_{x}^{1/2}u|^{2}-\frac{1}{6}u^{3}.\end{equation} Being completely integrable, it also has an infinite number of conserved quantities at the level of $H^{\frac{\sigma}{2}}$ for $0\leq \sigma\in \mathbb{Z}$, including the $L^{2}$ mass.

We briefly summarize the relevant previous study of (\ref{bo}) as follows. First, the classical energy method yields local well-posedness in $H^{\sigma}(\mathbb{T})$ for $\sigma>3/2$, see \cite{Io86}. By conservation laws, this implies global well-posedness in (say) $H^{2}$. In \cite{Ta04}, Tao introduces a gauge transform to prove the well-posedness result in $H^{1}$ for the Euclidean counterpart of (\ref{bo}). This approach is then adapted by Molinet-Ribaud \cite{MR09} to prove the $H^{1}$ well-posedness in the periodic case. Then Molinet in \cite{Mo07}, \cite{Mo08} further improves this result to $H^{\frac{1}{2}}$ and then $L^{2}$. For the Euclidean version we now also have well-posedness in $L^{2}$, see Burq-Planchon \cite{BP} and Ionescu-Kenig \cite{IK07}.

Starting from the pioneering work of Lebowitz-Rose-Speer \cite{LRS89} and Bourgain \cite{Bo94}, there has been considerable interest in constructing Gibbs measures for Hamiltonian PDEs and proving their invariance. From the dynamical system point of view, this provides the natural invariant measure for the system (which is the first step in studying this system's long time behavior); from the PDE aspect this is also important since it tells us exactly how rough a space can be so that we still have \emph{strong} solutions for \emph{generic} initial data. In this regard, such results can be viewed as variations on the theme of classical low-regularity well-posedness\footnote[1]{See \cite{BT11} for a general discussion about the notion of well-posedness in probabilistic sense.}. Among the important results in this field we only mention a few: Bourgain \cite{Bo94}, \cite{Bo96}, Burq-Tzvetkov \cite{BT08}, \cite{BT08ii}, Colliander-Oh \cite{CO12}, Nahmod-Oh-Bellet-Staffilani \cite{NORS10}.

The study of (\ref{bo}) along this line is initiated in Tzvetkov \cite{Tz10} where the Gibbs measure is rigorously constructed (see \cite{Tz10} for details; this construction is also reviewed in Section \ref{probab} below). In order to to prove its invariance, one has to construct global flow on its support; since this measure is supported in spaces rougher than $L^{2}$ (namely $L^{2}(\mathbb{T})$ has measure zero), the well-posedness result of Molinet \cite{Mo08} will not apply. Nevertheless, in Section 5 of \cite{Tz10}, the author makes several important observations regarding the behavior of the gauge transform and second Picard iteration for random data, which suggest that global well-posedness and measure invariance may still hold despite the low regularity. 

In the current paper we will solve this problem by establishing the invariance of the Gibbs measure. To be precise, we will construct an almost-surely defined (and unique) global flow for (\ref{bo}) in some Besov-type space $Z_{1}$ rougher than $L^{2}$, and prove that the Gibbs measure is kept invariant by this flow.
\begin{remark}Very recently, Tzvetkov-Visciglia \cite{TV11}, \cite{TV12} have constructed (and proved the invariance of) weighted Gaussian measures associated to the conserved quantities of (\ref{bo}) at the level of $H^{\frac{\sigma}{2}}$ for even $\sigma\geq 6$. From this point of view, the remaining cases are $\sigma\in\{2,4\}$ and $\sigma\geq 3$ odd (since the Gibbs measure corresponds to $\sigma=1$, and we do not expect a positive result for $\sigma=0$). We believe that these cases are easier than the $\sigma=1$ case and can be solved by combining the low regularity analysis here and the probabilistic arguments in \cite{TV11}, \cite{TV12}.
\end{remark} 

\subsection{Notations and preliminaries}\label{parameters}
Throughout this paper, the standard notations, such as $\lesssim$, $\gtrsim$ and $O(*)$, will always be used in terms of absolute values. The Japanese bracket $\langle x \rangle$ will be $(1+|x|^{2})^{\frac{1}{2}}$ and $\mathbb{N}$ will denote the set of nonnegative integers; the characteristic function of a set $E$ is denoted by $\mathbf{1}_{E}$ and if $E$ is finite, its cardinality is denoted by $\#E$. We will use $\mathbb{P}_{*}$ to denote (spatial) frequency projections; for example $\mathbb{P}_{+}$ (or $\mathbb{P}_{\leq 0}$) will be the projection onto strictly positive (or non-positive) frequencies and $\mathbb{P}_{\gtrsim\lambda}$ will be the projection onto frequencies with absolute value $\gtrsim\lambda$. We may use the same (latin or greek) letter in different places, but its meaning will be clear from the context.

Define $\mathcal{V}$ to be the space of distributions on $\mathbb{T}$ that are real-valued and have mean zero; in other words $f\in\mathcal{V}$ if and only if $\widehat{f}(-n)=\overline{\widehat{f}(n)}$ and $\widehat{f}(0)=0$. Let $\mathcal{V}_{N}$ be the subspace of $\mathcal{V}$ containing functions of frequency not exceeding $N$ (so that $\mathcal{V}_{N}$ is identified with $\mathbb{R}^{2N}$), and $\mathcal{V}_{N}^{\perp}$ be its orthogonal complement. Let $\Pi_{N}$ and $\Pi_{N}^{\perp}$ be the projection to the corresponding spaces, we actually have $\Pi_{N}=\mathbb{P}_{\leq N}$ and $\Pi_{N}^{\perp}=\mathbb{P}_{>N}$.

We use a parameter $s>0$ that will be chosen sufficiently small. The large constants $C$ and small constants $c$ may depend on $s$; Any situation in which they are independent of $s$ will be easily recognized. We choose a few other parameters, namely $(p,r,b,\tau,q,\kappa,\gamma, \epsilon)$, as follows:\begin{eqnarray}&&p=\frac{2}{1-2s}+s^{2}, \,\,r=\frac{1}{2}-\frac{1}{p},\,\,b=\frac{1}{2}-s^{\frac{15}{8}},\tau=8-s^{\frac{13}{8}},\nonumber\\
&&q=1+s^{\frac{3}{2}},\,\,\kappa=1-s^{\frac{5}{4}},\,\,\gamma=2-s^{2.5},\,\,\epsilon=s^{\frac{7}{4}}.\nonumber\end{eqnarray} When $s$ is small enough, we have the following hierarchy of smallness factors:\begin{eqnarray}\label{hierarchy}s^{3}&\ll&2-\gamma\ll r-s=\frac{1}{2}-\frac{1}{p}-s\ll \frac{1}{2}-b\ll\\&\ll &\epsilon\ll 8-\tau\ll q-1\ll 1-\kappa\ll s\ll s^{\frac{1}{2}}.\nonumber\end{eqnarray} In (\ref{hierarchy}) each $\ll$ symbol connects two numbers that actually differ in scale by a power of $s$. We will also use $0+$ to denote some small positive number (whether it depends on $s$ will be clear from the context); the meanings of $0-$, and $a+$, $a-$ are then obvious. Finally, using these parameters, we can define the space $Z_{1}$ by\footnote[1]{Here when we write $n\sim 2^{d}$, we are actually including $n=0$ when $d=0$; we will neglect this issue in later discussions.}\begin{equation}\|f\|_{Z_{1}}=\sup_{d\geq 0}\bigg(\sum_{n\sim 2^{d}}2^{rpd}|\widehat{f}(n)|^{p}\bigg)^{\frac{1}{p}}.\end{equation}

In addition to (\ref{bo}), we will introduce finite dimensional truncations of it. Fix a smooth, even cutoff function $\psi$ on $\mathbb{R}$ which equals $1$ on $[-\frac{1}{2},\frac{1}{2}]$ and vanishes outside $[-\frac{3}{4},\frac{3}{4}]$. We also need another function $\psi_{1}$ with the same properties, and let $\psi_{2}=1-\psi_{1}$. For a positive integer $N$, we define the multiplier $S_{N}$ by \begin{equation}\label{truncation}\widehat{S_{N}f}(n)=\psi\bigg(\frac{n}{N}\bigg)\widehat{f}(n).\end{equation} We also allow $N=\infty$, in which case $S_{\infty}=1$. The truncated equations are then\begin{equation}\label{smoothtrunc}u_{t}+Hu_{xx}=S_{N}(S_{N}u\cdot S_{N}u_{x}).\end{equation} Notice that (\ref{smoothtrunc}) conserves the $L^{2}$ mass of $u$; also, if $u$ is a solution of (\ref{smoothtrunc}) whose spatial Fourier transform $\widehat{u}(n)$ is supported in $|n|\leq N$ for one time $t$, then this automatically holds for all time.

\subsection{The main results, and major difficulties} With these preparations, we can now state our main results. The most precise and detailed versions are somewhat technical, and will be postponed to Section \ref{lwp}.
\begin{theorem}[Local well-posedness]\label{main} For any $A>0$, let $T=T(A)=C^{-1}e^{-CA}$; then for the metric space $\mathcal{BO}^{T}$ in Definition \ref{bott} which contains $C^{0}([-T,T]\to Z_{1})$ as a \emph{subset}, we have the following. For any $f$ with $\|f\|_{Z_{1}}\leq A$, there exists a unique function $u\in\mathcal{BO}^{T}$, such that $u(0)=f$, and $u$ verifies (\ref{bo}) on $[-T,T]$ in the sense of distributions (we may define $uu_{x}$ as a distribution for all $u\in\mathcal{BO}^{T}$; for details see Remark \ref{rmk00}). Moreover, if we write $u=\Phi f$, then the map $\Phi$, from the ball $\{f:\|f\|_{Z_{1}}\leq A\}$ to the metric space $\mathcal{BO}^{T}$, will be a Lipschitz extension of the classical solution map for regular data, and its image is bounded away from the zero element in $\mathcal{BO}^{T}$ by $Ce^{CA}$.
\end{theorem}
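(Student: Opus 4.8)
\medskip
\noindent\emph{Outline of the proof.} The plan is to pass to the \emph{gauged} equation, solve it by a contraction mapping on $\mathcal{BO}^{T}$, and transport the solution back. A direct Picard iteration for \eqref{bo} is hopeless at any regularity, since $uu_{x}=\tfrac12\partial_{x}(u^{2})$ loses a derivative and the periodic dispersion cannot fully compensate: with $n=n_{1}+n_{2}$ the resonance function $\Omega(n_{1},n_{2})=|n|n-|n_{1}|n_{1}-|n_{2}|n_{2}$ satisfies $|\Omega|\sim n_{\max}n_{\min}$, where $n_{\max},n_{\min}$ are the largest and smallest of $|n_{1}|,|n_{2}|,|n|$; since reality and mean zero are preserved, every frequency occurring is nonzero, so $n_{\min}\ge1$ and there is enough modulation to absorb the derivative $|n|$ in \emph{every} configuration \emph{except} $n_{\min}\sim1$ --- a near-constant low mode multiplying the derivative of a high mode. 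To remove exactly this interaction I would use Tao's gauge transform in its periodic form \cite{Ta04,MR09,Mo08}: let $F=\partial_{x}^{-1}u$ be the mean-zero primitive of $u$ and set, schematically, $w=\mathbb{P}_{+}\,\partial_{x}\bigl(e^{-\frac{\mathrm{i}}{2}F}\bigr)$, with the standard low-frequency corrections that make $F$ and the exponential well defined on $\mathbb{T}$. A computation then recasts \eqref{bo} as $w_{t}+Hw_{xx}=\mathbb{P}_{+}\partial_{x}\mathcal{N}$, where $\mathcal{N}$ consists of terms quadratic in $(u,w)$ dressed by $e^{\pm\frac{\mathrm{i}}{2}F}$, plus a cubic correction, and --- crucially --- carries \emph{no net derivative}, the resonant part of $\tfrac12\partial_{x}(u^{2})$ having cancelled against $\partial_{x}F=u$ passing through the exponential.

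The estimates would be carried out in a Bourgain-type space built on $Z_{1}$: an $\ell^{\infty}$-over-dyadic-blocks, $\ell^{p}$-in-frequency structure carrying the spatial weight $2^{rpd}$ of $\|\cdot\|_{Z_{1}}$ together with the modulation weight $\langle\mu-n|n|\rangle^{b}$ in the time-frequency variable $\mu$, refined by the auxiliary $L^{q}_{\mu}$, $\gamma$, $\kappa$, $\epsilon$ and $8-\tau$ adjustments of \eqref{hierarchy} (needed to absorb the many endpoint losses, since $b$ exceeds $\tfrac12$ and $p$ exceeds $2$ only by powers of $s$, and there is a sup over $d$). This normed space is the analytic core of the metric space $\mathcal{BO}^{T}$ of Definition \ref{bott}, whose distance is measured through the gauge variable and is therefore not translation invariant --- which is why $\mathcal{BO}^{T}$ is only a metric space and genuinely larger than $C^{0}([-T,T]\to Z_{1})$. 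The heart of the proof is then: (i) linear energy and Duhamel bounds for the free propagator $e^{-tH\partial_{x}^{2}}$ on $[-T,T]$, yielding a gain $T^{\theta}$; (ii) bilinear estimates for $\mathbb{P}_{+}\partial_{x}(\cdot\,\cdot)$ in which $|\Omega|\sim n_{\max}n_{\min}$, distributed among the three modulation weights, recovers the derivative on every frequency interaction surviving the gauge; and (iii) multiplier bounds showing that multiplication by $e^{\pm\frac{\mathrm{i}}{2}F}$ --- essentially convolution by a kernel concentrated at low frequencies --- costs only a factor exponential in a norm of $F$, which is the source of the constant $e^{CA}$. I expect step (ii), together with the bookkeeping of the low-frequency/mean corrections in the periodic gauge, to be the main obstacle: it must be run uniformly in the dyadic index $d$ and in the $L^{p}$ (rather than $L^{2}$) scale, tracking which parameter of \eqref{hierarchy} is spent where, and it must still dispose of the residual high$\times$low pieces that the gauge only partly tames.

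Granting these estimates, the rest is routine. On the ball of radius $\sim e^{CA}$ in $\mathcal{BO}^{T}$ the map $w\mapsto(\text{Duhamel of }\mathbb{P}_{+}\partial_{x}\mathcal{N})$ becomes a contraction once $T^{\theta}e^{CA}\ll1$, which is precisely the choice $T=T(A)=C^{-1}e^{-CA}$; this produces $w$, and then $u$ through the locally bi-Lipschitz inverse of the gauge, with $uu_{x}$ interpreted via the gauge as in Remark \ref{rmk00}, and uniqueness in $\mathcal{BO}^{T}$ transfers along the way. Running the same fixed point for the truncated equations \eqref{smoothtrunc} uniformly in $N$ and letting $N\to\infty$ identifies $u=\Phi f$ with the classical solution when $f$ is smooth, so $\Phi$ extends the classical solution map; the difference version of the contraction estimates yields its Lipschitz continuity, and the a priori bounds coming from the fixed point provide the quantitative control of the image of $\Phi$ in $\mathcal{BO}^{T}$ claimed in the statement (the constant $Ce^{CA}$), completing the proof.
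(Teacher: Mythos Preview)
Your outline captures the broad architecture --- gauge transform, multilinear estimates in a $Z_{1}$-based Bourgain space, small-time gain --- but it underestimates the difficulties specific to working \emph{below} $L^{2}$, and the paper's actual route differs from a contraction mapping in essential ways.

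First, a concrete gap. You write that after gauging, the nonlinearity ``carries no net derivative, the resonant part of $\tfrac12\partial_{x}(u^{2})$ having cancelled against $\partial_{x}F=u$ passing through the exponential.'' That is the $L^{2}$ story. Below $L^{2}$ the gauged equation (see \eqref{transgen}) still contains the term $\tfrac{\mathrm{i}}{4}\mathbb{P}_{0}(u^{2})\,w$, and $\mathbb{P}_{0}(u^{2})=\|u\|_{L^{2}}^{2}$ is \emph{infinite} for generic $u\in Z_{1}$. The paper resolves this by a further cancellation: the cubic term in \eqref{transgen} produces a resonant contribution proportional to $\mathbb{P}_{0}(|w|^{2})\,w$, and the two combine into a multiple of $\|w\|_{L^{2}}^{2}-\|\mathbb{P}_{+}u\|_{L^{2}}^{2}$, which \emph{is} finite. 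This is the ``miraculous cancellation'' of Section~\ref{gaugetransform2}, and it is not visible at the level of your outline. Even after that, a residual resonant piece forces a second change of unknown $w\mapsto w^{*}$ via phase factors $e^{-\mathrm{i}\Delta_{n}}$ (Proposition~\ref{finalver}); your scheme has no analogue of this step.

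Second, the bilinear estimate you describe (distributing $|\Omega|\sim n_{\max}n_{\min}$ among three modulations) actually \emph{fails} for the quadratic part $\mathcal{N}^{2}$ in these spaces, just as it fails for KdV below $H^{-1/2}$. The paper handles this not by a sharper bilinear bound but by a \emph{second iteration}: one substitutes \eqref{global2} for one copy of $y$ inside $\mathcal{N}^{2}(y,y)$ and estimates the resulting quartic object (end of Section~\ref{mid1}). A single-step contraction cannot close here. Relatedly, the term involving $z=\mathbb{P}_{-}(ue^{-\mathrm{i}F/2})$ is not bounded in any $X^{s,b}$ with $s$ near $0$ and $b$ near $\tfrac12$; the paper introduces the atomic space $X_{8}$ (Section~\ref{another}) precisely to measure how $z$ is shifted from $w$. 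Your ``multiplier bounds for $e^{\pm\mathrm{i}F/2}$'' do not cover this.

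Finally, the paper does \emph{not} run a contraction on $\mathcal{BO}^{T}$. It proves a priori estimates on smooth solutions by bootstrap (Proposition~\ref{uniformest}), proves a separate difference estimate in a weakened metric $\mathfrak{D}_{s^{5}}$ (Proposition~\ref{difference0}), and then obtains the solution as a limit of the truncated flows $u^{N}=\Phi^{N}\Pi_{N}f$ (Theorem~\ref{main'}). Uniqueness requires rerunning the whole machinery without smoothness, via an auxiliary weaker space $Y_{1}^{w}$. The Lipschitz continuity is only with respect to $\mathfrak{D}_{s^{5}}$ (or $\mathfrak{D}_{0}$ with an $\varepsilon$-loss), not a clean contraction Lipschitz bound, which is why $\mathcal{BO}^{T}$ is genuinely a metric space and not a Banach space centered at the solution.
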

\begin{theorem}[Measure invariance]\label{main2} Recall the Gibbs measure $\nu$ on $\mathcal{V}$ defined in \cite{Tz10}, which is absolutely continuous with respect to a Wiener measure $\rho$ (see Section \ref{gibbs} for details). There exists a subset $\Sigma$ of $\mathcal{V}$ with full $\rho$ measure such that for each $f\in \Sigma$, the equation (\ref{bo}) has a unique solution $u\in\cap_{T>0}\mathcal{BO}^{T}$ (in the sense described in Remark \ref{rmk00}) with initial data $f$. If we denote $u=\Phi f=(\Phi_{t}f)_{t}$, then for each $t\in\mathbb{R}$ we get a map $f\mapsto \Phi_{t}f$ from $\Sigma$ to itself. These maps form a one parameter group, and each of them keeps invariant the Gibbs measure $\nu$.
\end{theorem}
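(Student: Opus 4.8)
The plan is to run the finite-dimensional approximation scheme of Bourgain \cite{Bo94}, \cite{Bo96}, adapted to the present sub-$L^{2}$ regime. The starting point is the truncation (\ref{smoothtrunc}) with $N<\infty$: projected to $\mathcal{V}_{N}\cong\mathbb{R}^{2N}$ it is a Hamiltonian ODE whose flow $\Phi_{t}^{N}$ is divergence-free in the symplectic coordinates and which conserves both the truncated energy $E_{N}$ and the $L^{2}$ mass; hence the truncated Gibbs measure $\nu_{N}$ defined in \cite{Tz10} (the natural finite-dimensional analogue of $\nu$, with $E$ replaced by its truncation) is invariant under $\Phi_{t}^{N}$ by Liouville's theorem together with conservation of $E_{N}$. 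Since the frequencies $>N$ evolve under (\ref{smoothtrunc}) by the linear flow $u_{t}+Hu_{xx}=0$, which is a rotation on each Fourier mode and so preserves $\rho$ restricted to $\mathcal{V}_{N}^{\perp}$, the measure $\nu_{N}$ extended to all of $\mathcal{V}$ is invariant under the full truncated flow. From \cite{Tz10} (reviewed in Section \ref{probab}) I will also use that $\rho(Z_{1})=1$, that $\nu\ll\rho$, and that $\nu_{N}\to\nu$ with the densities $d\nu_{N}/d\rho$ uniformly integrable, so that bounded continuous functions on $Z_{1}$ can be integrated in the limit.

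The analytic core is an approximation statement extending Theorem \ref{main}: for $f$ in a set of $\rho$-measure close to $1$, on the local interval $[-T,T]$ with $T=T(\|f\|_{Z_{1}})$, the truncated solutions $\Phi^{N}f$ converge to $\Phi f$ in $\mathcal{BO}^{T}$, with quantitative control uniform in $N$. This is where the low regularity bites: bounding the difference of the two gauge-transformed flows below $L^{2}$ requires, beyond the deterministic $Z_{1}$ machinery, the probabilistic regularization of the Gibbs data — the gain of smoothness in the second Picard iteration and the improved mapping properties of the gauge transform on random data observed in Section 5 of \cite{Tz10}. Combining this with Fernique-type tail bounds for $\|f\|_{Z_{1}}$ under $\rho$ (hence uniformly small under $\nu_{N}$, by uniform integrability of the densities) gives Bourgain's globalization mechanism: $\nu_{N}$-invariance propagates the smallness of $\nu_{N}(\{\|f\|_{Z_{1}}>\lambda\})$ to all discrete times $kT$ with $|kT|\leq M$; a union bound over the $O(M/T)$ such times, with $\lambda=\lambda(\epsilon,M)$ and $T=T(\lambda)$ chosen accordingly, controls $\Phi^{N}_{t}f$ on $[-M,M]$ off a set of measure $<\epsilon$, and the approximation statement transfers the bound to the actual flow as $N\to\infty$. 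This produces, for each $\epsilon>0$ and $M>0$, a set $\Sigma_{\epsilon,M}$ of measure $>1-\epsilon$ on which the solution of (\ref{bo}) exists on $[-M,M]$ with $\sup_{|t|\leq M}\|\Phi_{t}f\|_{Z_{1}}$ bounded.

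Globalization and invariance then follow in the usual way. Intersecting the $\Sigma_{\epsilon,M}$ over $M\in\mathbb{N}$ (with $\epsilon$ replaced by $\epsilon 2^{-M}$) gives a set $\Sigma_{\epsilon}$ of measure $>1-\epsilon$ on which the solution is global, and $\Sigma=\bigcup_{n}\Sigma_{1/n}$ has full $\rho$-measure and carries a global flow; uniqueness in Theorem \ref{main} upgrades this to the one-parameter group property and shows each $\Phi_{t}$ maps $\Sigma$ into itself. Finally, for any bounded continuous $F$ on $Z_{1}$,
\begin{equation*}
\int F(\Phi_{t}f)\,d\nu(f)=\lim_{N\to\infty}\int F(\Phi_{t}^{N}f)\,d\nu_{N}(f)=\lim_{N\to\infty}\int F(f)\,d\nu_{N}(f)=\int F(f)\,d\nu(f),
\end{equation*}
the outer equalities coming from the flow approximation together with $\nu_{N}\to\nu$ and the uniform $Z_{1}$-bounds (for uniform integrability), and the middle one being the finite-dimensional invariance of $\nu_{N}$; hence $\nu$ is invariant under $\Phi_{t}$.

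I expect the main obstacle to be the approximation statement of the second paragraph: controlling the difference of the gauge-transformed truncated flows in the Besov-type space $Z_{1}$ lying below $L^{2}$, uniformly in $N$ and off a small exceptional set, is precisely the point where the deterministic low-regularity analysis underlying Theorem \ref{main} and the probabilistic structure of the Gibbs data must be combined. The surrounding Liouville/Fernique/union-bound globalization and the passage to the limit in the invariance identity are, by comparison, routine.
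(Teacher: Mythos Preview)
Your overall framework---Bourgain's scheme of finite-dimensional Liouville invariance, large-deviation tail bounds, union bound over discrete time steps, and passage to the limit---matches the paper's and is correct in outline.

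However, you misidentify where the probabilistic input enters. You write that the approximation statement ``requires, beyond the deterministic $Z_{1}$ machinery, the probabilistic regularization of the Gibbs data.'' In the paper this is false: the difference estimate between $\Phi^{N}$ and $\Phi^{M}$ (Proposition \ref{difference0}) and the resulting convergence $\Phi^{N}\Pi_{N}f\to\Phi f$ (part (3) of Theorem \ref{main'}) are proved \emph{deterministically} for arbitrary data in $Z_{1}$, with an error $O_{A}(1)\big(\|\langle\partial_{x}\rangle^{-s^{5}}(f-\Pi_{N}f)\|_{Z_{1}}+N^{0-}\big)$. The second iteration you allude to is indeed used (end of Section \ref{mid1}), but as a deterministic device inside the multilinear estimates, not a probabilistic one. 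The \emph{only} probabilistic ingredient in the entire argument is the tail bound $\rho(\|f\|_{Z_{1}}>K)\leq Ce^{-C^{-1}K^{2}}$ of Proposition \ref{compat}, which feeds the union bound exactly as you sketch. So your ``main obstacle'' is already absorbed into the deterministic local theory behind Theorem \ref{main}; there is no separate probabilistic step to carry out at that point, and planning for one would send you in the wrong direction.

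Two smaller points of divergence. First, the convergence the paper obtains is not in the full $\mathcal{BO}^{T}$ metric but in the weakened $\mathfrak{D}_{s^{5}}^{T}$ (the $N^{0-}$ term in Proposition \ref{difference0} forces the $\langle\partial_{x}\rangle^{-s^{5}}$ weight when $M\neq N$); this suffices for everything downstream but your statement ``converge to $\Phi f$ in $\mathcal{BO}^{T}$'' is slightly too strong. Second, for invariance the paper argues on sets rather than test functions: it reduces to compact $E$ by inner regularity on the Polish space $(\{\|f\|_{Z_{1}}\leq A\},d)$, uses $\nu_{N}^{\circ}$-invariance to bound $\nu_{N}$ of a preimage set from below, and shows via the deterministic approximation that the $\limsup$ of those sets lies in $\Phi_{t}(E)$, concluding by total-variation convergence $\nu_{N}\to\nu$. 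Your test-function route is equivalent in spirit, but note that ``$F$ bounded continuous on $Z_{1}$'' alone is not enough, since the available convergence is only in the weaker norm; you would need either the compactness coming from the uniform $Z_{1}$ bounds or to test against functions continuous for $\mathfrak{D}_{s^{5}}$.
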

The analysis of (\ref{bo}) below $L^{2}$ is extremely subtle, even compared to the $L^{2}$ theory. The first step is to use the gauge transform to obtain a more favorable nonlinearity; this already becomes problematic with infinite $L^{2}$ mass. In fact, when we use the gauge $w=\mathbb{P}_{+}\big(ue^{-\frac{\mathrm{i}}{2}\partial_{x}^{-1}u}\big)$ as in \cite{Mo08}, the evolution equation satisfied by $w$ would be
\begin{equation}\label{transgen}(\partial_{t}-\mathrm{i}\partial_{xx})w=\frac{\mathrm{i}}{2}\partial_{x}\mathbb{P}_{+}\big(\partial_{x}^{-1}w\cdot\partial_{x}\mathbb{P}_{-}(\overline{w}\partial_{x}^{-1}w)\big)+\frac{\mathrm{i}}{4}\mathbb{P}_{0}(u^{2})w+GT,\end{equation} where $GT$ represents good terms. Here one can recognize the term $\mathbb{P}_{0}(u^{2})w$ that can be infinite for $u\in Z_{1}$. However, when we further analyze the cubic term above, we find another contribution, namely the ``resonant'' one, which is basically some constant multiple of $\mathbb{P}_{0}(|w|^{2})w$. It then turns out that the coefficients match exactly to give a multiple of $\|w\|_{L^{2}}^{2}-\|\mathbb{P}_{+}u\|_{L^{2}}^{2}$. Since (at least heuristically)\begin{equation}w=\mathbb{P}_{+}\big(ue^{-\frac{\mathrm{i}}{2}\partial_{x}^{-1}u}\big)=\mathbb{P}_{+}u\cdot e^{-\frac{\mathrm{i}}{2}\partial_{x}^{-1}u}+GT,\end{equation} this expression will be finite even if $u$ is only in $Z_{1}$.

The next obstacle to local theory is the failure of standard multilinear $X^{s,b}$ estimates, which play a crucial role in \cite{Mo08}. Recall from (\ref{transgen}) that a typical nonlinearity of the transformed equation looks like \begin{equation}\partial_{x}\mathbb{P}_{+}\big(\partial_{x}^{-1}w\cdot\partial_{x}\mathbb{P}_{-}(\overline{w}\partial_{x}^{-1}w)\big).\end{equation} If the frequency of $\partial_{x}^{-1}w$ appearing in $\overline{w}\partial_{x}^{-1}w$ is low, we may pretend this frequency is zero, obtaining a quadratic nonlinearity which is similar to the KdV equation. In fact, there is a similar failure of bilinear estimates for the KdV equation below $H^{-\frac{1}{2}}$, which is necessary in proving the invariance of white noise. This problem is solved in \cite{Oh10} by considering the second iteration, a strategy already used in \cite{Bo97}. We will use the same method, though the fact that our nonlinearity is only quadratic ``to the first order'' makes the argument a little more involved. 

There is also a special cubic term, omitted in (\ref{transgen}), which involves the function $z=\mathbb{P}_{-}\big(ue^{-\frac{\mathrm{i}}{2}\partial_{x}^{-1}u}\big)$. Recall that it is $w$, not $z$, that satisfies a good evolution equation; therefore $z$ is not supposed to be bounded in any $X^{s,b}$ space where $s$ is close to $0$ and $b$ close to $\frac{1}{2}$ (note $z$ is basically $w$ multiplied by a smoother function, but $X^{s,b}$ spaces are not closed under such multiplications). In \cite{Mo08}, Molinet introduces the space $X^{-1,\frac{7}{8}}$ to accommodate $z$ (he actually considers $u$, but the estimates for $z$ will be the same). In our case, not only do we need (a slightly different version of) this space, but we also have to introduce an atomic space characterizing, roughly speaking, how $z$ is ``shifted'' from $w$; see Section \ref{another} for details.

Passing from local theory to global well-posedness and measure invariance is another challenge. The only known method is to produce finite dimensional truncations such as (\ref{truncation}), exploit the invariance of the (finite dimensional) truncated Gibbs measures, and use a limiting procedure to pass to the original equation. This requires, among other things, uniform estimates for solutions to (\ref{smoothtrunc}). The major difficulty here is that the gauge transform in \cite{Mo08} is now inadequate for eliminating all the bad interactions. To see this, recall that when $v=Mu$ with some function $M$, then
\begin{eqnarray}(\partial_{t}-\mathrm{i}\partial_{xx})v&=&M(\partial_{t}-\mathrm{i}\partial_{xx})u-2\mathrm{i}\partial_{x}M\partial_{x}u+GT\nonumber\\
&=&M\cdot S(Su\cdot Su_{x})-2\mathrm{i}M_{x}u_{x}+GT,\nonumber\end{eqnarray} where we assume $u$ verifies (\ref{smoothtrunc}) with $S=S_{N}$. If $S=1$, then this worst term can be made zero by choosing $M=e^{-\frac{\mathrm{i}}{2}\partial_{x}^{-1}u}$; but it is impossible when $S=S_{N}$ with $N$ finite but large. However, note that we only need to eliminate the ``high-low'' interactions where the factor $Su$ contributes very low frequency and $Su_{x}$ contributes high frequency, and this is indeed possible if we replace multiplication by $M$ with some carefully chosen operator defined from a combination of $S$ (which is a Fourier multiplier) and suitable multiplication operators. See Section \ref{pass} for details. 

Finally, in order for the limiting procedure to work out, we must compare a solution to (\ref{smoothtrunc}) with a solution to (\ref{bo}). Since $\psi(n/N)$ equals $1$ only for $|n|\leq N/2$, the difference will contain some term involving factors like $\mathbb{P}_{\gtrsim N}u$, which does not decay for large $N$ due to the $l^{\infty}$ nature of our norm $Z_{1}$. Nevertheless, these bad terms eventually add up to zero, at least to first order, which is enough for our analysis. Note that the bad terms involve $\psi$ factors which are unique to (\ref{smoothtrunc}) and are not found in (\ref{bo}), this cancellation is really something of a miracle. See Section \ref{gaugetransform2} for details.
\subsection{Plan of this paper} In Section \ref{notation} and Section \ref{linear} we will define the spacetime norms needed in the proof, and prove some linear estimates as well as auxiliary results. In Section \ref{probab} we provide the basic probabilistic arguments. We next introduce the gauge transform for (\ref{smoothtrunc}) and derive the new equations; these will occupy Sections \ref{gaugetransform}-\ref{gaugetransform3}. From Section \ref{begin} to Section \ref{end}, we will prove our main \emph{a priori} estimates. Finally, combing these estimates with the standard probabilistic arguments, we will prove in Section \ref{lwp} (local and almost sure global) well-posedness for (\ref{bo}) and the invariance of the Gibbs measure.

\subsection{Acknowledgements} The author would like to thank Alexandru Ionescu for his encouragement and constructive suggestions regarding the gauge transform; he would also like to thank Tadahiro Oh for introducing the problem and suggesting the treatment used in \cite{Bo97}, and Xuecheng Wang for helpful discussions about the proof of Proposition \ref{initialboot}.

\section{Spacetime norms}\label{notation}
\subsection{The easier norms}\label{spaces}For a function $u$ defined on $\mathbb{T}\times\mathbb{R}$, we define its spacetime Fourier transform $\widehat{u}_{n,\widetilde{\xi}}$ by\begin{equation}u(x,t)=\sum_{n}\int_{\mathbb{R}}\widehat{u}_{n,\widetilde{\xi}}\times e^{\mathrm{i}(nx+\widetilde{\xi}t)}\,\mathrm{d}\widetilde{\xi},\nonumber\end{equation} and denote $\widetilde{u}_{n,\xi}=\widehat{u}_{n,\widetilde{\xi}}:=\widehat{u}_{n,\xi-|n|n}$. Thus we have three ways to represent $u$: $u(t,x)$ as a function of $t$ and $x$, $\widehat{u}_{n,\widetilde{\xi}}$ as a function of $n$ and $\widetilde{\xi}$, and $\widetilde{u}_{n,\xi}$ as a function of $n$ and $\xi$, where the $\xi$ and $\widetilde{\xi}$ are always related by $\widetilde{\xi}=\xi-|n|n$. Since we will be dealing with more than one function, $n$ and $\xi$ may be replaced with other letters possibly with subscipts, say $m_{1}$ or $\beta_{2}$. To simplify the notation, when there is no confusion, we will omit the ``hat'' and ``tilde'' symbols; for example, if we talk about an expression involving $u_{m,\widetilde{\alpha}}$, it will actually mean $\widehat{u}_{m,\widetilde{\alpha}}$. The appearance of functions $f$ defined on $\mathbb{T}$ will not be too frequent, but when they do appear, we will adopt the same convention and write for example $f_{n}$ instead of $\widehat{f}(n)$.

We will need a number of norms in our proof. As a general convention, when we write a norm as $l^{2}L^{1}$, this will mean the $l_{n}^{2}L_{\xi}^{1}$ norm for some $\widetilde{u}$ (which equals the $l_{n}^{2}L_{\widetilde{\xi}}^{1}$ norm for $\widehat{u}$); the meaning of $L^{1}l^{2}$ will thus be clear. The space-time Lebesgue norms will be denoted by $L^{6}L^{6}$ etc. For example, in this notation system the expression $\|u\|_{l_{d\geq 0}^{\infty}l_{\sim 2^{d}}^{p}L^{1}}\nonumber$ actually means\begin{equation}\sup_{d\geq 0}\bigg(\sum_{n\sim 2^{d}}\|\widetilde{u}_{n,\xi}\|_{L_{\xi}^{1}}^{p}\bigg)^{\frac{1}{p}}.\nonumber\end{equation}

Next, observe that up to a constant,\begin{equation}\label{observe}\|u\|_{L^{6}L^{6}}^{6}=\sum_{n\in\mathbb{Z}}\int_{\mathbb{R}}\bigg|\sum_{n_{1}+n_{2}+n_{3}=n}\int_{\widetilde{\xi_{1}}+\widetilde{\xi_{2}}+\widetilde{\xi_{3}}=\widetilde{\xi}}\prod_{i=1}^{3}u_{n_{i},\widetilde{\xi_{i}}}\bigg|^{2}\,\mathrm{d}\widetilde{\xi}.\end{equation}If follows that if $|u_{n,\xi}|\leq v_{n,\xi}$, then $\|u\|_{L^{6}L^{6}}\lesssim\|v\|_{L^{6}L^{6}}$. For any function $u$ we define $\mathfrak{N}u$ by $(\mathfrak{N}u)_{n,\xi}=|u_{n,\xi}|$, then $\|\mathfrak{N}u\|_{L^{6}L^{6}}$ is a norm of $u$. Now we list the norms we will use:
\begin{eqnarray}\label{norm0001}\|u\|_{X_{1}}&=&\big\|\langle n\rangle^{s}\langle\xi\rangle^{b}u\big\|_{l^{p}L^{2}};\\\label{norm0003}\|u\|_{X_{2}}&=&\|\langle n\rangle^{r}u\|_{l_{d\geq 0}^{\infty}l_{\sim 2^{d}}^{p}L^{1}};\\\label{modified}\|u\|_{X_{3}}&=&\|\langle n\rangle^{-\epsilon}\mathfrak{N}u\|_{L^{6}L^{6}};\\\label{norm0004}
\|u\|_{X_{4}}&=&\|\langle n\rangle^{-1}\langle\xi\rangle^{\kappa}u\|_{l^{\gamma}L^{2}};\\\label{norm0005}
\|u\|_{X_{5}}&=&\|u\|_{l_{d\geq 0}^{\infty}L^{q}l_{\sim 2^{d}}^{2}};\\\label{norm0006}
\|u\|_{X_{6}}&=&\|\langle n\rangle^{r}\langle \xi\rangle^{\frac{1}{2}+s^{2}}u\|_{l^{2}L^{2}};\\\label{norm0007}
\|u\|_{X_{7}}&=&\|\langle n\rangle^{r}\langle \xi\rangle^{\frac{1}{8}}u\|_{l_{d\geq 0}^{\infty}l_{\sim 2^{d}}^{p}L^{2}}.\end{eqnarray}
We also recall the norm $Z_{1}$ defined in Section \ref{parameters}, and rewrite it as\begin{equation}\|f\|_{Z_{1}}=\|\langle n\rangle^{r}f\|_{l_{d\geq 0}^{\infty}l_{\sim 2^{d}}^{p}}.
\end{equation}
\subsection{The Another norm}\label{another}
We will need \emph{another} spacetime norm, denoted by $X_{8}$, which is a little tricky to define.

Consider the space of functions $u$ of $(n,\xi)\in\mathbb{Z}\times\mathbb{R}$, normed by \begin{equation}\|u\|_{\Phi}=\|u\|_{L^{q}l^{2}}.\end{equation} The additive group $\mathbb{Z}$ acts on this space by\begin{equation}(\pi_{n_{0}}u)(n,\xi)=u(n+n_{0},\xi+|n+n_{0}|(n+n_{0})-|n|n).\end{equation} If we write\begin{equation}\mathcal{S}:\mathbb{Z}\times\mathbb{R}\to\mathbb{Z}\times\mathbb{R},\,\,(n,\widetilde{\xi})\mapsto(n,\xi)=(n,\widetilde{\xi}+|n|n),\end{equation} then we would have\begin{equation}\pi_{n_{0}}u=u\circ \mathcal{S}\circ T_{n_{0}}\circ \mathcal{S}^{-1},\end{equation} where $T_{n_{0}}:\mathbb{Z}\times\mathbb{R}\to\mathbb{Z}\times\mathbb{R}$ is the translation $(n,\widetilde{\xi})\mapsto(n+n_{0},\widetilde{\xi})$. We then define the atomic $\mathcal{Y}$ norm by\begin{equation}\label{definition}\|u\|_{\mathcal{Y}}=\inf\bigg\{\sum_{i}\langle n_{i}\rangle^{s^{1/2}}|\alpha_{i}|:u=\sum_{i}\alpha_{i}u_{i},\|\pi_{n_{i}}u_{i}\|_{\Phi}\leq 1\bigg\}.\end{equation} The $X_{8}$ norm is thus defined by\begin{equation}\|u\|_{X_{8}}=\sup_{d\geq 0}\|\mathbb{P}_{\sim 2^{d}}u\|_{\mathcal{Y}}:=\sup_{d\geq 0}\|\mathbb{P}_{\sim 2^{d}}\widetilde{u}\|_{\mathcal{Y}},\end{equation} where the last inequality is due to our convention.
\begin{remark}In (\ref{definition}), the convergence takes place in a suitable weighted $L_{n,\xi}^{1}$ space. Therefore, when $v$ is rapidly decaying in $n$ and $\xi$ (for example, $|v|\lesssim (|n|+|\xi|+1)^{-100}$ will suffice), the sum $\sum_{i}\alpha_{i}(u_{i},v)$ will converge absolutely to $(u,v)$, provided the $\sum_{i}\langle n_{i}\rangle^{s^{1/2}}|\alpha_{i}|$ is finite, where $(u,v)$ denotes (up to constant) the standard pairing\begin{equation}\label{pairing}(u,v)=\int_{\mathbb{R}\times\mathbb{T}}u(t,x)\overline{v(t,x)}\,\mathrm{d}t\mathrm{d}x=\sum_{n}\int_{\mathbb{R}}u_{n,\xi}\overline{v_{n,\xi}}\,\mathrm{d}\xi.\end{equation}
\end{remark}
\subsection{The space in which we work}\label{mainspace}
Define \begin{eqnarray}\|u\|_{Y_{1}}&=&\|u\|_{X_{1}}+\|u\|_{X_{2}}+\|u\|_{X_{4}}+\|u\|_{X_{5}}+\|u\|_{X_{7}};\\\|u\|_{Y_{2}}&=&\|u\|_{X_{2}}+\|u\|_{X_{3}}+\|u\|_{X_{4}}+\|u\|_{X_{8}}.\end{eqnarray}
Moreover, for each space $\mathcal{Z}$ (which can be $Y_{1}$, $Y_{2}$ or any other space) we define\footnote[1]{Note that any function in our space $\mathcal{Z}$ will actually be a bounded continuous function of $t$ with value in (say) $H^{-90}(\mathbb{T})$.} \begin{equation}\|u\|_{\mathcal{Z}^{T}}=\inf\big\{\|v\|_{\mathcal{Z}}:v|_{[-T,T]}=u|_{[-T,T]}\big\}.\end{equation} This $[-T,T]$ may also be replaced by any interval $I$.

The main spacetime norms we shall use in the whole bootstrap argument are $Y_{1}^{T}$ and $Y_{2}^{T}$, while other norms may be introduced whenever necessary.

\section{Linear estimates, and more}\label{linear}
Here we shall prove our main linear estimates, as well as some auxiliary results.
\begin{proposition}[Strichartz estimates]\label{stri} For any function $u$, we have \begin{equation}\label{strichartz}\|u\|_{L^{k}L^{k}}\lesssim\|\langle n\rangle^{\sigma}\langle\xi\rangle^{\beta}u\|_{l^{2}L^{2}},\end{equation} provided that the parameters are set as\begin{equation}(k,\sigma,\beta)\in\big\{(2,0,0),\big(4,0,\frac{3}{8}\big),\big(6,s^{5},\frac{1}{2}+s^{5}\big),\big(\infty,\frac{1}{2}+s^{5},\frac{1}{2}+s^{5}\big)\big\}.\end{equation}
\end{proposition}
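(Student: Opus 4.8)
The four cases split naturally into two easy endpoint estimates and two genuine Strichartz estimates. The cases $(2,0,0)$ and $(\infty,\frac12+s^5,\frac12+s^5)$ are soft: for $k=2$ it is just Plancherel in $(t,x)$ (using $\widetilde\xi=\xi-|n|n$, so $\langle\widetilde\xi\rangle\sim\langle\xi\rangle$ only up to the dispersion shift, but since we take no weights this is literally an identity); for $k=\infty$ one bounds $\|u\|_{L^\infty_{t,x}}$ by $\sum_n\int_{\mathbb R}|\widehat u_{n,\widetilde\xi}|\,d\widetilde\xi$ and applies Cauchy–Schwarz in $n$ against $\langle n\rangle^{-1-2s^5}$ and in $\widetilde\xi$ against $\langle\widetilde\xi\rangle^{-1-2s^5}$, both of which are summable/integrable, converting the $\langle\widetilde\xi\rangle$ weight back to a $\langle\xi\rangle$ weight at the cost of $s^5$. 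So the content is in $k=4$ and $k=6$.

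**The $L^6$ estimate.** For $k=6$ I would use the $L^6$ Strichartz inequality for the periodic Airy-type (here Benjamin–Ono) propagator, which on $\mathbb T$ is Bourgain's $L^6$ bound for KdV: $\|e^{tH\partial_{xx}}\mathbb P_{\leq N}f\|_{L^6_{t,x}([0,1]\times\mathbb T)}\lesssim N^{0+}\|f\|_{L^2}$, with the $N^{0+}$ loss an essential feature of the torus (there is no clean $L^6$ bound for KdV on $\mathbb T$ — one loses $\epsilon$ derivatives). One then transfers this to $X^{s,b}$-type spaces in the standard way: writing a general $u$ via its Fourier representation in $\widetilde\xi$, applying Minkowski and the transference principle (i.e. $\|u\|_{L^6}\lesssim\|u\|_{X^{0+,1/2+}}$ type bound built from the free estimate with a $b>1/2$ weight in $\widetilde\xi$), and finally converting $\langle\widetilde\xi\rangle$ to $\langle\xi\rangle$. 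The dyadic $N^{0+}$ losses sum because $\langle n\rangle^{s^5}$ in the right-hand norm absorbs them (dividing into Littlewood–Paley pieces in $n$, summing a geometric-type series in $d$). This is why the exponents $(s^5,\frac12+s^5)$ appear rather than $(0,\frac12)$.

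**The $L^4$ estimate.** For $k=4$ the situation is cleaner: there is a genuine $L^4$ Strichartz estimate on $\mathbb T$ for the BO/KdV propagator \emph{without} derivative loss, coming from a counting bound for the number of representations $\widetilde\xi=\widetilde\xi_1+\widetilde\xi_2$, $n=n_1+n_2$ subject to the resonance relation $\xi-|n|n=(\xi_1-|n_1|n_1)+(\xi_2-|n_2|n_2)$. The key arithmetic fact is that, fixing $n$ and the total modulation, the resonance function $|n_1|n_1+|n_2|n_2-|n|n$ is, for $n_1+n_2=n$ fixed, essentially linear/quadratic in $n_1$ with a nondegenerate derivative of size $\sim|n|$ (this is exactly the bilinear smoothing that underlies KdV's $L^4$ bound). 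This gives $\|u\|_{L^4_{t,x}}\lesssim\|u\|_{X^{0,3/8}}$ — the exponent $\frac38$ being the familiar endpoint for the $L^4$ KdV Strichartz estimate — and one again passes from $\langle\widetilde\xi\rangle$ to $\langle\xi\rangle$, which is harmless here since no $n$-weight is needed so Littlewood–Paley summation is trivial.

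**Main obstacle.** The delicate point is the $L^6$ case: on the torus one must accept the $N^{0+}$ loss and be careful that the chosen $s^5$ power of $\langle n\rangle$ genuinely dominates it after dyadic summation, and that the interplay between the time-interval localization (the estimate is on a unit time interval, then globalized via the $\langle\widetilde\xi\rangle^{1/2+}$ weight) does not leak an additional loss. One should also double-check the switch $\langle\widetilde\xi\rangle\leftrightarrow\langle\xi\rangle$: since $\widetilde\xi=\xi-|n|n$ and $\xi$ already accounts for the dispersion, on the Fourier support where the $X^{s,b}$ norm is finite this is an equivalence up to the weights chosen, but at the $k=\infty$ endpoint it costs exactly the $s^5$ one sees in the statement. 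Everything else is bookkeeping with Littlewood–Paley pieces and Hölder.
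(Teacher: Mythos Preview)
Your treatment of the endpoints $k=2$ and $k=\infty$ is fine and matches the paper. The substantive issue is in the $L^4$ and $L^6$ cases, where you repeatedly identify the Benjamin--Ono propagator as ``Airy-type'' and appeal to KdV Strichartz estimates. This is a genuine misidentification: the BO dispersion relation is $\omega(n)=|n|n$, not $n^3$. The paper's key observation---which you miss---is that after separating positive and negative spatial frequencies and reversing time on one piece, $|n|n$ becomes $n^2$, so the problem reduces cleanly to the periodic \emph{Schr\"odinger} equation. With that reduction, the $L^4$ exponent $\beta=\tfrac38$ is exactly the familiar periodic Schr\"odinger exponent (not a KdV exponent; the periodic KdV $L^4$ bound has $\beta=\tfrac13$), and Bourgain's $L^6$ estimate with $N^{0+}$ loss is likewise the Schr\"odinger one. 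Your resonance discussion for $L^4$ (derivative of size $\sim|n|$) is also only correct in the mixed-sign case; in the same-sign case the resonance function is $2n_1n_2$, which is precisely the Schr\"odinger structure. So the high-level transference strategy you describe does work, but only once you route it through Schr\"odinger rather than Airy.

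For $k=6$ the paper actually takes a different route from the transference argument you sketch. After the Schr\"odinger reduction it expands $\|u\|_{L^6}^6=\|u^3\|_{L^2}^2$ directly, uses the product estimate $\|fgh\|_{H^{1/2+}}\lesssim\|f\|_{H^{1/2+}}\|g\|_{H^{1/2+}}\|h\|_{H^{1/2+}}$ in the time variable, and then needs to bound $\sum_{n_1+n_2+n_3=n}\langle n_i\rangle^{-2s^5}\langle\widetilde\xi+n_1^2+n_2^2+n_3^2\rangle^{-1-2s^5}$ uniformly. This comes down to counting triples with $n_1+n_2+n_3$ and $n_1^2+n_2^2+n_3^2$ both prescribed, which the paper handles by the divisor bound in $\mathbb Z[e^{2\pi i/3}]$. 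Your transference-from-free-Strichartz approach would also prove the estimate, but the paper's counting argument is more self-contained and avoids invoking Bourgain's $L^6$ Schr\"odinger bound as a black box.

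One notational point: in this paper $\xi$ \emph{is already} the modulation variable ($\widetilde\xi=\xi-|n|n$ is the raw time frequency). So the weight $\langle\xi\rangle^\beta$ on the right of \eqref{strichartz} is exactly the $X^{s,b}$-type weight, and there is no ``switch $\langle\widetilde\xi\rangle\leftrightarrow\langle\xi\rangle$'' to worry about.
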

\begin{proof} When $(k,\sigma,\beta)=(2,0,0)$, the inequality (\ref{strichartz}) is simply Plancherel; when $(k,\sigma,\beta)=(\infty,\frac{1}{2}+s^{5},\frac{1}{2}+s^{5})$, this can also be easily proved by combining Hausdorff-Young and H\"{o}lder. When $(k,\sigma,\beta)=(4,0,\frac{3}{8})$, the inequality reduces, after separating positive and negative frequencies and using time inversion, to the $L^{4}$ Strichartz estimate for the linear Schr\"{o}dinger equation on $\mathbb{T}$ which is well-known; see for example \cite{Ta06}, Proposition 2.13.

Now we assume $k=6$, $\sigma=s^{5}$ and $\beta=\frac{1}{2}+s^{5}$. Again by separating positive and negative frequencies and using time inversion, we only need to consider the case for the Schr\"{o}dinger semigroup, thus \emph{in this proof} our convention will change to $\xi=\widetilde{\xi}+n^{2}$. Now for any function $u$ with the right hand side of (\ref{strichartz}) not exceeding $1$, we write $v_{n,\xi}=\langle n\rangle^{s^{5}}\langle\xi\rangle^{\frac{1}{2}+s^{5}}u_{n,\xi}$ using our (different) convention, and compute up to a constant that
\begin{equation}(u^{3})_{n, \widetilde{\xi}}=\sum_{n_{1}+n_{2}+n_{3}=n}\prod_{i=1}^{3}\langle n_{i}\rangle^{-s^{5}}(f_{n_{1}}\ast f_{n_{2}}\ast f_{n_{3}})_{\widetilde{\xi}+n_{1}^{2}+n_{2}^{2}+n_{3}^{2}},
\end{equation} where\begin{equation}(f_{n})_{\xi}=\langle\xi\rangle^{-\frac{1}{2}-s^{5}}v_{n,\xi}.\end{equation} By our assumption we have $\|\langle\xi\rangle^{\frac{1}{2}+s^{5}}f_{n_{i}}\|_{L^{2}}\lesssim A_{n_{i}}$, where $\{A_{n}\}$ is some sequence satisfying $\|A\|_{l^{2}}\lesssim 1$. By (the Fourier version of) the product estimate for $H^{\sigma}(\mathbb{R})$ spaces, we deduce that \begin{equation}(f_{n_{1}}*f_{n_{2}}*f_{n_{3}})_{\eta}=\langle\eta\rangle^{-\frac{1}{2}-s^{5}}(g_{n_{1}n_{2}n_{3}})_{\eta};\,\,\,\,\,\,\|g_{n_{1}n_{2}n_{3}}\|_{L^{2}}\lesssim A_{n_{1}}A_{n_{2}}A_{n_{3}}.\end{equation} Therefore we can estimate
\begin{eqnarray}|(u^{3})_{n, \widetilde{\xi}}|^{2}&\lesssim &\bigg(\sum_{n_{1}+n_{2}+n_{3}=n}\prod_{i=1}^{3}\langle n_{i}\rangle^{-2s^{5}}\cdot\langle\widetilde{\xi}+n_{1}^{2}+n_{2}^{2}+n_{3}^{2}\rangle^{-1-2s^{5}}\bigg)\times\nonumber\\
&\times&\bigg(\sum_{n_{1}+n_{2}+n_{3}=n}|(g_{n_{1}n_{2}n_{3}})_{\widetilde{\xi}+n_{1}^{2}+n_{2}^{2}+n_{3}^{2}}|^{2}\bigg).\nonumber\end{eqnarray} Now to finish the proof if will suffice to  show\begin{equation}\sum_{n_{1}+n_{2}+n_{3}=n}\prod_{i=1}^{3}\langle n_{i}\rangle^{-2s^{5}}\langle\widetilde{\xi}+n_{1}^{2}+n_{2}^{2}+n_{3}^{2}\rangle^{-1-2s^{5}}\leq C,\end{equation} when $n$ and $\widetilde{\xi}$ are fixed. Now suppose the maximum (in absolute value) of $n_{i}$ and $\Xi=\widetilde{\xi}+n_{1}^{2}+n_{2}^{2}+n_{3}^{2}$ be comparable to $2^{d}$, and $\Xi\sim 2^{d'}$, then the summand is at most $2^{-d'-s^{6}(d+d')}$, so it will suffice to show that there are at most $2^{d'+s^{7}d}$ choices for $(n_{1},n_{2},n_{3})$. Since their can be at most $2^{d'}$ possibilities for $n_{1}^{2}+n_{2}^{2}+n_{3}^{2}$, we only need to show that there are at most $2^{s^{7}d}$ choices for $(n_{1},n_{2},n_{3})$ if we require $|n_{i}|\lesssim 2^{d}$, and fix $n_{1}+n_{2}+n_{3}=n$ and $n_{1}^{2}+n_{2}^{2}+n_{3}^{2}$. But then $m_{i}=3n_{i}-n$ will be integers for $i\in\{1,2\}$, and $m_{1}^{2}+m_{1}m_{2}+m_{2}^{2}$ will be a fixed integer not exceeding $C2^{5d}$. The result then follows from the divisor estimate for the ring $\mathbb{Z}\big[e^{\frac{2\pi\mathrm{i}}{3}}\big]$.
\end{proof}
By Proposition \ref{stri} and interpolation, we get a series of $L^{k}L^{k}$ Strichartz estimates for all $2\leq k\leq\infty$. It is these that we will actually use in the proof; we will not care too much about the exact numerology because there will be enough room whenever we use these estimates.
\begin{proposition}[Relations between norms]\label{relattt} We have the following inequalities:
\begin{equation}\label{relation1}\|u\|_{X_{3}}\lesssim\|u\|_{X_{1}}+\|u\|_{X_{4}},\,\,\|u\|_{X_{8}}\lesssim\|u\|_{X_{5}};\end{equation}
\begin{equation}\label{relation2}\|u\|_{X_{1}}+\|u\|_{X_{2}}+\|u\|_{X_{5}}+\|u\|_{X_{7}}\lesssim\|u\|_{X_{6}}.\end{equation} Note that this in particular implies $\|u\|_{X_{j}}\lesssim \|u\|_{Y_{1}}$ if $1\leq j\leq 8$ and $j\neq 6$.
\end{proposition}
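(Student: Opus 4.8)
The plan is to establish the seven inequalities more or less one at a time. Six of them are elementary manipulations -- H\"older in $\xi$, the nestings $l^{2}\subset l^{p}$ (as $p>2$) and $l^{\gamma}\subset l^{2}$ (as $\gamma<2$), Minkowski's inequality, and discarding the harmless weight $\langle n\rangle^{r}\geq 1$ -- while only $\|u\|_{X_{3}}\lesssim\|u\|_{X_{1}}+\|u\|_{X_{4}}$ carries real content, relying on the $L^{6}$ Strichartz bound of Proposition \ref{stri}.

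For the four inequalities in (\ref{relation2}) I work over a single dyadic block $n\sim 2^{d}$ and then take the supremum in $d$. The bounds $\|u\|_{X_{1}},\|u\|_{X_{7}}\lesssim\|u\|_{X_{6}}$ follow from $\|a\|_{l^{p}_{n\sim 2^{d}}}\leq\|a\|_{l^{2}_{n\sim 2^{d}}}$ together with the pointwise comparisons $\langle n\rangle^{s}\leq\langle n\rangle^{r}$ (since $r>s$ in (\ref{hierarchy})) and $\langle\xi\rangle^{b},\langle\xi\rangle^{1/8}\leq\langle\xi\rangle^{1/2+s^{2}}$. For $\|u\|_{X_{2}}\lesssim\|u\|_{X_{6}}$, Cauchy--Schwarz in $\xi$ gives $\|u(n,\cdot)\|_{L^{1}_{\xi}}\lesssim\|\langle\xi\rangle^{1/2+s^{2}}u(n,\cdot)\|_{L^{2}_{\xi}}$ (using $\langle\xi\rangle^{-1-2s^{2}}\in L^{1}(\mathbb{R})$), after which one proceeds as above and drops $\langle n\rangle^{r}\geq 1$. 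For $\|u\|_{X_{5}}\lesssim\|u\|_{X_{6}}$, since $q<2$ Minkowski's inequality gives $\|u\|_{L^{q}_{\xi}l^{2}_{n\sim 2^{d}}}\leq\|u\|_{l^{2}_{n\sim 2^{d}}L^{q}_{\xi}}$, and then H\"older in $\xi$ (now with $\langle\xi\rangle^{-(1/2+s^{2})}\in L^{q'}(\mathbb{R})$, valid since $q'(\tfrac12+s^{2})>1$) and $\langle n\rangle^{r}\geq 1$ finish it. The first inequality in (\ref{relation1}), $\|u\|_{X_{8}}\lesssim\|u\|_{X_{5}}$, is immediate from the definitions: for each $d$ take the trivial one-term atomic decomposition of $\mathbb{P}_{\sim 2^{d}}\widetilde{u}$ with $n_{1}=0$, so that $\pi_{0}$ is the identity and hence $\|\mathbb{P}_{\sim 2^{d}}\widetilde{u}\|_{\mathcal{Y}}\leq\langle 0\rangle^{s^{1/2}}\|\mathbb{P}_{\sim 2^{d}}\widetilde{u}\|_{\Phi}=\|\mathbb{P}_{\sim 2^{d}}u\|_{L^{q}l^{2}}$, which is exactly the $d$-th term of $\|u\|_{X_{5}}$; take the supremum in $d$.

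There remains $\|u\|_{X_{3}}\lesssim\|u\|_{X_{1}}+\|u\|_{X_{4}}$. Since $\langle n\rangle^{-\epsilon}\mathfrak{N}u=\mathfrak{N}(\langle n\rangle^{-\epsilon}u)$ and the $l^{2}L^{2}$ norm is insensitive to $\mathfrak{N}$, the $k=6$ case of Proposition \ref{stri} applied to $\mathfrak{N}(\langle n\rangle^{-\epsilon}u)$ gives
\begin{equation}\|u\|_{X_{3}}\lesssim\big\|\langle n\rangle^{s^{5}-\epsilon}\langle\xi\rangle^{\frac12+s^{5}}u\big\|_{l^{2}L^{2}}.\nonumber\end{equation}
I then split $u=u_{\mathrm{lo}}+u_{\mathrm{hi}}$ according to whether $\langle\xi\rangle\leq\langle n\rangle^{10}$ or $\langle\xi\rangle>\langle n\rangle^{10}$. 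On the high-modulation piece, pointwise on its support one has $\langle n\rangle^{1+s^{5}-\epsilon}\langle\xi\rangle^{\frac12+s^{5}-\kappa}\lesssim\langle n\rangle^{1+s^{5}-\epsilon-10(\kappa-\frac12-s^{5})}=\langle n\rangle^{-4+O(s^{5/4})}\lesssim 1$, so $\|\langle n\rangle^{s^{5}-\epsilon}\langle\xi\rangle^{\frac12+s^{5}}u_{\mathrm{hi}}\|_{l^{2}L^{2}}\lesssim\|\langle n\rangle^{-1}\langle\xi\rangle^{\kappa}u\|_{l^{2}L^{2}}\leq\|\langle n\rangle^{-1}\langle\xi\rangle^{\kappa}u\|_{l^{\gamma}L^{2}}=\|u\|_{X_{4}}$ using $\gamma<2$. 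On the low-modulation piece, for each block $n\sim 2^{d}$ I use $\langle\xi\rangle^{\frac12+s^{5}}\leq 2^{10d(s^{15/8}+s^{5})}\langle\xi\rangle^{b}$ on its support together with $\|a\|_{l^{2}_{n\sim 2^{d}}}\lesssim 2^{dr}\|a\|_{l^{p}_{n\sim 2^{d}}}$, producing a bound $2^{d(r-s-\epsilon+O(s^{15/8}))}\|u\|_{X_{1}}$ for the $d$-th block; the exponent is negative by a power of $s$ because (\ref{hierarchy}) gives $r-s\ll\tfrac12-b=s^{15/8}\ll\epsilon$, so summing the geometric series in $d$ yields $\|u_{\mathrm{lo}}\|_{X_{3}}\lesssim\|u\|_{X_{1}}$. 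The main obstacle -- indeed the only place the precise numerology matters -- is exactly this last balance: the Strichartz loss $\langle n\rangle^{s^{5}}$, the $l^{2}\!\to\! l^{p}$ loss $\langle n\rangle^{r}$ per block, and the modulation-exchange loss $\langle n\rangle^{O(s^{15/8})}$ must all fit inside the $\langle n\rangle^{s}$ budget supplied by $X_{1}$, which is precisely why the hierarchy $r-s\ll\tfrac12-b\ll\epsilon$ was imposed in (\ref{hierarchy}).
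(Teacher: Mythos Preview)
Your proof is correct and follows essentially the same route as the paper's: the six elementary bounds via H\"older and norm nesting, and the substantive estimate $\|u\|_{X_{3}}\lesssim\|u\|_{X_{1}}+\|u\|_{X_{4}}$ via the $L^{6}$ Strichartz inequality followed by a high/low modulation split (the paper cuts at $\langle\xi\rangle=\langle n\rangle^{100}$ instead of $\langle n\rangle^{10}$ and packages the low-modulation step as a single pointwise weight inequality after first passing from $X_{1}$ to a weighted $l^{2}L^{2}$ norm, but the mechanism and numerology are the same). One minor slip to fix: in the $X_{5}\lesssim X_{6}$ step the H\"older exponent you need is $(\tfrac{1}{q}-\tfrac{1}{2})^{-1}$, not $q'$ --- the integrability condition still holds, so the argument is unaffected.
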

\begin{proof} By Proposition \ref{stri} and hierarchy (\ref{hierarchy}) we know that\begin{equation}\label{loose}\|u\|_{X_{3}}\lesssim\|\langle n\rangle^{-\frac{\epsilon}{2}}\langle\xi\rangle^{\frac{1}{2}+s^{5}}u\|_{l^{2}L^{2}}.\end{equation} Comparing this with the definition of $X_{1}$ and $X_{4}$, noticing that $\gamma<2$ and by (\ref{hierarchy}) and H\"{o}lder,\begin{equation}\|u\|_{X_{1}}\gtrsim\|\langle n\rangle^{-\frac{\epsilon}{4}}\langle \xi\rangle^{b}\|_{l^{2}L^{2}},\nonumber\end{equation} we will be able to prove the first inequality in (\ref{relation1}) provided we can show\begin{equation}\langle n\rangle^{-\frac{\epsilon}{2}}\langle\xi\rangle^{\frac{1}{2}+s^{5}}\lesssim\langle n\rangle^{-1}\langle\xi\rangle^{\kappa}+\langle n\rangle^{-\frac{\epsilon}{4}}\langle\xi\rangle^{b}.\end{equation} But this is clear since by (\ref{hierarchy}), the left hand side is controlled by the first term on the right hand side if $\langle\xi\rangle\geq\langle n\rangle^{100}$, and by the second term if $\langle\xi\rangle<\langle n\rangle^{100}$. The second inequality in (\ref{relation1}) is also easy, since we only need to prove $\|u\|_{\mathcal{Y}}\lesssim\|u\|_{L^{q}l^{2}}$, which is a direct consequence of the definition (\ref{definition}), if we choose to have only one term (with the corresponding $n_{i}=0$) in the proposed atomic decomposition.

Now let us prove (\ref{relation2}). The $X_{1}$ norm is controlled by $X_{6}$ norm because $s<r$, $b<\frac{1}{2}+s^{2}$, and $2<p$. For basically the same reason we can use H\"{o}lder to show $\|u\|_{X_{2}}+\|u\|_{X_{7}}\lesssim\|u\|_{X_{6}}$. Finally, to prove $\|u\|_{X_{5}}\lesssim\|u\|_{X_{6}}$, we only need to show that $\|g_{\xi}\|_{L^{q}}\lesssim\|\langle\xi\rangle^{\frac{1}{2}+s^{2}}g_{\xi}\|_{L^{2}}$, but this again follows from H\"{o}lder since $q>1$.
\end{proof}
Next, we introduce the (cut-off) Duhamel operator $\mathcal{E}$ defined by \begin{equation}\label{duhamel}\mathcal{E}u(t,x)=\chi(t)\int_{0}^{t}\chi(t')(e^{-(t-t')H\partial_{xx}}u(t'))(x)\,\mathrm{d}t',\end{equation} where $\chi(t)$ is a cutoff function (compactly supported and equals $1$ in a neighborhood of $0$) in $t$. Here and below we shall use many such functions, but unless really necessary, we will not distinguish them and will denote them all by $\chi$ (for example, we write $\chi^{2}=\chi$). We shall summarize the required linear estimates for $\mathcal{E}$ in Proposition \ref{linearestimate2} below, but before doing so, we need to introduce two more norms, namely:
\begin{eqnarray}\|u\|_{X_{9}}&=&\|\langle n\rangle^{r}u\|_{l_{d\geq 0}^{\infty}L^{q'}l_{\sim 2^{d}}^{p}},\\
\|u\|_{X_{10}}&=&\|\langle n\rangle^{r}\langle \xi\rangle^{-\frac{1}{8}}u\|_{l_{d\geq 0}^{\infty}L^{\tau}l_{\sim 2^{d}}^{p}}.\end{eqnarray}
\begin{lemma}\label{linearestimate}
Suppose $v(t,x)=\mathcal{E}u(t,x)$, then with constants $c_{j}$,\begin{equation}\label{computation}v_{n,\xi}=c_{1}\big(\widehat{\chi}*(\eta^{-1}(\widehat{\chi}*u_{n,*})_{\eta})\big)_{\xi}+c_{2}\bigg(\int_{\mathbb{R}}\frac{(\widehat{\chi}*u_{n,*})_{\eta}}{\eta}\,\mathrm{d}\eta\bigg)\cdot\widehat{\chi}_{\xi}.\end{equation} Here the $\frac{1}{\eta}$ is to be understood as the principal value distribution. This operator obeys the following basic estimates, valid for all $\sigma,\beta\in\mathbb{R}$ and $1\leq h,k\leq\infty$:
\begin{eqnarray}\label{001}\|\langle n\rangle^{\sigma}\langle\xi\rangle^{\beta}\mathcal{E}u\|_{L^{h}l^{k}}&\lesssim&\|\langle n\rangle^{\sigma}\langle\xi\rangle^{\beta-1}u\|_{L^{h}l^{k}}+\|\langle n\rangle^{\sigma}\langle\xi\rangle^{-1}u\|_{l^{k}L^{1}};\\
\label{002}\|\langle n\rangle^{\sigma}\langle \xi\rangle^{\beta}\mathcal{E}u\|_{l^{k}L^{h}}&\lesssim &\|\langle n\rangle^{\sigma}\langle\xi\rangle^{\beta-1}u\|_{l^{k}L^{h}}+\|\langle n\rangle^{\sigma}\langle\xi\rangle^{-1}u\|_{l^{k}L^{1}}.\end{eqnarray} Note the reversed order of norms in the second term on the right hand side of (\ref{001}). If moreover $\beta>1-\frac{1}{h}$, we can remove the $l^{k}L^{1}$ norms. Finally, by commuting with $\mathbb{P}$ projections, we get similar estimates for norms like $X_{2}$ and $X_{5}$.
\end{lemma}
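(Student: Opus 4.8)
The plan is to prove the lemma in two stages: first derive the closed formula (\ref{computation}) for the Fourier symbol of $\mathcal{E}u$ by a direct computation in the time variable, and then read the estimates (\ref{001})--(\ref{002}) off that formula; the one point that needs genuine care is the principal-value singularity at $\eta=0$. For the formula, I would start from (\ref{duhamel}), insert $u(t',x)=\sum_{n}e^{\mathrm{i}nx}\int_{\mathbb{R}}\widehat{u}_{n,\widetilde{\alpha}}\,e^{\mathrm{i}\widetilde{\alpha}t'}\,\mathrm{d}\widetilde{\alpha}$, and use that $e^{-sH\partial_{xx}}$ has $x$-symbol $e^{-\mathrm{i}|n|ns}$ (the convention under which the free flow sits at $\xi=0$). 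The $x$-frequency $n$ decouples completely, and for fixed $n$ one is left with $\chi(t)e^{-\mathrm{i}|n|nt}$ times $\mathcal{I}_{\beta}(t):=\int_{0}^{t}\chi(t')e^{\mathrm{i}\beta t'}\,\mathrm{d}t'$, where $\beta=\widetilde{\alpha}+|n|n$ is the variable in which $\widehat{u}_{n,\widetilde{\alpha}}=\widetilde{u}_{n,\beta}$. Now $\mathcal{I}_{\beta}$ solves $\mathcal{I}_{\beta}'(t)=\chi(t)e^{\mathrm{i}\beta t}$ with $\mathcal{I}_{\beta}(0)=0$, so on the time-Fourier side $\mathrm{i}\eta\,\widehat{\mathcal{I}_{\beta}}(\eta)=\widehat{\chi}_{\eta-\beta}$, which forces $\widehat{\mathcal{I}_{\beta}}(\eta)=c\,\widehat{\chi}_{\eta-\beta}/\eta+c_{\beta}\delta(\eta)$ with $\eta^{-1}$ the principal value and $c_{\beta}$ determined by $0=\mathcal{I}_{\beta}(0)=\int\widehat{\mathcal{I}_{\beta}}(\eta)\,\mathrm{d}\eta$. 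Substituting back, integrating in $\beta$, multiplying by the outer $\chi(t)$ (convolution by $\widehat{\chi}$ in the frequency variable), and translating by $|n|n$ to return to the tilde coordinates reproduces (\ref{computation}) with absolute constants $c_{1},c_{2}$. One does this first for Schwartz $u$ and then passes to the limit; the interchanges of sum and integral are legitimate since $\chi$ is smooth and compactly supported.

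For the estimates the key structural point is that $\mathcal{E}$ does nothing to the $n$ variable: for fixed $n$, with $g_{n}:=\widehat{\chi}\ast u_{n,\ast}$ (a smooth function), (\ref{computation}) exhibits $v_{n,\ast}$ as $c_{1}\,\widehat{\chi}\ast(\eta^{-1}g_{n})+c_{2}\big(\int\eta^{-1}g_{n}\big)\widehat{\chi}$, so it is enough to prove the bounds fibrewise in $n$ and reassemble. The second term is a fixed Schwartz profile $\widehat{\chi}_{\xi}$ times the number $\int\eta^{-1}g_{n}(\eta)\,\mathrm{d}\eta$, which, unfolding the convolution, equals a constant multiple of $\int\vartheta(\mu)\,u_{n,\mu}\,\mathrm{d}\mu$ for a fixed smooth $\vartheta=\mathrm{p.v.}\,\eta^{-1}\ast\widehat{\chi}$ decaying like $\langle\mu\rangle^{-1}$ (essentially the Hilbert transform of a Schwartz function); this is $\lesssim\|\langle\cdot\rangle^{-1}u_{n,\ast}\|_{L^{1}}$, which after inserting $\langle n\rangle^{\sigma}$ and taking $l^{k}_{n}$ is precisely the $l^{k}L^{1}$ term of (\ref{001})--(\ref{002}) --- here the $L^{h}$ in the time variable just produces the finite constant $\|\langle\xi\rangle^{\beta}\widehat{\chi}\|_{L^{h}}$, which is why the ``reversed order'' in the second term of (\ref{001}) is harmless. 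For the first term I would split $\eta^{-1}g_{n}=\mathbf{1}_{|\eta|>1}\eta^{-1}g_{n}+\mathbf{1}_{|\eta|\le1}\eta^{-1}g_{n}$. On $|\eta|>1$ the factor $\eta^{-1}$ is a genuine bounded symbol gaining $\langle\eta\rangle^{-1}$, and since convolution by the Schwartz $\widehat{\chi}$ is bounded on every $\langle\xi\rangle^{\beta}$-weighted $L^{h}$ (move the weight with the Peetre inequality, keep the $l^{k}_{n}$ in place with Minkowski and Young), this part is controlled by $\|\langle\cdot\rangle^{\beta-1}u_{n,\ast}\|_{L^{h}}$, i.e.\ by the first term on the right of (\ref{001})--(\ref{002}).

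The interesting piece is $\mathbf{1}_{|\eta|\le1}\eta^{-1}g_{n}$. Here I would use the oddness of the principal value, $\mathrm{p.v.}\int_{|\eta|\le1}\eta^{-1}\,\mathrm{d}\eta=0$, to rewrite $\big(\widehat{\chi}\ast\mathbf{1}_{|\cdot|\le1}\eta^{-1}g_{n}\big)_{\xi}=\int_{|\eta|\le1}\eta^{-1}\big(\widehat{\chi}_{\xi-\eta}g_{n}(\eta)-\widehat{\chi}_{\xi}g_{n}(0)\big)\,\mathrm{d}\eta$, whose integrand is now bounded by $\sup_{|\eta|\le1}|\partial_{\eta}(\widehat{\chi}_{\xi-\eta}g_{n}(\eta))|\lesssim\langle\xi\rangle^{-100}\|\langle\cdot\rangle^{-1}u_{n,\ast}\|_{L^{1}}$, using the rapid decay of $\widehat{\chi}$ and $\widehat{\chi}'$; this is again a fixed rapidly decaying function of $\xi$ times an admissible coefficient, feeding the $l^{k}L^{1}$ term. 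Reassembling in $n$ --- pointwise in $n$ for the $l^{k}L^{1}$ contributions, and Minkowski followed by H\"{o}lder in the time variable for the $L^{h}l^{k}$ contribution --- then gives (\ref{001}) and (\ref{002}). When $\beta>1-\tfrac{1}{h}$, the $|\eta|\le1$ part and the second term are instead estimated directly against $\|\langle\cdot\rangle^{\beta-1}u_{n,\ast}\|_{L^{h}}$: after Minkowski in the time variable one applies H\"{o}lder with the weight $\langle\cdot\rangle^{-\beta}$, which lies in $L^{h'}$ exactly when $\beta>1-\tfrac1h$ (for $h=2$ this is the familiar $b>\tfrac12$ threshold for Bourgain-space Duhamel estimates), so no separate $l^{k}L^{1}$ norm is needed. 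Finally, $\mathcal{E}$ commutes with every frequency projection $\mathbb{P}_{\sim 2^{d}}$ in $n$, so applying (\ref{001})--(\ref{002}) to each $\mathbb{P}_{\sim 2^{d}}u$ and taking $\sup_{d\ge0}$ yields the analogous bounds for the dyadically organised norms such as $X_{2}$ and $X_{5}$.

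The only genuinely non-routine point is the analysis near $\eta=0$ --- separating the benign contribution (morally the Hilbert transform of $\widehat{\chi}$, smooth and $\langle\cdot\rangle^{-1}$-decaying) from the truly singular zero-mode piece $g_{n}(0)\,\mathrm{p.v.}\,\eta^{-1}$, and verifying that the latter costs only the weaker $l^{k}L^{1}$ norm unless the time weight $\beta$ beats $1-\tfrac1h$. Everything else --- the time-Fourier computation, the weighted convolution bounds, and the bookkeeping of the mixed $l^{k}$/$L^{h}$ norms --- is standard, and the write-up should be short.
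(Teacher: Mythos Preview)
Your proposal is correct and follows essentially the same approach as the paper: the paper derives (\ref{computation}) via the identity $\int_{0}^{t}u(t')\,\mathrm{d}t'=\tfrac{1}{2}u\ast\mathrm{sgn}+\tfrac{1}{2}\int u\cdot\mathrm{sgn}$ rather than your ODE argument, but both are standard and lead to the same formula; the estimates are then handled exactly as you describe, splitting the first term of (\ref{computation}) into $|\eta|\gtrsim 1$ and $|\eta|\lesssim 1$, using the cancellation of the principal value near $\eta=0$ together with the smoothness of $g_{n}=\widehat{\chi}\ast u_{n,\ast}$, and bounding the second term by recognising $\mathrm{p.v.}\,\eta^{-1}\ast\widehat{\chi}$ as an $O(\langle\mu\rangle^{-1})$ kernel. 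Your treatment of the $\beta>1-\tfrac{1}{h}$ case and the commutation with $\mathbb{P}_{\sim 2^{d}}$ also matches the paper.
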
\begin{proof}The computation (\ref{computation}) is basically done in \cite{Bo97}. In our case, noticing that multiplication by $\chi(t)$ corresponds to convolution by $\widehat{\chi}$ on the ``tilde'' side, we only need to express the Fourier transform of $\int_{0}^{t}u(t')\mathrm{d}t'$ (which is exactly the Duhamel operator on the ``tilde'' side) in terms of $u(t)$. We compute\begin{equation}\int_{0}^{t}u(t')\,\mathrm{d}t'=\frac{1}{2}u\ast\mathrm{sgn}(t)+\frac{1}{2}\int_{\mathbb{R}}u(t')\mathrm{sgn}(t')\,\mathrm{d}t'.\end{equation} On the Fourier side, these two terms gives exactly the two terms in (\ref{computation}) after another convolution with $\widehat{\chi}$.

We will only prove (\ref{001}), since the proof of (\ref{002}) will be basically the same; also notice that if $\beta>1-\frac{1}{h}$, then \begin{equation}\|w\|_{l^{k}L^{1}}\lesssim\|w\|_{L^{1}l^{k}}\lesssim\min\big\{\|\langle\xi\rangle^{\beta}w\|_{l^{k}L^{h}},\|\langle\xi\rangle^{\beta}w\|_{L^{h}l^{k}}\big\}\nonumber\end{equation} for $w_{n,\xi}=\langle n\rangle^{\sigma}\langle\xi\rangle^{-1}u_{n,\xi}$, by H\"{o}lder. Now to prove (\ref{001}), we first consider the second term of (\ref{computation}). Due to its structure, we only need to prove for any function $z=z_{\xi}$ that\begin{equation}\bigg|\int_{\mathbb{R}}\frac{(z*\widehat{\chi})_{\eta}}{\eta}\,\mathrm{d}\eta\bigg|\lesssim\|\langle\xi\rangle^{-1}z\|_{L^{1}}.\end{equation} By considering $|\eta|\gtrsim1$ and $|\eta|\lesssim 1$ separately and using the cancelation coming from the $\frac{1}{\eta}$ factor, we can control the left hand side by $\|\langle\eta\rangle^{-1}(z*\widehat{\chi})_{\eta}\|_{L^{1}}$ (which is easily bounded by the right hand side of (\ref{001})), plus another term bounded by $\|\langle\eta\rangle^{-1}\partial_{\eta}(z*\widehat{\chi})\|_{L^{\infty}}$. If we shift the derivative to $\widehat{\chi}$ to get rid of it, we can again bound this expression by the right hand side of (\ref{001}).

Next, we consider the first term of (\ref{computation}). Again we consider the terms with $|\eta|\gtrsim 1$ and $|\eta|\lesssim1$ separately (by introducing a smooth, even cutoff $\phi_{\eta}$, say). The part where $|\eta|\gtrsim 1$ is easy, since convolution by $\widehat{\chi}_{\xi}$ is bounded on any weighted mixed norm Lebesgue space we have here, and $\frac{1}{\eta}$ is comparable to $\langle\eta\rangle^{-1}$ when restricted to the region $|\eta|\gtrsim 1$. Now for the region $|\eta|\lesssim 1$, we can actually prove for $y=y_{\xi}$ and arbitrary $K>0$ that\begin{equation}\bigg|\bigg(\widehat{\chi}*\bigg(\frac{\phi_{\eta}}{\eta}(\widehat{\chi}*y)_{\eta}\bigg)\bigg)_{\tau}\bigg|\lesssim\langle\tau\rangle^{-K}\|\langle\xi\rangle^{-K}y\|_{L^{1}},\end{equation} which easily implies our inequality. To prove this, let $\widehat{\chi}*y=z$, and compute\begin{eqnarray}\bigg(\widehat{\chi}*\bigg(\frac{\phi(\eta)}{\eta}z_{\eta}\bigg)\bigg)_{\tau}&=&\int_{|\eta|\lesssim 1}\widehat{\chi}_{\tau}\frac{\phi_{\eta}z_{\eta}-z_{0}}{\eta}\,\mathrm{d}\eta\nonumber\\
&+&\int_{|\eta|\lesssim 1}\frac{\widehat{\chi}_{\tau-\eta}-\widehat{\chi}_{\tau}}{\eta}\phi_{\eta}z_{\eta}\,\mathrm{d}\eta.\nonumber\end{eqnarray} From this we can readily recognize a decay of $\langle\tau\rangle^{-K}$, and it will suffice to prove that $\sup_{|\eta|\lesssim 1}|z_{\eta}|\lesssim \|\langle\xi\rangle^{-K}y\|_{L^{1}}$, but this will be clear from the definition of $z$.
 \end{proof}
 \begin{proposition}\label{linearestimate2}
 We have the following estimates:
\begin{equation}\label{linnn1}\|\mathcal{E}u\|_{X_{6}}\lesssim\|\langle \xi\rangle^{-1}u\|_{X_{6}},\|\mathcal{E}u\|_{X_{4}}\lesssim\|\langle \xi\rangle^{-1} u\|_{X_{4}};\end{equation}
\begin{equation}\label{linnn2}\|\mathcal{E}u\|_{X_{1}}+\|\mathcal{E}u\|_{X_{2}}\lesssim\|\langle \xi\rangle ^{-1}u\|_{X_{1}}+\|\langle \xi\rangle ^{-1}u\|_{X_{2}}\lesssim\|u\|_{X_{10}};\end{equation}
\begin{equation}\label{linnn3}
\|\mathcal{E}u\|_{X_{7}}\lesssim\|u\|_{X_{10}}\lesssim\|u\|_{X_{9}},\|\mathcal{E}u\|_{X_{5}}\lesssim\|u\|_{X_{10}}.
\end{equation}
Moreover, suppose $u$ is such that $u_{n,\xi}$ is supported in $\{(n,\xi):n\sim 2^{d}, \xi\gtrsim 2^{d}\}$ for some $d$, then \begin{equation}\label{linnn4}\|\mathcal{E}u\|_{X_{5}}+\|\mathcal{E}u\|_{X_{7}}\lesssim\|\langle \xi\rangle ^{-1}u\|_{X_{1}}+\|\langle \xi\rangle ^{-1}u\|_{X_{2}}.\end{equation}
Finally notice that all these estimates naturally imply the dual versions about the boundedness of $\mathcal{E}'$.
 \end{proposition}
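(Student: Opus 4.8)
The plan is to deduce every inequality from the two master bounds $(\ref{001})$--$(\ref{002})$ of Lemma~\ref{linearestimate}, the formula $(\ref{computation})$ entering only through them. The extra ingredients are: H\"older in $\xi$ (exploiting the gain $\langle\xi\rangle^{-1}$ against the weaker weights $\langle\xi\rangle^{b},\langle\xi\rangle^{1/8},\langle\xi\rangle^{-1/8}$); Minkowski's inequality to reorder mixed Lebesgue norms (using $p>2>q$ and $p\geq1$); the Bernstein-type embedding $\|a\|_{l^{2}_{n\sim2^{d}}}\lesssim\|\langle n\rangle^{r}a\|_{l^{p}_{n\sim2^{d}}}$, valid since $r=\tfrac12-\tfrac1p$; and the fact that each exponent inequality below is made strict by the hierarchy $(\ref{hierarchy})$, so the geometric series in the dyadic parameters converge. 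First I record that $\mathcal{E}$ acts only in $t$, hence commutes with the spatial projections $\mathbb{P}_{\sim2^{d}}$, so all norms carrying an outer $l^{\infty}_{d}$ ($X_{2},X_{5},X_{7},X_{9},X_{10}$) reduce blockwise to fixed-$d$ estimates; and whenever the relevant $\langle\xi\rangle$-power $\beta$ exceeds $1-\tfrac1h$ the auxiliary $l^{k}L^{1}$ term in $(\ref{001})$--$(\ref{002})$ simply drops. This already gives $(\ref{linnn1})$: apply $(\ref{002})$ with the parameters of $X_{6}$, $(\sigma,\beta)=(r,\tfrac12+s^{2})$, and of $X_{4}$, $(\sigma,\beta)=(-1,\kappa)$, noting $\tfrac12+s^{2},\kappa>\tfrac12$.

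For $(\ref{linnn2})$ and $(\ref{linnn3})$ the surviving $l^{k}L^{1}$ tails must be absorbed. In $(\ref{linnn2})$ I would first prove the left inequality: apply $(\ref{002})$ with $(\sigma,\beta)=(s,b)$ for $X_{1}$ and, after commuting with $\mathbb{P}_{\sim2^{d}}$, with $(\sigma,\beta)=(r,0)$ for $X_{2}$; the $X_{1}$-tail $\|\langle n\rangle^{s}\langle\xi\rangle^{-1}u\|_{l^{p}L^{1}}$ is then $\lesssim\|\langle\xi\rangle^{-1}u\|_{X_{2}}$ upon writing the full $l^{p}_{n}$ sum as a sum over dyadic blocks and using $\sum_{d}2^{(s-r)pd}<\infty$ to upgrade $\langle n\rangle^{s}$ to $\langle n\rangle^{r}$ and $l^{p}_{d}$ to $l^{\infty}_{d}$. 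For the right inequality $\|\langle\xi\rangle^{-1}u\|_{X_{1}}+\|\langle\xi\rangle^{-1}u\|_{X_{2}}\lesssim\|u\|_{X_{10}}$, on each block $d$ I reorder $l^{p}_{n}L^{1}_{\xi}$ into $L^{1}_{\xi}l^{p}_{n}$ (resp.\ $l^{p}_{n}L^{2}_{\xi}$ into $L^{2}_{\xi}l^{p}_{n}$) by Minkowski --- legal because $p\geq1$ (resp.\ $p\geq2$) --- and peel off a negative power of $\langle\xi\rangle$ lying in $L^{\tau'}_{\xi}$ (resp.\ in $L^{\mu}_{\xi}$ with $\tfrac12=\tfrac1\mu+\tfrac1\tau$), the finiteness of these being exactly the inequality $\tau<8$. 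The $X_{7}$ and $X_{5}$ bounds of $(\ref{linnn3})$ use the same inputs (for $X_{5}$ one starts from $(\ref{001})$ with $(h,k)=(q,2)$, treating the $l^2_nL^1_\xi$ tail by one Minkowski step and Bernstein), while $\|u\|_{X_{10}}\lesssim\|u\|_{X_{9}}$ is a single H\"older in $\xi$ with $\tfrac1\tau=\tfrac1\lambda+\tfrac1{q'}$, $\lambda>8$; this holds precisely because $\tfrac1\tau-\tfrac1{q'}<\tfrac18$, i.e.\ because $s^{13/8}\ll s^{3/2}$ in $(\ref{hierarchy})$.

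The delicate statement is $(\ref{linnn4})$, where the support hypothesis $\langle\xi\rangle\gtrsim2^{d}\sim\langle n\rangle$ on $\mathrm{supp}\,u$ is used through the trades $\langle n\rangle^{r}\lesssim\langle\xi\rangle^{r-s}\langle n\rangle^{s}$ and $2^{rd}\lesssim\langle\xi\rangle^{r}$. For $X_{7}$: $\mathcal{E}u$ still lives on the single $n$-block $d$, so $(\ref{002})$ applies with no dyadic sum, on $\mathrm{supp}\,u$ one has $\langle n\rangle^{r}\langle\xi\rangle^{-7/8}\lesssim\langle n\rangle^{s}\langle\xi\rangle^{b-1}$ (as $r-s-\tfrac78<b-1$), and the tail $\langle n\rangle^{r}\langle\xi\rangle^{-1}u$ is already controlled by $\|\langle\xi\rangle^{-1}u\|_{X_{2}}$. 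For $X_{5}$ --- the hardest point --- the naive reordering of $\|\langle\xi\rangle^{-1}u\|_{L^{q}_{\xi}l^{2}_{n}}$ (the $L^{q}$ with $q$ near $1$ sits \emph{outside}) points the wrong way for Minkowski, so instead I decompose $u=\sum_{e\gtrsim d}u^{(e)}$ with $u^{(e)}$ supported in $\xi\sim2^{e}$, and on each $(n,\xi)$-block compare $L^{q}_{\xi}$ with $L^{2}_{\xi}$ (H\"older, paying $2^{e(1/q-1/2)}$), reorder $L^{2}_{\xi}L^{2}_{n}=L^{2}_{n}L^{2}_{\xi}$ (Fubini), and apply Bernstein $l^{2}_{n\sim2^{d}}\hookrightarrow l^{p}_{n\sim2^{d}}$ (paying $2^{rd}$); feeding the result into $\|\langle\xi\rangle^{-1}u\|_{X_{1}}$ leaves the net factor $2^{(r-s)d}\sum_{e\gtrsim d}2^{e(1/q-1/2-b)}$, which is bounded because $\tfrac1q-\tfrac12-b\approx-s^{3/2}<0$ and then $r-s+\tfrac1q-\tfrac12-b\approx-s^{3/2}<0$, both forced by $(\ref{hierarchy})$. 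The $l^{2}_{n}L^{1}_{\xi}$ tail of $(\ref{001})$ is again absorbed into $\|\langle\xi\rangle^{-1}u\|_{X_{2}}$ by Bernstein. I expect this $X_{5}$ case to be the main obstacle: everything else is bookkeeping over Lemma~\ref{linearestimate}, but here the mixed-norm order is genuinely unfavorable and only the precise numerology of $(\ref{hierarchy})$ saves it.

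Finally, the dual statements: every space in sight ($X_{1},\dots,X_{10}$, $Z_{1}$) is a weighted mixed $\ell^{a}L^{b}$ space, so its dual is of the same type, and $\mathcal{E}'$, the transpose of $\mathcal{E}$ for the pairing $(\ref{pairing})$, is again a cut-off Duhamel operator (with the time direction reversed), for which $(\ref{computation})$ and $(\ref{001})$--$(\ref{002})$ hold verbatim; hence transposing each inequality above yields the asserted boundedness of $\mathcal{E}'$ with no further work.
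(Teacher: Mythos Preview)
Your proposal is correct and follows essentially the same route as the paper: every inequality is reduced to the master bounds of Lemma~\ref{linearestimate}, the $l^{k}L^{1}$ tails are absorbed into $\|\langle\xi\rangle^{-1}u\|_{X_{2}}$ via the dyadic embedding $\|a\|_{l^{2}_{n\sim2^{d}}}\lesssim\|\langle n\rangle^{r}a\|_{l^{p}_{n\sim2^{d}}}$, and the remaining main terms are handled by Minkowski reordering plus H\"older in $\xi$. You also correctly single out the $X_{5}$ part of~(\ref{linnn4}) as the only place where something nontrivial happens; the paper treats it by the single chain $\|\langle\xi\rangle^{-1}u\|_{L^{q}l^{2}}\lesssim\|\langle\xi\rangle^{\sigma}u\|_{L^{2}l^{2}}=\|\langle\xi\rangle^{\sigma}u\|_{l^{2}L^{2}}$ with $\sigma=-\tfrac12-\tfrac{1}{2q'}$, whereas you unfold this into an explicit dyadic decomposition $u=\sum_{e\gtrsim d}u^{(e)}$ in $\xi$---the two computations are equivalent and lead to the same exponent condition $r-s+\tfrac{1}{q}-\tfrac12-b<0$ (up to an immaterial factor of~$2$ in the $q$-term), which holds by~(\ref{hierarchy}).
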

 \begin{proof}
 By checking the numerology, we see that (\ref{linnn1}) is a direct consequence of Lemma \ref{linearestimate}. To prove the first inequality in (\ref{linnn2}), we use Lemma \ref{linearestimate} to conclude
 \begin{equation}\|\mathcal{E}u\|_{X_{1}}+\|\mathcal{E}u\|_{X_{2}}\lesssim\|\langle \xi\rangle^{-1} u\|_{X_{1}}+\|\langle \xi\rangle^{-1} u\|_{X_{2}}+\|\langle n\rangle^{s}\langle \xi\rangle^{-1}u\|_{l^{p}L^{1}},\end{equation} and note that the last term can be controlled by $\|\langle \xi\rangle^{-1} u\|_{X_{2}}$ also. To prove that $\|\langle \xi\rangle^{-1} u\|_{X_{2}}\lesssim\|u\|_{X_{10}}$, one first commute with $\mathbb{P}_{\sim 2^{d}}$, then control the $l^{p}L^{1}$ norm by the $L^{1}l^{p}$ norm, then use H\"{o}lder (note the hierarchy (\ref{hierarchy})). To prove that $\|\langle \xi\rangle^{-1} u\|_{X_{1}}\lesssim\|u\|_{X_{10}}$, one first replace the $\|\langle n\rangle^{s}*\|_{l^{p}}$ norm by the larger $\|\langle n\rangle^{r}*\|_{l_{d\geq 0}l_{n\sim 2^{d}}^{p}}$ norm, then commute with $\mathbb{P}_{\sim 2^{d}}$, and control the $l^{p}L^{2}$ norm by the $L^{2}l^{p}$ norm and use H\"{o}lder again. Along the same lines, we have
 \begin{equation}\label{x7norm}\|\mathcal{E}u\|_{X_{7}}\lesssim\|\langle \xi\rangle^{-1} u\|_{X_{2}}+\|\langle \xi\rangle^{-1} u\|_{X_{7}}\end{equation} as well as \begin{equation}\label{x5norm}\|\mathcal{E}u\|_{X_{5}}\lesssim\|\langle\xi\rangle^{-1}u\|_{l_{d\geq 0}^{\infty}l_{\sim 2^{d}}^{2}L^{1}}+\|\langle\xi\rangle^{-1}u\|_{X_{5}},\end{equation}where the first term on the right hand side of (\ref{x5norm}) is bounded by $\|\langle \xi\rangle^{-1} u\|_{X_{2}}$, and the second terms on the right hand side of both (\ref{x7norm}) and (\ref{x5norm}) are bounded by the $X_{10}$ norm, by controlling the $l^{p}L^{2}$ norm by the $L^{2}l^{p}$ norm and using H\"{o}lder. Also we have $\|u\|_{X_{10}}\lesssim\|u\|_{X_{9}}$ by H\"{o}lder. This proves (\ref{linnn3}).
 
 Let us now prove (\ref{linnn4}). For the $X_{7}$ norm we use (\ref{x7norm}), and the support condition will easily allow us to control the second term on the right hand side of (\ref{x7norm}) by $\|\langle \xi\rangle^{-1} u\|_{X_{1}}$. For the $X_{5}$ norm, we only need to bound the second term on the right hand side of (\ref{x5norm}) by $\|\langle \xi\rangle^{-1} u\|_{X_{1}}$. Since we can restrict to $|n|\sim 2^{d}$ and $|\xi|\gtrsim 2^{d}$, we can bound this term by
 \begin{eqnarray}\|\langle\xi\rangle^{-1}u\|_{L^{q}l^{2}}&\lesssim &\|\langle\xi\rangle^{\sigma}u\|_{L^{2}l^{2}}=\|\langle\xi\rangle^{\sigma}u\|_{l^{2}L^{2}}\nonumber\\
 &\lesssim &2^{(\sigma-b+1)d}\|\langle\xi\rangle^{b-1}u\|_{l^{2}L^{2}}\lesssim 2^{(\sigma+\sigma'-b+1)d}\|\langle \xi\rangle^{-1} u\|_{X_{1}},\nonumber
 \end{eqnarray} where $\sigma'=\frac{1}{2}-\frac{1}{p}-s>0$, $\sigma=-\frac{1}{2}-\frac{1}{2q'}$ so that $\sigma+\sigma'-b+1<0$ by (\ref{hierarchy}).
 \end{proof}
 Next we will prove two auxiliary results about our norms $Y_{j}$ and $Y_{j}^{T}$, which are defined in Section \ref{mainspace}. They will be used to validate our main bootstrap argument.
 \begin{proposition}\label{auxi}Suppose $j\in\{1,2\}$, and $u=u(t,x)\in Y_{j}$ is a function that vanishes at $t=0$, then with a time cutoff $\chi$ (recall our convention about such functions) we have, uniformly in $T\lesssim 1$, that \begin{equation}\label{boundd}\|\chi(T^{-1}t)u\|_{Y_{j}}\lesssim \|u\|_{Y_{j}}.\end{equation} If $u$ is smooth, then we also have \begin{equation}\label{limit}\lim_{T\to 0}\|\chi(T^{-1}t)u\|_{Y_{j}}=0.\end{equation}
 \end{proposition}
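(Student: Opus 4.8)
The plan is to reduce everything to one observation: multiplication by $\chi(T^{-1}t)$ acts, on the spacetime Fourier side, by convolution in $\widetilde\xi$ --- equivalently (since the modulation shift $|n|n$ attached to a fixed $n$ is the same for $u$ and for $\chi(T^{-1}t)u$) in $\xi$ --- against the kernel $K_T(\xi)=T\widehat\chi(T\xi)$, an $L^1$-normalized bump concentrated at scale $1/T\gtrsim 1$. This operator commutes with every spatial projection $\mathbb P_{\sim 2^d}$ and with the group actions $\pi_{n_0}$, and satisfies $\||K_T|\|_{L^1}\lesssim 1$ and $\|\widehat{|K_T|}\|_{L^\infty}\lesssim 1$. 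For (\ref{boundd}) I would then verify the estimate norm by norm for the pieces of $Y_1,Y_2$.

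For $X_2$, whose innermost norm is $L^1_\xi$, and for $X_5,X_8$, which carry no modulation weight so that Minkowski's inequality moves the $\xi$-convolution past the remaining (spatial) norms, uniform boundedness is immediate from $\|K_T\|_{L^1}\lesssim 1$ and the above commutations. For $X_3$ one uses the pointwise bound $\mathfrak N(\chi(T^{-1}t)u)\le |K_T|\ast_{\widetilde\xi}\mathfrak N u$, the monotonicity remark after (\ref{observe}), and $\|\widehat{|K_T|}\|_{L^\infty}\lesssim 1$. For $X_1$ (where $b<1/2$) and $X_7$ (where $1/8<1/2$) one invokes the standard fact that convolution with $K_T$ is bounded on $\langle\xi\rangle^\beta L^2_\xi$ uniformly in $T\le 1$ whenever $\beta<1/2$ --- the usual statement that smooth time-cutoffs at scale $T\le 1$ act boundedly on $X^{s,\beta}$, provable by a Littlewood--Paley decomposition of $\chi(T^{-1}t)$ in the time-frequency variable.

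The one delicate case, and the only place the hypothesis $u(0)=0$ is used, is $X_4$, where the modulation exponent $\kappa=1-s^{5/4}$ exceeds $1/2$ so that the previous fact fails. Since the $Y_j$ norm dominates the $X_2$ norm, the integrals $\int_{\mathbb R}\widetilde u_{n,\xi}\,d\xi$ converge absolutely, and $u(0)=0$ is exactly the statement that they vanish for every $n$. Fixing $n$, writing $g=\widetilde u_{n,\cdot}$ (so $\int g=0$), and noting that the $\langle n\rangle^{-1}$ weight and the $l^\gamma_n$ norm are inert, the task is $\|\langle\xi\rangle^\kappa(K_T\ast g)\|_{L^2_\xi}\lesssim\|\langle\xi\rangle^\kappa g\|_{L^2_\xi}$ uniformly in $T\le 1$. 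I would split the output frequency at $|\xi|=2/T$. When $|\xi|>2/T$: in the part of the $\eta$-integral with $|\eta|>|\xi|/2$ use $\langle\xi\rangle^\kappa\lesssim\langle\eta\rangle^\kappa$ and Young; in the part $|\eta|\le|\xi|/2$ one has $|\xi-\eta|\gtrsim|\xi|\gg 1/T$, so $K_T(\xi-\eta)$ and $K_T'(\xi-\eta)$ are smaller than any power of $T^{-1}|\xi|^{-1}$, and replacing $\int_{|\eta|\le|\xi|/2}g$ by $-\int_{|\eta|>|\xi|/2}g$ and Taylor expanding $K_T(\xi-\eta)-K_T(\xi)$ yields (using $\kappa<3/2$ on $\int|\eta|\,|g|$ over $|\eta|\le|\xi|/2$, and $\kappa>1/2$ on $\int|g|$ over $|\eta|>|\xi|/2$) a function of $\xi$ whose $L^2$ norm over $|\xi|>2/T$ is $\lesssim\|\langle\cdot\rangle^\kappa g\|_{L^2}$. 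When $|\xi|\le 2/T$: here $\langle\xi\rangle^\kappa\lesssim T^{-\kappa}$, and writing once more $K_T\ast g(\xi)=\int(K_T(\xi-\eta)-K_T(\xi))g(\eta)\,d\eta$ gives $|K_T\ast g(\xi)|\lesssim T\int\min(|\eta|T,1)|g(\eta)|\,d\eta$, which is $\xi$-independent; integrating over $|\xi|\le 2/T$ costs only $T^{-1/2}$, and a Cauchy--Schwarz split of $\int\min(|\eta|T,1)|g(\eta)|\,d\eta$ at $|\eta|=1/T$ (using $1/2<\kappa<3/2$) bounds it by $T^{\kappa-1/2}\|\langle\cdot\rangle^\kappa g\|_{L^2}$, so this piece is again $\lesssim\|\langle\cdot\rangle^\kappa g\|_{L^2}$. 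This $X_4$ estimate is the step I expect to be the main obstacle; the rest is bookkeeping.

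For (\ref{limit}) I would first use Proposition \ref{relattt} to get $\|v\|_{Y_j}\lesssim\|v\|_{X_4}+\|v\|_{X_6}$, so that it suffices to prove $\|\chi(T^{-1}t)u\|_{X_4}\to 0$ and $\|\chi(T^{-1}t)u\|_{X_6}\to 0$ for smooth $u$ with $u(0)=0$; both $X_4$ and $X_6$ have modulation exponent in $(1/2,3/2)$, so the argument just described applies to each. Re-examining the powers of $T$ in that argument, one sees that feeding in a slightly stronger input weight $\langle\xi\rangle^{\kappa+\delta}$ (with $\delta>0$ small and $\kappa+\delta<3/2$) gains a factor $T^\delta$, i.e.\ $\|\langle\xi\rangle^\kappa(K_T\ast g)\|_{L^2}\lesssim T^\delta\|\langle\xi\rangle^{\kappa+\delta}g\|_{L^2}$ for mean-zero $g$. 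For smooth $u$ the corresponding stronger norms are finite --- passing from $\xi$ to $\widetilde\xi$ costs only powers of $\langle n\rangle$, absorbed by the rapid decay of $\widehat u$ --- whence $\|\chi(T^{-1}t)u\|_{X_4}+\|\chi(T^{-1}t)u\|_{X_6}\lesssim T^\delta\to 0$, giving (\ref{limit}).
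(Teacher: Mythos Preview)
Your argument is correct, and it follows the same overall architecture as the paper's proof: reduce to convolution by $K_T=T\widehat\chi(T\cdot)$ on the $\xi$-side, dispose of $X_2,X_3,X_5,X_8$ by $L^1$-boundedness and the relevant commutations, and isolate the weighted-$L^2$ case as the crux. The genuine difference is in how that weighted-$L^2$ estimate is handled. The paper treats $X_1,X_4,X_7$ uniformly as instances of
\[
\|\langle\eta\rangle^{\sigma}(y\ast K_T)_{\eta}\|_{L^2}\lesssim\|\langle\xi\rangle^{\sigma}y_{\xi}\|_{L^2},\qquad 0\le\sigma<1,\ \int y=0,
\]
then passes by inverse Fourier transform to the physical-side statement $\|\chi(T^{-1}t)f\|_{H^\sigma}\lesssim\|f\|_{H^\sigma}$ for $f\in C_c^\infty$ with $f(0)=0$, and cites Lemma~2.8 of \cite{De10} for both the bound and the limit. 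You instead separate $X_1,X_7$ (exponents $b,\tfrac18<\tfrac12$, where the standard cutoff lemma applies without the mean-zero hypothesis) from $X_4$ (exponent $\kappa>\tfrac12$, where mean-zero is genuinely needed), and for $X_4$ you give a direct Fourier-side computation with the split at $|\xi|=2/T$ and the Taylor expansion of $K_T$. For the limit you extract an explicit $T^\delta$ gain from a stronger input weight rather than invoking an external lemma. Your route is more self-contained and makes the role of the hypothesis $u(0)=0$ more transparent (it is needed exactly where the modulation exponent exceeds $\tfrac12$); the paper's route is terser and exploits that the required $H^\sigma$ fact had already been recorded elsewhere.
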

 \begin{proof}We first assume $u\in Y_{j}$ and $u(0)=0$. We may also assume that $u$ is supported in $|t|\lesssim 1$. Since on the ``hat'' or ``tilde'' side multiplication by $\chi(T^{-1}t)$ is just convolution by $T\widehat{\chi}_{T\xi}$, we need to prove the uniform boundedness of these operators on spaces involved in the definition of $Y_{j}$, as well as the corresponding limit result when $u$ is smooth. The bound in $X_{3}$ is obtained by decomposing this convolution into translations (which preserve the $X_{3}$ norm) and integrating them using the boundedness of $L^{1}$ norm of $T\widehat{\chi}_{T\xi}$. The bound in $X_{8}$ follows from the bound in $\mathcal{Y}$, which is valid because this convolution does not increase the $\Phi$  (or $L^{q}l^{2}$) norm, and commutes with the action described in Section \ref{another}; the bound in $X_{2}$ and $X_{5}$ are shown in the same way. The remaining bounds will follow if we can bound this convolution in weighted norms $\|\langle\xi\rangle^{\sigma}y\|_{L^{2}}$, where $0\leq\sigma<1$, for complex valued functions $y_{\xi}$ such that $\int_{\mathbb{R}}y_{\xi}\mathrm{d}\xi=0$. Namely, we need to prove \begin{equation}\label{more2}\|\langle\eta\rangle^{\sigma}(y*T\widehat{\chi}_{T\xi})_{\eta}\|_{L^{2}}\lesssim\|\langle\xi\rangle^{\sigma}y_{\xi}\|_{L^{2}}.\end{equation} Also, by Proposition \ref{relattt} we can control $Y_{1}$ and $Y_{2}$ norms by $X_{4}$ and $X_{6}$. Thus in order to prove (\ref{limit}), we only need to prove that the left hand side of (\ref{more2}) actually tends to zero when $T\to 0$, for any fixed Schwartz $y$ with integral zero. By taking inverse Fourier transform, the problem an be reduced to proving \begin{equation}\|\chi(T^{-1}t)u\|_{H^{\sigma}}\lesssim\|u\|_{H^{\sigma}},\end{equation} and the limit\begin{equation}\lim_{T\to 0}\|\chi(T^{-1}t)u(t)\|_{H^{\sigma}}=0,\end{equation}for $T\lesssim 1$ and $u\in C_{c}^{\infty}$ such that $u(0)=0$. But these are proved, in a slightly different but equivalent setting, in \cite{De10}, Lemma 2.8.
 \end{proof}
 \begin{proposition}\label{initialboot}Suppose $u=u(t,x)$ is a smooth function defined on $\mathbb{R}\times\mathbb{T}$, then for $j\in\{1,2\}$, the function $T>0\mapsto \mathcal{M}(T)=\|u\|_{Y_{j}^{T}}$ satisfies $\mathcal{M}(T+0)\leq C\mathcal{M}(T-0)$ for all $0<T\lesssim 1$, and also $\mathcal{M}(0+)\leq C\|u(0)\|_{Z_{1}}$.
 \end{proposition}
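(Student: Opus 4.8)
The plan is to prove the two assertions about $\mathcal{M}(T)=\|u\|_{Y_j^T}$ separately, treating the right-continuity-type bound $\mathcal{M}(T+0)\le C\mathcal{M}(T-0)$ first and then the initial bound $\mathcal{M}(0+)\le C\|u(0)\|_{Z_1}$. In both cases the idea is to take a near-optimal extension $v$ of $u$ from a slightly smaller interval and multiply it by a rescaled time cutoff so that it agrees with $u$ on the larger interval, controlling the cost of the cutoff multiplication by Proposition \ref{auxi}. Since $u$ is smooth, all the relevant extensions can be taken smooth, so the limiting statement \eqref{limit} of Proposition \ref{auxi} is available and will be what produces the constant $C$ (rather than a $1+o(1)$ factor we would get for free) — actually a fixed constant $C$ suffices here, so we just need the uniform bound \eqref{boundd}.

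For the bound $\mathcal{M}(T+0)\le C\mathcal{M}(T-0)$: fix $T$ and let $T'<T<T''$ with $T'',T'$ close to $T$. Pick $v$ with $v|_{[-T',T']}=u|_{[-T',T']}$ and $\|v\|_{Y_j}\le (1+\delta)\mathcal{M}(T'-0)$; we want to build from $v$ an extension of $u$ on $[-T'',T'']$. The obstruction is that $v$ need not agree with $u$ outside $[-T',T']$. The fix is to write a new candidate as $v + \chi\big((t-T'')/(T''-T')\big)\cdot(u-v)$ type correction near the endpoints — more precisely, glue $v$ on $[-T',T']$ to $u$ near $\pm T''$ using a smooth partition of unity in $t$ adapted to the scale $T''-T'$, so that the glued function equals $u$ on $[-T'',T'']$ and equals $v$ (hence lies in $Y_j$) outside a bounded time interval. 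The difference between this glued function and $v$ is supported in $|t|\gtrsim T'$ and, being built from $u-v$ which vanishes on $[-T',T']$, is amenable to Proposition \ref{auxi} after translating in time; this gives $\|{\rm glued}\|_{Y_j}\lesssim \|v\|_{Y_j}+\|u\|_{Y_j}$. Letting $T'\uparrow T$, $T''\downarrow T$ and $\delta\downarrow 0$ yields $\mathcal{M}(T+0)\le C\mathcal{M}(T-0)$, where $C$ is the constant from Proposition \ref{auxi} (and the implicit constants in Proposition \ref{relattt}).

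For the bound $\mathcal{M}(0+)\le C\|u(0)\|_{Z_1}$: the natural extension of $u|_{\{0\}}$ is the free evolution. Write $f=u(0)$ and let $w(t)=\chi(t)e^{-tH\partial_{xx}}f$; then $w|_{\{0\}}=f=u|_{\{0\}}$, so $\mathcal{M}(T)\le\|w\|_{Y_j}$ for every $T$, and it suffices to show $\|w\|_{Y_j}\lesssim\|f\|_{Z_1}$. This is a linear estimate on the free propagator with a time cutoff: on the spacetime Fourier side $\widetilde w_{n,\xi}=\widehat\chi_\xi\,\widehat f(n)$, i.e. the $\xi$-profile is the fixed Schwartz function $\widehat\chi$ independent of $n$. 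Then each of the constituent norms of $Y_1$ and $Y_2$ (recall $\|u\|_{Y_1}=\|u\|_{X_1}+\|u\|_{X_2}+\|u\|_{X_4}+\|u\|_{X_5}+\|u\|_{X_7}$ and $\|u\|_{Y_2}=\|u\|_{X_2}+\|u\|_{X_3}+\|u\|_{X_4}+\|u\|_{X_8}$) factorizes as (a fixed $L^2_\xi$ or $L^q_\xi$ or $L^6$ norm of $\langle\xi\rangle^\beta\widehat\chi$, which is an absolute constant) times the spatial piece $\|\langle n\rangle^{\sigma}\widehat f(n)\|$ in the relevant $n$-norm; by Proposition \ref{relattt} it is enough to bound the $X_4$ and $X_6$ norms, and for these the spatial pieces are $\|\langle n\rangle^{-1}\widehat f\|_{l^\gamma}$ and $\|\langle n\rangle^{r}\widehat f\|_{l^2}$ — wait, these are not literally $\lesssim\|f\|_{Z_1}$ since $Z_1$ is an $l^\infty_d\, l^p$ norm; instead one dyadically decomposes in $n$ and uses $\sum_d 2^{-cd}\lesssim 1$ together with $p<2<\gamma$-type Hölder and the gain in $\langle n\rangle$ coming from $r<1/2$ (hierarchy \eqref{hierarchy}) to sum the dyadic pieces, exactly as in the proof of Proposition \ref{relattt}. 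The mild subtlety — and the main obstacle — is the $X_3$ (and $X_8$) norm in $Y_2$, since $X_3$ is an $L^6_{t,x}$-type norm that does not factorize through a single $\langle\xi\rangle^\beta$ weight; here one instead uses $\|w\|_{X_3}\lesssim\|\langle n\rangle^{-\epsilon/2}\langle\xi\rangle^{1/2+s^5}w\|_{l^2L^2}$ from \eqref{loose}, which does factorize, and for $X_8$ one uses $\|w\|_{X_8}\lesssim\|w\|_{X_5}$ from \eqref{relation1}. Assembling these gives $\|w\|_{Y_j}\lesssim\|f\|_{Z_1}$ and hence $\mathcal{M}(0+)\le C\|u(0)\|_{Z_1}$, completing the proof.
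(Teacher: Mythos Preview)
There are genuine gaps in both parts of your argument.

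For $\mathcal{M}(0+)\le C\|u(0)\|_{Z_1}$: your inference ``$w|_{\{0\}}=f=u|_{\{0\}}$, so $\mathcal{M}(T)\le\|w\|_{Y_j}$ for every $T$'' is simply false. By definition $\mathcal{M}(T)=\|u\|_{Y_j^T}$ is an infimum over functions agreeing with $u$ on \emph{all of} $[-T,T]$, not just at $t=0$; the free evolution $w=\chi(t)e^{-tH\partial_{xx}}f$ does not equal $u$ on $[-T,T]$. The paper fixes this exactly as you should: set $v=u-e^{-tH\partial_{xx}}f$, which is smooth with $v(0)=0$, and invoke the \emph{limit} statement \eqref{limit} of Proposition~\ref{auxi} to get $\|v\|_{Y_j^T}\to 0$ as $T\to 0$. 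Only then does the linear estimate $\|w\|_{Y_j}\lesssim\|f\|_{Z_1}$ (which you do outline correctly) finish the job.

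For $\mathcal{M}(T+0)\le C\mathcal{M}(T-0)$: your gluing scheme produces a bound of the form $\|\text{glued}\|_{Y_j}\lesssim\|v\|_{Y_j}+\|u-v\|_{Y_j}$, but $u$ is merely a smooth function on $\mathbb{R}\times\mathbb{T}$ with no decay in $t$, so $\|u\|_{Y_j}$ (hence $\|u-v\|_{Y_j}$) need not even be finite, and in any case is not controlled by $\mathcal{M}(T-0)$. Proposition~\ref{auxi} does not help here: its uniform bound \eqref{boundd} requires the input to already lie in $Y_j$, while its limit \eqref{limit} requires the input to be smooth---and $u-v$ is neither in $Y_j$ nor smooth (your near-optimal extension $v$ is only an element of $Y_j$). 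The paper's repair is to first split off the linear evolution $u_1=e^{-(t-T)H\partial_{xx}}u(T)$, which is bounded in $Y_j$ on any $[-T',T']$ by the $Z_1$ bound on $u(T)$ (itself controlled by $\mathcal{M}(T)$ via $X_2$). The remainder $u_2=u-u_1$ is smooth with $u_2(T)=0$, so \eqref{limit} gives a small-$\delta$ local extension $v$ of $u_2$ near $T$ with $\|v\|_{Y_j}\lesssim 1$; this is then glued with an extension $w$ of $u_2$ from $[-T,T]$, and now $v-w\in Y_j$ vanishes at $T$, so \eqref{boundd} applies legitimately. Finally, the paper treats the left-continuity $\mathcal{M}(T)\lesssim\mathcal{M}(T-0)$ by a separate compactness/weak-limit argument (including a Hahn--Banach step for the atomic $X_8$ norm) that you do not address at all; this step is not subsumed by your gluing and is needed to pass from $\mathcal{M}(T-0)$ to $\mathcal{M}(T)$.
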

 \begin{proof}First we prove the estimate about $\mathcal{M}(0+)$. Let $u(0)=f$ and $v(t,x)=u(t,x)-e^{-tH\partial_{xx}}f(x)$. Since $v$ is smooth and $v(0)=0$, we have by Proposition \ref{auxi} that $\lim_{T\to 0}\|v\|_{Y_{j}^{T}}=0$. It then suffices to prove that for some cutoff $\chi(t)$, we have $\|\chi(t)e^{-tH\partial_{xx}f}\|_{Y_{j}}\lesssim \|f\|_{Z_{1}}$. Note that on the ``tilde'' side, the function $\chi(t)e^{-tH\partial_{xx}f}$ simply becomes $\widehat{\chi}_{\xi}f_{n}$; thus this inequality is basically trivial if we take into account that the $Z_{1}$ norm is stronger than the norm $\|\langle n\rangle^{-1}f\|_{L^{\gamma}}$, and the norm $\|f\|_{l_{d\geq 0}^{\infty}l_{\sim 2^{d}}^{2}}$.
 
 Next, we shall prove that $\mathcal{M}(T+0)\lesssim \mathcal{M}(T)$ for $0<T\lesssim 1$. Namely, suppose $u$ is a smooth function, $0<T\lesssim 1$ is such that $\|u\|_{Y_{j}^{T}}\leq 1$, we want to prove for some $T'>T$ that $\|u\|_{Y_{j}^{T'}}\lesssim 1$. Actually we only need to prove $\|u\|_{Y_{j}^{[-T,T']}}\lesssim 1$, since we can use the same argument to move the left point also. Now, due to the presence of $X_{2}$ norm in the definitions of both $Y_{j}$, our assumption implies $\|u(T)\|_{Z_{1}}\lesssim 1$, therefore by what we just proved, $u_{1}=e^{-(t-T)H\partial_{xx}}u(T)$ verifies the estimate $\|u_{1}\|_{Y_{j}^{T'}}\lesssim 1$ for all $T<T'\lesssim 1$. Thus we only need to bound $\|u_{2}\|_{Y_{j}^{[-T,T']}}$ for some $T'>T$ and $u_{2}=u-u_{1}$. Note $u_{2}(T)=0$, by choosing $\delta$ small enough we can produce a function $v$ coinciding with $u_{2}$ on $[T-10\delta,T+10\delta]$ such that $\|v\|_{Y_{j}}\lesssim 1$ by Proposition \ref{auxi}. Also since $\|u_{2}\|_{Y_{j}^{T}}\lesssim 1$, we may choose a function $w$ coinciding with $u_{2}$ on $[-T,T]$ such that $\|w\|_{Y_{j}}\lesssim 1$. Note $v(T)=w(T)=0$. Next, choose a function $\psi_{3}\in C^{\infty}$ supported on $[-9,10]$ that equals $1$ on $[-1,9]$. Define \begin{equation}u_{3}(t)=(1-\psi_{3}(\delta^{-1}(t-T)))w(t)+\psi_{3}(\delta^{-1}(t-T))v(t).\end{equation} Then we can verify that $u_{3}=u_{2}$ on $[-T,T']$ with $T'=T+9\delta$, and by Proposition \ref{auxi} we have $\|u_{3}\|_{Y_{j}}\lesssim 1$, as desired.
 
 Finally let us prove that $\mathcal{M}(T)\lesssim \mathcal{M}(T-0)$ for all $0<T\lesssim 1$. Suppose $T_{k}\uparrow T$, and we can find $u^{k}$ coinciding with $u$ on $[-T_{k},T_{k}]$ such that $\|u^{k}\|_{Y_{j}}\leq 1$. Since $T\lesssim 1$, we may assume $u^{k}$ are supported in $|t|\lesssim 1$. By the uniform boundedness in $X_{4}$ norm, and the fact that on the ``tilde'' side each $u^{k}$ equals itself convolved with some $\widehat{\chi}_{\xi}$, we conclude that $(u^{k})_{n,\xi}$ has second order $\xi$-derivatives bounded by (say) $\langle n\rangle^{10}$. We therefore extract a subsequence so that $\{u^{k}\}$, viewed as a sequence of maps from $\mathbb{R}_{\xi}$ to some weighted $l_{n}^{2}$ space, converges uniformly in any $|\xi|\leq R$. In particular this implies the convergence as spacetime distributions; thus the limit, denoted by $u^{*}$, must coincide with $u$ on $[-T,T]$. It therefore suffices to prove $\|u^{*}\|_{Y_{j}}\lesssim 1$. The bounds for $X_{1}$, $X_{2}$, $X_{4}$ and $X_{7}$ norms immediately follows from distributional convergence; for $X_{3}$, note that the $|(u^{k})_{n,\xi}|$ also converge uniformly to $|u_{n,\xi}|$ in any $|\xi|\leq R$ for any fixed $n$, thus $\mathfrak{N}u^{k}$ (recall Section \ref{spaces} for definition) will converge to $\mathfrak{N}u^{*}$ as spacetime distributions, therefore the $X_{3}$ norm of $u^{*}$ will also be bounded by $O(1)$.
 
 It remains to prove the bound of $X_{8}$ norm for $u^{*}$. By commuting with $\mathbb{P}_{\sim 2^{d}}$, we may assume that $\|u^{k}\|_{\mathcal{Y}}\leq 1$. For any bounded function $v=v_{n,\xi}$ with compact $(n,\xi)$-support, we have $(u^{k},v)\to (u^{*},v)$ with the standard pairing $(u,v)$ as in (\ref{pairing}). By the definition of the $\mathcal{Y}$ norm we can easily see that\begin{equation}|(u^{k},v)|\leq\|u^{k}\|_{\mathcal{Y}}\cdot\sup_{n_{0}\in\mathbb{Z}}\langle n_{0}\rangle^{-s^{2}}\|\pi_{n_{0}}v\|_{L^{q'}l^{2}}\leq\sup_{n_{0}\in\mathbb{Z}}\langle n_{0}\rangle^{-s^{2}}\|\pi_{n_{0}}v\|_{L^{q'}l^{2}}.\end{equation} If we denote the right hand side by $\|v\|_{\mathcal{Z}}$, we then have $|(u^{*},v)|\leq\|v\|_{\mathcal{Z}}$ for $v$ with compact $(n,\xi)$-support. Now consider any $v$ with $\|v\|_{\mathcal{Z}}\leq 1$ (so in particular $v\in L^{q'}l^{2}$). We produce a sequence $v^{R}=v\cdot\mathbf{1}_{\{|v|+|n|+|\xi|\leq R\}}$ so that $\|v^{R}\|_{\mathcal{Z}}\leq1$, and $v^{R}\to v$ in $L^{q'}l^{2}$, thus $(u^{*},v^{R})\to (u^{*},v)$ (notice that $u^{*}\in X_{4}$ and is supported in some $|n|\sim 2^{d}$, thus we have $u^{*}\in L^{q}l^{2}$). This implies $|(u^{*},v)|\leq 1$ for all $v$ such that $\|v\|_{\mathcal{Z}}\leq 1$. Since \emph{a priori} we have $u^{*}\in L^{q}l^{2}\subset\mathcal{Y}$, and it is easily checked that $\mathcal{Y}$ is a Banach space, we may then invoke the Hahn-Banach theorem to conclude $\|u^{*}\|_{\mathcal{Y}}\leq 1$, provided that we can identify the dual space of $\mathcal{Y}$ with $\mathcal{Z}$. Now clearly each element in $\mathcal{Z}$ gives a linear functional on $\mathcal{Y}$ whose norm equals the $\mathcal{Z}$ norm; on the other hand, if we have a (bounded) linear functional on $\mathcal{Y}$, it must be bounded on $L^{q}l^{2}$, thus it is given by pairing with an element of $L^{q'}l^{2}$, and then by considering the action of $\mathbb{Z}$ on this function, we conclude that it is actually in $\mathcal{Z}$.
 \end{proof}
\section{Relevant probabilistic results}\label{probab}
\subsection{Review of the construction of Gibbs measure}\label{gibbs}In this section we briefly review the construction of the Gibbs measure $\nu$ as done in \cite{Tz10}. This measure is defined by adding a weight to some Wiener measure $\rho$, so we first describe the relevant Wiener measure. 

Consider a sequence of independent complex Gaussian random variables $\{g_{n}\}_{n>0}$ living on some ambient probability space $(\Omega,\mathcal{B},\mathbb{P})$ which are normalized so that $\mathbb{E}(|g_{n}|^{2})=1$. By excluding a null set, we also assume\footnote[1]{This assumption is just in order to define the map $\mathbf{f}$ and is irrelevant otherwise.} that $|g_{n}|=O(\langle n\rangle^{10})$ holds everywhere on $\Omega$. Letting $g_{-n}=\overline{g_{n}}$, we define the random series \begin{equation}\mathbf{f}:\Omega\ni\omega\mapsto\sum_{n\neq 0}\frac{g_{n}(\omega)}{2\sqrt{\pi|n|}}e^{\mathrm{i}nx}\in\mathcal{V}\end{equation} as a map from $\Omega$ to $\mathcal{V}$ (recall that $\mathcal{V}$ is the subset of $\mathcal{D}'(\mathbb{T})$ containing real-valued distributions with vanishing mean). This then defines the Wiener measure $\rho$ on $\mathcal{V}$ by $\rho(E)=\mathbb{P}(\mathbf{f}^{-1}(E))$. For each positive integer $N$, if we identify $\mathcal{V}$ with $\mathcal{V}_{N}\times\mathcal{V}_{N}^{\perp}$, then the measure $\mathrm{d}\rho$ is identified with $\mathrm{d}\rho_{N}\times\mathrm{d}\rho_{N}^{\perp}$, with the latter two measures defined by \begin{equation}\rho_{N}(E)=\mathbb{P}((\Pi_{N}\mathbf{f})^{-1}(E)),\,\,\,\,\,\,\,\rho_{N}^{\perp}(E)=\mathbb{P}((\Pi_{N}^{\perp}\mathbf{f})^{-1}(E)).\nonumber\end{equation}

Fix a compactly supported smooth cutoff $\zeta$, $0\leq\zeta\leq 1$, which equals $1$ on some neighborhood of $0$. Consider for each $N$ the functions \begin{equation}\theta_{N}(f)=\zeta\big(\|\Pi_{N}f\|_{L^{2}}^{2}-\alpha_{N}\big)e^{\frac{1}{3}\int_{\mathbb{T}}(S_{N}f)^{3}};\end{equation} \begin{equation}\theta_{N}^{\sharp}(f)=\zeta\big(\|\Pi_{N}f\|_{L^{2}}^{2}-\alpha_{N}\big)e^{\frac{1}{3}\int_{\mathbb{T}}(\Pi_{N}f)^{3}},\end{equation}where we recall $\Pi_{N}=\mathbb{P}_{\leq N}$ as in Section \ref{notation}, $S_{N}$ as in (\ref{truncation}), and \begin{equation}\alpha_{N}=\sum_{n=1}^{N}\frac{1}{n}=\mathbb{E}\big(\|\Pi_{N}\mathbf{f}\|_{L^{2}}^{2}\big).\nonumber\end{equation} Clearly $\theta_{N}$ and $\theta_{N}^{\sharp}$ only depend on $\Pi_{N}f$, thus they can also be understood as functions on $\mathcal{V}_{N}$. Define the measures \begin{equation}\mathrm{d}\nu_{N}=\theta_{N}\mathrm{d}\rho,\,\,\,\,\,\mathrm{d}\nu_{N}^{\circ}=\theta_{N}\mathrm{d}\rho_{N};\,\,\,\,\,\mathrm{d}\nu_{N}^{\sharp}=\theta_{N}^{\sharp}\mathrm{d}\rho,\,\,\,\,\,\mathrm{d}\nu_{N}^{\o}=\theta_{N}^{\sharp}\mathrm{d}\rho_{N}.\nonumber\end{equation} Then we could identify $\mathrm{d}\nu_{N}$ and $\mathrm{d}\nu_{N}^{\sharp}$ with $\mathrm{d}\nu_{N}^{\circ}\times\mathrm{d}\rho_{N}^{\perp}$ and $\mathrm{d}\nu_{N}^{\o}\times\mathrm{d}\rho_{N}^{\perp}$, respectively. Moreover, if we identify $\mathcal{V}_{N}$ with $\mathbb{R}^{2N}$ and thus denote the measure on $\mathcal{V}_{N}$ corresponding to the Lebesgue measure on $\mathbb{R}^{2N}$ by $\mathcal{L}_{N}$, then with some constant $C_{N}$
\begin{equation}\mathrm{d}\nu_{N}^{\circ}=C_{N}\zeta\big(\|f\|_{L^{2}}^{2}-\alpha_{N}\big)e^{-2E_{N}[f]}\mathrm{d}\mathcal{L}_{N};
\end{equation}
\begin{equation}\mathrm{d}\nu_{N}^{\o}=C_{N}\zeta\big(\|f\|_{L^{2}}^{2}-\alpha_{N}\big)e^{-2E[f]}\mathrm{d}\mathcal{L}_{N},
\end{equation} with the $f$ here denoting some element of $\mathcal{V}_{N}$, the Hamiltonian $E$ as in (\ref{hamiltonian}), and the truncated version $E_{N}$ being
\begin{equation}E_{N}[f]=\int_{\mathbb{T}}\frac{1}{2}|\partial_{x}^{1/2}f|^{2}-\frac{1}{6}(S_{N}f)^{3}.
\end{equation}
The main result of \cite{Tz10} now reads as follows.
\begin{proposition}[\cite{Tz10}, Theorem 1]\label{convergence} The sequence $\theta_{N}^{\sharp}$ converges in $L^{r}(\mathrm{d}\rho)$ to some function $\theta$ for all $1\leq r<\infty$, and if we define $\nu$ by $\mathrm{d}\nu=\theta\mathrm{d}\rho$, then $\nu_{N}^{\sharp}$ converges strongly to $\nu$ in the sense that the total variation of their difference tends to zero. This $\nu$ is defined to be the Gibbs measure for (\ref{bo}).
\end{proposition}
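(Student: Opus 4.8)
The statement to prove (Proposition \ref{convergence}) is attributed to \cite{Tz10}, so strictly speaking the ``proof'' is a reference; nonetheless I will sketch how one establishes it from scratch, since the argument is a prototype for the probabilistic estimates used later.

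\medskip

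\noindent\textbf{Overall strategy.} The plan is to show that the random variables $\theta_N^\sharp = \zeta(\|\Pi_N \mathbf f\|_{L^2}^2 - \alpha_N)\, e^{\frac13\int_{\mathbb T}(\Pi_N \mathbf f)^3}$ form a Cauchy sequence in $L^r(\mathrm d\rho)$ for every finite $r$, and identify the $L^r$ limit $\theta$; the convergence in total variation of $\nu_N^\sharp = \theta_N^\sharp\,\mathrm d\rho$ to $\nu = \theta\,\mathrm d\rho$ is then immediate, since $\|\nu_N^\sharp - \nu\|_{TV} = \|\theta_N^\sharp - \theta\|_{L^1(\mathrm d\rho)} \le \|\theta_N^\sharp - \theta\|_{L^r(\mathrm d\rho)}$. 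So the whole problem reduces to the $L^r$ convergence. First I would record the two basic facts about the Gaussian field: (i) the partial sums $\Pi_N \mathbf f = \sum_{0<|n|\le N} \frac{g_n}{2\sqrt{\pi|n|}} e^{inx}$ converge $\rho$-a.s.\ and in every $L^p(\Omega)$ to $\mathbf f$ in, say, $H^{-\varepsilon}(\mathbb T)$ (but \emph{not} in $L^2$, which is exactly why the renormalization $\alpha_N$ is needed); and (ii) the Wick-ordered cubic term has a limit.

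\medskip

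\noindent\textbf{The cubic term.} The key analytic input is that the sequence of random variables $F_N := \int_{\mathbb T}(\Pi_N \mathbf f)^3\,\mathrm dx$ converges in every $L^p(\mathrm d\rho)$, $1\le p<\infty$, to a limit $F_\infty$. One expands $F_N = \sum_{n_1+n_2+n_3=0,\ 0<|n_i|\le N} \prod_i \frac{g_{n_i}}{2\sqrt{\pi|n_i|}}$; because $\widehat{\mathbf f}(0)=0$ there are no diagonal ($n_i=-n_j$) contributions surviving the mean-zero constraint in the way that would force a divergent renormalization of the cubic term itself — this is the point where the periodic Benjamin--Ono Gibbs measure is \emph{subcritical enough} that only the $L^2$ norm needs renormalizing. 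Using hypercontractivity on the Wiener chaos (the $F_N$ live in the third homogeneous chaos), it suffices to prove convergence in $L^2(\mathrm d\rho)$, i.e.\ to bound $\mathbb E|F_N - F_M|^2$ by a tail $\sum_{\min|n_i|>M}\prod \frac{1}{|n_i|}$ over $n_1+n_2+n_3=0$, which converges. Then $e^{\frac13 F_N}\to e^{\frac13 F_\infty}$ in measure, and to upgrade this to $L^r(\mathrm d\rho)$ convergence of $\theta_N^\sharp$ one needs (a) the cutoff $\zeta(\|\Pi_N \mathbf f\|_{L^2}^2-\alpha_N)$ to be uniformly bounded (trivially, $0\le\zeta\le1$) and to converge $\rho$-a.s.\ — this follows from the a.s.\ convergence of the renormalized square $\|\Pi_N\mathbf f\|_{L^2}^2-\alpha_N = \sum_{0<|n|\le N}\frac{|g_n|^2-1}{2\pi|n|}$, again a second-chaos martingale with summable variance — and (b) a \emph{uniform integrability} bound $\sup_N \|\theta_N^\sharp\|_{L^{r'}(\mathrm d\rho)} < \infty$ for some $r'>r$.

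\medskip

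\noindent\textbf{The main obstacle: the exponential moment bound.} The heart of the matter, and where all the work lies, is proving $\sup_N \mathbb E_\rho\big[\zeta(\|\Pi_N\mathbf f\|_{L^2}^2-\alpha_N)^{r'} e^{\frac{r'}{3}\int(\Pi_N\mathbf f)^3}\big] < \infty$. The cubic exponent is \emph{not} sign-definite and is not in any $L^p(\mathrm d\rho)$ with the exponential, so naive Gaussian integration fails; one exploits the $L^2$ cutoff $\zeta$. I would follow the Lebowitz--Rose--Speer / Bourgain scheme: on the support of $\zeta(\|\Pi_N\mathbf f\|_{L^2}^2-\alpha_N)$ one has $\|\Pi_N\mathbf f\|_{L^2}^2 \le \alpha_N + C \sim \log N$, and one needs to beat the growth of $\int(\Pi_N\mathbf f)^3$ against this logarithmic $L^2$ budget. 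Decompose $\Pi_N\mathbf f$ into low frequencies $|n|\le K$ and high frequencies $K<|n|\le N$; on low frequencies one uses a Sobolev/Bernstein bound $\|\text{low}\|_{L^\infty}\lesssim K^{1/2}\|\text{low}\|_{L^2}$ and the cutoff, while the high-frequency--high-frequency--low and purely high interactions are controlled in $L^p$ by the probabilistic (Wiener-chaos) estimates, choosing $K$ a suitable power of $\log N$; summing a geometric series in dyadic frequency blocks gives a bound uniform in $N$. This is the technically delicate step and is exactly the content of Theorem 1 of \cite{Tz10}; I would cite that paper for the detailed estimate rather than reproduce it. Once the uniform bound is in hand, uniform integrability plus convergence in measure gives $\theta_N^\sharp \to \theta$ in $L^r(\mathrm d\rho)$ for all $r<\infty$, and the total-variation statement follows as noted above. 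Finally, $\theta\ge 0$ and $\mathbb E_\rho[\theta] = \lim \mathbb E_\rho[\theta_N^\sharp] \in (0,\infty)$, so $\mathrm d\nu = \theta\,\mathrm d\rho$ is a genuine finite nonzero measure, absolutely continuous with respect to $\rho$, which is the assertion.
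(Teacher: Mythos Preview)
Your proposal is correct and well-judged: the paper does not prove this proposition at all---it simply cites \cite{Tz10}, Theorem~1, with a remark that the strong (total-variation) convergence follows by an easy elaboration of the arguments there. Your sketch accurately reconstructs the strategy of \cite{Tz10} (Wiener-chaos convergence of the cubic and the renormalized $L^2$ norm, plus the crucial uniform exponential moment bound via the $L^2$ cutoff and a low/high frequency decomposition), and you correctly identify that the $L^r$ convergence of the densities immediately yields the total-variation convergence of the measures, which is the only point the paper adds beyond the cited reference.
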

\begin{remark}Only weak convergence is claimed in \cite{Tz10}, but an easy elaboration of the arguments there actually gives a much stronger convergence as stated in Proposition \ref{convergence} above.
\end{remark}
\begin{remark}\label{rmk4.3}We note that the measure $\nu$ depends on the choice of $\zeta$. In this regard we have the following easy observation: there exists a countable collection $\{\zeta^{R}\}_{R\in\mathbb{N}}$ with corresponding $\theta^{R}$ such that the union of $\mathcal{A}^{R}=\{f:\theta^{R}(f)>0\}$ has\footnote[1]{Note that $\mathcal{A}^{R}$ is the largest set on which $\rho$ and $\nu^{R}$ are mutually absolutely continuous.} full $\rho$ measure.
\end{remark}
The finite dimensional approximations we will actually use are $\nu_{N}$ instead of $\nu_{N}^{\sharp}$, thus we still need to prove the convergence of $\nu_{N}$. However, the proof is essentially the same as the proof of Proposition \ref{convergence}, so we shall omit it here and only state the result.
\begin{proposition}\label{convergence2}The sequence $\theta_{N}$ converges in $L^{r}(\mathrm{d}\rho)$ to the $\theta$ defined in Proposition \ref{convergence} for all $1\leq r<\infty$, and $\nu_{N}$ converges strongly to the $\nu$ defined in Proposition \ref{convergence} in the sense that the total variation of their difference tends to zero.
\end{proposition}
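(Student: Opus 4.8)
\textbf{Proof proposal for Proposition \ref{convergence2}.}

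The plan is to mimic the proof of Proposition \ref{convergence} from \cite{Tz10}, the only novelty being that the cubic weight now carries $S_{N}f$ in place of $\Pi_{N}f$. So the first step is to compare $\theta_{N}$ and $\theta_{N}^{\sharp}$ directly. Write
\begin{equation}\theta_{N}(f)-\theta_{N}^{\sharp}(f)=\zeta\big(\|\Pi_{N}f\|_{L^{2}}^{2}-\alpha_{N}\big)\Big(e^{\frac{1}{3}\int_{\mathbb{T}}(S_{N}f)^{3}}-e^{\frac{1}{3}\int_{\mathbb{T}}(\Pi_{N}f)^{3}}\Big),\nonumber\end{equation}
and estimate the difference of exponentials by the elementary bound $|e^{a}-e^{b}|\le |a-b|\,(e^{a}+e^{b})$. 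Thus it suffices to show that $\int_{\mathbb{T}}(S_{N}f)^{3}-\int_{\mathbb{T}}(\Pi_{N}f)^{3}\to 0$ in a strong enough sense, while the exponentials $e^{\frac13\int(S_Nf)^3}$ and $e^{\frac13\int(\Pi_Nf)^3}$, cut off by $\zeta$, remain uniformly bounded in every $L^{r}(\mathrm{d}\rho)$ (this last fact is exactly the uniform integrability already established in \cite{Tz10}, since $S_N f$ and $\Pi_N f$ differ only by frequencies in a dyadic band near $N$ and the relevant $L^2$-type and exponential moment bounds are insensitive to the smooth truncation $\psi(n/N)$ versus the sharp one). The key term $\int_{\mathbb{T}}(S_{N}f)^{3}-(\Pi_{N}f)^{3}$ factors as $\int (S_Nf-\Pi_Nf)\big((S_Nf)^2+S_Nf\cdot\Pi_Nf+(\Pi_Nf)^2\big)$, and the difference $S_Nf-\Pi_Nf=(\psi(\cdot/N)-\mathbf 1_{|\cdot|\le N})\widehat f$ is supported on frequencies $|n|\gtrsim N/2$; a standard random-data computation (of the same type used in \cite{Tz10} to control the full cubic term, cf.\ the Wick-ordering/Gaussian hypercontractivity estimates there) shows this quantity tends to $0$ in $L^{r'}(\mathrm d\rho)$ for every finite $r'$. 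Combining via H\"older gives $\theta_N-\theta_N^{\sharp}\to 0$ in $L^{r}(\mathrm d\rho)$, and since $\theta_N^{\sharp}\to\theta$ in $L^r(\mathrm d\rho)$ by Proposition \ref{convergence}, we get $\theta_N\to\theta$ in $L^r(\mathrm d\rho)$ as well.

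For the second assertion, that $\nu_N\to\nu$ in total variation, I would simply note that the total variation of $\nu_N-\nu$ is $\int_{\mathcal V}|\theta_N-\theta|\,\mathrm d\rho=\|\theta_N-\theta\|_{L^1(\mathrm d\rho)}$, which tends to $0$ by the convergence just proved (taking $r=1$). This is the same reduction used for Proposition \ref{convergence}.

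The main obstacle is the uniform integrability: one must ensure that the $L^r(\mathrm d\rho)$ norms of $e^{\frac13\int(S_Nf)^3}$ (cut off by $\zeta(\|\Pi_Nf\|_{L^2}^2-\alpha_N)$) are bounded uniformly in $N$, so that the $L^\infty$-in-$f$ bound on the exponential factors in the $|e^a-e^b|$ inequality can be traded for an $L^{r}$ bound and paired against the $L^{r'}$-smallness of the cubic difference. This is precisely the heart of the Lebowitz--Rose--Speer/Bourgain construction as carried out in \cite{Tz10}; the point is that replacing the sharp projection $\Pi_N$ by the smooth multiplier $S_N$ does not affect any of the moment or hypercontractivity estimates there, because $S_N$ is still a bounded Fourier multiplier with $|\psi(n/N)|\le 1$, so $\|S_Nf\|_{L^2}\le\|\Pi_Nf\|_{L^2}$ and all the Gaussian chaos bounds go through verbatim. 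Hence the proof of \cite{Tz10} applies with only cosmetic changes, which is why we omit the details and only record the statement.
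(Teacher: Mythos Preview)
Your proposal is correct. The paper itself omits the proof entirely, merely remarking that it is ``essentially the same as the proof of Proposition \ref{convergence}'', i.e.\ one reruns the argument of \cite{Tz10} with the smooth multiplier $S_{N}$ in place of the sharp projector $\Pi_{N}$.

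Your route is a mild variant: rather than redoing the \cite{Tz10} argument from scratch, you reduce to the already-established Proposition \ref{convergence} by estimating $\theta_{N}-\theta_{N}^{\sharp}$ directly. This is arguably cleaner, since it isolates exactly what is new (the cubic term with $S_{N}f$ versus $\Pi_{N}f$, supported on the dyadic shell $N/2<|n|\leq N$) and handles it with the same Gaussian-chaos and uniform-integrability machinery. The paper's implied approach would instead verify every step of \cite{Tz10} with $\psi(n/N)$ replacing $\mathbf{1}_{|n|\leq N}$; both work, and both ultimately rest on the observation you make at the end, that $|\psi(n/N)|\leq 1$ renders all the moment and hypercontractivity bounds insensitive to the choice of truncation.
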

\subsection{Compatibility with the Besov space} By elementary probabilistic arguments we can see that\begin{equation}\rho\big(f\in\mathcal{V}:\|f\|_{L^{2}}<\infty\big)=0;\end{equation}\begin{equation}\rho\big(f\in\mathcal{V}:\|f\|_{H^{-\delta}}<\infty\big)=1,\end{equation} for all $\delta>0$. Namely, the Wiener measure $\mathrm{d}\rho$ (and hence the Gibbs measure $\mathrm{d}\nu$) is compatible with $H^{-\delta}$ but not $L^{2}$, which is the essential difficulty in establishing the invariance result. In this section we show that this difficulty may be resolved by using the Besov space $Z_{1}$ defined in Section \ref{parameters}. First we prove a lemma.
\begin{lemma}\label{besov} Suppose that $g_{j}(1\leq j\leq N)$ are independent normalized complex Gaussian random variables. Then we have\begin{equation}\label{form}\mathbb{P}\bigg(\sum_{j=1}^{N}|g_{j}|^{4}\geq \alpha N\bigg)\leq 4e^{-\frac{1}{120}\sqrt{\alpha N}},\end{equation} for all $\alpha>1600$ and positive integer $N$.
\end{lemma}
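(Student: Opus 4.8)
The plan is to estimate the tail probability in \eqref{form} via the exponential Markov (Chernoff) method applied to the sum $X=\sum_{j=1}^{N}|g_{j}|^{4}$. First I would record the distribution of each summand: if $g_{j}$ is a normalized complex Gaussian then $|g_{j}|^{2}$ is exponential with mean $1$, so $Y_{j}:=|g_{j}|^{4}$ are i.i.d.\ nonnegative with a Weibull-type law. The key point is that $\mathbb{E}\,e^{\lambda Y_{j}}<\infty$ only for $\lambda\le 0$ (a $|g|^{4}$ moment generating function blows up for any positive $\lambda$), so a naive Chernoff bound on the full sum fails. The standard fix is truncation: one controls $\mathbb{E}\,e^{\lambda\min(Y_{j},R)}$ for a cutoff $R$ scaled to $\sqrt{\alpha N}$, or equivalently one splits according to whether some $|g_{j}|$ is atypically large. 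The cleanest route is a union bound: either all $|g_{j}|^{2}\le M$ for a threshold $M\sim\sqrt{\alpha N}$, in which case $Y_{j}\le M|g_{j}|^{2}$ and $X\le M\sum_{j}|g_{j}|^{2}$ is dominated by $M$ times a sum of i.i.d.\ exponentials (which has a clean MGF, finite for $\lambda<1$), or some $|g_{j}|^{2}>M$, an event of probability at most $Ne^{-M}$.

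Concretely, I would proceed as follows. Set $M=\tfrac12\sqrt{\alpha N}$. On the bad event $B=\{\max_{j}|g_{j}|^{2}>M\}$ we have $\mathbb{P}(B)\le N e^{-M}=N e^{-\frac12\sqrt{\alpha N}}$, and since $\alpha>1600$ and $N\ge 1$ one checks $N e^{-\frac12\sqrt{\alpha N}}$ is much smaller than the claimed bound (the $\sqrt{\alpha}\ge 40$ gives enough exponential decay to absorb the polynomial $N$; if needed one takes a slightly larger constant in the exponent and uses $x\le e^{x/2}$ type estimates). On the complementary event, $X\le M\sum_{j=1}^{N}|g_{j}|^{2}=:M\cdot S$ where $S$ is a sum of $N$ i.i.d.\ mean-one exponentials, so $\{X\ge\alpha N\}\cap B^{c}\subseteq\{S\ge \alpha N/M\}=\{S\ge 2\sqrt{\alpha N}\}$. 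Since $2\sqrt{\alpha N}\ge 2\sqrt{1600}\,\sqrt{N}=80\sqrt{N}\gg N$ is false in general — wait, one must be careful: $2\sqrt{\alpha N}$ exceeds $N$ only when $N\lesssim 4\alpha$. So I would instead choose the threshold $M$ adaptively, or more robustly apply the Chernoff bound to $S$ directly: $\mathbb{P}(S\ge t)\le e^{-\lambda t}(1-\lambda)^{-N}$ for $0<\lambda<1$, optimize, and get $\mathbb{P}(S\ge t)\le (t/N)^{N}e^{N-t}$ for $t\ge N$; combined with $t=2\sqrt{\alpha N}$ this is exponentially small in $\sqrt{\alpha N}$ provided $\sqrt{\alpha N}\gtrsim N$. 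For the regime $N\gtrsim\alpha$ one needs a genuinely different truncation level, namely $M$ a constant multiple of $N$ would be wrong; the right choice balances $N e^{-M}$ against the Chernoff bound for $S\ge \alpha N/M$, leading to $M\sim \sqrt{\alpha N}$ up to logarithms — but the cleanest uniform statement comes from noting $X=\sum|g_j|^4$ and applying a Bernstein-type inequality for sums of independent subexponential (here sub-Weibull of order $1/2$) variables.

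Given the target constants ($\tfrac{1}{120}$, the bound $4$, the constraint $\alpha>1600$), I expect the intended proof simply tracks one such truncation carefully. So the steps I would carry out: (i) identify $|g_{j}|^{2}\sim\mathrm{Exp}(1)$ and hence $\mathbb{E}\,e^{s|g_{j}|^{2}}=(1-s)^{-1}$ for $s<1$; (ii) for the fourth power, write $|g_{j}|^{4}\mathbf{1}_{|g_{j}|^{2}\le M}\le M|g_{j}|^{2}$ and bound the large-value contribution by a union bound with threshold $M=c\sqrt{\alpha N}$; (iii) apply the one-sided Chernoff inequality $\mathbb{P}(\sum_{j}|g_j|^2\ge \lambda N)\le (e^{1-\lambda}\lambda)^{N}\le e^{-c\lambda N}$ valid for $\lambda\ge 2$, with $\lambda=\alpha N/(MN)=\sqrt{\alpha/(c^2 N)}\cdot\sqrt{N}$... — adjusting so that in every regime of $N$ one of the two bad events carries all the probability; (iv) sum the two contributions and simplify the constants using $\alpha>1600$ to land at $4e^{-\frac{1}{120}\sqrt{\alpha N}}$. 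The main obstacle is purely bookkeeping: choosing the truncation scale $M$ so that the union-bound term $Ne^{-M}$ and the Chernoff term are \emph{both} bounded by $2e^{-\frac{1}{120}\sqrt{\alpha N}}$ simultaneously for \emph{all} $N\ge 1$, which forces one to either make $M$ depend mildly on $N$ or to exploit that $\alpha$ is large; the factor $4=2+2$ and the somewhat lossy constant $\tfrac{1}{120}$ strongly suggest this is exactly the (slightly wasteful) split the author intends, so no deeper idea should be needed.
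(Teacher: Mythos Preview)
Your truncation-plus-Chernoff strategy has a genuine gap that you yourself flag but never close. With $M\sim c\sqrt{\alpha N}$ the Chernoff threshold for $S=\sum_j|g_j|^2$ becomes $\alpha N/M\sim\sqrt{\alpha N}/c$, which for $N\gtrsim\alpha/c^2$ drops \emph{below} the mean $\mathbb{E}S=N$, so the exponential-sum Chernoff bound gives nothing. You recognize this (``wait, one must be careful'', ``one needs a genuinely different truncation level'') and retreat to invoking a Bernstein-type inequality for sub-Weibull($\tfrac12$) variables, but that is precisely the nontrivial input: such an inequality is not the standard sub-exponential Bernstein, and proving it from scratch is essentially equivalent to the lemma. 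The lossy step $|g_j|^4\le M|g_j|^2$ is the culprit, and replacing it by Bernstein on the truncated fourth powers $|g_j|^4\wedge M^2$ fails as well, since the forced range $M^2\sim\alpha N$ is so large that the Bernstein exponent degenerates to a constant. Incidentally, your reading of the constant $4$ as ``$2+2$ from a split'' is off; in the paper's argument it arises from a single moment bound, not a union of two events.

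The paper takes an entirely different route that avoids truncation. It applies Markov to the stretched exponential $e^{\sqrt{\epsilon X}}$---the natural test function, since each $|g_j|^4$ has tail $e^{-\sqrt{t}}$. Expanding $\mathbb{E}(e^{\sqrt{\epsilon X}})\le 2\mathbb{E}(\cosh\sqrt{\epsilon X})=2\sum_{k\ge0}\frac{\epsilon^k}{(2k)!}\mathbb{E}(X^k)$ and using $\mathbb{E}|g_j|^{4m}=(2m)!$ reduces everything to the combinatorial sum $S_{N,k}=\sum_{m_1+\cdots+m_N=k}\prod_j m_j!$. The crux is the elementary but non-obvious estimate $S_{N,k}\le 216^k k!+(12N)^k/k!$, obtained by stratifying over the number of nonzero $m_j$; plugging this back yields $\mathbb{E}(e^{\sqrt{\epsilon X}})\le 4e^{20\sqrt{\epsilon N}}$ for $\epsilon=1/3456$, and a single application of Markov's inequality gives \eqref{form} uniformly in $N$, with no case split.
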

\begin{proof} Let $X=\sum_{j=1}^{N}|g_{j}|^{4}$. Since $\mathbb{E}(|g_{j}|^{4m})=(2m)!$, we can estimate, for each integer $k\geq 1$, the $k$-th moment of $X$ by \begin{eqnarray}\mathbb{E}(X^{k})&=&\sum_{m_{1}+\cdots+m_{N}=k}\frac{k!}{m_{1}!\cdots m_{N}!}\times\mathbb{E}\big(|g_{1}|^{4m_{1}}\cdots |g_{N}|^{4m_{N}}\big)\nonumber\\ &\leq& k!\sum_{m_{1}+\cdots+m_{N}=k}\prod_{j=1}^{N}\frac{(2m_{j})!}{m_{j}!}\nonumber\\&\leq &k!4^{k}\sum_{m_{1}+\cdots+m_{N}=k}\prod_{j=1}^{N}m_{j}!,\nonumber\end{eqnarray} since ${2m \choose m}\leq 4^{m}$. From this, we have that (for $\epsilon>0$)\begin{equation}\mathbb{E}\big(e^{\sqrt{\epsilon X}}\big)\leq 2\mathbb{E}(\cosh\sqrt{\epsilon X})\leq 2+2\sum_{k\geq 1}\frac{\epsilon^{k}}{(2k)!}\mathbb{E}(X^{k})\leq 2+\sum_{k\geq 1}\frac{(8\epsilon)^{k}}{k!}S_{N,k},\nonumber\end{equation} where \begin{equation}S_{N,k}=\sum_{m_{1}+\cdots+m_{N}=k}\prod_{j=1}^{N}m_{j}!,\end{equation} which we shall now estimate. By identifying the nonzero terms in $(m_{1},\cdots,m_{N})$, we can rewrite $S_{N,k}$ as \begin{equation} S_{N,k}=\sum_{1\leq r\leq \min\{N,k\}} {N\choose r}S'_{k,r},\end{equation} where \begin{equation}S'_{k,r}=\sum_{m_{1}+\cdots +m_{r}=k,m_{j}\geq 1}\prod_{j=1}^{r}m_{j}!.\nonumber\end{equation} Clearly the number of choices of $(m_{1},\cdots,m_{r})$ is at most ${k-1\choose r-1}\leq 2^{k}$, and for each choice of $(m_{1},\cdots,m_{r})$, we have\begin{eqnarray}\prod_{j=1}^{r}m_{j}! &\leq  &m_{1}\cdots m_{r}\times\prod_{j=1}^{r}(m_{j}-1)!\nonumber\\
&\leq & (k/r)^{r}\bigg(\sum_{j=1}^{r}(m_{j}-1)\bigg)!\nonumber\\
&\leq & e^{r\frac{k}{r}}(k-r)!
\leq 3^{k}(k-r)!\nonumber.\end{eqnarray} Therefore we know that $S'_{k,r}\leq 6^{k}(k-r)!$. Next, notice that there are at most $k\leq 2^{k}$ choices of $r$, and that ${N\choose r}\leq N^{r}/r!$, we have\begin{equation}\label{est2}S_{N,k}\leq 12^{k}\max_{1\leq r\leq k}\frac{N^{r}(k-r)!}{r!}.\end{equation}

If the maximum in (\ref{est2}) is attained at $r=k$, it will be bounded by $\frac{N^{k}}{k!}$; otherwise it is attained at some $r<k$, from which we know $N\leq (r+1)(k-r)\leq 2r(k-r)$. Therefore the maximum in this case is bounded by\begin{equation}\frac{N^{r}(k-r)!}{r!}\leq\frac{2^{k}r^{r}(k-r)^{r}(k-r)^{k-r}}{r!}\leq (6k)^{k}\leq 18^{k}k!.\nonumber\end{equation} Altogether we have\begin{equation}S_{N,k}\leq216^{k}k!+\frac{(12N)^{k}}{k!},\nonumber\end{equation} and hence\begin{equation}\mathbb{E}\big(e^{\sqrt{\epsilon X}}\big)\leq 2+\sum_{k\geq 1}(1728\epsilon)^{k}+\sum_{k\geq 1}\frac{(384\epsilon N)^{k}}{(2k)!},\end{equation} which is clearly bounded by $4e^{20\sqrt{\epsilon N}}$ if we choose $\epsilon=\frac{1}{3456}$. Now if $\alpha>1600$, we will have\begin{equation}\mathbb{P}(X\geq\alpha N)\leq e^{-\sqrt{\epsilon\alpha N}}\mathbb{E}\big(e^{\sqrt{\epsilon X}}\big)\leq 4e^{-\frac{1}{120}\sqrt{\alpha N}},\nonumber\end{equation} as desired.
\end{proof}
Now we can prove that the Wiener measure $\mathrm{d}\rho$ is compatible with our Besov space $Z_{1}$. Namely, we have
\begin{proposition}\label{compat}With the measure $\rho$ defined in Section \ref{gibbs}, we have $\rho(Z_{1})=1$; more precisely we have\begin{equation}\label{largedeviation}\rho\big(\{f\in\mathcal{V}:\|f\|_{Z_{1}}\leq K\}\big)\geq 1-Ce^{-C^{-1}K^{2}}\end{equation} for all $K>0$.
\end{proposition}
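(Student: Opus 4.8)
The plan is to reduce the estimate \eqref{largedeviation} to the Gaussian tail bound of Lemma \ref{besov}, dyadic block by dyadic block. Recall that with $f = \mathbf{f}(\omega)$ we have $\widehat{f}(n) = g_n(\omega)/(2\sqrt{\pi|n|})$ for $n \neq 0$, and that
\[
\|f\|_{Z_1} = \sup_{d \geq 0}\bigg(\sum_{n \sim 2^d} 2^{rpd}|\widehat{f}(n)|^p\bigg)^{1/p}.
\]
Since $r = \tfrac12 - \tfrac1p$, on the block $n \sim 2^d$ we have $2^{rpd}|\widehat f(n)|^p \sim 2^{(p/2 - 1)d}\cdot 2^{-pd/2}|g_n|^p = 2^{-d}|g_n|^p$. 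Thus the $d$-th block of $\|f\|_{Z_1}^p$ is comparable to $2^{-d}\sum_{n \sim 2^d}|g_n|^p$. The key point is that $p$ is just slightly larger than $2$ (indeed $p = \frac{2}{1-2s} + s^2$), so I will compare $\ell^p$ to $\ell^4$ on each block via Hölder: for a block of $\sim 2^d$ terms,
\[
\sum_{n \sim 2^d}|g_n|^p \leq \bigg(\sum_{n\sim 2^d}|g_n|^4\bigg)^{p/4}\bigg(\sum_{n \sim 2^d}1\bigg)^{1-p/4} \lesssim 2^{(1-p/4)d}\bigg(\sum_{n\sim 2^d}|g_n|^4\bigg)^{p/4}.
\]
Hence the $d$-th block of $\|f\|_{Z_1}^p$ is $\lesssim 2^{-pd/4}\big(\sum_{n\sim 2^d}|g_n|^4\big)^{p/4}$, so that $\|f\|_{Z_1} > K$ forces $\sum_{n\sim 2^d}|g_n|^4 \gtrsim 2^d K^4$ for \emph{some} $d \geq 0$. (One should double-check the small-$d$ bookkeeping where the number of frequencies is $O(1)$; there the block is trivially controlled and contributes only a harmless constant, so without loss we restrict to $d$ with $2^d$ large.)

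Next I would apply a union bound over $d$. For each fixed $d$, the block $n \sim 2^d$ contains $N_d \sim 2^d$ independent normalized complex Gaussians (after pairing $g_{-n} = \overline{g_n}$, so the independent count is $\sim 2^{d}$, which only affects constants). Writing the threshold as $\sum_{n\sim 2^d}|g_n|^4 \geq \alpha_d N_d$ with $\alpha_d \sim K^4$, Lemma \ref{besov} gives, provided $\alpha_d > 1600$, a bound $\leq 4\exp(-c\sqrt{\alpha_d N_d}) \lesssim \exp(-cK^2 2^{d/2})$. Summing over $d \geq 0$ yields $\sum_d \exp(-cK^2 2^{d/2}) \lesssim \exp(-cK^2)$ for $K$ large (the $d = 0$ term dominates and the series converges geometrically-fast in $2^{d/2}$). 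For $K$ bounded, i.e. $K \lesssim 1$, the asserted bound $1 - Ce^{-C^{-1}K^2}$ is vacuous after enlarging $C$, since probabilities are at most $1$; likewise when $\alpha_d \leq 1600$ for the relevant range of $d$, which happens precisely when $K = O(1)$. This disposes of all cases and in particular gives $\rho(Z_1) = 1$ by letting $K \to \infty$.

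The main obstacle — really the only place where something must be set up carefully rather than estimated crudely — is the matching of exponents: one needs $r = \tfrac12 - \tfrac1p$ together with $p$ barely above $2$ so that (a) the per-block $2^{-d}\sum_{n\sim 2^d}|g_n|^p$ has no growth in $d$, and (b) after the Hölder step passing from $\ell^p$ to $\ell^4$, the factor $2^{(1-p/4)d}$ is negative in the exponent (which holds iff $p > 4$? — no: $1 - p/4 < 0$ iff $p > 4$, so in fact one should instead keep the gain $2^{-pd/4}$ on the right and note $N_d \sim 2^d$, so $\sqrt{\alpha_d N_d} \sim K^2 2^{d/2} \to \infty$, which is what drives summability in $d$). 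So the honest accounting is: the $\ell^4$ sum on block $d$ must exceed $\gtrsim 2^d K^4$, Lemma \ref{besov} converts this to a bound $\exp(-cK^2 2^{d/2})$, and summation over $d$ costs only a constant. Everything else is Hölder on finite blocks, the elementary reduction of $\|\cdot\|_{Z_1}$ to block sums, and the trivial observation that the claimed inequality is content-free for small $K$.
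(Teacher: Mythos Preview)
Your proof is correct and follows essentially the same route as the paper: reduce to dyadic blocks, use H\"older to pass from $\ell^p$ to $\ell^4$ (yielding the implication $\sum_{n\sim 2^d}|g_n|^p \gtrsim K^p 2^d \Rightarrow \sum_{n\sim 2^d}|g_n|^4 \gtrsim K^4 2^d$), apply Lemma~\ref{besov} on each block, and sum the resulting bounds $e^{-cK^2 2^{d/2}}$ over $d$, with the small-$K$ case being vacuous. Your momentary confusion about the sign of $1-p/4$ is harmless, since you correctly identify that summability in $d$ comes from the $2^{d/2}$ in the exponent of Lemma~\ref{besov}, not from any decay in the H\"older step.
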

\begin{proof} We only need to prove (\ref{largedeviation}). Setting $C$ large, this inequality will be trivial when $K\leq 100$. When $K>100$, we get from the definition that\begin{equation}\rho\big(\{f\in\mathcal{V}:\|f\|_{Z_{1}}>100K\}\big)\leq\sum_{j\geq 0}\mathbb{P}\bigg(\sum_{0<n\sim 2^{j}}|g_{n}|^{p}\geq K^{p}2^{j}\bigg).\end{equation} By H\"{o}lder, \begin{equation}\sum_{0<n\sim 2^{j}}|g_{n}|^{p}\geq K^{p}2^{j}\nonumber\end{equation} implies \begin{equation}\sum_{0<n\sim 2^{j}}|g_{n}|^{4}\geq K^{4}2^{j}.\nonumber\end{equation} By lemma \ref{besov}, this has probability not exceeding $Ce^{-C^{-1}K^{2}2^{j/2}}$ provided $K>100$. Summing up over $j$, we see that \begin{equation}\rho\big(\{f\in\mathcal{V}:\|f\|_{Z_{1}}>K\}\big)\leq\sum_{j\geq 0}Ce^{-C^{-1}K^{2}2^{j/2}}\leq Ce^{-C^{-1}K^{2}}.\nonumber\end{equation} This completes the proof.
\end{proof}

\section{The gauge transform I: Beating the derivative loss}\label{gaugetransform}
From this section to Section \ref{gaugetransform3}, we will introduce the gauge transform for (\ref{smoothtrunc}), and use it to derive the new equations. We will fix a large positive integer $N$ throughout, and drop the subscript $N$ in $S_{N}$ (we are allowing $N=\infty$, in which case the arguments should be modified slightly but no essential difference occurs). We also fix a smooth solution $u$ to (\ref{smoothtrunc}); note that smooth solutions are automatically global. When $N$ is finite, we also assume that $\widehat{u}$ is supported in $|n|\leq N$ for all time.

The gauge transform we use is defined as a power series, thus in many occasions we will have to deal with summations over sequences of the form $(m_{1},\cdots,m_{\mu})$. To simplify the notation we will define, for such a sequence, the partial sums\begin{equation}m_{ij}=m_{i}+\cdots+m_{j}.\nonumber\end{equation} This notation will also be used for other sequences, say $\mu_{i}$, which will always be nonnegative integers.
\subsection{The definition of $w$}\label{pass}
Let $F$ be the unique mean-zero antiderivative of $u$, namely $F_{n}=\frac{1}{\mathrm{i}n}u_{n}$ for $n\neq 0$ and $F_{0}=0$. Define the operators $Q_{0}:\phi\mapsto (Su)\cdot\phi$ and $P_{0}:\phi\mapsto (SF)\cdot\phi$, as well as $Q=SQ_{0}S$ and $P=SP_{0}S$. Further, define the operator \begin{equation}\label{expo}M=e^{-\frac{\mathrm{i}P}{2}}=\sum_{\mu\geq0}\frac{1}{\mu!}\bigg(-\frac{\mathrm{i}}{2}\bigg)^{\mu}P^{\mu}.\end{equation} The function $w$ will be defined by\begin{equation}w=\mathbb{P}_{+}(Mu).\end{equation} We also define $v=Mu$, so that $w_{n}=v_{n}$ when $n>0$, and $w_{n}=0$ otherwise. The evolution equation satisfied by $w$ can be computed as follows:
\begin{eqnarray}
(\partial_{t}-\mathrm{i}\partial_{xx})w&=&\mathbb{P}_{+}M(\partial_{t}-\mathrm{i}\partial_{xx})u+\mathbb{P}_{+}[\partial_{t},M]u-\mathrm{i}\mathbb{P}_{+}[\partial_{xx},M]u\nonumber\\
&=&-2\mathrm{i}\mathbb{P}_{+}(M\mathbb{P}_{-}u_{xx})+\mathbb{P}_{+}\big([\partial_{t},M]u-\mathrm{i}\big[\partial_{x},[\partial_{x},M]\big]u\big)\nonumber\\
&+&\mathbb{P}_{+}(MS(Su\cdot Su_{x})-2\mathrm{i}[\partial_{x},M]u_{x})\nonumber\\
\label{line1}&=&-2\mathrm{i}\mathbb{P}_{+}\partial_{x}(M\mathbb{P}_{-}u_{x})\\
\label{line2}&-&2\mathrm{i}\mathbb{P}_{+}([\partial_{x},M]+\frac{\mathrm{i}}{2}MQ)u_{x}\\
\label{line3}&+&2\mathrm{i}\mathbb{P}_{+}[\partial_{x},M]\mathbb{P}_{-}u_{x}+\mathbb{P}_{+}\big([\partial_{t},M]-\mathrm{i}\big[\partial_{x},[\partial_{x},M]\big]\big)u.
\end{eqnarray}

\subsection{The term in (\ref{line1})}\label{term1} By expanding $M$ using (\ref{expo}), we can write the term in (\ref{line1}) as\begin{equation}(\ref{line1})=\sum_{\mu_{1}}\frac{(-1)^{\mu_{1}}}{2^{\mu_{1}}\mu_{1}!}\mathcal{K}_{\mu_{1}}^{1},\end{equation} where in Fourier space \begin{equation}\label{k1}(\mathcal{K}_{\mu_{1}}^{1})_{n_{0}}=2\mathrm{i}\sum_{\mathbf{v}\in S_{n_{0},\mu_{1}}^{1}}\Lambda_{\mathbf{v}}^{1\mu_{1}}(u_{m_{1}}\cdots u_{m_{\mu_{1}}})u_{n_{1}}.\end{equation} Here in (\ref{k1}), the spatial frequency set is defined to be
\begin{eqnarray}S_{n_{0},\mu_{1}}^{1}&=&\big\{\mathbf{v}=(m_{1},\cdots,m_{\mu_{1}},n_{1})\in\mathbb{Z}^{\mu_{1}+1}:\nonumber\\
&&m_{i}\neq 0,n_{0}>0,n_{1}<0;
m_{1,\mu_{1}}+n_{1}=n_{0}\big\},\nonumber\end{eqnarray} and the weight is
\begin{eqnarray}
\Lambda_{\mathbf{v}}^{1\mu_{1}}&=&\prod_{i=1}^{\mu_{1}}\frac{1}{m_{i}}\psi\bigg(\frac{m_{i}}{N}\bigg)\times\prod_{i=2}^{\mu_{1}}\psi^{2}\bigg(\frac{m_{i,\mu_{1}}+n_{1}}{N}\bigg)\times\nonumber\\
&\times& n_{0}n_{1}\psi\bigg(\frac{n_{0}}{N}\bigg)\psi\bigg(\frac{n_{1}}{N}\bigg).\nonumber
\end{eqnarray} As the next step, we rewrite part of the weight as\begin{equation}\label{junior}\frac{1}{m_{1}\cdots m_{\mu_{1}}}=\frac{1}{n_{0}-n_{1}}\sum_{i=1}^{\mu_{1}}\frac{1}{m_{1}\cdots m_{i-1}m_{i+1}\cdots m_{\mu_{1}}}.\end{equation} By renaming the variables, we obtain
\begin{equation}\label{k1.1}(\ref{line1})=\sum_{\mu_{1}\geq 1}\sum_{i=1}^{\mu_{1}}\frac{(-1)^{\mu_{1}}}{2^{\mu_{1}-1}\mu_{1}!}\mathcal{K}_{\mu_{1}i}^{1},\end{equation} where in Fourier space\begin{equation}\label{k1.2}(\mathcal{K}_{\mu_{1}i}^{1})_{n_{0}}=\mathrm{i}\sum_{\mathbf{v}\in S_{n_{0},\mu_{1}}^{1.1}}\Lambda_{\mathbf{v}}^{1\mu_{1}i}(u_{m_{1}}\cdots u_{m_{\mu_{1}-1}})u_{n_{1}}u_{n_{2}}.\end{equation} The frequency set here is\begin{eqnarray}\label{1-2}S_{n_{0},\mu_{1}}^{1.1}&=&\big\{\mathbf{v}=(m_{1},\cdots,m_{\mu_{1}-1},n_{1},n_{2}):\\
&&\mathbf{v}\in\mathbb{Z}^{\mu_{1}+1},m_{j}\neq 0,n_{0}>0;\nonumber\\&&
n_{1}<0,m_{1,\mu_{1}-1}+n_{1}+n_{2}=n_{0}\big\},\nonumber\end{eqnarray}
and the weight is
\begin{eqnarray}\label{1-3}
\Lambda_{\mathbf{v}}^{1\mu_{1}i}&=&
\prod_{j=1}^{\mu_{1}-1}\frac{1}{m_{j}}\psi\bigg(\frac{m_{j}}{N}\bigg)\prod_{j=i+1}^{\mu_{1}}\psi^{2}\bigg(\frac{m_{j-1,\mu_{1}-1}+n_{1}}{N}\bigg)\times\\
&\times&\prod_{j=2}^{i}\psi^{2}\bigg(\frac{m_{j,\mu_{1}-1}+n_{1}+n_{2}}{N}\bigg)\frac{n_{0}n_{1}}{|n_{0}|+|n_{1}|}\prod_{j=0}^{2}\psi\bigg(\frac{n_{j}}{N}\bigg).\nonumber
\end{eqnarray}

\subsection{The term in (\ref{line2})}\label{term2}Since\begin{equation}[\partial_{x},P]=Q,\end{equation} we may compute
\begin{eqnarray}
[\partial_{x},M]&=&\sum_{\mu_{1}}\frac{1}{\mu_{1}!}\bigg(-\frac{\mathrm{i}}{2}\bigg)^{\mu_{1}}[\partial_{x},P^{\mu_{1}}]\nonumber\\
&=&\sum_{\mu_{1},\mu_{2}}\frac{1}{(\mu_{12}+1)!}\bigg(-\frac{\mathrm{i}}{2}\bigg)^{\mu_{12}+1}P^{\mu_{1}}QP^{\mu_{2}}\nonumber\\
\label{line4}&=&-\frac{\mathrm{i}}{2}MQ-\frac{\mathrm{i}}{2}\sum_{\mu_{1},\mu_{2}}\frac{1}{(\mu_{12}+1)!}\bigg(-\frac{\mathrm{i}}{2}\bigg)^{\mu_{12}}P^{\mu_{1}}[Q,P^{\mu_{2}}].
\end{eqnarray} By expanding the commutator in (\ref{line4}), we can write the term in (\ref{line2}) as
\begin{equation}\label{line5}(\ref{line2})=-\sum_{\mu_{1},\mu_{2}}\frac{\mu_{1}+1}{(\mu_{12}+2)!}\bigg(-\frac{\mathrm{i}}{2}\bigg)^{\mu_{12}+1}\mathbb{P}_{+}P^{\mu_{1}}[Q,P]P^{\mu_{2}}u_{x}.\end{equation} Notice that \begin{equation}\label{line6}[Q,P]=S(Q_{0}S^{2}P_{0}-P_{0}S^{2}Q_{0})S,\end{equation} we can thus write\begin{equation}(\ref{line2})=\sum_{\mu_{1},\mu_{2}}\frac{(-1)^{\mu_{12}}(\mu_{1}+1)}{2^{\mu_{12}+1}(\mu_{12}+2)!}\mathcal{K}_{\mu_{1}\mu_{2}}^{2},\end{equation} where in the Fourier space \begin{equation}(\mathcal{K}_{\mu_{1}\mu_{2}}^{2})_{n_{0}}=\mathrm{i}\sum_{\mathbf{v}\in S_{n_{0},\mu_{1}\mu_{2}}^{2}}\lambda_{\mathbf{v}}^{2\mu_{1}\mu_{2}}(u_{m_{1}}\cdots u_{m_{\mu_{12}}})u_{n_{1}}u_{n_{2}} u_{n_{3}}.\end{equation} Here the frequency set is\begin{eqnarray}\label{freqset}S_{n_{0},\mu_{1}\mu_{2}}^{2}&=&\big\{\mathbf{v}=(m_{1},\cdots,m_{\mu_{12}},n_{1},n_{2},n_{3}):\\
&&\mathbf{v}\in(\mathbb{Z}^{*})^{\mu_{12}+3},m_{i}\neq 0,n_{1}n_{2}n_{3}\neq 0,n_{0}>0;\nonumber\\
&&m_{1,\mu_{12}}+n_{1}+n_{2}+n_{3}=n_{0}\big\},\nonumber\end{eqnarray} and the weight is\begin{eqnarray}\lambda_{\mathbf{v}}^{2\mu_{1}\mu_{2}}&=&\frac{n_{3}}{n_{2}}\prod_{i=0}^{3}\psi\bigg(\frac{n_{i}}{N}\bigg)\prod_{i=1}^{\mu_{12}}\frac{1}{m_{i}}\psi\bigg(\frac{m_{i}}{N}\bigg)\prod_{i=1}^{\mu_{2}}\psi^{2}\bigg(\frac{n_{3}+m_{\mu_{1}+i,\mu_{12}}}{N}\bigg)\times\nonumber\\
&\times &\prod_{i=2}^{\mu_{1}+1}\psi^{2}\bigg(\frac{n_{1}+n_{2}+n_{3}+m_{i,\mu_{12}}}{N}\bigg)\times\nonumber\\
&\times&\bigg[\psi^{2}\bigg(\frac{n_{2}+n_{3}+m_{\mu_{1}+1,\mu_{12}}}{N}\bigg)-\psi^{2}\bigg(\frac{n_{1}+n_{3}+m_{\mu_{1}+1,\mu_{12}}}{N}\bigg)\bigg].\nonumber\end{eqnarray}Note that $S_{n_{0},\mu_{1}\mu_{2}}^{2}$ is symmetric with respect to $n_{1}$ and $n_{3}$, we can swap these two variables and rearrange the terms to obtain \begin{equation}\label{2-1}(\mathcal{K}_{\mu_{1}\mu_{2}}^{2})_{n_{0}}=\mathrm{i}\sum_{\mathbf{v}\in S_{n_{0},\mu_{1}\mu_{2}}^{2}}\Lambda_{\mathbf{v}}^{2\mu_{1}\mu_{2}}(u_{m_{1}}\cdots u_{m_{\mu_{12}}})u_{n_{1}}u_{n_{2}} u_{n_{3}},\end{equation} where the frequency set $S_{n_{0},\mu_{1}\mu_{2}}^{2}$ is as in (\ref{freqset}), and the weight is \begin{eqnarray}\label{2-3}
\Lambda_{\mathbf{v}}^{2\mu_{1}\mu_{2}}&=&\frac{1}{2n_{2}}\prod_{i=0}^{3}\psi\bigg(\frac{n_{i}}{N}\bigg)\prod_{i=1}^{\mu_{12}}\frac{1}{m_{i}}\psi\bigg(\frac{m_{i}}{N}\bigg)\times\\
&\times&\prod_{i=2}^{\mu_{1}+2}\psi^{2}\bigg(\frac{n_{1}+n_{2}+n_{3}+m_{i,\mu_{12}}}{N}\bigg)\times\nonumber\\
&\times&\bigg[n_{3}\psi^{2}\bigg(\frac{n_{2}+n_{3}+m_{\mu_{1}+1,\mu_{12}}}{N}\bigg)\prod_{i=1}^{\mu_{2}}\psi^{2}\bigg(\frac{n_{3}+m_{\mu_{1}+i,\mu_{12}}}{N}\bigg)\nonumber\\
&& -n_{3}\psi^{2}\bigg(\frac{n_{1}+n_{3}+m_{\mu_{1}+1,\mu_{12}}}{N}\bigg)\prod_{i=1}^{\mu_{2}}\psi^{2}\bigg(\frac{n_{3}+m_{\mu_{1}+i,\mu_{12}}}{N}\bigg)\nonumber\\
&&+n_{1}\psi^{2}\bigg(\frac{n_{1}+n_{2}+m_{\mu_{1}+1,\mu_{12}}}{N}\bigg)\prod_{i=1}^{\mu_{2}}\psi^{2}\bigg(\frac{n_{1}+m_{\mu_{1}+i,\mu_{12}}}{N}\bigg)\nonumber\\
&&-n_{1}\psi^{2}\bigg(\frac{n_{1}+n_{3}+m_{\mu_{1}+1,\mu_{12}}}{N}\bigg)\prod_{i=1}^{\mu_{2}}\psi^{2}\bigg(\frac{n_{1}+m_{\mu_{1}+i,\mu_{12}}}{N}\bigg)\bigg]\nonumber.
\end{eqnarray}
\subsection{The term in (\ref{line3})}\label{term3}Clearly we have\begin{equation}\label{line7}[\partial_{t},M]=\sum_{\mu_{1},\mu_{2}}\frac{1}{(\mu_{12}+1)!}\bigg(-\frac{\mathrm{i}}{2}\bigg)^{\mu_{12}+1}P^{\mu_{1}}[\partial_{t},P]P^{\mu_{2}},\end{equation} where\begin{equation}\label{line8}[\partial_{t},P]:\psi\mapsto S(SF_{t}\cdot S\psi);\end{equation} also we may compute\begin{eqnarray}\big[\partial_{x},[\partial_{x},M]\big]&=&\sum_{\mu_{1},\mu_{2}}\frac{1}{(\mu_{12}+1)!}\bigg(-\frac{\mathrm{i}}{2}\bigg)^{\mu_{12}+1}[\partial_{x},P^{\mu_{1}}QP^{\mu_{2}}]\nonumber\\
\label{line9}&=&\sum_{\mu_{1},\mu_{2}}\frac{1}{(\mu_{12}+1)!}\bigg(-\frac{\mathrm{i}}{2}\bigg)^{\mu_{1}+\mu_{2}+1}P^{\mu_{1}}[\partial_{x},Q]P^{\mu_{2}}\nonumber\\
&+&2\sum_{\mu_{1},\mu_{2},\mu_{3}}\frac{1}{(\mu_{13}+2)!}\bigg(-\frac{\mathrm{i}}{2}\bigg)^{\mu_{13}+2}\times\nonumber\\
\label{line10}&\times&P^{\mu_{1}}QP^{\mu_{2}}QP^{\mu_{3}}.\nonumber
\end{eqnarray} Using the fact that\begin{equation}[\partial_{t},P]-\mathrm{i}[\partial_{x},Q]:\psi\mapsto S(SG\cdot S\psi)\end{equation} where\begin{equation}G=F_{t}-\mathrm{i}F_{xx}=-2\mathrm{i}\mathbb{P}_{-}u_{x}+\frac{1}{2}\bigg(S((Su)^{2})-\mathbb{P}_{0}((Su)^{2})\bigg),\end{equation} we may write
\begin{eqnarray}
(\ref{line3})&=&\sum_{\mu_{1},\mu_{2}}\frac{(-1)^{\mu_{12}}}{2^{\mu_{12}}(\mu_{12}+1)!}\big(\mathcal{K}_{\mu_{1}\mu_{2}}^{3}+\mathcal{K}_{\mu_{1}\mu_{2}}^{4}\big)\\
&+&\sum_{\mu_{1},\mu_{2},\mu_{3}}\frac{(-1)^{\mu_{13}+1}}{2^{\mu_{13}+2}(\mu_{13}+2)!}\mathcal{K}_{\mu_{1}\mu_{2}\mu_{3}}^{5}.\nonumber
\end{eqnarray} Here, $\mathcal{K}_{\mu_{1}\mu_{2}}^{3}$ is defined in the Fourier space as
\begin{equation}(\mathcal{K}_{\mu_{1}\mu_{2}}^{3})_{n_{0}}=\mathrm{i}\sum_{\mathbf{v}\in S_{n_{0},\mu_{1}\mu_{2}}^{3}}\Lambda_{\mathbf{v}}^{3\mu_{1}\mu_{2}}(u_{m_{1}}\cdots u_{m_{\mu_{12}}})u_{n_{1}}u_{n_{2}},\end{equation} with the frequency set
\begin{eqnarray}\label{3-2}S_{n_{0},\mu_{1}\mu_{2}}^{3} &=&\big\{\mathbf{v}=(m_{1},\cdots,m_{\mu_{12}},n_{1},n_{2}):\\
&&\mathbf{v}\in\mathbb{Z}^{\mu_{12}+2},m_{i}\neq 0, n_{1}\neq 0,n_{0}>0;\nonumber\\
&&n_{2}<0,m_{1,\mu_{12}}+n_{1}+n_{2}=n_{0}\big\}\nonumber
\end{eqnarray}
and the weight
\begin{eqnarray}\Lambda_{\mathbf{v}}^{3\mu_{1}\mu_{2}}&=&n_{2}\prod_{i=1}^{\mu_{12}}\frac{1}{m_{i}}\psi\bigg(\frac{m_{i}}{N}\bigg)\prod_{i=0}^{2}\psi\bigg(\frac{n_{i}}{N}\bigg)\prod_{i=2}^{\mu_{1}+1}\psi^{2}\bigg(\frac{n_{1}+n_{2}+m_{i,\mu_{12}}}{N}\bigg)\times\nonumber\\
&\times&\bigg[\prod_{i=1}^{\mu_{2}}\psi^{2}\bigg(\frac{n_{2}+m_{\mu_{1}+i,\mu_{12}}}{N}\bigg)-\prod_{i=1}^{\mu_{2}}\psi^{2}\bigg(\frac{n_{1}+m_{\mu_{1}+i,\mu_{12}}}{N}\bigg)\bigg].\nonumber
\end{eqnarray}
The term $\mathcal{K}_{\mu_{1}\mu_{2}}^{4}$ is defined as
\begin{equation}\label{4-1}
(\mathcal{K}_{\mu_{1}\mu_{2}}^{4})_{n_{0}}=\frac{\mathrm{i}}{4}\mathbb{P}_{0}((Su)^{2})\sum_{\mathbf{v}\in S_{n_{0},\mu_{1}\mu_{2}}^{4}}\Lambda_{\mathbf{v}}^{4\mu_{1}\mu_{2}}(u_{m_{1}}\cdots u_{m_{\mu_{12}}})u_{n_{1}},
\end{equation} with frequency set
\begin{eqnarray}\label{4-2}
S_{n_{0},\mu_{1}\mu_{2}}^{4}&=&\big\{\mathbf{v}=(m_{1},\cdots,m_{\mu_{12}},n_{1}):\\
&&\mathbf{v}\in\mathbb{Z}^{\mu_{12}+1},m_{i}\neq 0, n_{1}\neq 0,n_{0}>0;\nonumber\\
&&m_{1,\mu_{12}}+n_{1}=n_{0}\big\};\nonumber
\end{eqnarray} and weight \begin{eqnarray}\label{4-3}
\Lambda_{\mathbf{v}}^{4\mu_{1}\mu_{2}}&=&\psi\bigg(\frac{n_{0}}{N}\bigg)\psi\bigg(\frac{n_{1}}{N}\bigg)\prod_{i=1}^{\mu_{12}}\frac{1}{m_{i}}\psi\bigg(\frac{m_{i}}{N}\bigg)\times\\
&\times&\psi^{2}\bigg(\frac{n_{1}+m_{\mu_{1}+1,\mu_{12}}}{N}\bigg)\prod_{i=2}^{\mu_{12}}\psi^{2}\bigg(\frac{n_{1}+m_{i,\mu_{12}}}{N}\bigg).\nonumber
\end{eqnarray}
The term $\mathcal{K}_{\mu_{1}\mu_{2}\mu_{3}}^{5}$ is defined as
\begin{equation}\label{5-1}(\mathcal{K}_{\mu_{1}\mu_{2}\mu_{3}}^{5})_{n_{0}}=\mathrm{i}\sum_{\mathbf{v}\in S_{n_{0},\mu_{1}\mu_{2}\mu_{3}}^{5}}\Lambda_{\mathbf{v}}^{5\mu_{1}\mu_{2}\mu_{3}}(u_{m_{1}}\cdots u_{m_{\mu_{13}}})u_{n_{1}}u_{n_{2}}u_{n_{3}},\end{equation} with frequency set

\begin{eqnarray}\label{5-2}S_{n_{0},\mu_{1}\mu_{2}\mu_{3}}^{5}&=&\big\{\mathbf{v}=(m_{1},\cdots,m_{\mu_{13}},n_{1},n_{2},n_{3}):\\
&&\mathbf{v}\in\mathbb{Z}^{\mu_{13}+3},m_{i}\neq 0, n_{1}n_{2}n_{3}\neq 0,n_{0}>0;\nonumber\\
&&m_{1,\mu_{13}}+n_{1}+n_{2}+n_{3}=n_{0}\big\}\nonumber
\end{eqnarray} and weight
\begin{eqnarray}\label{5-3}
\Lambda_{\mathbf{v}}^{5\mu_{1}\mu_{2}\mu_{3}}&=&\prod_{i=2}^{\mu_{1}+1}\psi^{2}\bigg(\frac{n_{1}+n_{2}+n_{3}+m_{i,\mu_{13}}}{N}\bigg)\times\\
&\times&\prod_{i=1}^{\mu_{13}}\frac{1}{m_{i}}\psi\bigg(\frac{m_{i}}{N}\bigg)\prod_{i=0}^{3}\psi\bigg(\frac{n_{i}}{N}\bigg)\prod_{i=1}^{\mu_{3}}\psi^{2}\bigg(\frac{n_{3}+m_{\mu_{12}+i,\mu_{13}}}{N}\bigg)\times\nonumber\\
&\times&\bigg[\psi^{2}\bigg(\frac{n_{1}+n_{2}}{N}\bigg)\prod_{i=2}^{\mu_{2}+1}\psi^{2}\bigg(\frac{n_{1}+n_{2}+n_{3}+m_{\mu_{1}+i,\mu_{13}}}{N}\bigg)\nonumber\\
&&+\psi^{2}\bigg(\frac{n_{1}+n_{2}}{N}\bigg)\prod_{i=1}^{\mu_{2}}\psi^{2}\bigg(\frac{n_{3}+m_{\mu_{1}+i,\mu_{13}}}{N}\bigg)\nonumber\\
&&-2\prod_{i=1}^{\mu_{2}+1}\psi^{2}\bigg(\frac{n_{2}+n_{3}+m_{\mu_{1}+i,\mu_{13}}}{N}\bigg)\bigg].
\end{eqnarray}
Next, we shall rewrite a part of the weight $\Lambda_{\mathbf{v}}^{3\mu_{1}\mu_{2}}$ as \begin{eqnarray}\frac{1}{m_{1}\cdots m_{\mu_{12}}}&=&\frac{1}{m_{1}\cdots m_{12}}\psi_{2}\bigg(\frac{n_{1}}{|n_{0}|+|n_{2}|}\bigg)\nonumber\\
&+&\frac{1}{|n_{0}|+|n_{2}|-n_{1}}\psi_{1}\bigg(\frac{n_{1}}{|n_{0}|+|n_{2}|}\bigg)\sum_{i=1}^{\mu_{12}}\frac{1}{m_{1}\cdots m_{i-1}m_{i+1}\cdots m_{\mu_{12}}},\nonumber\end{eqnarray} then rename the variables (separating the cases $i\leq \mu_{1}$ and $i>\mu_{1}$) to obtain 
\begin{eqnarray}(\ref{line3})&=&\sum_{\mu_{1},\mu_{2}}\frac{(-1)^{\mu_{12}}}{2^{\mu_{12}}(\mu_{12}+1)!}\big(\mathcal{K}_{\mu_{1}\mu_{2}0}^{3}+\mathcal{K}_{\mu_{1}\mu_{2}}^{4}\big)\\
&+&\sum_{\mu_{1}+\mu_{2}\geq 1}\sum_{i=1}^{\mu_{12}}\frac{(-1)^{\mu_{12}}}{2^{\mu_{12}}(\mu_{12}+1)!}\mathcal{K}_{\mu_{1}\mu_{2}i}^{3}\nonumber\\
&+&\sum_{\mu_{1},\mu_{2},\mu_{3}}\frac{(-1)^{\mu_{13}+1}}{2^{\mu_{13}+2}(\mu_{13}+2)!}\mathcal{K}_{\mu_{1}\mu_{2}\mu_{3}}^{5},\nonumber
\end{eqnarray}where in the Fourier space
\begin{equation}\label{3-1}(\mathcal{K}_{\mu_{1}\mu_{2}0}^{3})_{n_{0}}=\mathrm{i}\sum_{\mathbf{v}\in S_{n_{0},\mu_{1}\mu_{2}}^{3}}\Lambda_{\mathbf{v}}^{3\mu_{1}\mu_{2}0}(u_{m_{1}}\cdots u_{m_{\mu_{12}}})u_{n_{1}}u_{n_{2}},\end{equation} with frequency set $S_{n_{0},\mu_{1}\mu_{2}}^{3}$ is as in (\ref{3-2}), and new weight
\begin{eqnarray}\label{3-3}\Lambda_{\mathbf{v}}^{3\mu_{1}\mu_{2}0}&=&\prod_{i=2}^{\mu_{1}+1}\psi^{2}\bigg(\frac{n_{1}+n_{2}+m_{i,\mu_{12}}}{N}\bigg)\times\\
&\times&n_{2}\psi_{2}\bigg(\frac{n_{1}}{|n_{0}|+|n_{2}|}\bigg)\prod_{i=1}^{\mu_{12}}\frac{1}{m_{i}}\psi\bigg(\frac{m_{i}}{N}\bigg)\prod_{i=0}^{2}\psi\bigg(\frac{n_{i}}{N}\bigg)\times\nonumber\\
&\times&\bigg[\prod_{i=1}^{\mu_{2}}\psi^{2}\bigg(\frac{n_{2}+m_{\mu_{1}+i,\mu_{12}}}{N}\bigg)-\prod_{i=1}^{\mu_{2}}\psi^{2}\bigg(\frac{n_{1}+m_{\mu_{1}+i,\mu_{12}}}{N}\bigg)\bigg];\nonumber
\end{eqnarray} the other term will be
\begin{equation}\label{3-4}(\mathcal{K}_{\mu_{1}\mu_{2}i}^{3})_{n_{0}}=\mathrm{i}\sum_{\mathbf{v}\in S_{n_{0},\mu_{1}\mu_{2}}^{3.1}}\Lambda_{\mathbf{v}}^{3\mu_{1}\mu_{2}i}(u_{m_{1}}\cdots u_{m_{\mu_{12}-1}})u_{n_{1}}u_{n_{2}}u_{n_{3}},\end{equation} where the new frequency set is
\begin{eqnarray}\label{3-5}
S_{n_{0},\mu_{1}\mu_{2}}^{3.1}&=&\big\{\mathbf{v}=(m_{1},\cdots,m_{\mu_{12}-1},n_{1},n_{2},n_{3}):\\
&&\mathbf{v}\in\mathbb{Z}^{\mu_{12}+2},m_{i}\neq 0, n_{1}n_{2}\neq 0,n_{0}>0;\nonumber\\
&&n_{3}<0,m_{1,\mu_{12}-1}+n_{1}+n_{2}+n_{3}=n_{0}\big\};\nonumber
\end{eqnarray} the new weight is
\begin{eqnarray}\label{3-6}\Lambda_{\mathbf{v}}^{3\mu_{1}\mu_{2}i}&=&\frac{n_{3}}{|n_{0}|+|n_{3}|-n_{1}}\psi_{1}\bigg(\frac{n_{1}}{|n_{0}|+|n_{3}|}\bigg)\prod_{j=1}^{\mu_{12}-1}\frac{1}{m_{j}}\psi\bigg(\frac{m_{j}}{N}\bigg)\times\\
&\times&\prod_{j=0}^{3}\psi\bigg(\frac{n_{j}}{N}\bigg)\prod_{j=2}^{i}\psi^{2}\bigg(\frac{n_{1}+n_{2}+n_{3}+m_{j,\mu_{12}-1}}{N}\bigg)\times\nonumber\\
&\times&\prod_{j=i+1}^{\mu_{1}+1}\psi^{2}\bigg(\frac{n_{1}+n_{3}+m_{j-1,\mu_{12}-1}}{N}\bigg)\times\nonumber\\
&\times&\bigg[\prod_{j=1}^{\mu_{2}}\psi^{2}\bigg(\frac{n_{3}+m_{\mu_{1}+j-1,\mu_{12}-1}}{N}\bigg)\nonumber\\
&&-\prod_{j=1}^{\mu_{2}}\psi^{2}\bigg(\frac{n_{1}+m_{\mu_{1}+j-1,\mu_{12}-1}}{N}\bigg)\bigg]\nonumber
\end{eqnarray} for $1\leq i\leq \mu_{1}$, and 
\begin{eqnarray}\label{3-62}\Lambda_{\mathbf{v}}^{3\mu_{1}\mu_{2}i}&=&\frac{n_{3}}{|n_{0}|+|n_{3}|-n_{1}}\psi_{1}\bigg(\frac{n_{1}}{|n_{0}|+|n_{3}|}\bigg)\prod_{j=1}^{\mu_{12}-1}\frac{1}{m_{j}}\psi\bigg(\frac{m_{j}}{N}\bigg)\times\\&\times&\prod_{j=0}^{3}\psi\bigg(\frac{n_{j}}{N}\bigg)\prod_{j=2}^{\mu_{1}+1}\psi^{2}\bigg(\frac{n_{1}+n_{2}+n_{3}+m_{j,\mu_{12}-1}}{N}\bigg)\times\nonumber\\
&\times&\bigg[\prod_{j=1}^{i-\mu_{1}}\psi^{2}\bigg(\frac{n_{2}+n_{3}+m_{\mu_{1}+j,\mu_{12}-1}}{N}\bigg)\times\nonumber\\
&&\times\prod_{j=i-\mu_{1}+1}^{\mu_{2}}\psi^{2}\bigg(\frac{n_{3}+m_{\mu_{1}+j-1,\mu_{12}-1}}{N}\bigg)\nonumber\\
&&-\prod_{j=1}^{i-\mu_{1}}\psi^{2}\bigg(\frac{n_{1}+n_{2}+m_{\mu_{1}+j,\mu_{12}-1}}{N}\bigg)\times\nonumber\\
&&\times\prod_{j=i-\mu_{1}+1}^{\mu_{2}}\psi^{2}\bigg(\frac{n_{1}+m_{\mu_{1}+j-1,\mu_{12}-1}}{N}\bigg)\bigg]\nonumber
\end{eqnarray} for $\mu_{1}+1\leq i\leq \mu_{12}$.

\subsection{Summary} Now we have obtained a first version of the equation satisfied by $w$, namely
\begin{eqnarray}\label{000}
(\partial_{t}-\mathrm{i}\partial_{xx})w&=&\sum_{\mu_{1}\geq 1}\sum_{i=1}^{\mu_{1}}\frac{(-1)^{\mu_{1}}}{2^{\mu_{1}-1}\mu_{1}!}\mathcal{K}_{\mu_{1}i}^{1}\\
&+&\sum_{\mu_{1},\mu_{2}}\frac{(-1)^{\mu_{12}}(\mu_{1}+1)}{2^{\mu_{12}+1}(\mu_{12}+2)!}\mathcal{K}_{\mu_{1}\mu_{2}}^{2}\nonumber\\
&+&\sum_{\mu_{1},\mu_{2}}\frac{(-1)^{\mu_{12}}}{2^{\mu_{12}}(\mu_{12}+1)!}\big(\mathcal{K}_{\mu_{1}\mu_{2}0}^{3}+\mathcal{K}_{\mu_{1}\mu_{2}}^{4}\big)\nonumber\\
&+&\sum_{\mu_{1}+\mu_{2}\geq 1}\sum_{i=1}^{\mu_{12}}\frac{(-1)^{\mu_{12}}}{2^{\mu_{12}}(\mu_{12}+1)!}\mathcal{K}_{\mu_{1}\mu_{2}i}^{3}\nonumber\\
&+&\sum_{\mu_{1},\mu_{2},\mu_{3}}\frac{(-1)^{\mu_{13}+1}}{2^{\mu_{13}+2}(\mu_{13}+2)!}\mathcal{K}_{\mu_{1}\mu_{2}\mu_{3}}^{5},\nonumber
\end{eqnarray} where:

the term $\mathcal{K}_{\mu_{1}i}^{1}$ is defined in (\ref{k1.2}), (\ref{1-2}) and (\ref{1-3});

the term $\mathcal{K}_{\mu_{1}\mu_{2}}^{2}$ is defined in (\ref{freqset}), (\ref{2-1}), and (\ref{2-3});

the term $\mathcal{K}_{\mu_{1}\mu_{2}0}^{3}$ is defined in (\ref{3-2}), (\ref{3-1}), and (\ref{3-3});

the term $\mathcal{K}_{\mu_{1}\mu_{2}i}^{3}$ is defined in (\ref{3-4}), (\ref{3-5}), (\ref{3-6}), and (\ref{3-62});

the term $\mathcal{K}_{\mu_{1}\mu_{2}}^{4}$ is defined in (\ref{4-1}), (\ref{4-2}), and (\ref{4-3});

the term $\mathcal{K}_{\mu_{1}\mu_{2}\mu_{3}}^{5}$ is defined in (\ref{5-1}), (\ref{5-2}), and (\ref{5-3}).

 In the next section we will further examine the structure of these terms.
\section{The gauge transform II: A miraculous cancellation}\label{gaugetransform2}
In this section we identify the bad resonant terms coming from each $\mathcal{K}^{j}$ term in (\ref{000}). Our computation will show that these bad terms will eventually add up to zero, leaving only the better-behaved ones. Throughout this section we will use a variable $k$, and define $\theta=\psi(k/N)$, $\eta=\psi'(k/N)$.
\subsection{The resonant terms in $\mathcal{K}^{1}$} In the expression (\ref{k1.2}), let $n_{1}+n_{2}=0$. Notice that $n_{1}<0$, we get a sum
\begin{equation}
-\mathrm{i}n_{0}\sum_{k>0}|u_{k}|^{2}\sum_{m_{1}+\cdots+m_{\mu_{1}-1}=n_{0}}\Delta\cdot \frac{u_{m_{1}}\cdots u_{m_{\mu_{1}-1}}}{m_{1}\cdots m_{\mu_{1}-1}},
\end{equation} where we always assume $m_{i}\neq 0$, and the factor
\begin{eqnarray}
\Delta&=&\prod_{j=1}^{\mu_{1}-1}\psi\bigg(\frac{m_{j}}{N}\bigg)\prod_{j=i+1}^{\mu_{1}}\psi^{2}\bigg(\frac{k-m_{j-1,\mu_{1}-1}}{N}\bigg)\times\nonumber\\
&\times&\prod_{j=2}^{i}\psi^{2}\bigg(\frac{m_{j,\mu_{1}-1}}{N}\bigg)\frac{k}{|n_{0}|+|k|}\psi\bigg(\frac{n_{0}}{N}\bigg)\psi^{2}\bigg(\frac{k}{N}\bigg).\nonumber
\end{eqnarray} We then replace each variable in this expression, except $k$, by zero\footnote[1]{Strictly speaking, we should replace $n$ by $m_{1,\mu_{1}-1}$ and cancel each $m_{j}$ in the numerator before this process, but the results will be the same and no estimate is affected.}, and get a term which reads \begin{equation}
-\mathrm{i}n_{0}\sum_{k>0}|u_{k}|^{2}\sum_{m_{1}+\cdots+m_{\mu_{1}-1}=n_{0}}\theta^{2(\mu_{1}-i+1)}\prod_{i=1}^{\mu_{1}-1}\frac{u_{m_{i}}}{m_{i}}.
\end{equation}Note that the summation over the $m_{i}$'s gives exactly $((\mathrm{i}F)^{\mu_{1}-1})_{n_{0}}$, we can then sum over $\mu_{1}$ and $i$ to get
\begin{eqnarray}(\mathcal{R}^{1})_{n_{0}}&=&-\mathrm{i}\sum_{k>0}|u_{k}|^{2}\sum_{\mu_{1}\geq 1}\frac{(-1)^{\mu_{1}}}{2^{\mu_{1}-1}\mu_{1}!}n_{0}((\mathrm{i}F)^{\mu_{1}-1})_{n_{0}}\sum_{i=1}^{\mu_{1}}\theta^{2(\mu_{1}-i+1)}\\
&=&-\sum_{k>0}|u_{k}|^{2}\sum_{\mu_{1}\geq 1}\frac{(-1)^{\mu_{1}}}{2^{\mu_{1}-1}\mu_{1}!}(\partial_{x}(\mathrm{i}F)^{\mu_{1}-1})_{n_{0}}\sum_{i=1}^{\mu_{1}}\theta^{2(\mu_{1}-i+1)}\nonumber\\
&=&\sum_{k>0}\sum_{\mu\geq 0}\frac{\mathrm{i}|u_{k}|^{2}}{(\mu+2)!}\bigg(u\bigg(-\frac{\mathrm{i}F}{2}\bigg)^{\mu}\bigg)_{n_{0}}\cdot \mathcal{C}^{1},\nonumber
\end{eqnarray} where \begin{equation}\mathcal{C}^{1}=-\frac{\mu+1}{2}\big(\theta^{2}+\theta^{4}+\cdots +\theta^{2\mu+4}\big),\end{equation} and we have dropped the dependence of $\mathcal{C}^{1}$ on $k$ and $\mu$ for simplicity.
\subsection{The resonant terms in $\mathcal{K}^{2}$}
In the expression (\ref{2-1}), let $n_{2}+n_{3}=0$ to obtain a term
\begin{equation}
\frac{\mathrm{i}}{2}\sum_{k\neq 0}|u_{k}|^{2}\sum_{n_{1}+m_{1}+\cdots+m_{\mu_{12}}=n_{0}}\Delta\cdot\frac{u_{m_{1}}\cdots u_{m_{\mu_{12}}}}{m_{1}\cdots m_{\mu_{12}}}u_{n_{1}},
\end{equation}where the factor
\begin{eqnarray}\label{factor2.1}
\Delta &=&\psi^{2}\bigg(\frac{k}{N}\bigg)\prod_{i=0}^{1}\psi\bigg(\frac{n_{i}}{N}\bigg)\prod_{i=1}^{\mu_{12}}\psi\bigg(\frac{m_{i}}{N}\bigg)\prod_{i=2}^{\mu_{1}+2}\psi^{2}\bigg(\frac{n_{1}+m_{i,\mu_{12}}}{N}\bigg)\times\\
&\times&\bigg[-\psi^{2}\bigg(\frac{m_{\mu_{1}+1,\mu_{12}}}{N}\bigg)\prod_{i=1}^{\mu_{2}}\psi^{2}\bigg(\frac{k+m_{\mu_{1}+i,\mu_{12}}}{N}\bigg)\nonumber\\
&& +\psi^{2}\bigg(\frac{k+n_{1}+m_{\mu_{1}+1,\mu_{12}}}{N}\bigg)\prod_{i=1}^{\mu_{2}}\psi^{2}\bigg(\frac{k+m_{\mu_{1}+i,\mu_{12}}}{N}\bigg)\nonumber\\
&&-\frac{n_{1}}{k}\psi^{2}\bigg(\frac{n_{1}+m_{\mu_{1}+1,\mu_{12}}-k}{N}\bigg)\prod_{i=1}^{\mu_{2}}\psi^{2}\bigg(\frac{n_{1}+m_{\mu_{1}+i,\mu_{12}}}{N}\bigg)\nonumber\\
&&+\frac{n_{1}}{k}\psi^{2}\bigg(\frac{n_{1}+m_{\mu_{1}+1,\mu_{12}}+k}{N}\bigg)\prod_{i=1}^{\mu_{2}}\psi^{2}\bigg(\frac{n_{1}+m_{\mu_{1}+i,\mu_{12}}}{N}\bigg)\bigg]\nonumber.
\end{eqnarray} We then discard the last two summands in the bracket, and in what remains replace each variable except $k$ by zero to get
\begin{equation}
\frac{\mathrm{i}}{2}\sum_{k\neq 0}|u_{k}|^{2}\sum_{n_{1}+m_{1}+\cdots+m_{\mu_{12}}=n_{0}}(\theta^{2\mu_{2}+4}-\theta^{2\mu_{2}+2})\cdot\frac{u_{m_{1}}\cdots u_{m_{\mu_{12}}}}{m_{1}\cdots m_{\mu_{12}}}u_{n_{1}}.
\end{equation} Since the summation over $m_{i}$ and $n_{1}$ gives exactly $(u\cdot (iF)^{\mu_{12}})_{n_{0}}$, we can then sum over $\mu_{1}$ and $\mu_{2}$ to obtain an expression which involves a sum over all $k\neq 0$. We may include a factor of $2$ and restrict to $k>0$ (since $\theta$ is even in $k$), and then take into account the symmetry with respect to $n_{1}$ and $n_{3}$ (namely, we are considering also the term where $n_{1}+n_{2}=0$) to include another factor of $2$, and the final expression will be 
\begin{eqnarray}(\mathcal{R}^{2.1})_{n_{0}}&=&2\mathrm{i}\sum_{k>0}|u_{k}|^{2}\sum_{\mu\geq 0}\frac{(-1)^{\mu}}{2^{\mu+1}(\mu+2)!}(u(\mathrm{i}F)^{\mu})_{n_{0}}\times\\
&\times&\sum_{\mu_{2}=0}^{\mu}(\mu-\mu_{2}+1)(\theta^{2\mu_{2}+4}-\theta^{2\mu_{2}+2})\nonumber\\
&=&\sum_{k>0}\sum_{\mu\geq 0}\frac{\mathrm{i}|u_{k}|^{2}}{(\mu+2)!}\bigg(u\bigg(-\frac{\mathrm{i}F}{2}\bigg)^{\mu}\bigg)_{n_{0}}\cdot \mathcal{C}^{2},\nonumber
\end{eqnarray} where
\begin{equation}
\mathcal{C}^{2}=-(\mu+1)\theta^{2}+(\theta^{4}+\cdots+\theta^{2\mu+4}).
\end{equation}
The other possibility is when $n_{1}+n_{3}=0$. In this case we rename $n_{2}$ by $n_{1}$ and get
\begin{equation}\label{0-2-3}
\frac{\mathrm{i}}{2}\sum_{k\neq0}|u_{k}|^{2}\sum_{n_{1}+m_{1}+\cdots+m_{\mu_{12}}=n_{0}}\Delta\cdot \frac{u_{m_{1}}\cdots u_{m_{\mu_{12}}}u_{n_{1}}}{m_{1}\cdots m_{\mu_{1}-1}n_{1}},
\end{equation} where the factor
\begin{eqnarray}\label{1-2-3}
\Delta&=&\psi^{2}\bigg(\frac{k}{N}\bigg)\prod_{i=0}^{1}\psi\bigg(\frac{n_{i}}{N}\bigg)\prod_{i=1}^{\mu_{12}}\psi\bigg(\frac{m_{i}}{N}\bigg)\prod_{i=2}^{\mu_{1}+2}\psi^{2}\bigg(\frac{n_{1}+m_{i,\mu_{12}}}{N}\bigg)\times\\
&\times&k\bigg[\psi^{2}\bigg(\frac{k+n_{1}+m_{\mu_{1}+1,\mu_{12}}}{N}\bigg)\prod_{i=1}^{\mu_{2}}\psi^{2}\bigg(\frac{k+m_{\mu_{1}+i,\mu_{12}}}{N}\bigg)\nonumber\\
&& -\psi^{2}\bigg(\frac{k-n_{1}-m_{\mu_{1}+1,\mu_{12}}}{N}\bigg)\prod_{i=1}^{\mu_{2}}\psi^{2}\bigg(\frac{k-m_{\mu_{1}+i,\mu_{12}}}{N}\bigg)\nonumber\\
&&-\psi^{2}\bigg(\frac{m_{\mu_{1}+1,\mu_{12}}}{N}\bigg)\prod_{i=1}^{\mu_{2}}\psi^{2}\bigg(\frac{k+m_{\mu_{1}+i,\mu_{12}}}{N}\bigg)\nonumber\\
&&+\psi^{2}\bigg(\frac{m_{\mu_{1}+1,\mu_{12}}}{N}\bigg)\prod_{i=1}^{\mu_{2}}\psi^{2}\bigg(\frac{k-m_{\mu_{1}+i,\mu_{12}}}{N}\bigg)\bigg]\nonumber.
\end{eqnarray} Next, we examine the terms in the bracket, which basically can be written, for some $\sigma_{j}$ which are linear combinations of $n_{1}$ and $m_{i}$, as $\prod_{j}\psi^{2}((k+\sigma_{j})/N)-\prod_{j}\psi^{2}((k-\sigma_{j})/N)$. We then replace this expression by $4\theta^{2\mu-1}\eta\sum_{j}\frac{\sigma_{j}}{N}$, where $\mu$ is the number of factors. If we plug into (\ref{1-2-3}) this and the expression of each $\sigma_{j}$, cancel each $n_{1}$ or $m_{i}$ factor with the corresponding denominator in (\ref{0-2-3}), and finally replace each variable other than $k$ by zero, we will get a term which, up to a rearrangement of variables, reads as
\begin{eqnarray}&&2\mathrm{i}\sum_{k\neq 0}\frac{k}{N}|u_{k}|^{2}\sum_{n_{1}+m_{1}+\cdots+m_{\mu_{12}}=n_{0}}\cdot\frac{u_{m_{1}}\cdots u_{m_{\mu_{12}}}}{m_{1}\cdots m_{\mu_{12}}}u_{n_{1}}\times\\
&\times&\bigg(\frac{(\mu_{2}+1)(\mu_{2}+2)}{2}\theta^{2\mu_{2}+3}\eta-\frac{\mu_{2}(\mu_{2}+1)}{2}\theta^{2\mu_{2}+1}\eta\bigg).\nonumber
\end{eqnarray} We may restrict to $k>0$ since $\eta$ is odd, and then sum over $\mu_{1}$ and $\mu_{2}$ to obtain \begin{eqnarray}(\mathcal{R}^{2.2})_{n_{0}}&=&4\mathrm{i}\sum_{k>0}\frac{k\eta}{N}|u_{k}|^{2}\sum_{\mu\geq 0}\sum_{\mu_{2}=0}^{\mu}\frac{(-1)^{\mu}}{2^{\mu+1}(\mu+2)!}\times\\
&\times&(u(\mathrm{i}F)^{\mu})_{n_{0}}(\mu-\mu_{2}+1)\times\nonumber\\
&\times&\bigg(\frac{(\mu_{2}+1)(\mu_{2}+2)}{2}\theta^{2\mu_{2}+3}-\frac{\mu_{2}(\mu_{2}+1)}{2}\theta^{2\mu_{2}+1}\bigg)\nonumber\\
&=&\sum_{k>0}\sum_{\mu\geq 0}\frac{\mathrm{i}k\eta|u_{k}|^{2}}{N(\mu+2)!}\bigg(u\bigg(-\frac{\mathrm{i}F}{2}\bigg)^{\mu}\bigg)_{n_{0}}\cdot \mathcal{D}^{2},\nonumber
\end{eqnarray} where
\begin{equation}
\mathcal{D}^{2}=2\theta^{3}+\cdots+\mu(\mu+1)\theta^{2\mu+1}+(\mu+1)(\mu+2)\theta^{2\mu+3}.
\end{equation}
\subsection{The resonant terms in $\mathcal{K}^{3}$} In the expression (\ref{3-1}), let $n_{1}+n_{2}=0$. Note that $n_{2}<0$, we obtain a term
\begin{equation}\mathrm{i}\sum_{k>0}|u_{k}|^{2}\sum_{m_{1}+\cdots +m_{\mu_{12}}=n_{0}}\Delta\cdot\frac{u_{m_{1}}\cdots u_{m_{\mu_{12}}}}{m_{1}\cdots m_{\mu_{12}}},\end{equation} where the factor
\begin{eqnarray}
\Delta &=&-k\psi_{2}\bigg(\frac{k}{|n_{0}|+|k|}\bigg)\psi^{2}\bigg(\frac{k}{N}\bigg)\prod_{i=2}^{\mu_{1}+1}\psi^{2}\bigg(\frac{m_{i,\mu_{12}}}{N}\bigg)\prod_{i=1}^{\mu_{12}}\psi\bigg(\frac{m_{i}}{N}\bigg)\times\\
&\times&\psi\bigg(\frac{n_{0}}{N}\bigg)\bigg[\prod_{i=1}^{\mu_{2}}\psi^{2}\bigg(\frac{k-m_{\mu_{1}+i,\mu_{12}}}{N}\bigg)-\prod_{i=1}^{\mu_{2}}\psi^{2}\bigg(\frac{k+m_{\mu_{1}+i,\mu_{12}}}{N}\bigg)\bigg].\nonumber
\end{eqnarray} Then we replace the term in the bracket by $-4\theta^{2\mu_{2}-1}\eta\sum_{i}(m_{\mu_{1}+i,\mu_{12}}/N)$, cancel the corresponding $m_{j}$ factor in the denominator, and replace all the variables except $k$ by zero to obtain, after a rearrangement of variables, the sum
\begin{equation}4\mathrm{i}\sum_{k>0}\frac{k}{N}|u_{k}|^{2}\frac{\mu_{2}(\mu_{2}+1)}{2}\theta^{2\mu_{2}+1}\eta\sum_{n_{1}+m_{1}+\cdots+m_{\mu_{12}-1}=n_{0}}\cdot\frac{u_{m_{1}}\cdots u_{m_{\mu_{12}-1}}}{m_{1}\cdots m_{\mu_{12}-1}}u_{n_{1}}.\end{equation} Then we sum over $\mu_{1}$ and $\mu_{2}$ to obtain
\begin{eqnarray}(\mathcal{R}^{3.1})_{n_{0}}&=&4\mathrm{i}\sum_{k>0}\frac{k\eta}{N}|u_{k}|^{2}\sum_{\mu\geq 0}\frac{(-1)^{\mu+1}}{2^{\mu+1}(\mu+2)!}\times\\
&\times&(u(\mathrm{i}F)^{\mu})_{n_{0}}\bigg(\sum_{\mu_{2}=0}^{\mu}\frac{\mu_{2}(\mu_{2}+1)}{2}\theta^{2\mu_{2}+1}\bigg)\nonumber\\
&=&\sum_{k>0}\sum_{\mu\geq 0}\frac{\mathrm{i}k\eta|u_{k}|^{2}}{N(\mu+2)!}\bigg(u\bigg(-\frac{\mathrm{i}F}{2}\bigg)^{\mu}\bigg)_{n_{0}}\cdot \mathcal{D}^{3},\nonumber
\end{eqnarray} where
\begin{equation}
\mathcal{D}^{3}=-\big(2\theta^{3}+\cdots+\mu(\mu+1)\theta^{2\mu+1}+(\mu+1)(\mu+2)\theta^{2\mu+3}\big).
\end{equation}
Next, in the expression (\ref{3-4}), let $n_{2}+n_{3}=0$, note that $n_{3}<0$, we get a term
\begin{equation}\mathrm{i}\sum_{k>0}|u_{k}|^{2}\sum_{n_{1}+m_{1}+\cdots +m_{\mu_{12}-1}=n_{0}}\Delta\cdot\frac{u_{m_{1}}\cdots u_{m_{\mu_{12}-1}}}{m_{1}\cdots m_{\mu_{12}-1}}u_{n_{1}},\end{equation} where the factor
\begin{eqnarray}
\Delta &=&\frac{-k}{|k|+|n_{0}|-n_{1}}\psi_{1}\bigg(\frac{n_{1}}{|k|+|n_{0}|}\bigg)\psi^{2}\bigg(\frac{k}{N}\bigg)\prod_{j=1}^{\mu_{12}-1}\psi\bigg(\frac{m_{j}}{N}\bigg)\prod_{j=0}^{1}\psi\bigg(\frac{n_{j}}{N}\bigg)\times\nonumber\\
&\times&\prod_{j=2}^{i}\psi^{2}\bigg(\frac{n_{1}+m_{j,\mu_{12}-1}}{N}\bigg)\prod_{j=i+1}^{\mu_{1}+1}\psi^{2}\bigg(\frac{k-n_{1}-m_{j-1,\mu_{12}-1}}{N}\bigg)\times\nonumber\\
&\times&\bigg[\prod_{j=1}^{\mu_{2}}\psi^{2}\bigg(\frac{k-m_{\mu_{1}+j-1,\mu_{12}-1}}{N}\bigg)-\prod_{j=1}^{\mu_{2}}\psi^{2}\bigg(\frac{n_{1}+m_{\mu_{1}+j-1,\mu_{12}-1}}{N}\bigg)\bigg]\nonumber
\end{eqnarray} for $1\leq i\leq \mu_{1}$, and 
\begin{eqnarray}
\Delta &=&\frac{-k}{|k|+|n_{0}|-n_{1}}\psi_{1}\bigg(\frac{n_{1}}{|k|+|n_{0}|}\bigg)\psi^{2}\bigg(\frac{k}{N}\bigg)\prod_{j=1}^{\mu_{12}-1}\psi\bigg(\frac{m_{j}}{N}\bigg)\prod_{j=2}^{\mu_{1}+1}\psi^{2}\bigg(\frac{n_{1}+m_{i,\mu_{12}-1}}{N}\bigg)\times\nonumber\\
&\times &\prod_{j=0}^{1}\psi\bigg(\frac{n_{j}}{N}\bigg)\bigg[\prod_{j=1}^{i-\mu_{1}}\psi^{2}\bigg(\frac{m_{\mu_{1}+j,\mu_{12}-1}}{N}\bigg)\prod_{j=i-\mu_{1}+1}^{\mu_{2}}\psi^{2}\bigg(\frac{k-m_{\mu_{1}+j-1,\mu_{12}-1}}{N}\bigg)\nonumber\\
&&-\prod_{j=1}^{i-\mu_{1}}\psi^{2}\bigg(\frac{k+n_{1}+m_{\mu_{1}+j,\mu_{12}-1}}{N}\bigg)\prod_{j=i-\mu_{1}+1}^{\mu_{2}}\psi^{2}\bigg(\frac{n_{1}+m_{\mu_{1}+j-1,\mu_{12}-1}}{N}\bigg)\bigg]\nonumber
\end{eqnarray} for $\mu_{1}+1\leq i\leq \mu_{1}+\mu_{2}$. Then we replace every variable other than $k$ by zero, and sum over $i$ to obtain
\begin{eqnarray}
&&-\mathrm{i}\sum_{k>0}|u_{k}|^{2}\sum_{n_{1}+m_{1}+\cdots +m_{\mu_{12}-1}=n_{0}}\frac{u_{m_{1}}\cdots u_{m_{\mu_{12}-1}}}{m_{1}\cdots m_{\mu_{12}-1}}u_{n_{1}}\times\nonumber\\
&\times &\bigg(\sum_{i=1}^{\mu_{1}}(\theta^{2\mu_{12}-2i+4}-\theta^{2\mu_{1}-2i+4})+\sum_{i=\mu_{1}+1}^{\mu_{12}}(\theta^{2\mu_{12}-2i+2}-\theta^{2i-2\mu_{1}+2})\bigg).\nonumber
\end{eqnarray} We then sum over $\mu_{1}$ and $\mu_{2}$ to get
\begin{eqnarray}(\mathcal{R}^{3.2})_{n_{0}}&=&-\mathrm{i}\sum_{k>0}|u_{k}|^{2}\sum_{\mu\geq 0}\frac{(-1)^{\mu+1}}{2^{\mu+1}(\mu+2)!}(u(\mathrm{i}F)^{\mu})_{n_{0}}\times\\
&\times& \bigg(\sum_{\mu_{2}=0}^{\mu+1}\sum_{i=1}^{\mu+1-\mu_{2}}(\theta^{2\mu-2i+6}-\theta^{2\mu-2i-2\mu_{2}+6})\nonumber\\
&+&\sum_{\mu_{2}=0}^{\mu+1}\sum_{i=\mu+2-\mu_{2}}^{\mu+1}(\theta^{2\mu-2i+4}-\theta^{2i+2\mu_{2}-2\mu})\bigg)\nonumber\\
&=&\sum_{k>0}\sum_{\mu\geq 0}\frac{\mathrm{i}|u_{k}|^{2}}{(\mu+2)!}\bigg(u\bigg(-\frac{\mathrm{i}F}{2}\bigg)^{\mu}\bigg)_{n_{0}}\cdot \mathcal{C}^{3},\nonumber
\end{eqnarray} where
\begin{equation}
\mathcal{C}^{3}=\frac{1}{2}\big((\mu+1)\theta^{2}+(-\mu-1)\theta^{4}+(-\mu+1)\theta^{6}+\cdots +(\mu-1)\theta^{2\mu+4}\big).
\end{equation}
\subsection{The resonant terms in $\mathcal{K}^{4}$} The whole term $\mathcal{K}^{4}$ should be viewed as resonant. Here we simply expand $\mathbb{P}_{0}((Su)^{2})=2\sum_{k>0}\theta^{2}|u_{k}|^{2}$, and replace every variable in (\ref{4-3}) by zero (after extracting the $\prod_{i}m_{i}^{-1}$ factor, as we have done before) to obtain
\begin{eqnarray}(\mathcal{R}^{4})_{n_{0}}&=&\frac{\mathrm{i}}{2}\sum_{k>0}\theta^{2}|u_{k}|^{2}\sum_{\mu\geq 0}\frac{(-1)^{\mu}}{2^{\mu}(\mu+1)!}(u(\mathrm{i}F)^{\mu})_{n_{0}}\times\sum_{\mu_{2}=0}^{\mu}1\\
&=&\sum_{k>0}\sum_{\mu\geq 0}\frac{\mathrm{i}|u_{k}|^{2}}{(\mu+2)!}\bigg(u\bigg(-\frac{\mathrm{i}F}{2}\bigg)^{\mu}\bigg)_{n_{0}}\cdot \mathcal{C}^{4},\nonumber
\end{eqnarray} where
\begin{equation}
\mathcal{C}^{4}=\frac{(\mu+1)(\mu+2)}{2}\theta^{2}.
\end{equation}
\subsection{The resonant terms in $\mathcal{K}^{5}$} In the expression (\ref{5-1}), consider the contribution where $n_{1}+n_{2}=0$, $n_{2}+n_{3}=0$, or where $n_{1}+n_{3}=0$. For each of these cases, we perform the same operation as in the above sections, and collect all the resulting terms (and rearrange the variables) to obtain
\begin{equation}
\mathrm{i}\sum_{k\neq 0}|u_{k}|^{2}\sum_{m_{1}+\cdots +m_{\mu_{13}}+n_{1}=n_{0}}\Delta\cdot\frac{u_{m_{1}}\cdots u_{m_{\mu_{13}}}}{m_{1}\cdots m_{\mu_{13}}}u_{n_{1}},
\end{equation} where the net factor
\begin{eqnarray}
\Delta&=&\psi^{2}\bigg(\frac{k}{N}\bigg)\prod_{i=2}^{\mu_{1}+1}\psi^{2}\bigg(\frac{n_{1}+m_{i,\mu_{13}}}{N}\bigg)\prod_{i=1}^{\mu_{13}}\psi\bigg(\frac{m_{i}}{N}\bigg)\prod_{i=0}^{1}\psi\bigg(\frac{n_{i}}{N}\bigg)\times\nonumber\\
&\times&\bigg[2\psi^{2}\bigg(\frac{k-n_{1}}{N}\bigg)\prod_{i=1}^{\mu_{3}}\psi^{2}\bigg(\frac{k+m_{\mu_{12}+i,\mu_{13}}}{N}\bigg)\prod_{i=2}^{\mu_{2}+1}\psi^{2}\bigg(\frac{n_{1}+m_{\mu_{1}+i,\mu_{13}}}{N}\bigg)\nonumber\\
&&+2\psi^{2}\bigg(\frac{k-n_{1}}{N}\bigg)\prod_{i=1}^{\mu_{3}}\psi^{2}\bigg(\frac{k+m_{\mu_{12}+i,\mu_{13}}}{N}\bigg)\prod_{i=1}^{\mu_{2}}\psi^{2}\bigg(\frac{k+m_{\mu_{1}+i,\mu_{13}}}{N}\bigg)\nonumber\\
&&-2\prod_{i=1}^{\mu_{3}}\psi^{2}\bigg(\frac{k+m_{\mu_{12}+i,\mu_{13}}}{N}\bigg)\prod_{i=1}^{\mu_{2}+1}\psi^{2}\bigg(\frac{m_{\mu_{1}+i,\mu_{13}}}{N}\bigg)\nonumber\\
&&+\prod_{i=1}^{\mu_{3}}\psi^{2}\bigg(\frac{n_{1}+m_{\mu_{12}+i,\mu_{13}}}{N}\bigg)\prod_{i=2}^{\mu_{2}+1}\psi^{2}\bigg(\frac{n_{1}+m_{\mu_{1}+i,\mu_{13}}}{N}\bigg)\nonumber\\
&&+\prod_{i=1}^{\mu_{3}}\psi^{2}\bigg(\frac{n_{1}+m_{\mu_{12}+i,\mu_{13}}}{N}\bigg)\prod_{i=1}^{\mu_{2}}\psi^{2}\bigg(\frac{n_{1}+m_{\mu_{1}+i,\mu_{13}}}{N}\bigg)\nonumber\\
&&-2\prod_{i=1}^{\mu_{3}}\psi^{2}\bigg(\frac{n_{1}+m_{\mu_{12}+i,\mu_{13}}}{N}\bigg)\prod_{i=1}^{\mu_{2}+1}\psi^{2}\bigg(\frac{k+n_{1}+m_{\mu_{1}+i,\mu_{13}}}{N}\bigg)\nonumber\\
&&-2\prod_{i=1}^{\mu_{3}}\psi^{2}\bigg(\frac{k+m_{\mu_{12}+i,\mu_{13}}}{N}\bigg)\prod_{i=1}^{\mu_{2}+1}\psi^{2}\bigg(\frac{k+n_{1}+m_{\mu_{1}+i,\mu_{13}}}{N}\bigg)\bigg].\nonumber
\end{eqnarray} Then we replace each variable other than $k$ by zero, obtaining
\begin{eqnarray}
&&\mathrm{i}\sum_{k\neq 0}|u_{k}|^{2}\sum_{m_{1}+\cdots +m_{\mu_{13}}+n_{1}=n_{0}}\frac{u_{m_{1}}\cdots u_{m_{\mu_{13}}}}{m_{1}\cdots m_{\mu_{13}}}u_{n_{1}}\times\\
&\times&2\theta^{2}(\theta^{2\mu_{3}+2}-\theta^{2\mu_{2}+2}+1-\theta^{2\mu_{3}}).\nonumber
\end{eqnarray} Again we restrict to $k>0$ and sum over $\mu_{1},\mu_{2},\mu_{3}$ to get
\begin{eqnarray}(\mathcal{R}^{5})_{n_{0}}&=&4\mathrm{i}\sum_{k>0}|u_{k}|^{2}\sum_{\mu\geq 0}\frac{(-1)^{\mu+1}}{2^{\mu+2}(\mu+2)!}(u(iF)^{\mu})_{n_{0}}\times\nonumber\\
&\times &\sum_{\mu_{1}+\mu_{2}+\mu_{3}=\mu}(\theta^{2\mu_{3}+4}-\theta^{2\mu_{2}+4}+\theta^{2}-\theta^{2\mu_{3}+2})\nonumber\\
&=&\sum_{k>0}\sum_{\mu\geq 0}\frac{\mathrm{i}|u_{k}|^{2}}{(\mu+2)!}\bigg(u\bigg(-\frac{\mathrm{i}F}{2}\bigg)^{\mu}\bigg)_{n_{0}}\cdot \mathcal{C}^{5},\nonumber
\end{eqnarray} where 
\begin{equation}
\mathcal{C}^{5}=-\frac{\mu(\mu+1)}{2}\theta^{2}+\mu\theta^{4}+(\mu-1)\theta^{6}+\cdots +2\theta^{2\mu}+\theta^{2\mu+2}.
\end{equation}
\subsection{When put together...} Now we can directly verify from the above computations that
\begin{equation}\mathcal{C}^{1}+\mathcal{C}^{2}+\mathcal{C}^{3}+\mathcal{C}^{4}+\mathcal{C}^{5}=0;\end{equation}
\begin{equation}\mathcal{D}^{2}+\mathcal{D}^{3}=0,\end{equation} which then implies
\begin{equation}\mathcal{R}^{1}+\mathcal{R}^{2.1}+\mathcal{R}^{2.2}+\mathcal{R}^{3.1}+\mathcal{R}^{3.2}+\mathcal{R}^{4}+\mathcal{R}^{5}=0.\end{equation}
\subsection{What remains?} Here we analyze what remains after we subtract from each $\mathcal{K}^{j}$ term the resonant contribution, and deduce a second version of the equation satisfied by $w$. To simplify the argument, we need to introduce a few more notions.
\begin{definition}We say a function $f:\mathbb{Z}\to \mathbb{R}$ is \emph{slowly varying of type $1$}, or $f\in SV_{1}$, if we have $|f(n)|\leq C$ and 
\begin{equation}|f(n+1)-f(n)|\leq C\langle n\rangle^{-1}
\end{equation} for some constant $C$. We say $f$ is \emph{slowly varying of type $2$}, or $f\in SV_{2}$, if we have \begin{equation}|f(n+1)-f(n)|\leq C\langle n\rangle^{-1}(|f(n)|+|f(n+1)|)
\end{equation} for some constant $C$. For a function $f:\mathbb{Z}^{\mu}\to \mathbb{R}$, we say it is slowly varying of type $1$ or $2$ if it verifies the above inequalities for each single variable when the other variables are fixed, with uniformly bounded constants.
\begin{proposition}\label{gene} The following functions are in $SV_{1}$:

(1) function of the form $\phi(f_{1},\cdots,f_{k})$, where $f_{j}\in SV_{1}$, $\phi:\mathbb{R}^{k}\to\mathbb{R}$ is Lipschitz; 

(2) function of the form $\phi(f_{1},\cdots,f_{k})$, where $f_{j}\in SV_{2}$, $\phi$ is smooth and is constant outside some compact set.

The following functions are in $SV_{2}$:

(3) any monomial (say $n_{1}^{2}$ or $n_{2}n_{3}$), or characteristic function of any set generated by $\{n_{j}>0\}$ and $\{n_{j}<0\}$;

(4) product or reciprocal of functions in $SV_{2}$ (with $1/f$ defined to be $1$ at points where $f=0$); $\max(f,g)$, $\min(f,g)$ or $f+g$ for nonnegative $f,g\in SV_{2}$;

(5) function of the form $|f|$, $\langle f\rangle$ or $(\max(f,0))^{\lambda}$, where $f\in SV_{2}$ and $\lambda>0$.
\end{proposition}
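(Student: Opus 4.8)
The plan is to reduce every assertion to a single-variable statement and then verify each by an elementary increment estimate, isolating a small number of degenerate configurations that are all dispatched by one universal observation. That observation is: to check the increment bound defining $SV_2$ (and, once boundedness is established separately, the increment bound in $SV_1$) it suffices to verify the inequality for $\langle n\rangle\geq N_0$ with $N_0$ any fixed threshold, since for $\langle n\rangle\leq N_0$ the triangle inequality already gives $|f(n+1)-f(n)|\leq |f(n)|+|f(n+1)|\leq N_0\langle n\rangle^{-1}(|f(n)|+|f(n+1)|)$ (respectively $|f(n+1)-f(n)|\leq 2\|f\|_{\infty}\leq 2N_0\|f\|_{\infty}\langle n\rangle^{-1}$). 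A second, equally elementary consequence of $f\in SV_2$ to be used repeatedly: once $\langle n\rangle$ is large (depending on the $SV_2$ constant of $f$), the numbers $|f(n)|$ and $|f(n+1)|$ are comparable, with the convention that if one vanishes so does the other; in particular $f$ undergoes no strict change of sign between $n$ and $n+1$ for large $n$. Granting these, the rest is bookkeeping.

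For the two $SV_1$ statements: in (1) each $f_j$ is bounded, so $(f_1(n),\dots,f_k(n))$ stays in a fixed compact box on which the Lipschitz $\phi$ is bounded, whence the composite is bounded; the increment is controlled by $L\sum_j|f_j(n+1)-f_j(n)|\lesssim\langle n\rangle^{-1}$ via the $SV_1$ increment bounds. In (2), $\phi$ is bounded (continuous and eventually constant), so the composite is again bounded; for the increment we telescope over the $k$ coordinates and treat one slice at a time, using that the partial derivative of $\phi$ in that slot vanishes outside a fixed ball $B_R$. Thus the slice increment is zero unless the segment joining $f_j(n)$ to $f_j(n+1)$ meets $[-R,R]$, in which case $|f_j(n)|$ and $|f_j(n+1)|$ are both $\leq R+|f_j(n+1)-f_j(n)|$; substituting this into the $SV_2$ bound for $f_j$ and solving for the increment (for $\langle n\rangle$ large, per the reduction above) produces the required $O(\langle n\rangle^{-1})$ bound. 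This compact-support step is the one genuinely non-routine point.

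For the $SV_2$ statements we argue directly. In (3) a monomial is handled by the binomial expansion (the leading correction drops one degree, which is exactly the $\langle n\rangle^{-1}$ gain), and the indicator of a Boolean combination of the sets $\{n_j>0\},\{n_j<0\}$ has increment supported on a bounded set of $n$, so it is covered by the reduction to large $n$. In (4): for a product we use $fg(n+1)-fg(n)=f(n+1)(g(n+1)-g(n))+g(n)(f(n+1)-f(n))$ together with comparability of consecutive values to replace $|f(n+1)g(n)|$ by $|f(n)g(n)|$; for a reciprocal we compute $1/f(n+1)-1/f(n)=(f(n)-f(n+1))/(f(n)f(n+1))$ when $f(n),f(n+1)\neq0$, while if $f(n)=0$ the no-sign-change/comparability remark forces $f(n+1)=0$ for large $n$ so the increment vanishes there; for $f+g$, $\max(f,g)$, $\min(f,g)$ with $f,g\geq0$ a short case analysis on which function realizes the value (using comparability for large $n$) reduces the increment to that of $f$ or of $g$. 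In (5): $|f|$ is immediate from $\big||f(n+1)|-|f(n)|\big|\leq|f(n+1)-f(n)|$; $\langle f\rangle=(1+f^{2})^{1/2}$ follows by combining (4) (so $1+f^{2}\in SV_2$, nonnegative and $\geq1$) with the power rule below; and $(\max(f,0))^{\lambda}$ follows because the no-sign-change remark gives $\max(f,0)\in SV_2$ directly, again followed by the power rule. The power rule itself, namely $g\geq0$, $g\in SV_2$, $\lambda>0$ imply $g^{\lambda}\in SV_2$, is the mean value theorem on $[g(n),g(n+1)]$ together with $\xi^{\lambda-1}\asymp g(n)^{\lambda-1}$ for the intermediate point $\xi$, the case $g(n)=0$ (hence $g(n+1)=0$ for large $n$) being trivial.

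Finally, the multivariable versions of all five items reduce to the single-variable arguments applied coordinatewise with the other variables frozen; the only thing to check is uniformity of the implied constants, which holds because the hypotheses already posit uniformly bounded $SV$ constants for the one-variable slices, and because freezing variables in a Lipschitz (respectively smooth and eventually constant) function $\phi$ leaves it Lipschitz (respectively smooth and eventually constant) with the same or smaller constants and the same ball $B_R$. I expect the compact-support increment estimate in part (2), together with the careful handling of the degenerate configurations where some $f$ vanishes or changes sign, to be the only places requiring genuine care; everything else is a routine manipulation of the two defining inequalities.
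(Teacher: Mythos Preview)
Your proposal is correct. The paper's own proof of this proposition is simply ``Omitted,'' so there is nothing to compare against; your argument supplies exactly the kind of routine-but-careful verification the author evidently had in mind, with the compact-support trick in part (2) and the ``no sign change for large $n$'' observation for $SV_2$ being the two genuinely nontrivial ingredients.
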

\begin{proof} Omitted.
\end{proof}
\end{definition}
\begin{proposition}\label{propneweqn} We have \begin{equation}\label{neweqn1}(\partial_{t}-\mathrm{i}\partial_{xx})w=\mathcal{H}=\sum_{\mu}C_{\mu}\mathcal{H}_{\mu},\end{equation}where $|C_{\mu}|\leq C^{\mu}/\mu!$ with some absolute constant $C$, and $\mathcal{H}_{\mu}=\mathcal{H}_{\mu}^{2}+\mathcal{H}_{\mu}^{3}+\mathcal{H}_{\mu}^{4}$. The $\mathcal{H}^{j}$ terms can be written as
\begin{equation}\label{neweqn3}(\mathcal{H}_{\mu}^{2})_{n_{0}}=\mathrm{i}\sum_{n_{1}+n_{2}+m_{1}+\cdots +m_{\mu}=n_{0}}\min\{\langle n_{0}\rangle,\langle n_{1}\rangle,\langle n_{2}\rangle\}\cdot\Theta_{\mu}^{2}\prod_{l=1}^{2}u_{n_{l}}\prod_{i=1}^{\mu}\frac{u_{m_{i}}}{m_{i}};\end{equation}
\begin{equation}\label{neweqn4}(\mathcal{H}_{\mu}^{j})_{n_{0}}=\mathrm{i}\sum_{n_{1}+\cdots +n_{j}+m_{1}+\cdots +m_{\mu}=n_{0}}\Theta_{\mu}^{j}\prod_{l=1}^{j}u_{n_{l}}\prod_{i=1}^{\mu}\frac{u_{m_{i}}}{m_{i}},\,\,\,\,\,\,\,\,j\in\{3,4\},\end{equation}for positive $n_{0}$. For each $(\mu,j)$, the function \begin{equation}\Theta_{\mu}^{j}=\Theta_{\mu}^{j}(n_{0},n_{1},\cdots,n_{j},m_{1},\cdots,m_{\mu}),\,\,\,\,\,\,\,\,j\in\{2,3,4\},\nonumber\end{equation} is a linear combination of products $\mathbf{1}_{E}\cdot\Theta$, where $E$ is some set generated by the sets $\{n_{h}+n_{l}=0\}, 1\leq h<l\leq j$, and $\Theta$ is\footnote[1]{Later we may slightly abuse the notation and use the term ``$\Theta$ factor'' or ``$\Theta^{j}$ factor'' to refer to both the $\Theta_{\mu}^{j}$ and the $\Theta$ here. Moreover, in the $SV_{1}$ bound and other estimates (see for example (\ref{neglect}) below) we may pick up factors depending on $\mu$; but they are clearly at most $O(1)^{\mu}$ so can be safely absorbed into the $C_{\mu}$ factor.} \emph{slowly varying of type $1$}; note in particular they are \emph{real valued}. Moreover we have the following:

(i) When $j=2$, $\Theta$ is nonzero only when \begin{equation}\label{neglect}\max_{i}\langle m_{i}\rangle\ll (\mu+1)^{-2}\min\big\{\langle n_{0}\rangle,\langle n_{1}\rangle,\langle n_{2}\rangle\big\}.\end{equation}

(ii) When $j=3$, if $E$ is contained in $\{n_{1}+n_{2}=0\}$ \emph{but not} $\{|n_{1}|=|n_{2}|=|n_{3}|\}$, we must have \begin{equation}|\Theta|\lesssim\min\bigg\{1,\frac{\langle n_{0}\rangle+\langle n_{3}\rangle}{\langle n_{1}\rangle}\bigg\}.\end{equation} The same holds for other permutations of $(1,2,3)$.

(iii) When $j=4$, we have
\begin{equation}|\Theta|\lesssim\big(\max_{0\leq l\leq 4}\langle n_{l}\rangle\big)^{-1}.\end{equation}
\end{proposition}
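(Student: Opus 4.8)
The plan is to process the six families of terms $\mathcal{K}^{j}$ appearing in (\ref{000}) one at a time, decomposing each frequency sum according to two dichotomies. The first is whether one of the resonance relations $n_{h}+n_{l}=0$ holds; the second is whether all the factors $m_{i}$ (which always enter with a weight $1/m_{i}$ inherited from the antiderivative $F$) are genuinely ``low'', meaning $\max_{i}\langle m_{i}\rangle\ll(\mu+1)^{-2}\min\{\langle n_{0}\rangle,\langle n_{1}\rangle,\dots\}$, or whether one or two of them are ``high''. In the base case --- no resonance and all $m_{i}$ low --- I would simply pull $\prod_{i}1/m_{i}$ out of the weight $\Lambda$; what remains is a finite product of factors $\psi(\cdot/N)$ or $\psi^{2}(\cdot/N)$ evaluated at affine combinations of the frequencies, bounded rational factors such as $n_{3}/n_{2}$ or $\frac{n_{0}n_{1}}{|n_{0}|+|n_{1}|}$, sign cutoffs, and the $\psi_{1},\psi_{2}$ factors, all of which are slowly varying of type $1$ or $2$ by Proposition \ref{gene}(2)--(5); restricted to each fixed sign pattern of the $n_{l}$, their product is of the form $\mathbf{1}_{E}\cdot\Theta$ with $\Theta\in SV_{1}$ and real. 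Since the various $\mathcal{K}^{j}$ carry either two $n$-type factors (as $\mathcal{K}^{1}$) or three (as $\mathcal{K}^{2}$), this base case lands in $\mathcal{H}^{2}$ or $\mathcal{H}^{3}$ accordingly, with the $\partial_{x}$ in (\ref{line1}) together with the rewriting (\ref{junior}) producing the $\min\{\langle n_{0}\rangle,\langle n_{1}\rangle,\langle n_{2}\rangle\}$ factor of (\ref{neweqn3}).

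When one (or two) of the $m_{i}$ is high, I would relabel it (or them) as a new big frequency $n_{j+1}$ (or $n_{j+1},n_{j+2}$), absorbing the bounded, and --- on $|m_{i}|\ge1$ --- $SV_{1}$ factor $1/m_{i}$ into the weight; note that this decreases $\mu$ and increases $j$ by the same amount, so that the terms reindex cleanly. This is exactly why $j$ must be allowed to range over $\{2,3,4\}$, and it is this promotion that yields the decay in (iii): the reciprocal of the promoted index is now comparable to the largest frequency present, giving $|\Theta|\lesssim(\max_{l}\langle n_{l}\rangle)^{-1}$. The very definition of ``low'' then gives (i) for the surviving $j=2$ piece. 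For the gain (ii) on the $j=3$ pieces one has to exploit the structure of the gauge: for $\mathcal{K}^{2}$ it comes from the commutator $[Q,P]$, whose weight (\ref{2-3}) is a difference of products of $\psi^{2}$'s times $1/n_{2}$, which a telescoping/mean-value argument converts into $\min\{1,(\langle n_{0}\rangle+\langle n_{3}\rangle)/\langle n_{1}\rangle\}$ on the relevant resonant subset; for $\mathcal{K}^{3}$ it is already encoded in the $\psi_{1}(\cdot/(|n_{0}|+|n_{2}|))$, $\psi_{2}$ splitting of (\ref{3-3})--(\ref{3-62}); and for $\mathcal{K}^{5}$ it follows in the same way from the double commutator $P^{\mu_{1}}QP^{\mu_{2}}QP^{\mu_{3}}$.

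When a resonance relation $n_{h}+n_{l}=0$ does hold, I would set $n_{h}=k=-n_{l}$, use $u_{n_{h}}u_{n_{l}}=|u_{k}|^{2}$, and split off the ``fully degenerate'' piece obtained by freezing every remaining variable to $0$. Summed with their coefficients, these degenerate pieces are precisely the $\mathcal{R}^{j}$ of Section \ref{gaugetransform2}, and since $\sum_{j}\mathcal{R}^{j}=0$ has already been established, they disappear. The remainder is a finite difference in the non-frozen variables; as every constituent of $\Lambda$ is Lipschitz (the $\psi$'s) or $C^{1}$ with derivative $O(\langle\cdot\rangle^{-1})$ on its support (the rational and cutoff factors, via Proposition \ref{gene}), the mean value theorem extracts an extra factor, of the shape $\frac1N\times(\text{affine combination of non-frozen frequencies})$ or $(\text{low frequency})/(\text{large denominator})$, supplying exactly the gains needed in (i)--(iii); the leftover is then reorganized, promoting high $m_{i}$'s as before, into the $\mathcal{H}^{2},\mathcal{H}^{3},\mathcal{H}^{4}$ forms with $\Theta\in SV_{1}$. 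Finally, for the bound $|C_{\mu}|\le C^{\mu}/\mu!$: each coefficient in (\ref{000}) is $O(1)^{\mu}/(2^{\mu}\mu!)$ times a polynomial in the $\mu_{i}$; the number of compositions $\mu=\mu_{1}+\mu_{2}$ or $\mu=\mu_{1}+\mu_{2}+\mu_{3}$ and the sums over the index $i$ cost only $\mathrm{poly}(\mu)$; and every per-$\mu$ estimate on $\Theta$ and on the extracted gains loses at most $O(1)^{\mu}$, all of which is absorbed into $C^{\mu}$.

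The hard part will not be any single estimate but the volume of case analysis: each of the six $\mathcal{K}^{j}$ families has to be carried through all four combinations of the two dichotomies, the algebraic rewritings (such as (\ref{junior}) and the $\psi_{1}/\psi_{2}$ split) must be tracked so that the weights genuinely collapse to $\mathbf{1}_{E}\cdot\Theta$, and --- most delicately --- one must verify that neither the finite-difference step nor the $m_{i}$-promotion step destroys the $SV_{1}$ property or the real-valuedness while still producing the precise gains $\min\{\langle n_{0}\rangle,\langle n_{1}\rangle,\langle n_{2}\rangle\}$, $\min\{1,(\langle n_{0}\rangle+\langle n_{3}\rangle)/\langle n_{1}\rangle\}$ and $(\max_{l}\langle n_{l}\rangle)^{-1}$ of (i), (ii) and (iii).
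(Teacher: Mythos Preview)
Your outline matches the paper's approach closely: the same resonant/non-resonant dichotomy, the same promotion of high $m_i$'s to $n$-variables, the same use of $\sum_j\mathcal{R}^j=0$ with mean-value control of the remainders, and the same appeal to Proposition~\ref{gene} for the $SV_1$ property. Two points need correction, however.

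First, the factor $n_3/n_2$ appearing in the $\mathcal{K}^2$ weight (\ref{2-3}) is \emph{not} a ``bounded rational factor'', so your base-case description does not go through for $\mathcal{K}^2$ as stated. This is precisely where the paper spends its longest non-resonant case analysis: one splits according to which of $\langle n_0\rangle,\langle n_1\rangle,\langle n_3\rangle$ is smallest, relegating $n_2$ to an $m$-variable when possible, and in the hardest sub-case $\langle n_0\rangle\ll\langle n_1\rangle\sim\langle n_3\rangle$ the key identity is
\[
n_3\tau_j(n_3)+n_1\tau_j(n_1)=(n_1+n_3)\,\tau_j(n_3)+n_1\bigl(\tau_j(n_1)-\tau_j(n_3)\bigr),
\]
which trades the unbounded $n_1,n_3$ for the small quantity $n_1+n_3=n_0-n_2-\sum_i m_i$ together with an $SV_1$ finite difference; see the discussion around (\ref{n0small}).

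Second, your attribution of the decay in (iii) is off. In the paper $\mathcal{H}^4$ arises \emph{only} from the resonant error terms (resonant contribution minus $\mathcal{R}^j$), never from the non-resonant analysis; the factor $(\max_l\langle n_l\rangle)^{-1}$ is obtained as the product of the subtraction gain $\langle m_i\rangle/\langle k\rangle$ from (\ref{factor00}) with the $1/m_i$ of the promoted variable, not from the bare $1/m_i$ alone. Promoting a high $m_i$ out of a generic $\mathcal{H}^3$ term would give a $\Theta$ that is merely bounded, not decaying.
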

\begin{proof}The estimate on the coefficients $C_{\mu}$, whose choice will be clear from the expressions we have, is elementary based on the factorial decay we have, and the simple observation that
\begin{equation}\frac{(\mu_{1}+\cdots +\mu_{k})!}{\mu_{1}!\cdots \mu_{k}!}\leq k^{\mu_{1}+\cdots +\mu_{k}},\end{equation} where in practice we always have (say) $k\leq30$. Next we shall examine the terms left after the subtraction of resonant ones, and define the $\Theta$ factors. We will first prove the boundedness of $\Theta$ and properties (i), (ii), (iii), and then show that $\Theta\in SV_{1}$.

Before proceeding, let us make one useful observation. If we have a term (temporarily called term of type $R$ for convenience) of type (\ref{neweqn3}) in which the $\Theta$ factor is bounded and is accompanied by some $\mathbf{1}_{E}$ with $E\subset\{n_{1}+n_{2}\neq 0\}$, then we can use a smooth cutoff (similar to $\psi_{1}$ or $\psi_{2}$) to separate the part where (\ref{neglect}) holds, and the part where $\langle m_{i}\rangle\gtrsim(\mu+1)^{-2}\min\{\langle n_{0}\rangle,\langle n_{1}\rangle,\langle n_{2}\rangle\}$ for some $1\leq i\leq \mu$; in the former case we have $\mathcal{H}_{\mu}^{2}$, and in the latter case we promote $m_{i}$ and rename it $n_{3}$ to obtain $\mathcal{H}_{\mu-1}^{3}$ (since here the $\Theta^{3}$ factor is bounded, $n_{1}+n_{2}\neq 0$, and if $n_{l}+n_{3}=0$ for $l\in\{1,2\}$, then the $\Theta^{3}$ factor will have an $n_{3}$ on the denominator, and at most $\langle n_{0}\rangle$ on the numerator). Also we may assume that all the $n_{l}$ and $m_{i}$ variables are nonzero and $\lesssim N$.

The first contribution we need to consider is when none of the equalities we proposed in obtaining the $\mathcal{R}^{j}$ terms hold; these include the contribution from each $\mathcal{K}^{j}$ which we discuss separately.

For the part in $\mathcal{K}_{\mu_{1}i}^{1}$ we have $n_{1}+n_{2}\neq 0$. If $\langle n_{2}\rangle\gtrsim\langle n_{0}\rangle+\langle n_{1}\rangle$, then we have a term of type $R$ and obtain either $\mathcal{H}_{\mu_{1}-1}^{2}$ or $\mathcal{H}_{\mu_{1}-2}^{3}$. Now if $\langle n_{2}\rangle \ll\langle n_{0}\rangle+\langle n_{1}\rangle$, then $\langle m_{j}\rangle \gtrsim\langle n_{0}\rangle +\langle n_{1}\rangle$ for at least one $j$, so we can promote that $m_{j}$ and rename it $n_{3}$ to obtain $\mathcal{H}_{\mu_{1}-2}^{3}$, due to a similar argument as above and the restriction $n_{1}+n_{2}\neq 0$.

For the part of $\mathcal{K}_{\mu_{1}\mu_{2}}^{2}$, no $n_{h}+n_{l}=0$ happens. In the expression (\ref{2-3}), first assume $\langle n_{3}\rangle$ (or, by symmetry, $\langle n_{1}\rangle$) is $\lesssim\min\{\langle n_{0}\rangle ,\langle n_{1}\rangle,\langle n_{3}\rangle\big\} $, then the first two terms in the bracket on the right hand side of (\ref{2-3}) contributes at most $O(\langle n_{3}\rangle)$, so for this term we may relegate $n_{2}$ (rename it by some $m_{i}$) to obtain a term of type $R$. For the last two terms in the bracket, the contribution is at most $N^{-1}\langle n_{1}\rangle(\langle n_{2}\rangle+\langle n_{3}\rangle)$, which is a sum of two terms. One of them is at most $\langle n_{3}\rangle$ and can be treated as above; the other can be cancelled by the $n_{2}^{-1}$ factor and we get $\mathcal{H}_{\mu_{12}}^{3}$ (since we have pre-assumed that no $n_{h}+n_{l}$ can be zero). 

Next suppose (say) $\langle n_{0}\rangle\ll\langle n_{3}\rangle\ll\langle n_{1}\rangle$. In this case the first two terms in bracket on the right hand side of (\ref{2-3}) contributes at most $\langle n_{3}\rangle$, and at least one of $\langle m_{j}\rangle$ or $\langle n_{2}\rangle$ must be $\gtrsim\langle n_{1}\rangle$ here, so we get $\mathcal{H}_{\mu_{12}-1}^{3}$ after making appropriate promotion or relegations; the last two terms contribute at most $N^{-1}\langle n_{1}\rangle(\langle n_{2}\rangle+\langle n_{3}\rangle)$, which is bounded either by $\langle n_{3}\rangle$ (which can be treated the same way as above), or $N^{-1}\langle n_{1}\rangle\langle n_{2}\rangle$ (which is cancelled by the $n_{2}^{-1}$ to obtain $\mathcal{H}_{\mu_{12}-1}^{3}$).

The only remaining possibility is $\langle n_{0}\rangle\ll \langle n_{1}\rangle\sim\langle n_{3}\rangle$. we may write
\begin{equation}\tau_{1}(n)=\prod_{i=1}^{\mu_{2}}\psi^{2}\bigg(\frac{n+m_{\mu_{1}+i,\mu_{12}}}{N}\bigg)\end{equation} and \begin{equation}\tau_{2}(n)=\psi^{2}\bigg(\frac{n+n_{2}+m_{\mu_{1}+1,\mu_{12}}}{N}\bigg)\tau_{1}(n),\end{equation} so the net contribution in the bracket will be 
\begin{equation}(n_{3}\tau_{2}(n_{3})+n_{1}\tau_{2}(n_{1}))-\psi^{2}(n_{3}\tau_{1}(n_{3})+n_{1}\tau_{1}(n_{1}))\nonumber\end{equation} with some factor $\psi$. Since we can write 
\begin{equation}\label{n0small}n_{3}\tau_{j}(n_{3})+n_{1}\tau_{j}(n_{1})=(n_{1}+n_{3})\tau_{j}(n_{3})+n_{1}(\tau_{j}(n_{1})-\tau_{j}(n_{3})),\end{equation} and $n_{1}+n_{3}$ is a linear combination of $n_{0}$, $n_{2}$ and $m_{i}$, the first term on the right hand side of (\ref{n0small}) will be bounded either by $\langle n_{0}\rangle$ (in which case we have a term of type $R$),or by $\langle n_{2}\rangle$ (in which case we obtain $\mathcal{H}_{\mu_{12}-1}^{3}$), or by some $\langle m_{j}\rangle$ (in which case we relegate $n_{2}$ and promote $m_{j}$ to obtain $\mathcal{H}_{\mu_{12}-1}^{3}$ under the restriction $\langle n_{1}\rangle\sim\langle n_{3}\rangle$). The contribution of the second term will be bounded by $N^{-1}\langle n_{1}\rangle$ times either $\langle n_{0}\rangle$ (in which case we have a term of type $R$), $\langle n_{2}\rangle$ (in which case we have a part of $\mathcal{H}_{\mu_{12}-1}^{3}$), or some $\langle m_{j}\rangle$ (in which case we relegate $n_{2}$ and promote $m_{j}$ to get $\mathcal{H}_{\mu_{12}-1}^{3}$).

For the part of $\mathcal{K}_{\mu_{1}\mu_{2}0}^{3}$ we have $n_{1}+n_{2}\neq 0$. By the assumptions about this term, if $\langle n_{0}\rangle\gtrsim\langle n_{2}\rangle$, we will have a term of type $R$. Now assume $\langle n_{0}\rangle\ll\langle n_{2}\rangle$, we can extract from the bracket in (\ref{3-3}) a factor of $n_{0}/N$ or $m_{i}/N$. If we have an $n_{0}/N$ factor then the net $\Theta$ factor will be $\lesssim\langle n_{0}\rangle$ and we again have a term of type $R$; if we have an $m_{i}/N$ factor then we may cancel this with the $1/m_{i}$ factor, promote this $m_{i}$ and rename it $n_{3}$, to obtain $\mathcal{H}_{\mu_{12}-2}^{3}$. Notice that in this case the $\Theta$ factor is bounded by $\langle n_{2}\rangle/N\lesssim 1$, $n_{1}+n_{2}\neq 0$, and if $n_{2}+n_{3}=0$, we must have $\langle n_{1}\rangle \gtrsim\langle n_{2}\rangle$.

For the part of $\mathcal{K}_{\mu_{1}\mu_{2}i}^{3}$ we have $n_{2}+n_{3}\neq 0$. We claim that this part is $\mathcal{H}_{\mu_{12}-1}^{3}$. In fact, this will be the case if both $n_{1}+n_{3}$ and $n_{1}+n_{2}$ are nonzero since the $\Theta$ factor is bounded; if $n_{1}+n_{3}=0$, then from the assumptions about the $\mathcal{K}_{\mu_{1}\mu_{2}i}^{3}$ term we have $\langle n_{0}\rangle\gtrsim\langle n_{3}\rangle$, so we also have $\mathcal{H}_{\mu_{12}-1}^{3}$; if $n_{1}+n_{2}=0$, then either $\langle n_{0}\rangle$ or $\langle n_{3}\rangle$ must be $\gtrsim\langle n_{1}\rangle$, so we still have $\mathcal{H}_{\mu_{12}-1}^{3}$.
 
For the part of $\mathcal{K}_{\mu_{1}\mu_{2}\mu_{3}}^{5}$, no $n_{h}+n_{l}=0$ happens. In this case the $\Theta$ factor is clearly bounded, thus we obtain $\mathcal{H}_{\mu_{13}}^{3}$.

Next, we have the ``error term'' which is some resonant contribution in $\mathcal{K}^{j}$ (for example, the contribution in $\mathcal{K}_{\mu_{1}i}^{1}$ where $n_{1}+n_{2}=0$) minus the corresponding $\mathcal{R}^{j}$. In this term we may specify some $k$ (for example, in the term corresponding to $\mathcal{K}_{\mu_{1}i}^{1}$ we will have $n_{1}=-k$ and $n_{2}=k$). From the computations made before, we can see that the corresponding terms may be written in an appropriate form so that the $\Theta$ factor is bounded even without subtracting $\mathcal{R}^{j}$. Note that here we may need to promote some $m_{i}$ so that we can include $m_{i}^{-1}$ in $\Theta$ to cancel certain factors (for example when dealing with $\mathcal{K}_{\mu_{1}i}^{1}$). Therefore, \emph{before subtracting the $\mathcal{R}^{j}$ terms}, the resonant contributions can be written in the form of (\ref{neweqn4}), with $j=3$, the $\Theta$ factor bounded, and (say) $n_{1}=-k$, $n_{2}=k$. In particular, if $\langle n_{0}\rangle +\langle n_{3}\rangle\gtrsim \langle k\rangle$, we will obtain $\mathcal{H}^{3}$ and subtraction of $\mathcal{R}^{j}$ will not affect this. Now we assume $\langle n_{0}\rangle +\langle n_{3}\rangle\ll \langle k\rangle$.

After the subtraction of the $\mathcal{R}^{j}$ factors, the $\Theta$ will remain bounded; moreover, it can be checked case-by-case that in the remaining term, we gain an additional factor of \begin{equation}\label{factor00}\min\bigg\{1,\frac{1}{\langle k\rangle}\bigg(\langle n_{0}\rangle+\langle n_{3}\rangle+\sum_{i=1}^{\mu}\langle m_{i}\rangle\bigg)\bigg\},\end{equation} if $n_{1}=-k$ and $n_{2}=k$. For example, say we are replacing $\prod_{j}\psi^{2}((k+\sigma_{j})/N)-\prod_{j}\psi^{2}((k-\sigma_{j})/N)$ by $4\theta^{2\mu-1}\eta\sum_{j}\frac{\sigma_{j}}{N}$, then the error term we introduce is at most $O(N^{-2}\langle\sigma_{j}\rangle^{2})$, which is then at most $O(N^{-2}\langle n_{l}\rangle^{2})$ or $O(N^{-2}\langle m_{i}\rangle^{2})$ for some $i$ and $l\in\{0,3\}$. Since this contribution can be cancelled by other factors to produce a bounded $\Theta$ even if we replace the power of $2$ by $1$ (which will be the case if we do not subtract the $\mathcal{R}^{j}$), we will have in the error term an additional factor as in (\ref{factor00}). The other factors are treated in the same way, provided that in some cases we replace the $N$ on the denominator by something larger than $\langle k\rangle$. This guarantees that either we obtain $\mathcal{H}^{3}$, or we may promote some $m_{i}$ to obtain $\mathcal{H}^{4}$.

Next, notice that in obtaining $\mathcal{R}^{2.1}$, we have discarded the last two terms in the bracket on the right hand side of (\ref{factor2.1}). However, they add up to produce a factor of at most $N^{-1}\langle n_{1}\rangle$, thus they can be included in $\mathcal{H}^{3}$. Finally, there are terms where at least \emph{two} of the proposed equalities hold (these term appear due to inclusion-exclusion principle), for example we have the term where $n_{1}+n_{2}=n_{2}+n_{3}=0$ in $\mathcal{K}_{\mu_{1}\mu_{2}\mu_{3}}^{5}$, but by the discussion above, the corresponding $\Theta$ factor will be bounded, thus they can also be included in $\mathcal{H}^{3}$.

Now we only need to show $\Theta\in SV_{1}$. This will follow from Proposition \ref{gene}, since it can be checked that all the $\Theta$ factors are formed using rules (1) through (5) in that proposition, with rule (2) used at least once (in particular, all the cut-off factors we introduce will be in $SV_{1}$).
\end{proof}
\section{The gauge transform III: The final substitution}\label{gaugetransform3} Starting from equations (\ref{neweqn1}) and (\ref{neweqn3}), we need to make further substitutions before we can state and prove the main estimates. Here we introduce one more notation, namely when we write $g^{\omega}$ for a function $g$, where $\omega\in\{-1,1\}$, this will mean $g$ if $\omega=-1$, and $\overline{g}$ if $\omega=1$. Also in the following, we will use the letter $\upsilon$ to represent a function that can be either $u$ or $v$.
\subsection{From $u$ to $w$} Recall that $v=Mu$ and $w=\mathbb{P}_{+}v$, we have \begin{equation}u=\sum_{\mu}\frac{(-\mathrm{i})^{\mu}}{2^{\mu}\mu!}P^{\mu}v,\nonumber\end{equation} which then implies, for $n>0$,
\begin{equation}\label{possub}u_{n}=\sum_{\mu}\frac{(-1)^{\mu}}{2^{\mu}\mu!}\sum_{n_{1}+m_{1}+\cdots +m_{\mu}=n}\Psi_{\mu}\cdot v_{n_{1}}\prod_{i=1}^{\mu}\frac{u_{m_{i}}}{m_{i}},\end{equation} where \begin{equation}\Psi_{\mu}=\Psi_{\mu}(n,n_{1},m_{1},\cdots,m_{\mu})\nonumber\end{equation} is a product of $\psi$ factors. When $n<0$, since $u_{n}=\overline{u_{-n}}$, we have instead
\begin{equation}\label{negsub}u_{n}=\sum_{\mu}\frac{1}{2^{\mu}\mu!}\sum_{n_{1}+m_{1}+\cdots +m_{\mu}=n}\Psi_{\mu}\cdot (\overline{v})_{n_{1}}\prod_{i=1}^{\mu}\frac{u_{m_{i}}}{m_{i}},\end{equation} where we note $(\overline{v})_{n}=\overline{v_{-n}}$. By replacing each $u_{n_{l}}$ in (\ref{neweqn3}) with one of the above expressions, we can prove
\begin{proposition}\label{intereqn} We have \begin{equation}\label{intereqn0}(\partial_{t}-\mathrm{i}\partial_{xx})w=\mathcal{J}=\sum_{\mu}C_{\mu}\mathcal{J}_{\mu},\end{equation}where $|C_{\mu}|\lesssim C^{\mu}/\mu!$, the nonlinearity is written as\begin{equation}\mathcal{J}_{\mu}=\sum_{j\in\{2,3,3.5,4,4.5\}}\sum_{\omega\in\{-1,1\}^{[j]}}\mathcal{J}_{\mu}^{\omega j}.\end{equation} The terms are then
\begin{equation}\label{3term}(\mathcal{J}_{\mu}^{\omega j})_{n_{0}}=\mathrm{i}\sum_{n_{1}+\cdots +n_{j}+m_{1}+\cdots +m_{\mu}=n_{0}}\phi_{\mu}^{j}\prod_{l=1}^{j}(w^{\omega_{l}})_{n_{l}}\prod_{i=1}^{\mu}\frac{u_{m_{i}}}{m_{i}}\end{equation} for $j\in\{2,3\}$;
\begin{equation}\label{4.5term}(\mathcal{J}_{\mu}^{\omega j})_{n_{0}}=\mathrm{i}\sum_{n_{1}+\cdots+n_{[j]}+m_{1}+\cdots +m_{\mu}=n_{0}}\phi_{\mu}^{j}\prod_{l=1}^{[j]}(\upsilon^{\omega_{l}})_{n_{l}}\prod_{i=1}^{\mu}\frac{u_{m_{i}}}{m_{i}}\end{equation}for $j\in\{3.5,4,4.5\}$. Here the real valued weights
\begin{equation}\phi_{\mu}^{j}=\phi_{\mu}^{j}(n_{0},n_{1},\cdots,n_{[j]},m_{1},\cdots,m_{\mu}),
\end{equation} where $j\in\{2,3,3.5,4,4.5\}$, verify the following.

(i) When $j=2$, we have \begin{equation}|\phi_{\mu}^{2}|\lesssim \min\{\langle n_{0}\rangle,\langle n_{1}\rangle,\langle n_{2}\rangle\},\nonumber\end{equation} also $\phi_{\mu}^{2}$ is nonzero only when \begin{equation}\min\{\langle n_{0}\rangle,\langle n_{1}\rangle,\langle n_{2}\rangle\}\gg(\mu+1)^{2}\max_{i}\langle m_{i}\rangle.\nonumber\end{equation}

(ii) When $j=3$, we have $|\phi_{\mu}^{3}|\lesssim 1$; also when $n_{1}+n_{2}=0$, and neither $n_{1}$ or $n_{2}$ is related to $n_{3}$ by $m$ (here and after, we say two $n$ variables are ``related by $m$'', if their sum of difference belongs to some fixed, finite set of linear combinations of the $m$ variables), we will have 
\begin{equation}\label{bound3.0}\big|\phi_{\mu}^{3}\big|\lesssim \min\bigg\{1,\frac{\langle n_{0}\rangle+\langle n_{3}\rangle}{\langle n_{1}\rangle}\bigg\},\nonumber\end{equation} and the estimate also holds for other permutations of $(1,2,3)$. Also, when all three of $(n_{1},n_{2},n_{3})$ are related by $m$, we are allowed to have $(\upsilon^{\omega_{l}})_{n_{l}}$ instead of $(w^{\upsilon_{l}})_{n_{l}}$ in (\ref{3term}) for $j=3$.

(iii) When $j=3.5$, we have\begin{equation}\label {bound3.5}|\phi_{\mu}^{3.5}|\lesssim\frac{\min\{\langle n_{0}\rangle,\langle n_{1}\rangle,\langle n_{2}+n_{3}\rangle\}}{\max\{\langle n_{2}\rangle,\langle n_{3}\rangle\}}.\end{equation} Moreover, we can replace the $\upsilon$ in $(\upsilon^{\omega_{1}})_{n_{1}}$ in (\ref{4.5term}) for $j=3.5$ by $w$; also, if \begin{equation}(\max_{0\leq l\leq 3}\langle n_{l}\rangle)^{\frac{1}{2}}\ll\min_{0\leq l\leq 3}\langle n_{l}\rangle,\nonumber\end{equation} then $n_{2}$ and $n_{3}$ must have opposite sign.

(iv) When $j=4$, we have \begin{equation}|\phi_{\mu}^{4}|\lesssim \big(\max_{0\leq l\leq 4}\langle n_{l}\rangle^{\frac{1}{20}}+\min_{1\leq l\leq 4}\langle n_{l}\rangle\big)^{-1}.\nonumber\end{equation}

(v) When $j=4.5$, we have $n_{1}+n_{2}\neq 0$, and\begin{equation}|\phi_{\mu}^{4.5}|\lesssim \big(\langle n_{3}\rangle+\langle n_{4}\rangle\big)^{-1},\nonumber\end{equation} and this factor is nonzero only if \begin{equation}\max\{\langle n_{3}\rangle,\langle n_{4}\rangle\}+\max_{i}\langle m_{i}\rangle\ll(\max\{\langle n_{0}\rangle,\langle n_{1}\rangle,\langle n_{2}\rangle\})^{\frac{1}{10}};\nonumber\end{equation} also, whenever \begin{equation}\langle n_{l}\rangle\gtrsim(\max\{\langle n_{0}\rangle,\langle n_{1}\rangle,\langle n_{2}\rangle\})^{\frac{1}{10}}\nonumber\end{equation}for some $l\in\{1,2\}$, we can replace the $\upsilon$ in $(\upsilon^{\omega_{l}})_{n_{l}}$ in (\ref{4.5term}) for $j=4.5$ by $w$.

(vi) When $j=3$, suppose $n_{0}=n_{1}=n, -n_{2}=n_{3}=k$, and $\langle k\rangle \ll(\mu+3)^{-11}\langle n\rangle$, then the $\phi_{\mu}^{3}$ factor will be a function of $n$, $k$ and other variables. This function can then be divided into two parts, with the first part satisfying
\begin{equation}\label{ysx}|\phi_{\mu}^{3}(n,k,m_{1},\cdots,m_{\mu})|\lesssim\frac{\min\{\langle n\rangle,\langle k\rangle\}}{\max\{\langle n\rangle,\langle k\rangle\}},\end{equation} and the second part satisfying \begin{equation}\label{sbx}|\phi_{\mu}^{3}(n,k,m_{1},\cdots,m_{\mu})-\phi_{\mu}^{3}(n+1,k,m_{1},\cdots,m_{\mu})|\lesssim\langle n\rangle^{-1}.\end{equation}
\end{proposition}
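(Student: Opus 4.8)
The plan is to carry out the substitution of (\ref{possub})–(\ref{negsub}) into each of the terms $\mathcal{H}_\mu^2, \mathcal{H}_\mu^3, \mathcal{H}_\mu^4$ of Proposition \ref{propneweqn} and to track how the various structural features of the $\Theta$ factors are transformed. The combinatorial bookkeeping is routine; the content is entirely in the bounds (i)–(vi), so I would organize the argument term by term. First I would fix $j$ and a choice of which $u_{n_l}$ in $\mathcal{H}_\mu^j$ gets replaced by a power series in $w$ (or $\overline w$) times $u/m$ factors; each substitution of (\ref{possub}) for a positive-frequency $u_{n_l}$ (or (\ref{negsub}) for a negative one) turns one $n_l$ into a new ``output'' frequency $n_l'$ carried by $w^{\omega_l}$, plus a batch of new $m$-variables absorbed into the $\prod u_{m_i}/m_i$ product, all multiplied by $\Psi_\mu$, which is a product of $\psi$-cutoffs and hence trivially in $SV_1$ and bounded. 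Since $\Theta_\mu^j\in SV_1$ and $\Psi$ is a product of $\psi$'s, the composite weight $\phi_\mu^j$ is again in $SV_1$ by Proposition \ref{gene}; this handles the regularity/real-valuedness claims and also (by rule (2) of Proposition \ref{gene}) the $SV_1$ cutoffs introduced when splitting terms. The factorial/combinatorial bound $|C_\mu|\lesssim C^\mu/\mu!$ follows from the multinomial estimate $(\mu_1+\cdots+\mu_k)!/(\mu_1!\cdots\mu_k!)\le k^{\mu_1+\cdots+\mu_k}$ exactly as in the proof of Proposition \ref{propneweqn}.

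For the size bounds I would proceed as follows. For $j=2$: since each $u_{m_i}$ introduced in the substitution carries a frequency that is by construction small (it comes from a $P$-power acting on $v$, hence $\lesssim$ the output frequency after the cutoffs), the separation-of-scales property (\ref{neglect}) of $\Theta_\mu^2$ is inherited, with the $(\mu+1)^{-2}$ worsened only by an $O(1)^\mu$ factor that gets absorbed into $C_\mu$; and the factor $\min\{\langle n_0\rangle,\langle n_1\rangle,\langle n_2\rangle\}$ is stable because the new $n_l'$ differs from $n_l$ by a sum of small $m$'s, so $\langle n_l'\rangle\sim\langle n_l\rangle$ in the relevant regime. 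For $j=3$: the bound $|\phi_\mu^3|\lesssim 1$ is immediate; the refined bound when $n_1+n_2=0$ with $n_1,n_2$ not related to $n_3$ by $m$ is the image of property (ii) of Proposition \ref{propneweqn} — one must check that substitution does not destroy the relation "$n_1+n_2=0$'', which is the point where the clause "neither $n_1$ nor $n_2$ related to $n_3$ by $m$'' and the possibility of replacing $w$ by $\upsilon$ when all three are related by $m$ come in, and I would argue this by cases on which of $n_1,n_2,n_3$ was substituted. For $j=4$ the claim $|\phi_\mu^4|\lesssim(\max_l\langle n_l\rangle^{1/20}+\min_l\langle n_l\rangle)^{-1}$ follows from property (iii) of Proposition \ref{propneweqn} ($|\Theta|\lesssim(\max\langle n_l\rangle)^{-1}$) together with the fact that a promoted $m_i$ renamed $n_3$ or $n_4$ satisfies $\langle m_i\rangle\gtrsim(\mu+1)^{-2}(\max\langle n_l\rangle)$ in the relevant splitting, which converts the $\max^{-1}$ gain into the stated mixed form after accounting for the $\mu$-loss with a small power.

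The genuinely new terms are $j=3.5$ and $j=4.5$, which arise precisely when, in substituting into $\mathcal{H}_\mu^3$ or into the quadratic term $\mathcal{H}_\mu^2$, one of the three output frequencies is itself of $w$-type but paired in a "shifted'' way; (\ref{bound3.5}) and the bound in (v) record the gain coming from the $\partial_x^{-1}$-type denominators $1/n_2$ or $1/(\langle n_3\rangle+\langle n_4\rangle)$ that survive the substitution, combined with the smallness of the promoted frequencies. I would obtain these by the same relegation/promotion dichotomy used in the proof of Proposition \ref{propneweqn}: whenever the $\Theta$-factor carries a spare power $\langle m_i\rangle/N$ or $\langle n_l\rangle/N$ one either cancels it against a denominator to lower $\mu$, or bounds it by the $\min$ appearing in (\ref{bound3.5}). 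The sign constraint in (iii) ("$n_2,n_3$ opposite sign when $(\max\langle n_l\rangle)^{1/2}\ll\min\langle n_l\rangle$'') and the support constraint in (v) are inherited from the corresponding resonance structure, since in the non-resonant regime the relevant $\psi^2$-differences vanish unless the frequencies are arranged as claimed.

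Finally, part (vi) is the most delicate point and I expect it to be the main obstacle: it concerns the "diagonal'' contribution $n_0=n_1=n$, $-n_2=n_3=k$ with $\langle k\rangle\ll(\mu+3)^{-11}\langle n\rangle$, which is exactly the regime left over after the miraculous cancellation of Section \ref{gaugetransform2} but before the $z=\mathbb{P}_-(Mu)$ correction. Here the substitution mixes the (already subtracted) resonant remainder with the $\Psi$-cutoffs, and one must extract from $\phi_\mu^3$ a decomposition into a "diagonal'' part obeying the ratio bound (\ref{ysx}) and a "smooth in $n$'' part obeying the difference bound (\ref{sbx}); I would do this by isolating, inside $\phi_\mu^3$, the factor that depends on $n$ only through the cutoffs $\psi(n/N)$ and the ratio $k/(\langle n\rangle+\langle k\rangle)$-type expressions, using that such ratios are in $SV_2$ (Proposition \ref{gene}(4),(5)) and hence their $n$-difference is $O(\langle n\rangle^{-1})$ times their size, which is $\lesssim\langle k\rangle/\langle n\rangle\lesssim 1$. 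The part where a $\psi'$-type derivative has been produced (i.e. the "$\eta$'' terms of Section \ref{gaugetransform2}) is the one that must be put into the first piece and bounded by $\langle k\rangle/\langle n\rangle$ via the frequency localization $\langle k\rangle\sim N$ forced by $\psi'(k/N)\ne 0$; the remaining part is genuinely slowly varying in $n$ and gives (\ref{sbx}). Care is needed to ensure the split is consistent across the whole power series in $\mu$ and that the $\mu$-dependence ($(\mu+3)^{-11}$) is strong enough to absorb the combinatorics — this matching of the smallness exponent against the number of terms is where I would be most careful.
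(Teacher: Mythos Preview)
Your outline captures the overall shape of the argument but misses the central difficulty, which is not the inheritance of the old bounds but the appearance of \emph{new} resonances created by the substitution. When you replace $u_{n_l}$ by (\ref{possub})/(\ref{negsub}), the output frequency becomes $n_l'$ with $n_l=n_l'+\sum_i(m^l)_i$. Even on the region of $\mathcal{H}^j$ where no two of the original $n_l$ sum to zero, it can happen that $n_h'+n_l'=0$ for some pair $h\neq l$; these are precisely the terms for which the $\mathcal{J}^3$ bound in (ii) fails and which cannot be absorbed into $\mathcal{J}^{3.5}$, $\mathcal{J}^{4}$ or $\mathcal{J}^{4.5}$ by a simple promotion. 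The paper disposes of them by a genuine cancellation: after restricting to $\langle(m^h)_i\rangle,\langle(m^l)_i\rangle\ll(\mu+1)^{-2}\langle k\rangle$ (the complement being of ``type $A$'' and handled separately), one replaces the $\Psi_{\mu_h}\Psi_{\mu_l}$ factors and the $n_h,n_l$ arguments in $\Theta$ by quantities depending only on $\mu_h+\mu_l$ (the error again being of type $A$, by $SV_1$), and then the remaining sum over $\mu_h$ with $\mu_h+\mu_l$ fixed vanishes identically because $\sum_{\mu_1+\mu_2=\mu>0}(-1)^{\mu_1}/(\mu_1!\mu_2!)=0$. Your proposal does not mention this mechanism, and without it the $n_h'+n_l'=0$ contributions are simply uncontrolled.

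Relatedly, you underestimate the role of what the paper isolates as Lemma~\ref{onlyclaim} (type $A$ terms). The terms $\mathcal{J}^{3.5}$ and $\mathcal{J}^{4.5}$ are not produced by ``$\partial_x^{-1}$-type denominators surviving the substitution''; they come exactly from promoting an $(m^l)_i$ in a term whose weight carries the extra factor $\min\{1,\langle(m^l)_i\rangle/(\langle n_l\rangle+\langle n_l'\rangle)\}$, which is what type $A$ means. The bound (\ref{bound3.5}), the opposite-sign constraint in (iii), and the structure in (v) all drop out of the case analysis inside that lemma, including a further substitution of $u_{n_3}$ when $n_2',n_3$ have the same sign. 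Your sketch of (vi) is also off: the split is not between ``$\eta$-terms'' and the rest (those were already cancelled in Section~\ref{gaugetransform2}), but between the $\mathcal{J}^3$ pieces arising from type $A$ terms, which one checks satisfy (\ref{ysx}) directly in the proof of Lemma~\ref{onlyclaim}, and the remaining pieces, whose weights are honest products of $SV_1$ functions and hence satisfy (\ref{sbx}).
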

\begin{proof} We will first prove (i) through (v) as well as (\ref{ysx}); the proof of (\ref{sbx}) will be left to the end. Since each $\mathcal{H}^{4}$ term is also a $\mathcal{J}^{4}$ term, we only need to consider the expressions (\ref{neweqn3}) and (\ref{neweqn4}) with $j\in\{2,3\}$. We replace each $u_{n_{l}}$, where $1\leq l\leq j$, by either (\ref{possub}) or (\ref{negsub}), depending on whether $n_{l}$ is positive or negative, to obtain
\begin{equation}\mathcal{H}_{\mu_{0}}^{j}=\sum_{\omega;\mu_{1},\cdots,\mu_{j}}C_{\mu_{0}}\frac{\omega_{1}^{\mu_{1}}\cdots\omega_{j}^{\mu_{j}}}{2^{\mu_{1j}}\mu_{1}!\cdots \mu_{j}!}\mathcal{H}_{\mu_{0}\cdots\mu_{j}}^{\omega j}\end{equation} for all $\mu_{0}$ and $j\in\{2,3\}$, where $\omega=(\omega_{1},\cdots,\omega_{j})\in\{-1,1\}^{j}$, and
\begin{equation}\label{neweqn5}(\mathcal{H}_{\mu_{0}\cdots\mu_{j}}^{\omega j})_{n_{0}}=\sum_{\mathbf{w}\in V_{n_{0},\mu_{0}\cdots\mu_{j}}^{\omega j}}\Theta_{\mathbf{w}}^{\mu_{0}\cdots\mu_{j}j}\prod_{l=1}^{j}(v^{\omega_{l}})_{n_{l}'}\prod_{l=0}^{j}\prod_{i=1}^{\mu_{l}}\frac{u_{(m^{l})_{i}}}{(m^{l})_{i}}.\end{equation} Here the frequency set
\begin{eqnarray}\label{weighttt}
V_{n_{0},\mu_{0}\cdots\mu_{j}}^{\omega j} &=&\big\{\mathbf{w}=\big((n_{l},n_{l}')_{1\leq l \leq j},((m^{l})_{i})_{1\leq i\leq \mu_{l};1\leq l\leq j}\big):\\
&&n_{l}=n_{l}'+(m^{l})_{1\mu_{l}},\omega_{l}n_{l}<0;\nonumber\\
&&n_{1}+\cdots +n_{j}+(m^{0})_{1\mu_{0}}=n_{0}\big\}.\nonumber
\end{eqnarray} note that the free variables are $n_{l}'$ and $(m^{l})_{i}$, and they satisfy a constraint
\begin{equation}\sum_{l=1}^{j}n_{l}'+\sum_{l=0}^{j}\sum_{i=1}^{\mu_{l}}(m^{l})_{i}=n_{0}\nonumber\end{equation} as well as several inequalities. Also the weight is
\begin{eqnarray}\label{coef}\Theta_{\mathbf{w}}^{\mu_{0}\mu_{1}\mu_{2}2}&=&\Theta_{\mu_{0}}^{2}(n_{0},n_{1},n_{2},(m^{0})_{1},\cdots,(m^{0})_{\mu_{0}})\times\\
&\times&\min_{0\leq l\leq 2}\langle n_{l}\rangle\cdot\prod_{l=1}^{j}\Psi_{\mu_{l}}(n_{l},n_{l}',(m^{l})_{1},\cdots,(m^{l})_{\mu_{l}});\nonumber\end{eqnarray}
\begin{eqnarray}\label{coef2}\Theta_{\mathbf{w}}^{\mu_{0}\cdots\mu_{3}3}&=&\Theta_{\mu_{0}}^{3}(n_{0},\cdots,n_{3},(m^{0})_{1},\cdots,(m^{0})_{\mu_{0}})\times\\
&\times&\prod_{l=1}^{j}\Psi_{\mu_{l}}(n_{l},n_{l}',(m^{l})_{1},\cdots,(m^{l})_{\mu_{l}}).\nonumber\end{eqnarray} 
Our argument will be an enumerative examination of all the possible terms, and this can be greatly simplified with the following lemma, which we will assume for now, and prove after the proof of this main proposition.
\begin{lemma}\label{onlyclaim}
We say a term has type $A$, if it has the form (\ref{neweqn5}), with some factor $\Theta'$ in place of $\Theta_{\mathbf{w}}^{\mu_{0}\cdots\mu_{j}j}$, which is bounded by \begin{equation}\label{add00}|\Theta'|\lesssim\min_{0\leq j\leq 2}\langle n_{j}\rangle\cdot\min\bigg\{1,\frac{\langle (m^{l})_{i}\rangle}{\langle n_{l}\rangle+\langle n_{l}'\rangle}\bigg\},\,\,\,\,\,\,\,\,j=2;\end{equation}\begin{equation}\label{add00}|\Theta'|\lesssim\min\bigg\{1,\frac{\langle (m^{l})_{i}\rangle}{\langle n_{l}\rangle+\langle n_{l}'\rangle}\bigg\},\,\,\,\,\,\,\,\,j=3,\end{equation} for some $l\geq 1$ and $1\leq i\leq \mu_{l}$. Moreover we assume that (1) either there is some $h\neq l$ such that $n_{l}'+n_{h}'=0$, or no $n_{j}'+n_{k}'=0$ regardless whether $j$ or $k$ is equal to $l$; (2) either $(m^{l})_{i}n_{l}'<0$, or the $v$ in $(v^{\omega_{l}})_{n_{l}'}$ is replaced by $w$. Then this term will be $\mathcal{J}^{b}$ for some $b\in\{3,3.5,4,4.5\}$.
\end{lemma}

We now start to analyze the sum (\ref{neweqn5}). Note that the $\Theta_{\mu_{0}}^{2}$ in (\ref{coef}) and the $\Theta_{\mu_{0}}^{3}$ in (\ref{coef2}) are fixed linear combinations of products $\mathbf{1}_{E}\cdot\Theta$ (recall Proposition \ref{propneweqn}), so we only need to consider one product of this type.

First, we collect the terms in (\ref{neweqn5}) where for some $1\leq h\neq l\leq j$ we have $n_{h}'+n_{l}'=0$. We fix such pair $(h,l)$ and fix a $k>0$ (the case $k=0$ being trivial) so that $n_{h}'=k$ and $n_{l}'=-k$, then we fix $\omega$ and all the $\mu$'s except for $\mu_{h}$ and $\mu_{l}$, and fix all the variables except for $(m^{h})_{i}$ and $(m^{l})_{i}$. There are then two possibilities.

(1) If $(\omega_{h},\omega_{l})\neq (-1,1)$, say $\omega_{l}=-1$, then from (\ref{weighttt}) we have $(m^{l})_{1\mu_{l}}+k<0$, which implies $\langle k\rangle\lesssim\langle (m^{i})_{i}\rangle$ for some $i$, and we may assume that $(m^{l})_{i}$ has opposite sign with $k$. Therefore we get a term of type $A$ and reduce to Lemma \ref{onlyclaim}.

(2) If $(\omega_{h},\omega_{l})=(-1,1)$, then in particular we may replace the $v$ in $(v^{\omega_{h}})_{n_{h}'}$ and $(v^{\omega_{l}})_{n_{l}'}$ by $w$ in (\ref{neweqn5}). Now we make the restriction that $\langle(m^{h})_{i}\rangle\ll (\mu+1)^{-2}\langle k\rangle$ for all $1\leq i\leq \mu_{h}$ and the same for $l$, where $\mu$ is the sum of all $\mu_{j}$, including $\mu_{h}$ and $\mu_{l}$. It is important to notice that this restriction depends only on $\mu_{h}+\mu_{l}$; also, the remaining part is of type $A$ and can be treated using Lemma \ref{onlyclaim}.

Next, assume $\mu_{h}+\mu_{l}>0$; we will replace the $\Psi_{\mu_{h}}$ factor in (\ref{coef}) and (\ref{coef2}) by $\psi^{2\mu_{h}}(n_{h}'/N)$ and the same for $l$; thus the modified version of $\Psi_{\mu_{h}}\Psi_{\mu_{l}}$ will depend only on $\mu_{h}+\mu_{l}$. Also we may replace the $n_{h}$ and $n_{l}$ appearing in $\Theta$ factors in $\Theta_{\mu_{0}}^{j}$ functions, as well as $\min_{0\leq j\leq 2}\langle n_{j}\rangle$, by $n_{h}'(=k)$ and $n_{l}'(=-k)$; note that we are \emph{not} doing this for the $\mathbf{1}_{E}$ factor. Now, since the $\Theta$ factors and the $\Psi$ factors are in $SV_{1}$, $\min_{0\leq j\leq 2}\langle n_{j}\rangle$ is in $SV_{2}$, and we already have $\langle n_{h}\rangle\sim\langle n_{h}'\rangle$ and the same for $l$, we can easily show that the error introduced in this way will be of type $A$.

Now, apart from the $\mathbf{1}_{E}$ factors, we have replaced $\Theta_{\mathbf{w}}^{\mu_{0}\cdots\mu_{j}j}$ with some $\Theta'$ independent of the $(m^{h})_{i}$ and the $(m^{l})_{i}$ variables. Regarding the $\mathbf{1}_{E}$ factor, let us consider the case $E=\{n_{l'}+n_{h'}=0\}$. If $\{l',h'\}=\{l,h\}$, this factor will again be independent of the chosen $m$ variables (since it only depends on $(m^{h})_{1\mu_{h}}+(m^{l})_{1\mu_{l}}$ which is fixed). Therefore, up to an error term which only involves the summation where $j=3$, the weight $\Theta_{\mathbf{w}}^{\mu_{0}\cdots\mu_{3}3}$ factor is bounded, and all three of $(n_{1}',n_{2}',n_{3}')$ are related by $m$ (thus it will be $\mathcal{J}^{3}$), we may assume that $\Theta'$ is completely independent of the $(m^{h})_{i}$ and $(m^{l})_{i}$ variables.

Next, we will fix $\mu_{h}$ and $\mu_{l}$, so that we are summing over $(m^{h})_{i}$ and $(m^{l})_{i}$, the restriction being
\begin{equation}\label{restrict}(m^{h})_{1\mu_{h}}+(m^{l})_{1\mu_{l}}=\mathrm{cst},\,\,\,\,\,\max\{\langle (m^{h})_{i}\rangle,\langle(m^{l})_{i'}\rangle\}\ll (\mu+1)^{-2}\langle k\rangle,\end{equation} where the constant depends on the other fixed variables, and the summand will be
\begin{equation}\prod_{i=1}^{\mu_{h}}\frac{u_{(m^{h})_{i}}}{(m^{h})_{i}}\prod_{i=1}^{\mu_{l}}\frac{u_{(m^{l})_{i}}}{(m^{l})_{i}}.\end{equation} Note that when each $(m^{h})_{i}$ and $(m^{l})_{i'}$ is small, the restriction
\begin{equation}(m^{h})_{1\mu_{h}}>-k,(m^{l})_{1\mu_{l}}<k,\end{equation} which comes from (\ref{weighttt}), \emph{will be void}. Now we can see that this sum actually depends \emph{only on} $\mu_{h}+\mu_{l}$, thus when we sum over $\mu_{h}$ fixing $\mu_{h}+\mu_{l}$, we will get zero, since
\begin{equation}\sum_{\mu_{1}+\mu_{2}=\mu>0}\frac{(-1)^{\mu_{1}}}{\mu_{1}!\mu_{2}!}=0.\end{equation} Therefore all the terms in this case can be treated using Lemma \ref{onlyclaim}.

We still need to consider when $\mu_{h}=\mu_{l}=0$. In this case we have $n_{h}=k$ and $n_{l}=-k$. Note in particular we must have $j=3$ due to the restriction (i) in proposition \ref{propneweqn}; we may assume $h=1$ and $l=2$, so the $\Theta_{\mathbf{w}}^{\mu_{0}\cdots\mu_{3}3}$ factor is bounded by 
\begin{equation}\min\bigg\{1,\frac{\langle n_{0}\rangle +\langle n_{3}\rangle}{\langle k\rangle}\bigg\}\end{equation} provided $n_{3}\neq\pm k$, which we may assume since otherwise all three of $(n_{1}',n_{2}',n_{3}')$ will be related by $m$ and we will get $\mathcal{J}^{3}$. Now if $\langle (m^{3})_{i}\rangle\ll(\mu_{3}+1)^{-2}\langle n_{3}\rangle$ for all $i$, then $\langle n_{3}\rangle\sim\langle n_{3}'\rangle$ and the $v$ in $(v^{\omega_{3}})_{n_{3}'}$ may be replaced by $w$, thus we will have $\mathcal{J}^{3}$; otherwise we may promote some $(m^{3})_{i}$ and rename it $n_{4}$, and it can be easily checked that this part will be $\mathcal{J}^{4}$.

Now we collect the terms where no two $n_{l}'$ add to zero. Among these, we will first take out the part where $\langle (m^{l})_{i}\rangle\gtrsim (\mu_{0l}+1)^{-2}\langle n_{i}\rangle$ for at least one $1\leq l\leq j$ and at least one $1\leq i\leq \mu_{l}$, since this again will be of type $A$. In what remains, we will have $\langle n_{l}'\rangle\sim \langle n_{l}\rangle$, and that $\omega_{l}n_{l}'<0$, and we may replace the $v$ in $(v^{\omega_{l}})_{n_{l}'}$ by $w$. Now when $j=3$, we already obtain a part of $\mathcal{J}^{3}$. Finally, when $j=2$ we separate the cases where \begin{equation}\label{finalclass}\max_{l,i}\langle (m^{l})_{i}\rangle\ll (\mu_{0j}+1)^{-2}\min\{\langle n_{0}\rangle,\langle n_{1}\rangle,\langle n_{2}\rangle\}\end{equation} or otherwise, again by inserting smooth cutoffs. If (\ref{finalclass}) holds we get a part of $\mathcal{J}^{2}$; if (\ref{finalclass}) fails, we can promote some $(m^{l})_{i}$ and call it $n_{3}$ so that the new $\Theta$ factor is bounded, and then replace $u_{n_{3}}$ by (\ref{possub}) or (\ref{negsub}), introducing the $n_{3}'$ and $(m^{3})_{i}$ variables. Now, if $\langle (m^{3})_{i}\rangle\ll\langle n_{3}\rangle$ for all $i$, so that $\langle n_{3}'\rangle\sim\langle n_{3}\rangle$ and the $v$ in $(v^{\omega_{3}})_{n_{3}'}$ may be replaced by $w$, so we get $\mathcal{J}^{3}$ due to the same argument as in the proof of Proposition \ref{propneweqn}; otherwise we could promote some $(m^{3})_{i}$ to be $n_{4}$. We then obtain $\mathcal{J}^{4}$ if one of $n_{3}$, $n_{3}'$, $n_{4}$ or the remaining $m$ variables is $\gtrsim(\max\{\langle n_{0}\rangle,\langle n_{1}\rangle,\langle n_{2}\rangle \})^{\frac{1}{10}}$, and obtain a part of $\mathcal{J}^{4.5}$ otherwise.

Finally, to prove part (vi), first notice that in (\ref{sbx}) we may assume each $\langle m_{i}\rangle\ll n$ also, since otherwise we will have $\mathcal{J}^{4}$. It can then be checked that \emph{in this proof}, whenever we have such a term in $\mathcal{J}^{3}$, all the $\Theta$ and $\Psi$ factors involved in the weight will be in $SV_{1}$; there will be another part of $\mathcal{J}^{3}$ coming from type $A$ terms, and we will show below that they verify (\ref{ysx}). The possible $\mathbf{1}_{E}$ is not in $SV_{1}$ but \emph{under our assumptions}, they will not change if we increase or decrease $n$ by $1$. Thus (\ref{sbx}) will be a direct consequence of the definition of $SV_{1}$ and Proposition \ref{gene}.
\end{proof}
\begin{proof}[Proof of Lemma \ref{onlyclaim}]Fix the $l$ and the $i$ in (\ref{add00}), and first suppose $j=2$. We may assume $l=2$ and promote the $(m^{2})_{i}$ by calling it $n_{3}$, then the new $\Theta$ factor will be bounded by\begin{equation}\frac{\min\{\langle n_{0}\rangle,\langle n_{1}\rangle,\langle n_{2}\rangle\}}{\max\{\langle n_{2}\rangle,\langle n_{3}\rangle,\langle n_{2}'\rangle\}}.\nonumber\end{equation} Notice that\begin{equation}\langle n_{1}\rangle\lesssim\langle n_{1}'\rangle+\sum_{i}\langle (m^{1})_{i}\rangle;\,\,\,\,\,\,\,\,\langle n_{2}\rangle\lesssim\langle n_{2}'+n_{3}\rangle+\sum_{i}\langle (m^{2})_{i}\rangle,\nonumber\end{equation} thus the $\Theta$ factor will be bounded either by
\begin{equation}\label{xxxyyy}\frac{\min\{\langle n_{0}\rangle,\langle n_{1}'\rangle,\langle n_{2}'+n_{3}\rangle\}}{\max\{\langle n_{2}'\rangle,\langle n_{3}\rangle\}}\end{equation} or by some \begin{equation}\min\bigg\{1,\frac{\langle (m^{l})_{i}\rangle}{\max\{\langle n_{2}\rangle,\langle n_{3}\rangle,\langle n_{2}'\rangle\}}\bigg\},\,\,\,\,\,\,\,\,\,\,l\in\{1,2\},1\leq i\leq \mu_{l}.\nonumber\end{equation} 
In the former case the bound (\ref{bound3.5}) is already verified, and we will have a part of $\mathcal{J}^{3.5}$ if $\omega_{1}n_{1}'<0$ and $n_{2}'$ has different sign with $n_{3}$. If $\omega_{1}n_{1}'\geq0$, we have for some $1\leq i\leq \mu_{1}$ that $\langle n_{1}\rangle+\langle n_{1}'\rangle\lesssim \langle(m^{1})_{i}\rangle$ and we are reduced to the latter case above. Now in the latter case, we promote the $(m^{l})_{i}$ and call it $n_{4}$, so that we get an expression of the form
\begin{equation}\label{neweqn10}\sum_{n_{1}+\cdots+n_{4}+m_{1}+\cdots +m_{\mu}=n_{0}}\Phi\cdot\prod_{l=1}^{4}(\upsilon^{\omega_{l}})_{n_{l}}\prod_{i=1}^{\mu}\frac{u_{m_{i}}}{m_{i}},\end{equation} where we may assume $\omega_{l}(n_{l}+\lambda_{l})<0$, where $\lambda_{l}$ is some linear combination of the $m$ variables, and the $\upsilon$ in $(\upsilon^{\omega_{l}})_{n_{l}}$ can be replaced by $v$ for $l\in\{1,2\}$; also the $\Phi$ factor is bounded by $(\langle n_{2}\rangle+\langle n_{3}\rangle+\langle n_{4}\rangle)^{-1}$. Now if one of $n_{l}(2\leq l\leq 4)$ or $m_{i}$ is $\gtrsim \max\{\langle n_{0}\rangle,\langle n_{1}\rangle\}^{\frac{1}{20}}$ we will obtain a part of $\mathcal{J}^{4}$; otherwise we must have $\omega_{1}n_{1}<0$ and thus we are in $\mathcal{J}^{4.5}$.

Now, if we have some term similar to $\mathcal{J}^{3.5}$ (i.e. with $\Theta$ factor bounded by (\ref{xxxyyy})), with $\omega_{1}n_{1}<0$ but $n_{2}$ and $n_{3}$ have the same sign (note the $n_{j}$ here was $n_{j}'$ before we rename it), then from the definition of type $A$ terms, we can replace the $\upsilon$ in $(\upsilon^{\omega_{j}})_{n_{j}'}$ by $w$ for $j\in\{1,2\}$. Next, we replace $u_{n_{3}}$ by (\ref{possub}) or (\ref{negsub}) according to the sign of $n_{3}$, introducing the $n_{3}'$ and $(m^{3})_{i}$ variables. Under the assumption $\langle n_{3}\rangle\gg\max_{0\leq l\leq 3}\langle n_{l}\rangle^{\frac{1}{2}}$, we may assume $\langle (m^{3})_{i}\rangle\ll \langle n_{3}\rangle^{\frac{1}{4}}$, otherwise we will have $\mathcal{J}^{4}$. In particular we have $(w^{\omega_{3}})_{n_{3}'}$ and the weight will be bounded by $\frac{\min\{\langle n_{0}\rangle,\langle n_{1}\rangle\}}{\max\{\langle n_{2}\rangle,\langle n_{3}'\rangle\}}$. Since $n_{2}$ will have the same sign with $n_{3}'$, we \emph{cannot} have $n_{0}=n_{1}$ or $n_{2}+n_{3}'=0$; then we can check that this term will be $\mathcal{J}^{3}$, and that it verifies (\ref{ysx}).

Now assume $j=3$. We may assume $l=3$, and by a similar argument we will obtain an expression of form (\ref{neweqn10}), but with $\Phi$ factor bounded only by $(\langle n_{3}\rangle+\langle n_{4}\rangle)^{-1}$. If we can assume that some other $n_{j}$ (say $n_{2}$) are related to $n_{3}+n_{4}$ by $m$, then we can reduce to the case just studied. Otherwise we must have $n_{1}+n_{2}\neq 0$. Now we may assume that $n_{3}$, $n_{4}$ and all the $m$ variables are $\ll\langle (\max\{\langle n_{0}\rangle,\langle n_{1}\rangle,\langle n_{2}\rangle\})^{\frac{1}{20}}$ or we are in $\mathcal{J}^{4}$; also, if for some $l\in\{1,2\}$ we have $\langle n_{l}\rangle\gtrsim(\max\{\langle n_{0}\rangle,\langle n_{1}\rangle,\langle n_{2}\rangle\})^{\frac{1}{10}}$ (we make this restriction by inserting a smooth cutoff), then $\omega_{l}n_{l}<0$ and we can replace $\upsilon$ in $(\upsilon^{\omega_{l}})_{n_{l}}$ by $w$. Therefore we will have $\mathcal{J}^{4.5}$.
\end{proof}
\subsection{From $w$ to $w^{*}$}
We still need to remove from the right hand side of (\ref{intereqn0}) the part that cannot be controlled directly, by means of a substitution which will be described in the following proposition.
\begin{proposition}\label{finalver}
We can define $w^{*}$, for each fixed time, by
\begin{equation}(w^{*})_{n}=e^{-\mathrm{i}\Delta_{n}}w_{n},\end{equation} where the $\Delta$ factors are
\begin{equation}\label{factor0}\Delta_{n}(t)=\int_{0}^{t}\delta_{n}(t')\,\mathrm{d}t',
\end{equation} and the $\delta$ factors are\footnote[1]{Note that we may replace the $w_{k}$ by $(w^{*})_{k}$ in this expression.}
\begin{equation}\label{factor1}\delta_{n}=\sum_{\mu}C_{\mu}\sum_{k>0}\sum_{m_{1}+\cdots+m_{\mu}=0}\Gamma\cdot|w_{k}|^{2}\prod_{i=1}^{\mu}\frac{u_{m_{i}}}{m_{i}}
\end{equation} for $n>0$.
Here we have $|C_{\mu}|\leq C^{\mu}/\mu!$, the factor $\Gamma=\Gamma(n,k,m_{1},\cdots,m_{\mu})$ is nonzero only when $\langle k\rangle+\max_{i}\langle m_{i}\rangle\leq (\mu+3)^{-12}\langle n\rangle$, and it verifies the estimates
\begin{equation}\label{factor2}|\Gamma|\lesssim1;\,\,\,\,\,\,\,\,|\Gamma(n,k,m_{1},\cdots,m_{\mu})-\Gamma(n+1,k,m_{1},\cdots,m_{\mu})|\lesssim\langle n\rangle^{-1}.
\end{equation}
Moreover, we extend $\delta_{n}$ and $\Delta_{n}$ to $n\leq 0$ so that they are odd in $n$, then define $u^{*}$ and $v^{*}$ by
\begin{equation}(\upsilon^{*})_{n}=e^{-\mathrm{i}\Delta_{n}}\upsilon_{n},\,\,\,\,\,\,\,\upsilon\in\{u,v\}.\nonumber\end{equation} With these definitions, we have
\begin{equation}\label{neweqnfin}(\partial_{t}-\mathrm{i}\partial_{xx})w^{*}=\mathcal{N}^{2}(w^{*},w^{*})+\sum_{j\in\{3,2.5,4,4.5\}}\mathcal{N}^{j},
\end{equation} where $\mathcal{N}^{j}=\sum_{\mu}\sum_{\omega\in\{-1,1\}^{[j]}}C_{\mu}^{\omega j}\mathcal{N}_{\mu}^{\omega j}$ for each $j$ with $|C_{\mu}^{\omega j}|\leq C^{\mu}/\mu!$. The nonlinearities are 
\begin{equation}(\mathcal{N}_{\mu}^{\omega2}(f,g))_{n_{0}}=\sum_{n_{1}+n_{2}+m_{1}+\cdots+m_{\mu}=n_{0}}\Phi_{\mu}^{2}\cdot e^{\mathrm{i}(\Delta_{n_{1}}+\Delta_{n_{2}}-\Delta_{n_{0}})}(f^{\omega_{1}})_{n_{1}}(g^{\omega_{2}})_{n_{2}}\prod_{i=1}^{\mu}\frac{u_{m_{i}}}{m_{i}};\nonumber
\end{equation}
\begin{equation}(\mathcal{N}_{\mu}^{\omega3}(f,g))_{n_{0}}=\sum_{n_{1}+n_{2}+n_{3}+m_{1}+\cdots+m_{\mu}=n_{0}}\Phi_{\mu}^{3}\cdot e^{-\mathrm{i}\Delta_{n_{0}}}\prod_{l=1}^{3}(w^{\omega_{l}})_{n_{l}}\prod_{i=1}^{\mu}\frac{u_{m_{i}}}{m_{i}};\nonumber
\end{equation} and
\begin{equation}(\mathcal{N}_{\mu}^{\omega j})_{n_{0}}=\sum_{n_{1}+\cdots+n_{[j]}+m_{1}+\cdots+m_{\mu}=n_{0}}\Phi_{\mu}^{j}\cdot e^{-\mathrm{i}\Delta_{n_{0}}}\prod_{l=1}^{[j]}(\upsilon^{\omega_{l}})_{n_{l}}\prod_{i=1}^{\mu}\frac{u_{m_{i}}}{m_{i}}\nonumber
\end{equation} for $n_{0}>0$ and $j\in\{3.5,4,4.5\}$. Here $\Phi_{\mu}^{j}=\Phi_{\mu}^{j}(n_{1},\cdots,n_{[j]},m_{1},\cdots,m_{\mu})$, and these factors (and the corresponding terms they come from) satisfy the requirements in parts (i), (iii), (iv), (v) in Proposition \ref{propneweqn} for $j=2,3.5,4,4.5$, respectively.

Finally when $j=3$ and we only consider the case where $\Phi_{\mu}^{3}\neq 0$, we have one of the following: (a) either three of the four variables $(-n_{0},n_{1},n_{2},n_{3})$ are related by $m$ (in which case we are allowed to have $\upsilon$ instead of $w$), or no two of them add up to zero, and $|\Phi_{\mu}^{3}|\lesssim 1$; (b) up to some permutation, $n_{1}+n_{2}=0\neq n_{0}-n_{3}$ and $|\Phi_{\mu}^{3}|\lesssim\min\{1,(\langle n_{0}\rangle+\langle n_{3}\rangle)/\langle n_{1}\rangle\}$; (c) up to some permutation, $n_{0}=n_{1}$, and either $n_{2}+n_{3}\neq 0$ and $|\Phi_{\mu}^{3}|\lesssim 1$, or $n_{2}=-n_{3}$ and $|\Phi_{\mu}^{3}|\lesssim\frac{\min\{\langle n_{0}\rangle,\langle n_{2}\rangle\}}{\max\{\langle n_{0}\rangle,\langle n_{2}\rangle\}}$.

\end{proposition}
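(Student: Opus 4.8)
The plan is to read off from equation \eqref{intereqn0} the single piece of $\mathcal{J}$ that cannot be estimated directly and to kill it by a time-dependent, frequency-diagonal phase. By Proposition~\ref{intereqn} every $\mathcal{J}_\mu^{\omega j}$ with $j\in\{2,3.5,4,4.5\}$ already has a weight obeying the smallness/off-resonance bounds of Proposition~\ref{propneweqn}, and the only obstruction sits inside the $\mathcal{J}^3$ terms: the region where, up to a permutation of $(n_1,n_2,n_3)$, one has $n_0=n_1=:n$ and $n_2=-n_3$ with $\langle n_2\rangle\ll(\mu+3)^{-11}\langle n\rangle$. This is exactly the regime of Proposition~\ref{intereqn}(vi), in which $\phi_\mu^3$ splits as a part obeying \eqref{ysx} (hence bounded by $\min\{\langle n\rangle,\langle k\rangle\}/\max\{\langle n\rangle,\langle k\rangle\}$, $k:=|n_2|$, which we keep) plus a part that is only $O(1)$ but slowly varying in $n$ in the sense of \eqref{sbx} (which we must remove). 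Since $w$ is supported on strictly positive frequencies, the factors $(w^{\omega_1})_{n_1}(w^{\omega_2})_{n_2}(w^{\omega_3})_{n_3}$ with $n_1=n>0$ and $n_2=-k<0<k=n_3$ force $\omega_1=\omega_3=-1$, $\omega_2=+1$, so they equal $w_n\,\overline{w_k}\,w_k=|w_k|^2\,w_n$. Summing these contributions over $\mu$ and over the $u$-tail, with a further smooth cutoff shrinking $(\mu+3)^{-11}$ to $(\mu+3)^{-12}$ and also imposing $\max_i\langle m_i\rangle\le(\mu+3)^{-12}\langle n\rangle$ (the discarded annular pieces being assigned to $\mathcal{N}^3$, or, after promoting a large $m_i$, to $\mathcal{N}^4$), produces precisely $\mathrm{i}\,\delta_n w_n$ for $n>0$ with $\delta_n$ of the form \eqref{factor1}; here $\Gamma$ is a relabelling of the slowly varying part of the $\phi_\mu^3$ weight, so $|\Gamma|\lesssim1$ and the difference bound in \eqref{factor2} are inherited from \eqref{ysx}--\eqref{sbx}, i.e. from membership in $SV_1$ via Propositions~\ref{gene} and~\ref{propneweqn}, with constants uniform in $\mu$. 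One also checks directly that $\delta_n$ is real valued (the $|w_k|^2$ are real and, by the reality of $u$ together with the $m_i\mapsto-m_i$ symmetry of the constraint $m_1+\cdots+m_\mu=0$, so is the $u$-tail). Write $\mathcal{B}$ for this resonant part of $\mathcal{J}$, so that $\mathcal{B}_{n_0}=\mathrm{i}\,\delta_{n_0}w_{n_0}$ for $n_0>0$.

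Next I define $(w^*)_n=e^{-\mathrm{i}\Delta_n}w_n$ with $\Delta_n$ as in \eqref{factor0}. Since $u$ is a fixed smooth solution, the series \eqref{factor1} converges absolutely (using $|C_\mu|\le C^\mu/\mu!$ and the rapid decay of $\widehat u$) to a smooth function of $t$, so $\Delta_n$ is well defined and smooth; extending $\delta_n,\Delta_n$ to $n\le0$ as odd functions and setting $(\upsilon^*)_n=e^{-\mathrm{i}\Delta_n}\upsilon_n$, the oddness and reality of $\Delta_n$ make $u^*,v^*$ again real and mean zero, give $|(w^*)_n|=|w_n|$, and yield $w^*=\mathbb{P}_+v^*$; in particular $w\leftrightarrow w^*$ is bounded with bounded inverse. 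Because $e^{-\mathrm{i}\Delta_n}$ is a time-dependent Fourier multiplier commuting with $\partial_{xx}$, the product rule gives
\[ (\partial_t-\mathrm{i}\partial_{xx})(w^*)_{n_0}=e^{-\mathrm{i}\Delta_{n_0}}\big[(\partial_t-\mathrm{i}\partial_{xx})w\big]_{n_0}-\mathrm{i}\,\delta_{n_0}(w^*)_{n_0}=e^{-\mathrm{i}\Delta_{n_0}}\mathcal{J}_{n_0}-\mathrm{i}\,\delta_{n_0}(w^*)_{n_0}, \]
and since $e^{-\mathrm{i}\Delta_{n_0}}\mathcal{B}_{n_0}=\mathrm{i}\,\delta_{n_0}(w^*)_{n_0}$, the phase term cancels the resonant part exactly, leaving $(\partial_t-\mathrm{i}\partial_{xx})w^*=e^{-\mathrm{i}\Delta}(\mathcal{J}-\mathcal{B})$.

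It remains to put the surviving pieces of $\mathcal{J}-\mathcal{B}$ into the stated form. In each $\mathcal{J}_\mu^{\omega2}$ term I substitute $(w^{\omega_l})_{n_l}=e^{\mathrm{i}\Delta_{n_l}}((w^*)^{\omega_l})_{n_l}$ for $l=1,2$ (legitimate since $\Delta$ is odd and real) while keeping the $u$-tail $\prod u_{m_i}/m_i$ untouched; combined with the overall $e^{-\mathrm{i}\Delta_{n_0}}$ this produces the phase $e^{\mathrm{i}(\Delta_{n_1}+\Delta_{n_2}-\Delta_{n_0})}$ and the term $\mathcal{N}_\mu^{\omega2}(w^*,w^*)$ with $\Phi_\mu^2:=\phi_\mu^2$, satisfying Proposition~\ref{propneweqn}(i) by Proposition~\ref{intereqn}(i) (the $u$-tail must stay as $u$, not $u^*$, because $\Phi_\mu^j$ is required to be time independent; by (i) those $m_i$ are harmlessly low frequency). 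For $j\in\{3.5,4,4.5\}$ I merely pull out $e^{-\mathrm{i}\Delta_{n_0}}$ and set $\Phi_\mu^j:=\phi_\mu^j$, so properties (iii)--(v) carry over verbatim. For $\mathcal{J}^3-\mathcal{B}$ I again pull out $e^{-\mathrm{i}\Delta_{n_0}}$ and take $\Phi_\mu^3$ to be the residual weight; invoking Proposition~\ref{intereqn}(ii) and (vi) one sorts the outcome into: (a) no two of $(-n_0,n_1,n_2,n_3)$ add to zero, or all three of some triple are related by $m$ (so $|\Phi_\mu^3|\lesssim1$, with the $\upsilon$-relaxation inherited from (ii)); (b) after permuting, $n_1+n_2=0\neq n_0-n_3$, with $|\Phi_\mu^3|\lesssim\min\{1,(\langle n_0\rangle+\langle n_3\rangle)/\langle n_1\rangle\}$ from \eqref{bound3.0}; and (c) after permuting, $n_0=n_1$, which forces $n_2+n_3+m_1+\cdots+m_\mu=0$, hence either $n_2+n_3\neq0$ with $|\Phi_\mu^3|\lesssim1$ from the general bound in (ii), or $n_2=-n_3$, where the slowly varying $O(1)$ contribution is precisely $\mathcal{B}$ and what remains obeys $|\Phi_\mu^3|\lesssim\min\{\langle n_0\rangle,\langle n_2\rangle\}/\max\{\langle n_0\rangle,\langle n_2\rangle\}$ by \eqref{ysx} and \eqref{bound3.0} (absorbing the $\mu$-dependent transition factors into $C_\mu$). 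This establishes \eqref{neweqnfin}.

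I expect the bookkeeping in the last step to be the main obstacle: one must revisit the proof of Proposition~\ref{intereqn} and confirm that the \emph{only} portion of the $\mathcal{J}^3$ weight which is simultaneously of size $O(1)$, genuinely resonant (some $n_h+n_l=0$ not absorbable as an off-resonant gain), and slowly varying in the large frequency --- hence obstructs a direct estimate and must be cancelled by the phase --- is exactly the part of Proposition~\ref{intereqn}(vi) obeying \eqref{sbx}, so that after its subtraction the residual weight really does fall into the clean trichotomy (a)--(c) with the claimed bounds and no leftover bad piece. Tied to this are the two routine but essential checks that $\Gamma$ inherits both $|\Gamma|\lesssim1$ and $|\Gamma(n,\cdot)-\Gamma(n+1,\cdot)|\lesssim\langle n\rangle^{-1}$ with constants uniform in $\mu$, and that $\delta_n$ is genuinely real valued, so that $e^{-\mathrm{i}\Delta_n}$ is norm preserving and the substitution is reversible.
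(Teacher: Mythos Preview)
Your proposal is correct and follows essentially the same route as the paper: isolate within $\mathcal{J}^3$ the resonant piece where (up to permutation) $n_0=n_1$, $n_2+n_3=0$, $\langle n_2\rangle\ll(\mu+3)^{-12}\langle n_0\rangle$ and the weight satisfies \eqref{sbx}, recognize it as $\mathrm{i}\,\delta_{n_0}w_{n_0}$, cancel it by the diagonal phase $e^{-\mathrm{i}\Delta_{n_0}}$, and then sort the remainder of $\mathcal{J}^3$ into cases (a)--(c) using Proposition~\ref{intereqn}(ii),(vi). Your write-up is in fact more explicit than the paper's own proof (which dispatches the final enumeration in a single sentence); the one point to tighten is the reality argument for $\delta_n$, where the $m_i\mapsto -m_i$ symmetry alone is not quite enough without also using the structure of $\Gamma$, though as you note this is inessential since boundedness of $\Delta_n$ already suffices for the later estimates.
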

\begin{proof} Note that in $\mathcal{J}^{3}$ we may assume $\langle m_{i}\rangle\ll(\mu+3)^{-13}\max_{0\leq l\leq 3}\langle n_{l}\rangle$, since otherwise we would obtain a part of $\mathcal{J}^{4}$. Now we collect the terms in $\mathcal{J}^{3}$ where $n_{0}=n_{1}$, $n_{2}+n_{3}=0$, $\langle n_{2}\rangle\ll (\mu+3)^{-12}\langle n_{1}\rangle$, as well as the terms corresponding to other permutations, \emph{that satisfy} (\ref{sbx}). It is clear that the sum of these terms can be written in the form of
\begin{equation}\label{subterm}\mathrm{i}\cdot w_{n}\sum_{k>0}\sum_{m_{1}+\cdots+m_{\mu}=0}\Gamma\cdot|w_{k}|^{2}\prod_{i=1}^{\mu}\frac{u_{m_{i}}}{m_{i}},
\end{equation} with the $\Gamma$ factor satisfying the requirements (the inequality about the difference is guaranteed by part (vi) of Proposition \ref{intereqn}, since the new factors we introduce will always be in $SV_{1}$). Now if we define the $\delta_{n}$ and $\Delta_{n}$ factors accordingly and make the substitution, we will be able to get rid of the term in (\ref{subterm}). The terms $\mathcal{J}^{j}$ with $j\neq 3$ are transformed into $\mathcal{N}^{j}$ without any change; as for the remaining terms in $\mathcal{J}^{3}$, we can see by an easy enumeration that the coefficient $\Phi_{\mu}^{3}$ will meet our requirements.
\end{proof}
\section{The \emph{a priori} estimate I: The general setting}\label{begin}
In this section we state our main estimate that works for a single solution. Its proof will occupy Sections \ref{mid1}-\ref{mid3}. There will be another version concerning the difference of two solutions, which will be stated and proved in Section \ref{end}.
\subsection{The bootstrap}
Let us fix a smooth solution $u$, defined on $\mathbb{R}\times\mathbb{T}$, to the equation (\ref{smoothtrunc}), with the parameter $1\ll N\leq\infty$. In what follows we will assume $N<\infty$, since the case $N=\infty$ will follow from a similar (and simpler) argument. The main estimate can then be stated as follows.
\begin{proposition}\label{uniformest}There exists an absolute constant $C$ such that the following holds. Suppose $\|u(0)\|_{Z_{1}}\leq A$ for some large $A$, then within a short time $T=C^{-1}e^{-CA}$, for the functions $v$ and $w$ defined in Section \ref{gaugetransform}, and the functions $u^{*}$, $v^{*}$ and $w^{*}$ defined in Section \ref{gaugetransform3}, we have
\begin{equation}\label{output}\|w^{*}\|_{Y_{1}^{T}}+\|v^{*}\|_{Y_{2}^{T}}+\|u^{*}\|_{Y_{2}^{T}}\leq Ce^{CA};\end{equation}
\begin{equation}\label{output2}\|\langle \partial_{x}\rangle^{-s^{3}}u\|_{(X_{2}\cap X_{3}\cap X_{4})^{T}}\leq CA.\end{equation} Here the space $X_{2}\cap X_{3}\cap X_{4}$ is normed by $\|\cdot\|_{X_{2}}+\|\cdot\|_{X_{3}}\|+\|\cdot\|_{X_{4}}$, for which we can easily show that Proposition \ref{initialboot} still holds.
\end{proposition}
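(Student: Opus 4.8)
The plan is to run a bootstrap (continuity) argument on the time interval, feeding the gauge-transformed equation \eqref{neweqnfin} and the relations between $w^*$, $v^*$, $u^*$ and the original $u$ back into themselves. Concretely, one fixes $T = C^{-1}e^{-CA}$ and defines, for $0 < T' \le T$, the quantity
\[
\mathcal{M}(T') = \|w^{*}\|_{Y_{1}^{T'}} + \|v^{*}\|_{Y_{2}^{T'}} + \|u^{*}\|_{Y_{2}^{T'}} + \|\langle\partial_x\rangle^{-s^3}u\|_{(X_2\cap X_3\cap X_4)^{T'}}.
\]
By Proposition \ref{initialboot} (and the remark that it extends to $X_2\cap X_3\cap X_4$), $\mathcal{M}$ satisfies $\mathcal{M}(0+)\le C\|u(0)\|_{Z_1}\le CA$ and is "almost continuous" in the sense $\mathcal{M}(T'+0)\le C\mathcal{M}(T'-0)$; hence it suffices to prove the a priori implication: if $\mathcal{M}(T')\le 2Ce^{CA}$ on some subinterval, then in fact $\mathcal{M}(T')\le Ce^{CA}$ there, provided $T'\le T$. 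The continuity/barrier argument then closes the estimate on all of $[-T,T]$. I would first set up this abstract skeleton, then spend the bulk of the work on the a priori implication.

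For the a priori step the main input is the Duhamel representation: since $w^*$ vanishes at $t=0$ (up to the linear evolution of $u(0)$, which is handled separately by the last estimate in the proof of Proposition \ref{initialboot}), we write $w^* = \mathcal{E}(\mathcal{N}^2 + \mathcal{N}^3 + \mathcal{N}^{2.5} + \mathcal{N}^4 + \mathcal{N}^{4.5})$ and apply the linear estimates of Proposition \ref{linearestimate2} together with the norm relations of Proposition \ref{relattt}. Each nonlinearity $\mathcal{N}^j$ must then be estimated, summed over $\mu$ (the $C^\mu/\mu!$ coefficients give absolute convergence), in the output norm $Y_1$ (for $w^*$) or $Y_2$ (for $v^*,u^*$). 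This is where the multilinear estimates — which are the content of Sections \ref{mid1}--\ref{mid3}, assumed here — enter: one uses the structural bounds (i)--(v) on the weights $\Phi_\mu^j$ from Proposition \ref{finalver}, the Strichartz estimates of Proposition \ref{stri}, and the special atomic space $X_8$ to absorb the "shifted" factor $z$. The quadratic term $\mathcal{N}^2$ is the borderline one and requires the second-iteration trick (plugging the equation into itself once more) because a direct bilinear $X^{s,b}$ estimate fails; one gains the needed room from the gain $\min\{\langle n_0\rangle,\langle n_1\rangle,\langle n_2\rangle\}\gg(\mu+1)^2\max_i\langle m_i\rangle$ in (i) and from the phase factor $e^{\mathrm{i}(\Delta_{n_1}+\Delta_{n_2}-\Delta_{n_0})}$. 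The factor $e^{-\mathrm{i}\Delta_{n_0}}$ appearing in every term is harmless for the $Y_1$ estimate of $w^*$ because $|e^{-\mathrm{i}\Delta_{n_0}}|=1$, but one must check that multiplication by it (equivalently, the substitution $w\mapsto w^*$) is bounded on the relevant norms, using the $\Gamma$ bounds \eqref{factor2} and that $\Delta_n$ is controlled by $T$ times a convergent series in $\|u\|$.

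The estimates for $v^*$ and $u^*$ in $Y_2$ are then derived from that of $w^*$: one has $v = Mu$ and the power-series expansions \eqref{possub}--\eqref{negsub} expressing $u$ (on positive frequencies) in terms of $v$ and the smoother antiderivative factors $u_{m_i}/m_i$, so $u$ and $v$ differ from $w$ by a multiplication-type operator whose symbol is built from $\langle\partial_x\rangle^{-s^3}u$; the $X_8$ norm was precisely designed so this loss is affordable, and \eqref{output2} feeds back the needed control of the $m$-factors. Here one also uses $\|u\|_{X_j}\lesssim\|u\|_{Y_1}$ from Proposition \ref{relattt} to convert $w^*$-bounds into the ingredients needed for the $Y_2$ bounds. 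I expect the main obstacle to be the quadratic term $\mathcal{N}^2$ together with the bookkeeping of the $\mu$-sum: one must verify that the second Picard iteration, after all the gauge substitutions, still produces a quantity for which the bilinear estimate holds with a constant that survives summation over $\mu$ — this is the place where the "only quadratic to first order" subtlety mentioned in the introduction bites, and where the interplay of the phase corrections $\Delta_n$ with the resonant structure must be tracked carefully. The secondary obstacle is making sure the $e^{-\mathrm{i}\Delta_{n_0}}$ and $e^{\mathrm{i}(\Delta_{n_1}+\Delta_{n_2}-\Delta_{n_0})}$ factors, which are not Fourier multipliers, do not destroy the $X^{s,b}$-type and atomic norms; one handles this by writing the phase as a convergent series and estimating term by term, exploiting that each $\delta_n$ carries the strong frequency localization $\langle k\rangle + \max_i\langle m_i\rangle \le (\mu+3)^{-12}\langle n\rangle$ from Proposition \ref{finalver}.
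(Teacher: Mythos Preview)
Your outline is the paper's approach: bootstrap via Proposition \ref{initialboot}, Duhamel for $w^*$ using the multilinear estimates of Sections \ref{mid1}--\ref{mid2} (with second iteration for $\mathcal{N}^2$), then recover $u^*,v^*,u$ as in Section \ref{mid3}, and you have correctly identified the two genuine obstacles.

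Three places where the paper's argument is more delicate than your sketch and which you should not underestimate. First, $w(0)=\mathbb{P}_+Mu(0)$ already costs an exponential in $\|u(0)\|_{Z_1}$ (Proposition \ref{inii}), so your claim $\mathcal{M}(0+)\le CA$ is wrong for the first three summands; the paper therefore keeps \eqref{output} and \eqref{output2} separate and runs the bootstrap with a three-tier hierarchy $C_0\ll C_1\ll C_2$ (Proposition \ref{boot}) rather than a single $C$-vs-$2C$. Second, the Duhamel identity is not written for $w^*$ itself but for a carefully constructed global extension $y$ (Section \ref{theextensions}) satisfying \eqref{global2} with a sharp cutoff $\mathbf{1}_{[-T,T]}$; handling that cutoff (its Fourier transform $\phi$ is only in $L^{1+}$) is a recurring technical point in every multilinear bound, and the phase factors are controlled not by a power-series expansion but by the operator bounds of Lemma \ref{general} and Propositions \ref{factt}--\ref{weaken}. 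Third, your account of the $v^*,u^*$ step is too optimistic: a single ``multiplication-type'' argument does not close, and the paper instead splits at an intermediate frequency scale $K=C_{1.5}e^{C_{1.5}A}$, treating $\mathbb{P}_{\le K}u$ by a direct $X^{-1/s,\kappa}$ estimate on the original equation \eqref{smoothtrunc} and $\mathbb{P}_{>K}u^*$ via Proposition \ref{recoverlemma}; the $X_8$ norm is used not here but in the $\mathcal{N}^{3.5}$ estimate of Section \ref{mid2}.
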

\begin{remark}The constant $C$ will depend on the constants in the inequalities in earlier sections, such as Proposition \ref{initialboot} and Proposition \ref{finalver}. To make this clear, we will now use $C_{0}$ to denote any (large) constant that can be bounded by the constants appearing in those inequalities.
\end{remark}
In the proof of Proposition \ref{uniformest} we will use a bootstrap argument. The starting point is
\begin{proposition}\label{inii}
The estimates (\ref{output}) and (\ref{output2}) are true, with $C$ replaced by $C_{0}$, when $T>0$ is sufficiently small.
\end{proposition}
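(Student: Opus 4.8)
The plan is to show that for $T$ small the gauge-transformed quantities are all close to their initial data, so that the bounds \eqref{output}, \eqref{output2} hold with the moderate constant $C_0$ rather than $Ce^{CA}$. First I would invoke Proposition \ref{initialboot} (and its analogue for $X_2\cap X_3\cap X_4$, noted in the statement of Proposition \ref{uniformest}): since $u$ is a smooth solution, the functions $T\mapsto\|w^*\|_{Y_1^T}$, $T\mapsto\|v^*\|_{Y_2^T}$, etc., are right-continuous at $T=0$ with limit controlled by the $Z_1$-norm (or the appropriate weaker norm) of the relevant function at $t=0$. Thus it suffices to bound the data: $\|w^*(0)\|_{Z_1}$, $\|v^*(0)\|_{Z_1}$, $\|u^*(0)\|_{Z_1}$, and $\|\langle\partial_x\rangle^{-s^3}u(0)\|_{Z_1}$ (the last being $\leq\|u(0)\|_{Z_1}\leq A$ trivially, but we only need $C_0$-type control, and in fact $\langle n\rangle^{-s^3}$ only helps). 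Note $\Delta_n(0)=0$ by \eqref{factor0}, so $w^*(0)=w(0)$, $u^*(0)=u(0)$, $v^*(0)=v(0)$; hence the substitution of Section \ref{gaugetransform3} is invisible at $t=0$ and we only need to handle the gauge transform of Section \ref{gaugetransform}.

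Next I would estimate $\|v(0)\|_{Z_1}$ and $\|w(0)\|_{Z_1}$ in terms of $\|u(0)\|_{Z_1}=A$. Recall $v=Mu$ with $M=e^{-\mathrm{i}P/2}$ and $P=SP_0S$, $P_0:\phi\mapsto(SF)\phi$, where $F$ is the mean-zero antiderivative of $u$. The key point is that $F$ is smoother than $u$ by one derivative, and in particular $SF$ is bounded in, say, $L^\infty$ (or a Besov algebra) by something like $\|u(0)\|_{Z_1}$ up to absolute constants, because the $\langle n\rangle^r$ weight with $r<1/2$ together with the $1/|n|$ gain puts $\widehat{F}$ into $\ell^1_n$. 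Expanding $M$ as the power series \eqref{expo}, each term $P^\mu u$ is an iterated product of $u$ with $\mu$ copies of $SF$, and using that $Z_1$ is essentially an algebra under such products (the $\ell^\infty$-in-dyadic-blocks structure with $\ell^p_n$, $r>s$, behaves well under convolution against the rapidly-summable $\widehat{SF}$), one gets $\|P^\mu u(0)\|_{Z_1}\lesssim (C_0\|u(0)\|_{Z_1})^\mu\cdot\|u(0)\|_{Z_1}$ — but crucially with the $1/\mu!$ from \eqref{expo} this sums to something like $C_0 A\, e^{C_0 A}$. Since we only claim the bound with constant $C_0$ at sufficiently small $T$, and Proposition \ref{inii} is allowed to have the right side be any fixed $C_0$-controlled quantity evaluated at the data, the exponential-in-$A$ factor is harmless here: it is absorbed once and reappears in \eqref{output} as $Ce^{CA}$ after the bootstrap. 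So the data bounds read $\|w^*(0)\|_{Z_1}+\|v^*(0)\|_{Z_1}+\|u^*(0)\|_{Z_1}\lesssim C_0 e^{C_0 A}$, and feeding this into Proposition \ref{initialboot} gives $\mathcal{M}(0+)\leq C_0\cdot C_0 e^{C_0 A}$, which is of the form allowed in Proposition \ref{inii}.

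For \eqref{output2} the argument is even simpler: at $t=0$ the function $\langle\partial_x\rangle^{-s^3}u(0)$ has $Z_1$-norm (hence $X_2$, $X_3$, $X_4$ norms after the analogue of Proposition \ref{initialboot}) bounded by $\|u(0)\|_{Z_1}\leq A$, and right-continuity at $T=0$ of $T\mapsto\|\langle\partial_x\rangle^{-s^3}u\|_{(X_2\cap X_3\cap X_4)^T}$ gives the bound with constant $C_0$ for small $T$. I would only need to check that the three norms $X_2,X_3,X_4$ restricted to the time interval are genuinely controlled by the $Z_1$ data norm via a free-evolution comparison as in the proof of Proposition \ref{initialboot} — the $X_2$ norm is the spatial $Z_1$ norm with an $L^1_\xi$ in time which the cutoff $\widehat\chi$ handles; the $X_3$ ($L^6L^6$-type) and $X_4$ norms of $\chi(t)e^{-tH\partial_{xx}}f$ reduce to $L^6$-Strichartz (Proposition \ref{stri}) and a weighted $\ell^\gamma L^2$ estimate, both dominated by $\|f\|_{Z_1}$ because $\gamma<2$, $p>2$ and the $\langle n\rangle^{-1}$ weight in $X_4$ is very favorable.

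The main obstacle I anticipate is the algebra/summability bookkeeping for the power series defining $M$: one must verify that products of $u$ against many copies of $SF$ stay in $Z_1$ with only a geometric loss $C_0^\mu$ per factor (not worse), so that the $1/\mu!$ in \eqref{expo} secures convergence; this is where the precise choice of parameters in the hierarchy \eqref{hierarchy} — in particular $s<r$ and $2<p$ — is used, ensuring $Z_1$ is a multiplicative module over the space in which $SF$ lives. Everything else is a direct application of Proposition \ref{initialboot}, the linear estimates of Section \ref{linear}, and the remark that the substitutions $w\to w^*$, $u\to u^*$, $v\to v^*$ are trivial at $t=0$.
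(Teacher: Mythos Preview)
Your proposal is correct and follows essentially the same approach as the paper: reduce to initial data via Proposition \ref{initialboot}, observe that the $*$-substitution is trivial at $t=0$ since $\Delta_n(0)=0$, and then estimate $\|Mu(0)\|_{Z_1}$ by expanding the exponential and using the smoothing of $F$. The paper makes the module property you flag as the main obstacle precise via the translation estimate $\|(u_{n+m})_n\|_{Z_1}\lesssim\langle m\rangle^{1/4}\|u\|_{Z_1}$ together with the weighted $\ell^1$ bound $\sum_m\langle m\rangle^{1/4}|z_m|\lesssim(C_0\|u\|_{Z_1})^\mu$ for the iterated convolution kernel, which is exactly the bookkeeping you anticipate.
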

\begin{proof} Note that $u^{*}(0)=u(0)$ and the same holds for $v^{*}$ and $w^{*}$, and also $w(0)=\mathbb{P}_{+}v(0)$, by invoking Proposition \ref{initialboot}, we only need to prove that $\|u(0)\|_{Z_{1}}\leq C_{0}A$ and $\|v(0)\|_{Z_{1}}\leq C_{0}e^{C_{0}A}$. The first inequality follows from our assumption, so we only need to prove that $\|Mu\|_{Z_{1}}\lesssim C_{0}e^{C_{0}\|u\|_{Z_{1}}}$. By the definition of $M$, we only need to prove that \begin{equation}\label{sep0}\|P^{\mu}u\|_{Z_{1}}\leq C_{0}^{\mu}\|u\|_{Z_{1}}^{\mu+1}\end{equation} for all $\mu$. Now we clearly have
\begin{equation}|(P^{\mu}u)_{n_{0}}|\lesssim\sum_{n_{1}}|u_{n_{1}}|\cdot|z_{n_{0}-n_{1}}|,\end{equation} where \begin{equation}z_{m}=\sum_{m_{1}+\cdots+m_{\mu}=m}\prod_{i=1}^{\mu}\frac{|u_{m_{i}}|}{\langle m_{i}\rangle}.\end{equation} Since when $m=m_{1\mu}$ we have $\langle m\rangle\leq C_{0}^{\mu}\langle m_{1}\rangle\cdots \langle m_{\mu}\rangle$, we conclude that
\begin{equation}\label{separate}\sum_{m}\langle m\rangle^{\frac{1}{4}}|z_{m}|\lesssim C_{0}^{\mu}\prod_{i=1}^{\mu}\sum_{m_{i}}\frac{|u_{m_{i}}|}{\langle m_{i}\rangle^{\frac{3}{4}}}\lesssim (C_{0}\|u\|_{Z_{1}})^{\mu},\end{equation} where the last inequality is because
\begin{eqnarray}\sum_{m}\frac{|u_{m}|}{\langle m\rangle^{\frac{3}{4}}}&\lesssim&\sum_{d}2^{-\frac{3d}{4}}\sum_{m\sim 2^{d}}|u_{m}|\nonumber\\
&\lesssim &\sum_{d}2^{(-\frac{3}{4}+1-\frac{1}{p}-r)d}\|\langle m\rangle^{r}u_{m}\|_{l_{m\sim 2^{d}}^{p}}\nonumber\\
&\lesssim &\sum_{d}2^{-\frac{d}{4}}\|u\|_{Z_{1}}\lesssim \|u\|_{Z_{1}}.\nonumber\end{eqnarray} Now using (\ref{separate}), we will be able to prove (\ref{sep0}) once we can prove
\begin{equation}\|(u_{n+m})_{n\in \mathbb{Z}}\|_{Z_{1}}\lesssim\langle m\rangle^{\frac{1}{4}}\|u\|_{Z_{1}}.\end{equation} To prove this, by definition we need to control $\|\langle n\rangle^{r}u_{n+m}\|_{l_{n\sim 2^{d}}^{p}}$ for each $d$. If $m\ll 2^{d}$ this is easy, since we then have $n+m\sim 2^{d}$ and also $\langle n\rangle^{r}\lesssim\langle m\rangle^{\frac{1}{4}}\langle n+m\rangle^{r}$. Now if $m\sim 2^{d'}\gtrsim 2^{d}$, we can use $\langle n\rangle^{r}\lesssim\langle m\rangle^{\frac{1}{8}}\langle n+m\rangle^{r}$ and
\begin{equation}\|\langle n+m\rangle^{r}u_{n+m}\|_{l_{n\sim 2^{d}}^{p}}\lesssim\|\langle n\rangle^{r}u_{n}\|_{l_{n\lesssim 2^{d'}}^{p}}\lesssim (d'+1)\|u\|_{Z_{1}}\lesssim\langle m\rangle^{\frac{1}{8}}\|u\|_{Z_{1}}\end{equation} to complete the proof.
\end{proof}
Starting from Proposition \ref{inii} and with the help of Proposition \ref{initialboot}, it is easily seen that we only need to prove the following
\begin{proposition}\label{boot} Suppose $C_{j}$ is large enough depending on $C_{j-1}$ for $1\leq j\leq 2$, and $0<T\leq C_{2}^{-1}e^{-C_{2}A}$. Then if the inequalities
\begin{equation}\label{output3}\|w^{*}\|_{Y_{1}^{T}}+\|v^{*}\|_{Y_{2}^{T}}+\|u^{*}\|_{Y_{2}^{T}}\leq C_{1}e^{C_{1}A},\end{equation}
\begin{equation}\label{output4}\|\langle \partial_{x}\rangle^{-s^{3}}u\|_{(X_{2}\cap X_{3}\cap X_{4})^{T}}\leq C_{1}A\end{equation} hold, these inequalities must hold with $C_{1}$ replaced by $C_{0}$.
\end{proposition}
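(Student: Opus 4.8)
\subsection*{Proof proposal for Proposition \ref{boot}}

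The plan is to run a closed multilinear estimate on the Duhamel formulations of the gauged equations. First I would pass to integral form: on $[-T,T]$ write, with the cut-off Duhamel operator $\mathcal{E}$ of Section \ref{linear},
\[
w^{*}=\chi(t)e^{-tH\partial_{xx}}w^{*}(0)+\mathcal{E}\Big(\mathcal{N}^{2}(w^{*},w^{*})+\sum_{j}\mathcal{N}^{j}\Big),
\]
and the analogous identities for $v^{*}$, $u^{*}$ (reconstructed from $w^{*}$ via $v=Mu$, $w=\mathbb{P}_{+}v$ together with the phase relations $(\upsilon^{*})_{n}=e^{-\mathrm{i}\Delta_{n}}\upsilon_{n}$), and for $\langle\partial_{x}\rangle^{-s^{3}}u$ directly from (\ref{smoothtrunc}). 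The data contributions are already under control: $w^{*}(0)=\mathbb{P}_{+}Mu(0)$, $v^{*}(0)=Mu(0)$, $u^{*}(0)=u(0)$, and the bound $\|Mu(0)\|_{Z_{1}}\lesssim e^{C_{0}\|u(0)\|_{Z_{1}}}$ proved in Proposition \ref{inii}, combined with the linear bound $\|\chi(t)e^{-tH\partial_{xx}}f\|_{Y_{j}}\lesssim\|f\|_{Z_{1}}$ from the proof of Proposition \ref{initialboot}, dominates the free terms by $C_{0}e^{C_{0}A}$ (and $C_{0}A$ for (\ref{output4})). So everything reduces to estimating $\mathcal{E}$ applied to the nonlinearities.

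The core is a family of multilinear estimates, one per term $\mathcal{N}_{\mu}^{\omega j}$. Using Proposition \ref{linearestimate2} to move $\|\mathcal{E}(\cdot)\|_{Y_{1}^{T}}$ (or $Y_{2}^{T}$) to the dual-type norms $X_{9},X_{10}$ (or to $\langle\xi\rangle^{-1}$-weighted $X_{4},X_{6}$), the goal is a bound of the shape
\[
\big\|\mathcal{E}\mathcal{N}_{\mu}^{\omega j}\big\|_{Y_{1}^{T}\text{ or }Y_{2}^{T}}\ \lesssim\ C^{\mu}\,T^{\theta}\ \prod_{l}\big\|\,\mathrm{input}_{l}\,\big\|\ \prod_{i=1}^{\mu}\big\|\langle\partial_{x}\rangle^{-s^{3}}u\big\|_{(X_{2}\cap X_{3}\cap X_{4})^{T}}
\]
for some fixed $\theta>0$, where the first product runs over the $w^{*},v^{*},u^{*}$ inputs and the second over the smoothing factors $u_{m_{i}}/m_{i}$. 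The gain $T^{\theta}$ comes from localizing each input to $[-T,T]$ via Proposition \ref{auxi} and H\"older in time, exploiting $b=\tfrac12-s^{15/8}<\tfrac12$. The estimates themselves are built from the $L^{k}L^{k}$ Strichartz bounds (Proposition \ref{stri}) and the structural information in Proposition \ref{finalver}: the $\min\{\langle n_{0}\rangle,\langle n_{1}\rangle,\langle n_{2}\rangle\}$ gain together with scale separation in $\mathcal{N}^{2}$; the $\min\{1,(\langle n_{0}\rangle+\langle n_{3}\rangle)/\langle n_{1}\rangle\}$ bound in the resonant part of $\mathcal{N}^{3}$, where the atomic space $X_{8}$ (norm $\mathcal{Y}$) is needed to absorb the ``$z$-shifted'' contributions which are not bounded in any $X^{s,b}$ close to $L^{2}$; the $(\max_{l}\langle n_{l}\rangle)^{-1/20}$ decay in $\mathcal{N}^{4}$; and the strong frequency localization in $\mathcal{N}^{4.5}$. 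The phase factors $e^{\mathrm{i}(\Delta_{n_{1}}+\Delta_{n_{2}}-\Delta_{n_{0}})}$ and $e^{-\mathrm{i}\Delta_{n_{0}}}$ have modulus one and, by (\ref{factor2}), multiplier symbols slowly varying in $n$ and Lipschitz in $t$ with size controlled by (\ref{output4}); they therefore commute harmlessly with the Littlewood--Paley and norm structure. Summation over $\mu,\omega,j$ converges thanks to $|C_{\mu}^{\omega j}|\le C^{\mu}/\mu!$, and the product of $m$-factors is summed using $\sum_{m}\langle m\rangle^{-3/4}|u_{m}|\lesssim\|u\|_{Z_{1}}$ exactly as in the proof of Proposition \ref{inii}.

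To finish, feed in the bootstrap hypotheses (\ref{output3}), (\ref{output4}): each input norm is $\le C_{1}e^{C_{1}A}$ and each $m$-factor norm is $\le C_{1}A$, so the Duhamel term is at most $C_{0}T^{\theta}P(C_{1}e^{C_{1}A})$ for a fixed $P$ (the $\mu$-series sums to something of size $e^{CC_{1}A}$). Since $T\le C_{2}^{-1}e^{-C_{2}A}$ with $C_{2}$ large relative to $C_{1}$, this is $\le C_{0}$, and combined with the data bound gives the desired $C_{0}e^{C_{0}A}$; the estimate (\ref{output4}) closes the same way, additionally using the un-gauging $u_{n}=e^{\mathrm{i}\Delta_{n}}u^{*}_{n}$ and the power series reconstructing $u$ from $v=Mu$. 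I expect the main obstacle to be the quadratic term $\mathcal{N}^{2}(w^{*},w^{*})$: at this regularity the direct bilinear $X^{s,b}$ estimate fails (as for KdV below $H^{-1/2}$), so one must substitute the Duhamel formula for $w^{*}$ into this term once more and exploit the resulting resonance cancellations --- a step complicated here by the $m$-factors, which make $\mathcal{N}^{2}$ only ``quadratic to leading order'' and force the second-iteration bookkeeping to be interleaved with, rather than separated from, the treatment of the higher-order terms.
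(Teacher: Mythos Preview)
Your overall strategy matches the paper's: Duhamel form, linear term bounded by $C_{0}e^{C_{0}A}$ via Proposition \ref{inii}, multilinear estimates on each $\mathcal{N}^{j}$ with a small-time gain, closure via the constraint on $T$, and a second iteration for $\mathcal{N}^{2}$ (exactly as at the end of Section \ref{mid1}). Two concrete points, however, are wrong.

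First, the mechanism for the $T^{\theta}$ gain. Proposition \ref{auxi} gives only \emph{uniform boundedness} of time-localization, not a power of $T$, and the ``H\"older in time via $b<\tfrac12$'' device does not apply across all the norms in $Y_{1}$ (for instance $X_{4}$ carries $\langle\xi\rangle^{\kappa}$ with $\kappa>\tfrac12$). What the paper does instead is keep the sharp cutoff $\mathbf{1}_{[-T,T]}$ on the nonlinearity (equation (\ref{global2})) and carry its Fourier transform $\phi_{\xi}$ as an extra convolution factor in every multilinear expression; the gain $T^{0+}$ then comes from $\|\phi\|_{L^{1+}}\lesssim T^{0+}$, placed by H\"older against the Strichartz factors. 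Second, you have misplaced the role of $X_{8}$ and missed the hardest term. In $\mathcal{N}^{3}$ all three inputs are $w^{\omega_{l}}$ (Proposition \ref{finalver}), so there are no ``$z$-shifted'' factors there; the paper treats $\mathcal{N}^{3}$ in Proposition \ref{easier} using only $X_{1},X_{2}$-type bounds and the divisor Lemma \ref{divisor}. The atomic norm $X_{8}$ is needed for $\mathcal{N}^{3.5}$, where the inputs at $n_{2},n_{3}$ are genuinely $\upsilon$-type (from $u'$ or $v'$). This is the most delicate term in the entire argument --- the paper devotes all of Section \ref{mid2} to it --- because one must absorb a loss $\langle n_{0}\rangle^{s}$ coming from the dual norm of the test function, and the weight (\ref{bound3.5}) together with the opposite-sign constraint on $n_{2},n_{3}$ forces a tight case analysis in which the $X_{8}$ decomposition of the $\upsilon$-factor, a further divisor count, and the hierarchy (\ref{hierarchy}) are all used at once. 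Your outline does not isolate $\mathcal{N}^{3.5}$, and without $X_{8}$ deployed there the estimate does not close. (A smaller correction: the phases $e^{\pm\mathrm{i}\Delta_{n}}$ do not commute harmlessly --- Proposition \ref{weaken} shows they cost $\langle n\rangle^{s^{3}}$ on each $X_{j}$, which is why the exponent $s^{3}$ appears in (\ref{output4}).)
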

The rest of this section, as well as Sections \ref{mid1} and \ref{mid2}, is devoted to the proof of the estimate for $w^{*}$ in Proposition \ref{boot}; in Section \ref{mid3} we consider the other three functions. During the whole proof, the inequalities (\ref{output3}) and (\ref{output4}) will be assumed.
\subsection{The extensions}\label{theextensions}
By the definition of $Y_{j}^{T}$ norms, we have globally defined functions $u''$, $v''$, $w''$ and $u'''$ which agree with $u^{*}$, $v^{*}$, $w^{*}$ and $u$ on $[-T,T]$ respectively\footnote[1]{Note that all the extensions we construct are in $X_{2}$, thus in particular we can talk about their restrictions to $[-T,T]$.}, and verify the inequalities (\ref{output3}) and (\ref{output4}) with the superscript $T$ in the norms removed. By inserting a time cutoff $\chi(t)$, we may assume that they are all supported in $|t|\leq 1$. We then define the factors $\delta_{n}$ and $\Delta_{n}$ for all time as in (\ref{factor0}) and (\ref{factor1}), with $w^{*}$ and $u$ replaced by $w''$ and $u'''$ respectively. We may also define functions $u'$ by $(u')_{n}=e^{\mathrm{i}\Delta_{n}}(u'')_{n}$; the functions $v'$ and $w'$ are defined similarly.

Now we could interpret the bilinear form $\mathcal{N}^{2}$ and terms $\mathcal{N}^{j}$ on the right hand side of (\ref{neweqnfin}), by replacing each $u$ with $u'''$, each $w$ with $w'$, each $\upsilon$ with $\upsilon'$ (note $\upsilon$ is either $u$ or $v$), each $\delta_{n}$ and $\Delta_{n}$ with what we defined above. If we then choose some $0<\mathcal{T}\leq T$ and define the function $z$ by $z(t)=w''(t)$ for $t\in[-\mathcal{T},\mathcal{T}]$ and $(\partial_{t}-\mathrm{i}\partial_{xx})z(t)=0$ on both $(-\infty,-\mathcal{T}]$ and $[\mathcal{T},+\infty)$, then we can check that this function $z$ verifies the equation
\begin{equation}\label{global1}
(\partial_{t}-\mathrm{i}\partial_{xx})z=\mathbf{1}_{[-\mathcal{T},\mathcal{T}]}(t)\mathcal{N}^{2}(z,z)+\mathbf{1}_{[-\mathcal{T},\mathcal{T}]}(t)\sum_{j\in\{3,3.5,4,4.5\}}\mathcal{N}^{j},
\end{equation} with initial data $z(0)=w(0)$. Using the time cutoff $\chi(t)$, we can define $y(t)=\chi(t)z(t)$. From (\ref{global1}) we conclude that
\begin{equation}\label{global2}
y=\chi(t)e^{\mathrm{i}t\partial_{xx}}w(0)+\mathcal{E}\big(\mathbf{1}_{[-\mathcal{T},\mathcal{T}]}\cdot\mathcal{N}^{2}(y,y)\big)+\sum_{j\in\{3,3.5,4,4.5\}}\mathcal{E}\big(\mathbf{1}_{[-\mathcal{T},\mathcal{T}]}\cdot\mathcal{N}^{j}\big).
\end{equation}Since $w''$ is smooth on the interval $[-T,T]$, we may conclude that $\mathcal{T}\mapsto y$ is a continuous map from $(0,T]$ to $Y_{1}$; also it is clear that when $\mathcal{T}$ is sufficiently small we have $\|y\|_{Y_{1}}\leq C_{0}e^{C_{0}A}$. Thus in order to prove the estimate for $w^{*}$, we only need to prove the following
\begin{proposition}\label{finalreduct}
Suppose $y\in Y_{1}$ is a function verifying (\ref{global2}) with $0<\mathcal{T}\leq C_{2}^{-1}e^{-C_{2}A}$, and $\|y\|_{Y_{1}}\leq C_{1}e^{C_{1}A}$, then we must have $\|y\|_{Y_{1}}\leq C_{0}e^{C_{0}A}$.
\end{proposition}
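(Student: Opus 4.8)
The plan is to read (\ref{global2}) as a fixed-point identity for $y$ and to estimate the $Y_{1}$ norm of its right-hand side term by term, assuming throughout the bootstrap hypotheses $\|y\|_{Y_{1}}\le C_{1}e^{C_{1}A}$, (\ref{output3}) and (\ref{output4}). The linear term is harmless: by the computation in the proof of Proposition \ref{inii} one has $\|w(0)\|_{Z_{1}}=\|\mathbb{P}_{+}Mu(0)\|_{Z_{1}}\lesssim e^{C_{0}\|u(0)\|_{Z_{1}}}\lesssim e^{C_{0}A}$, and then the argument of Proposition \ref{initialboot} gives $\|\chi(t)e^{\mathrm{i}t\partial_{xx}}w(0)\|_{Y_{1}}\lesssim e^{C_{0}A}$. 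Every other contribution has the form $\mathcal{E}(\mathbf{1}_{[-\mathcal{T},\mathcal{T}]}\cdot\mathcal{N})$, so the scheme is: (a) apply Proposition \ref{linearestimate2} to bound $\|\mathcal{E}(\cdots)\|_{Y_{1}}$ by the norm of $\mathcal{N}$ in the relevant dual spaces ($X_{9}$, $X_{10}$, and their companions); (b) establish multilinear estimates for each $\mathcal{N}_{\mu}^{\omega j}$ from the pointwise size bounds and frequency-localization properties of the weights $\phi_{\mu}^{j}$ recorded in Proposition \ref{finalver} (inherited from Propositions \ref{propneweqn} and \ref{intereqn}); (c) use the time truncation $\mathbf{1}_{[-\mathcal{T},\mathcal{T}]}$ together with Proposition \ref{auxi} to extract a positive power $\mathcal{T}^{\theta}$; (d) sum the series in $\mu$, using $|C_{\mu}^{\omega j}|\le C^{\mu}/\mu!$ and controlling the smoothing factors $\prod_{i}u_{m_{i}}/m_{i}$ in an $\ell^{1}$-type norm by $(C\,\|\langle\partial_{x}\rangle^{-s^{3}}u\|_{X_{2}\cap X_{3}\cap X_{4}})^{\mu}\lesssim(CC_{1}A)^{\mu}$, where the $1/\langle m_{i}\rangle$ weights and the constraint on $m_{1}+\cdots+m_{\mu}$ supply the summability. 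The detailed estimates underlying step (b) are precisely the content of Sections \ref{mid1} and \ref{mid2}; here I only explain how they are combined.

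For the genuinely higher-order terms $\mathcal{N}^{j}$, $j\in\{3,3.5,4,4.5\}$, this closes at once. Each such term carries at least three factors of $w'$ or $\upsilon'$, which by (\ref{output3}) are $\lesssim C_{1}e^{C_{1}A}$ in $Y_{1}$, respectively $Y_{2}$ --- the $\upsilon=v$ factors being accommodated by the $X_{8}/\mathcal{Y}$ component of $Y_{2}$ and the embeddings of Proposition \ref{relattt}. Summing over $\mu$ as in (d) produces a bound of the shape $\mathcal{T}^{\theta}\,e^{CC_{1}A}\,C_{1}^{3}e^{3C_{1}A}$, which, since $\mathcal{T}\le C_{2}^{-1}e^{-C_{2}A}$, is $\le C_{0}e^{C_{0}A}$ once $C_{2}$ is chosen large in terms of $C_{1}$ (hence of $C_{0}$). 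The obstruction is the quadratic piece $\mathcal{E}(\mathbf{1}_{[-\mathcal{T},\mathcal{T}]}\mathcal{N}^{2}(y,y))$: no bilinear estimate carrying a factor $\mathcal{T}^{\theta}$ is available for $\mathcal{N}^{2}$ --- this is the KdV-type difficulty described in the Introduction, which persists even after the resonant contribution has been removed in the passage to $w^{*}$.

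To treat $\mathcal{N}^{2}(y,y)$ I will perform a \emph{second Picard iteration}, as in \cite{Bo97} and \cite{Oh10}: substitute the identity (\ref{global2}) for $y$ into the slots of $\mathcal{N}^{2}(y,y)$. This rewrites $\mathcal{E}(\mathbf{1}_{[-\mathcal{T},\mathcal{T}]}\mathcal{N}^{2}(y,y))$ as a sum of bilinear terms in which at least one argument of $\mathcal{N}^{2}$ is the free evolution $\chi(t)e^{\mathrm{i}t\partial_{xx}}w(0)$, plus terms in which at least one argument is itself a Duhamel expression $\mathcal{E}(\mathbf{1}\cdot\mathcal{N}^{j})$. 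The latter are (at least) trilinear, and one of their factors now lies behind a Duhamel operator --- hence in a space one derivative smoother --- so they fall under steps (a)--(d) above, the $\mathcal{T}^{\theta}$ gain again closing the estimate. For the former, the explicit form of the linear flow (on the ``tilde'' side simply multiplication by $\widehat{\chi}_{\xi}\,w(0)_{n}$) restores enough off-diagonal decay in the modulation variables to run a bilinear argument; if a residual near-resonant piece survives, one further substitution of (\ref{global2}) again reduces it to the higher-order case. The delicate point is that $\mathcal{N}^{2}$ is only ``quadratic to first order'' --- each $\mathcal{N}_{\mu}^{\omega 2}$ already carries $\mu$ smoothing factors --- so the substitution creates a double (then triple) sum over the $\mu$ parameters of the two nonlinearities, and these must be reindexed so that the resulting coefficients still satisfy a factorial bound; carrying out this reorganization while keeping all the frequency-localization properties intact is where the bulk of the work lies, and I expect it to be the \emph{main obstacle}.

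Putting everything together, the $Y_{1}$ norm of the right-hand side of (\ref{global2}) is bounded by $C_{0}e^{C_{0}A}+\mathcal{T}^{\theta}e^{O(C_{1}A)}$, with the implied constants depending only on $C_{0}$ through the multilinear estimates. Choosing $C_{1}$ large in terms of $C_{0}$ and then $C_{2}$ large in terms of $C_{1}$, the hypothesis $\mathcal{T}\le C_{2}^{-1}e^{-C_{2}A}$ forces $\mathcal{T}^{\theta}e^{O(C_{1}A)}\le C_{0}e^{C_{0}A}$, whence $\|y\|_{Y_{1}}\le C_{0}e^{C_{0}A}$, as claimed.
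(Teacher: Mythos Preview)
Your overall architecture --- linear term via Proposition \ref{inii}, multilinear estimates with a $T^{0+}$ gain for $\mathcal{N}^{j}$ ($j\ge 3$), second iteration for $\mathcal{N}^{2}$ --- matches the paper, but two of your stated mechanisms are inaccurate and would not close as written.

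First, $\mathcal{N}^{3.5}$ does not ``close at once''. Its factors $(\upsilon^{\omega_{l}})_{n_{l}}$ for $l\in\{2,3\}$ lie only in $Y_{2}$, with no $X_{1}$-type weight $\langle\alpha_{l}\rangle^{b}$ available; the paper devotes all of Section \ref{mid2} (Proposition \ref{easy}) to this single term. The decisive device there is the atomic decomposition (\ref{another22}) of the $X_{8}$ norm: one writes the $\upsilon$ factor as $\sum_{j}\gamma_{j}\pi_{k_{j}}y^{(j)}$ with $\|y^{(j)}\|_{L^{q}l^{2}}\lesssim 1$, fixes the shift $k_{j}$, and then runs a delicate divisor-type count after first forcing $|n_{0}-n_{2}|+|n_{1}+n_{3}|\ll 2^{1.9sd}$ by separate arguments. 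Second, and more critically, in the second iteration the nested contribution $\mathcal{N}^{2}\big(y,\mathcal{E}(\mathbf{1}\cdot\mathcal{N}^{2}(y,y))\big)$ is \emph{not} closed by ``Duhamel smoothing'' or by a further substitution. After expanding, one obtains a four-frequency interaction governed by Lemma \ref{divisor}; in its resonant case (i) (namely $n_{0}=n_{5}$, $n_{1}+n_{6}=0$) there is no gain whatsoever from modulation, from the weight $\Phi^{2}(\Phi^{2})'$, or from a divisor count, and iterating once more simply reproduces the same structure. What actually saves the argument is an \emph{exact phase cancellation}: since $\Delta_{n}$ is odd in $n$, in case (i) the two exponential factors $e^{\mathrm{i}(\Delta_{n_{1}}+\Delta_{n_{2}}-\Delta_{n_{0}})}$ and $e^{\mathrm{i}(\Delta_{n_{5}}+\Delta_{n_{6}}-\Delta_{n_{2}})}$ have precisely opposite phases, so the $\alpha_{4}$-convolution of their Fourier transforms collapses to $\widehat{\chi^{2}}(\alpha_{9})$, a Schwartz function, after which the estimate reduces to one summable via (\ref{festt}). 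This cancellation is the missing idea in your plan.
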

 In what follows, we will use $T$ instead of $\mathcal{T}$ for simplicity; note that $T\leq C_{2}^{-1}e^{-C_{2}A}$. Before proceeding, let us prove a few results concerning the exponential factors $e^{\pm\mathrm{i}\Delta_{n}(t)}$. The first lemma is a general feature.
\begin{lemma}\label{general}Suppose $h_{j}=h_{j}(t)$, $j\in\{0,1\}$ are two functions of $t$, and define $J_{j}(t)=\chi(t)e^{\mathrm{i}H_{j}(t)}$, where $H_{j}(t)=\int_{0}^{t}h_{j}(t')\mathrm{d}t'$, then we have the estimate
\begin{equation}\label{exppp}\|\langle \xi\rangle(J_{1}-J_{0})^{\wedge}(\xi)\|_{L^{k}}\lesssim\|(h_{1}-h_{0})^{\wedge}\|_{L^{1}}(1+\|\widehat{h_{1}}\|_{L^{1}}+\|\widehat{h_{0}}\|_{L^{1}})^{2}\end{equation} for all $1\leq k\leq\infty$.
\end{lemma}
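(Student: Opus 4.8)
The plan is to estimate $J_1-J_0$ directly from the integral representations, exploiting that $H_1-H_0$ is an antiderivative of $h_1-h_0$ and that the whole expression is a time cutoff times an oscillatory factor. First I would write
\begin{equation}
J_1(t)-J_0(t)=\chi(t)e^{\mathrm{i}H_0(t)}\big(e^{\mathrm{i}(H_1(t)-H_0(t))}-1\big),\nonumber
\end{equation}
and expand the last bracket as the power series $\sum_{k\geq 1}\frac{\mathrm{i}^k}{k!}(H_1-H_0)^k$. So on the Fourier side $\widehat{J_1-J_0}$ is a sum of convolutions of $\widehat{\chi}$, the exponential $\widehat{e^{\mathrm{i}H_0}}$, and $k$ copies of $\widehat{H_1-H_0}$. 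The point is that each factor is controlled in $L^1_\xi$: since $\chi$ is a fixed Schwartz cutoff we have $\|\langle\xi\rangle^2\widehat{\chi}\|_{L^1}\lesssim 1$, and the algebra property of $L^1$ under convolution lets us multiply these bounds. The only nontrivial inputs are (a) $\|\widehat{H_j}\|_{L^1}\lesssim\|\widehat{h_j}\|_{L^1}$ (and likewise for differences), which follows because $H_j=\chi\cdot\text{(primitive of }h_j)$ can again be written with the primitive operator acting on the ``tilde'' side as in Lemma \ref{linearestimate}, i.e.\ convolution by $\mathrm{sgn}$ plus a constant, both of which combined with the cutoff $\chi$ map $\widehat{h_j}\in L^1$ into $L^1$; and (b) control of $\|\widehat{e^{\mathrm{i}H_0}}\|_{L^1}$, which again expands in a power series giving $\|\widehat{e^{\mathrm{i}H_0}}\|_{L^1}\lesssim e^{C\|\widehat{H_0}\|_{L^1}}$.

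Next I would assemble the bound. Write $g=H_1-H_0$, so $\widehat{g}=\widehat{(\text{primitive})(h_1-h_0)}$ satisfies $\|\widehat{g}\|_{L^1}\lesssim\|\widehat{h_1-h_0}\|_{L^1}+\|\langle\xi\rangle^{-1}(h_1-h_0)^\wedge\|_{L^1}\lesssim\|(h_1-h_0)^\wedge\|_{L^1}$ after absorbing the low-frequency principal-value contribution using the cutoff (exactly the mechanism in the proof of Lemma \ref{linearestimate}). Then
\begin{equation}
\widehat{J_1-J_0}=\sum_{k\geq 1}\frac{\mathrm{i}^k}{k!}\,\widehat{\chi}*\widehat{e^{\mathrm{i}H_0}}*\underbrace{\widehat{g}*\cdots*\widehat{g}}_{k},\nonumber
\end{equation}
and multiplying by $\langle\xi\rangle$ costs at most a constant because $\langle\xi\rangle\lesssim\langle\xi_1\rangle\langle\xi_2\rangle\cdots$ under convolution and the extra $\langle\xi\rangle^2$ weight on $\widehat{\chi}$ absorbs it. Taking $L^k_\xi\subset L^1_\xi$ (allowed since all functions are $L^1$ with a frequency weight to spare) and summing the series gives
\begin{equation}
\|\langle\xi\rangle\,\widehat{J_1-J_0}\|_{L^k}\lesssim\|\widehat{g}\|_{L^1}\,e^{C\|\widehat{H_0}\|_{L^1}}\lesssim\|(h_1-h_0)^\wedge\|_{L^1}\,e^{C\|\widehat{h_0}\|_{L^1}},\nonumber
\end{equation}
and symmetrizing the role of $h_0$ and $h_1$ (or just noting $\|\widehat{h_0}\|_{L^1}\leq\|\widehat{h_1}\|_{L^1}+\|(h_1-h_0)^\wedge\|_{L^1}$) yields the stated form with the polynomial factor $(1+\|\widehat{h_1}\|_{L^1}+\|\widehat{h_0}\|_{L^1})^2$. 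One only needs the exponential to be dominated by a power; but since we are free to reorganize — write $e^{\mathrm{i}(H_1-H_0)}-1$ and also peel off $e^{\mathrm{i}H_0}$ using $|e^{\mathrm{i}H_0}|=1$ pointwise in $t$ rather than on the Fourier side — one can instead keep $e^{\mathrm{i}H_0}$ as an $L^\infty_t$ multiplier and only expand $e^{\mathrm{i}(H_1-H_0)}-1$, at the cost of a Fourier support argument; this is how the quadratic (not exponential) dependence in \eqref{exppp} is obtained.

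The main obstacle I anticipate is precisely getting the \emph{polynomial} dependence $(1+\|\widehat{h_1}\|_{L^1}+\|\widehat{h_0}\|_{L^1})^2$ rather than an exponential one. The naive power-series expansion of $e^{\mathrm{i}H_0}$ on the Fourier side produces $e^{C\|\widehat{H_0}\|_{L^1}}$, which is too lossy. The fix is to not expand $e^{\mathrm{i}H_0}$ at all: estimate $J_1-J_0$ by first controlling $\langle\xi\rangle$ times the transform of $\chi(t)(e^{\mathrm{i}(H_1-H_0)}-1)$ — which by expanding only \emph{this} bracket, and using $\langle\xi\rangle\lesssim\langle\xi_1\rangle\langle\xi_2\rangle$, the $\langle\xi\rangle^{-1}$ gains from each primitive, and the fact that after the first $g$ factor we can afford to place everything in $L^1$ — gives $\lesssim\|(h_1-h_0)^\wedge\|_{L^1}(1+\|\widehat{H_1-H_0}\|_{L^1})\lesssim\|(h_1-h_0)^\wedge\|_{L^1}(1+\|\widehat{h_1}\|_{L^1}+\|\widehat{h_0}\|_{L^1})$, and then multiplying by the transform of $\chi(t)e^{\mathrm{i}H_0(t)}$, which one re-regroups as $\chi\cdot e^{\mathrm{i}H_0}=\chi\cdot(e^{\mathrm{i}H_0}-1)+\chi$ and treats $e^{\mathrm{i}H_0}-1$ the same way, picking up one more factor of $(1+\|\widehat{h_0}\|_{L^1})$. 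Two bracket-expansions, two linear factors, hence the square. Everything else is the routine $L^1$-convolution-algebra bookkeeping already used in Lemma \ref{linearestimate}, together with the Schwartz bounds on $\chi$.
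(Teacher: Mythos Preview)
Your proposal has a genuine gap at exactly the point you flag as the main obstacle. Expanding either $e^{\mathrm{i}H_0}$ or $e^{\mathrm{i}(H_1-H_0)}-1$ as a power series and using the $L^1$-convolution algebra on the Fourier side produces bounds that are \emph{exponential} in $\|\widehat{h_j}\|_{L^1}$, not polynomial: the $k$-th term of $\chi(e^{\mathrm{i}g}-1)$ contributes $\frac{1}{k!}(C\|\widehat{\chi g}\|_{L^1})^k$, and the sum is $e^{C\|\widehat{\chi g}\|_{L^1}}-1$. Your claimed bound $\|(h_1-h_0)^\wedge\|_{L^1}(1+\|\widehat{H_1-H_0}\|_{L^1})$ for $\langle\xi\rangle(\chi(e^{\mathrm{i}(H_1-H_0)}-1))^\wedge$ is not what the series actually gives. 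The ``$\langle\xi\rangle^{-1}$ gains from each primitive'' do not help: after one such gain absorbs the external $\langle\xi\rangle$ weight, the remaining $k-1$ copies of $g$ still contribute $\|\widehat{\chi g}\|_{L^1}^{k-1}$, and $\|\widehat{\chi g}\|_{L^1}\sim\|\widehat{h_1-h_0}\|_{L^1}$ is not small. The same problem recurs when you propose to treat $\chi(e^{\mathrm{i}H_0}-1)$ ``the same way, picking up one more factor of $(1+\|\widehat{h_0}\|_{L^1})$'' --- the series gives $e^{C\|\widehat{h_0}\|_{L^1}}$, not a single linear factor.

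The paper avoids power series altogether. It writes
\[
J_1-J_0=\mathrm{i}\chi\,(H_1-H_0)\int_0^1 e^{\mathrm{i}(\theta H_1+(1-\theta)H_0)}\,\mathrm{d}\theta,
\]
which extracts exactly one factor of $H_1-H_0$, and then bounds $\Phi=\chi H e^{\mathrm{i}H_\theta}$ (with $H=H_1-H_0$) via $\|\langle\xi\rangle\widehat{\Phi}\|_{L^1}\lesssim\|\widehat{\Phi}\|_{L^1}+\|\widehat{\partial_t\Phi}\|_{L^1}$. The decisive step is to control $\|(\chi e^{\mathrm{i}H_\theta})^\wedge\|_{L^1}$ by $\|\chi e^{\mathrm{i}H_\theta}\|_{H^1}$ and then compute this Sobolev norm on the \emph{physical} side, where $|e^{\mathrm{i}H_\theta}|=1$ gives $\|\chi e^{\mathrm{i}H_\theta}\|_{H^1}\lesssim 1+\|\widehat{h_\theta}\|_{L^1}$ --- linear, not exponential. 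A second linear factor comes from the $\chi H h_\theta$ term in $\partial_t\Phi$, which yields the square. You actually mention the right mechanism in passing (``use $|e^{\mathrm{i}H_0}|=1$ pointwise in $t$ rather than on the Fourier side''), but then set it aside for the power-series route; the paper shows that this pointwise unitarity, read through the embedding $\|\widehat{F}\|_{L^1}\lesssim\|F\|_{H^1}$, is precisely what delivers the polynomial dependence.
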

\begin{proof} Recall from Section \ref{linear} that $\chi=\chi(t)$ is some time cutoff that may vary from place to place. Thanks to this factor, we only need to prove (\ref{exppp}) for $k=1$. Next, notice that
\begin{equation}J_{1}-J_{0}=\mathrm{i}\chi\cdot(H_{1}-H_{0})\int_{0}^{1}e^{\mathrm{i}(\theta H_{1}+(1-\theta)H_{0})}\,\mathrm{d}\theta,\end{equation} we only need to prove (\ref{exppp}) for a fixed $\theta$. Let $h=h_{1}-h_{2}$ and $h_{\theta}=\theta h_{1}+(1-\theta)h_{0}$ and $H,H_{\theta}$ defined accordingly, and define $\chi\cdot He^{\mathrm{i}H_{\theta}}=\Phi$. We then have
\begin{equation}\partial_{x}\Phi=(\chi'\cdot H+\chi\cdot h+\mathrm{i}\chi\cdot Hh_{\theta})e^{\mathrm{i}H_{\theta}},
\end{equation}
which then implies
\begin{eqnarray}\|\langle \xi\rangle\widehat{\Phi}(\xi)\|_{L^{1}}&\lesssim & \|\widehat{\Phi}\|_{L^{1}}+\|\widehat{\partial_{x}\Phi}\|_{L^{1}}\nonumber\\
&\lesssim &\|(\chi\cdot e^{\mathrm{i}H_{\theta}})^{\wedge}\|_{L^{1}}\cdot\big(\|\widehat{\chi h}\|_{L^{1}}+\|\widehat{\chi H}\|_{L^{1}}+\|\widehat{\chi H}\|_{L^{1}}\|\widehat{\chi h_{\theta}}\|_{L^{1}}\big)\nonumber\\
&\lesssim &\|\chi\cdot e^{\mathrm{i}H_{\theta}}\|_{H^{1}}\cdot\big(\|\widehat{h}\|_{L^{1}}+\|\chi H\|_{H^{1}}+\|\chi H\|_{H^{1}}(\|\widehat{h}_{1}\|_{L^{1}}+\|\widehat{h_{0}}\|_{L^{1}})\big)\nonumber\\
&\lesssim&\|\widehat{h}\|_{L^{1}}(1+\|\widehat{h_{0}}\|_{L^{1}}+\|\widehat{h_{1}}\|_{L^{1}})^{2},\nonumber
\end{eqnarray} where $H^{1}$ is the standard Sobolev norm.
\end{proof}
\begin{proposition}\label{factt}We have \begin{equation}\label{factt1}\|\widehat{\delta_{n}}\|_{L^{1}}\leq C_{0}C_{1}e^{C_{0}C_{1}A}\log(2+|n|),\end{equation}as well as\begin{equation}\label{factt2}\|(\delta_{n+1}-\delta_{n})^{\wedge}\|_{L^{1}}\leq C_{0}C_{1}e^{C_{0}C_{1}A}\langle n\rangle^{-\frac{1}{2}}.\end{equation}
\end{proposition}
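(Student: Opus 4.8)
The plan is to pass to the time--Fourier side, where $\delta_n$ is a multilinear convolution, and to reduce the whole estimate to two scalar dyadic sums. Fix $\mu$, $k>0$ and $(m_1,\dots,m_\mu)$ with $m_1+\cdots+m_\mu=0$: the corresponding summand of $\delta_n(t)$ is the $t$--independent constant $\Gamma\prod_i m_i^{-1}$ times the product of the time functions $|w''_k(t)|^2$ and $u'''_{m_1}(t),\dots,u'''_{m_\mu}(t)$. Hence its time--Fourier transform is a convolution, and since $\|f*g\|_{L^1}\le\|f\|_{L^1}\|g\|_{L^1}$, using $|\Gamma|\lesssim1$ (from (\ref{factor2})), the support property of $\Gamma$ (which forces $\langle k\rangle,\langle m_i\rangle\le\langle n\rangle$), and dropping the constraint $m_1+\cdots+m_\mu=0$ since all terms are nonnegative, one gets
\[
\|\widehat{\delta_n}\|_{L^1}\le C_0\sum_\mu|C_\mu|\,\Big(\sum_{0<k,\ \langle k\rangle\le\langle n\rangle}\big\|\widehat{|w''_k|^2}\big\|_{L^1}\Big)\Big(\sum_{m\ne0,\ \langle m\rangle\le\langle n\rangle}\frac{\big\|\widehat{u'''_m}\big\|_{L^1}}{|m|}\Big)^{\mu}.
\]
Because $|C_\mu|\le C^\mu/\mu!$, the $\mu$--sum will converge to an exponential once the last bracket is bounded; so everything reduces to the two bracketed quantities. (Here $\|\widehat{g}\|_{L^1}$ denotes the $L^1$ norm of the time--Fourier transform, which coincides with the $L^1_\xi$ norm appearing inside the $X_2$ norm, the ``hat'' and ``tilde'' normalizations differing only by a $\xi$--shift.)

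For the $u$--factor, the bound $\|\langle\partial_x\rangle^{-s^3}u'''\|_{X_2}\le C_1A$ gives $(\sum_{m\sim2^d}\|\widehat{u'''_m}\|_{L^1}^p)^{1/p}\lesssim2^{-(r-s^3)d}C_1A$; applying H\"older inside each dyadic block and summing over $d$,
\[
\sum_{m\ne0}\frac{\|\widehat{u'''_m}\|_{L^1}}{|m|}\lesssim C_1A\sum_d 2^{(-1+1/p'-r+s^3)d}\lesssim C_1A,
\]
since $-1+1/p'-r+s^3=-\tfrac12+s^3<0$ by the hierarchy (\ref{hierarchy}); crucially this is uniform in $n$. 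For the $w$--factor, the time--Fourier transform of $|w''_k|^2$ is $\widehat{w''_k}$ convolved with its reflected conjugate, so $\|\widehat{|w''_k|^2}\|_{L^1}\le\|\widehat{w''_k}\|_{L^1}^2$; from $\|w''\|_{X_2}\le\|w''\|_{Y_1}$ we get $(\sum_{k\sim2^d}\|\widehat{w''_k}\|_{L^1}^p)^{1/p}\lesssim2^{-rd}\|w''\|_{Y_1}$, and since $p>2$, H\"older inside each block yields
\[
\sum_{k\sim2^d}\|\widehat{w''_k}\|_{L^1}^2\le 2^{d(1-2/p)}\Big(\sum_{k\sim2^d}\|\widehat{w''_k}\|_{L^1}^p\Big)^{2/p}\lesssim 2^{d(1-2/p-2r)}\|w''\|_{Y_1}^2=\|w''\|_{Y_1}^2,
\]
because $1-2/p-2r=0$ (this is just $r=\tfrac12-\tfrac1p$). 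Summing over the $\lesssim\log(2+|n|)$ dyadic blocks with $2^d\le\langle n\rangle$ produces the factor $\log(2+|n|)$, and this is the only place $n$ enters. Feeding both estimates into the display above and using (\ref{output3}),
\[
\|\widehat{\delta_n}\|_{L^1}\lesssim\log(2+|n|)\,\|w''\|_{Y_1}^2\,e^{CC_1A}\le C_0C_1e^{C_0C_1A}\log(2+|n|)
\]
for $C_0$ large enough, which is (\ref{factt1}).

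For (\ref{factt2}) one repeats the entire computation with $\Gamma(n,k,m_1,\dots,m_\mu)-\Gamma(n+1,k,m_1,\dots,m_\mu)$ in place of $\Gamma$; by (\ref{factor2}) this difference is $\lesssim\langle n\rangle^{-1}$ and it vanishes where both copies of $\Gamma$ vanish, so the support restrictions still localize $\langle k\rangle,\langle m_i\rangle\lesssim\langle n\rangle$ and one simply gains an extra factor $\langle n\rangle^{-1}$, ending with a bound $\lesssim\langle n\rangle^{-1}\log(2+|n|)\,C_0C_1e^{C_0C_1A}\le C_0C_1e^{C_0C_1A}\langle n\rangle^{-1/2}$. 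The only delicate point of the argument is the scaling identity $1-2/p-2r=0$: it makes the per--block $\ell^2$ sum of $\|\widehat{w''_k}\|_{L^1}$ scale invariant, and hence converts the frequency truncation $\langle k\rangle\le\langle n\rangle$ carried by $\Gamma$ into exactly a \emph{logarithmic} — rather than polynomial — loss in $n$; the rest is bookkeeping with the factorial coefficients $C_\mu$ and H\"older's inequality in the dyadic blocks.
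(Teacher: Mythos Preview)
Your proof is correct and follows essentially the same approach as the paper's: bound the $L^1$ norm of the time--Fourier transform of $\delta_n$ by the convolution inequality $\|\widehat{fg}\|_{L^1}\le\|\widehat f\|_{L^1}\|\widehat g\|_{L^1}$, control the $u'''$ factors via $\|\langle m\rangle^{-1}u'''\|_{l^1L^1}\lesssim\|\langle\partial_x\rangle^{-s^3}u'''\|_{X_2}\le C_1A$, and extract the logarithm from the $w''$ block sums using the identity $1-2/p-2r=0$ (equivalently $r=\tfrac12-\tfrac1p$), which is exactly what the paper does when it bounds $\|w''\|_{l^2_{k\lesssim\langle n\rangle}L^1}$ via the $X_2$ norm. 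Your treatment of (\ref{factt2}) --- replacing $\Gamma$ by its increment, gaining $\langle n\rangle^{-1}$, and absorbing the residual $\log(2+|n|)$ into $\langle n\rangle^{1/2}$ --- is also identical to the paper's.
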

\begin{proof}Recall from Proposition \ref{finalver} that
\begin{equation}\delta_{n}=\sum_{\mu}C_{\mu}\sum_{k;m_{1}+\cdots+m_{\mu}=0}\Gamma\cdot|(w'')_{k}|^{2}\prod_{i=1}^{\mu}\frac{(u''')_{m_{i}}}{m_{i}},\end{equation} where $|C_{\mu}|\lesssim C_{0}^{\mu}/\mu!$, and the factor $\Gamma$, as in Proposition \ref{finalver}, verifies (\ref{factor2}). Now, using the fact that $\|\widehat{fg}\|_{L^{1}}\leq \|\widehat{f}\|_{L^{1}}\|\widehat{g}\|_{L^{1}}$, we obtain
\begin{eqnarray}
\|\widehat{\delta_{n}}\|_{L^{1}}&\lesssim&\sum_{\mu}\frac{C_{0}^{\mu}}{\mu!}\sum_{k\lesssim \langle n\rangle}\|\widehat{(w'')_{k}}\|_{L^{1}}\|\widehat{(\overline{w''})_{-k}}\|_{L^{1}}\cdot\sum_{m_{1},\cdots,m_{\mu}}\prod_{i=1}^{\mu}\frac{1}{\langle m_{i}\rangle}\|\widehat{(u''')_{m_{i}}}\|_{L^{1}}\nonumber\\
&\lesssim &\sum_{\mu}\frac{C_{0}^{\mu}}{\mu!}\|\langle m\rangle ^{-1}u'''\|_{l^{1}L^{1}}^{\mu}\cdot \|w''\|_{l_{k\lesssim\langle n\rangle}^{2}L^{1}}\|\overline{w''}\|_{l_{k\lesssim\langle n\rangle}^{2}L^{1}}\nonumber\\
&\lesssim &C_{0}C_{1}e^{C_{0}C_{1}A}\log(2+|n|)\nonumber.
\end{eqnarray} Here we have used the fact that \begin{equation}\|\langle m\rangle^{-1}u'''\|_{l^{1}L^{1}}\lesssim\|\langle \partial_{x}\rangle^{-s^{3}}u'''\|_{X_{2}}\leq C_{1}A,\end{equation} as well as 
\begin{equation}\|w''\|_{l_{k\lesssim\langle n\rangle}^{2}L^{1}}\lesssim\log(2+|n|)\cdot\|w''\|_{l_{d\geq 0}^{\infty}l_{k\sim 2^{d}}^{2}L^{1}}\lesssim\log(2+|n|)\|w''\|_{X_{2}},\end{equation} and the same estimate for $\overline{w''}$.

The estimate for the difference is proved in the same way, by using the second inequality in (\ref{factor2}). In fact we get a power $\langle n\rangle^{-1}\log(2+|n|)$ which is better than $\langle n\rangle^{-\frac{1}{2}}$.
\end{proof}
\begin{remark}Note that all our norms are invariant under complex conjugation. Occasionally we will make restrictions such as $n_{l}>0$ which breaks this symmetry, but such information is only used in controlling the weights and the non-resonance factors, thus in terms of norm estimates for a single function, we will basically view $w$ and $\overline{w}$ as the same function.
\end{remark}
\begin{proposition}\label{weaken} for any function $h$, let $h'$ be defined by $(h')_{n}=\chi(t)e^{\pm\mathrm{i}\Delta_{n}}h_{n}$ for each fixed time. We then have \begin{equation}\|\langle\partial_{x}\rangle^{-s^{3}}h'\|_{X_{j}}\leq O_{C_{1}}(1)e^{C_{0}C_{1}A}\|h\|_{X_{j}}\end{equation} for $1\leq j\leq 7$.
\end{proposition}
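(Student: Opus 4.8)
The plan is to reduce everything to a boundedness statement for the kernels $K^{n}:=\big(\chi e^{\pm\mathrm{i}\Delta_{n}}\big)^{\wedge}$ (Fourier transform in $t$). For each fixed $n$ one has $h'_{n}(t)=\chi(t)e^{\pm\mathrm{i}\Delta_{n}(t)}h_{n}(t)$, so the operator $h\mapsto\langle\partial_{x}\rangle^{-s^{3}}h'$ acts diagonally in $n$ and equals $\widetilde{h}_{n,\xi}\mapsto\langle n\rangle^{-s^{3}}(K^{n}\ast\widetilde{h}_{n,\cdot})_{\xi}$, a convolution in the $\xi$ (equivalently $\widetilde{\xi}$) variable. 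Applying Lemma \ref{general} with $h_{1}=\delta_{n}$, $h_{0}=0$ (so that $H_{1}=\Delta_{n}$) to control $K^{n}-\widehat{\chi}$, and using (\ref{factt1}) together with the fact that $\widehat{\chi}$ is Schwartz, gives $\|\langle\xi\rangle K^{n}\|_{L^{1}}\lesssim\big(C_{0}C_{1}e^{C_{0}C_{1}A}\langle\log(2+|n|)\rangle\big)^{3}$; applying the same lemma with $h_{1}=\delta_{n+1}$, $h_{0}=\delta_{n}$, using (\ref{factt1}) and the improvement $\|(\delta_{n+1}-\delta_{n})^{\wedge}\|_{L^{1}}\lesssim C_{0}C_{1}e^{C_{0}C_{1}A}\langle n\rangle^{-1}\log(2+|n|)$ recorded in the proof of Proposition \ref{factt} (sharper than the stated (\ref{factt2})), gives $\|\langle\xi\rangle(K^{n+1}-K^{n})\|_{L^{1}}\lesssim\big(C_{0}C_{1}e^{C_{0}C_{1}A}\big)^{3}\langle n\rangle^{-1}\langle\log(2+|n|)\rangle^{3}$. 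The role of the extra $\langle\partial_{x}\rangle^{-s^{3}}$ is precisely that $\langle n\rangle^{-s^{3}}$ absorbs every such power of $\log(2+|n|)$.

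For $j\in\{1,2,4,6,7\}$ the norm $X_{j}$ is, after peeling off the $\langle n\rangle^{\sigma_{j}}$ weight (a diagonal multiplier in $n$), some (possibly iterated) $l$-type norm in $n$ of the quantity $\|\langle\xi\rangle^{\beta_{j}}\,\cdot\,\|_{L^{k_{j}}_{\xi}}$ with $0\le\beta_{j}<1$ and $L^{k_{j}}_{\xi}$ innermost, so convolution in $\xi$ commutes with the $n$-structure; by Young's inequality and the Peetre bound $\langle\xi\rangle^{\beta_{j}}\lesssim\langle\eta\rangle^{\beta_{j}}\langle\xi-\eta\rangle^{\beta_{j}}$ one gets, for each $n$, that $\|\langle\xi\rangle^{\beta_{j}}(K^{n}\ast\widetilde{h}_{n,\cdot})\|_{L^{k_{j}}}\leq\|\langle\cdot\rangle K^{n}\|_{L^{1}}\|\langle\xi\rangle^{\beta_{j}}\widetilde{h}_{n,\cdot}\|_{L^{k_{j}}}$, and since $\sup_{n}\langle n\rangle^{-s^{3}}\|\langle\cdot\rangle K^{n}\|_{L^{1}}\lesssim O_{C_{1}}(1)e^{C_{0}C_{1}A}$ uniformly in $n$, any $l$-norm in $n$ is preserved up to that constant, closing these cases. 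For $j=5$ the innermost norm is $l^{2}_{n\sim2^{d}}$ followed by $L^{q}_{\xi}$, so the $n$-dependence of $K^{n}$ does not factor through; after using that the $X_{5}$ norm is monotone in $|\widetilde{h}_{n,\xi}|$ one replaces $K^{n}$ by $|K^{n}|$, and on each dyadic block one telescopes from the left endpoint $n_{d}$, $|K^{n}_{\eta}|\leq G^{d}_{\eta}:=|K^{n_{d}}_{\eta}|+\sum_{m\sim2^{d}}|(K^{m+1}-K^{m})_{\eta}|$, so convolution by the single scalar kernel $G^{d}$ dominates the whole block. Since $\sum_{m\sim2^{d}}\langle m\rangle^{-1}\lesssim1$ (no factor $2^{d}$ here, which is the crux), $\|G^{d}\|_{L^{1}}$ is at most $\big(C_{0}C_{1}e^{C_{0}C_{1}A}\big)^{3}$ times a fixed power of $d$, and the diagonal factor $\langle n\rangle^{-s^{3}}\sim 2^{-ds^{3}}$ beats that power uniformly in $d$; vector-valued Young in $\xi$ with the scalar kernel $G^{d}$ then gives the bound on block $d$, and one takes $\sup_{d}$.

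The delicate case is $j=3$, because $\|\langle n\rangle^{-\epsilon}\mathfrak{N}\,\cdot\,\|_{L^{6}L^{6}}$ neither factors as an iterated norm in $n$ and $\widetilde{\xi}$ nor offers a saving $\sup_{d}$, and moreover $\langle\partial_{x}\rangle^{-s^{3}}$ is weaker than the built-in $\langle\partial_{x}\rangle^{-\epsilon}$ (recall $s^{3}\ll\epsilon$ in (\ref{hierarchy})), so a Littlewood--Paley decomposition-and-reassembly loses. Here the plan is: from $|\widehat{h'}_{n,\widetilde{\xi}}|\leq(|K^{n}|\ast|\widehat{h}_{n,\cdot}|)_{\widetilde{\xi}}$ and the monotonicity of the $\mathfrak{N}$-norm recorded after (\ref{observe}), it suffices to bound on $L^{6}_{t,x}$ the operator $w\mapsto\langle\partial_{x}\rangle^{-\epsilon}\big[\text{multiplication in }t\text{ by the }x\text{-Fourier multiplier }\langle n\rangle^{-s^{3}}(|K^{n}|)^{\vee}(t)\big]w$ applied to $w=\mathfrak{N}h$, since in $t$-space convolution in $\widetilde{\xi}$ by the positive kernel $|K^{n}|$ is multiplication by $(|K^{n}|)^{\vee}(t)$. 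Now $|(|K^{n}|)^{\vee}(t)|\leq\|K^{n}\|_{L^{1}}/(2\pi)$ and $|(|K^{n+1}|)^{\vee}(t)-(|K^{n}|)^{\vee}(t)|\leq\|K^{n+1}-K^{n}\|_{L^{1}}/(2\pi)$ by the reverse triangle inequality, so by the kernel estimates of the first paragraph the symbol $n\mapsto\langle n\rangle^{-s^{3}}(|K^{n}|)^{\vee}(t)$ is, uniformly in $t\in\mathrm{supp}\,\chi$, bounded by $O_{C_{1}}(1)e^{C_{0}C_{1}A}$ and satisfies $\sup_{d}\sum_{n\sim2^{d}}\big|\langle n+1\rangle^{-s^{3}}(|K^{n+1}|)^{\vee}(t)-\langle n\rangle^{-s^{3}}(|K^{n}|)^{\vee}(t)\big|\lesssim O_{C_{1}}(1)e^{C_{0}C_{1}A}$; it is therefore a Marcinkiewicz multiplier on $L^{6}(\mathbb{T})$ with that constant, hence bounded on $L^{6}_{t,x}$ after integrating in $t$, and composing with the bounded multiplier $\langle\partial_{x}\rangle^{-\epsilon}$ finishes the proof. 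I expect this $j=3$ analysis --- recognizing the operator as a uniformly-in-time Marcinkiewicz multiplier and thereby sidestepping any frequency localization --- to be the main obstacle; all the remaining cases are routine once the kernel estimates of the first paragraph are in hand.
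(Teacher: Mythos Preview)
Your argument is correct. For $j\in\{1,2,4,6,7\}$ it coincides with the paper's proof. For $j\in\{3,5\}$ your route is genuinely different from the paper's, and somewhat more elaborate.

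The paper treats $X_3$ and $X_5$ (the two norms where the $\xi$-integration is not innermost) by a single device that avoids both your telescoping and the Marcinkiewicz theorem: it writes the convolution as an integral of translations,
\[
\big|\langle n\rangle^{-s^{3}}(h')_{n,\xi}\big|\le\int_{\mathbb{R}}\langle n\rangle^{-s^{3}}|K^{n}(\eta)|\,|h_{n,\xi-\eta}|\,\mathrm{d}\eta,
\]
pulls $\sup_{n}\langle n\rangle^{-s^{3}}|K^{n}(\eta)|$ outside the inner $l^{k}_{n}$ norm (or, for $X_{3}$, outside the translation-invariant $L^{6}L^{6}$ norm of $\mathfrak{N}$), and is then reduced to the scalar bound
\[
\int_{\mathbb{R}}\sup_{n}\langle n\rangle^{-s^{3}}|K^{n}(\eta)|\,\mathrm{d}\eta\le O_{C_{1}}(1)e^{C_{0}C_{1}A}.
\]
This last integral is estimated by a dyadic decomposition in $n$: on the block $n\sim 2^{d}$ one splits the $\eta$-integral at $|\eta|\sim 2^{10d}$, using the $L^{\infty}$ case of Lemma~\ref{general} (which gives $|K^{n}(\eta)|\lesssim(d+2)^{O(1)}\langle\eta\rangle^{-1}$) for small $\eta$, and replacing $\max_{n\sim2^{d}}$ by $\sum_{n\sim2^{d}}$ and using the $L^{1}$ case for large $\eta$. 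The factor $2^{-ds^{3}}$ then sums the resulting $(d+2)^{O(1)}$.

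The key contrast: the paper's approach never uses the difference bound (\ref{factt2}); it relies instead on the $L^{\infty}$ endpoint of Lemma~\ref{general} together with a crude $\max\le\sum$ step for large $\eta$. Your approach trades this for the increment estimate on $K^{n+1}-K^{n}$, which you then exploit either by telescoping (for $X_{5}$) or by recognizing a Marcinkiewicz symbol (for $X_{3}$). Both are valid; the paper's is shorter and more uniform across the two cases, while yours makes explicit the slowly-varying-in-$n$ structure of the kernel and brings in a classical multiplier theorem in a place where one might not expect it.
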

\begin{proof} Apart from $X_{3}$, all the other norms we are considering are (some Besov versions of) $\|\langle n\rangle^{\sigma}\langle\xi\rangle^{\beta}u\|_{l^{k}L^{h}}$ or $\|\langle n\rangle^{\sigma}\langle\xi\rangle^{\beta}u\|_{L^{h}l^{k}}$ with $\beta<1$, and in the latter case we have $\sigma=\beta=0$. Since the map $h\mapsto h'$ commutes with $\mathbb{P}$ projections, we only need to consider these kinds of norms. Notice that on the $\widetilde{u}$ side, this map is just a convolution with the Fourier transform of $\chi(t)e^{\pm\mathrm{i}\Delta_{n}(t)}$ for each $\widetilde{u}_{n}$. Thus to prove the result for $l^{k}L^{h}$ norm, we only need to prove that convolution by this function is bounded with respect to the weighted norm $\|\langle \xi\rangle^{\beta}\cdot\|_{L^{h}}$ by $O_{C_{1}}(1)e^{C_{0}C_{1}A}\langle n\rangle^{s^{3}}$. An elementary argument yields that this bound does not exceed the norm \begin{equation}\big\|\langle \xi\rangle(\chi(t)e^{\pm\mathrm{i}\Delta_{n}(t)})^{\wedge}(\xi)\big\|_{L^{1}},\nonumber\end{equation} which is bounded by $C_{0}C_{1}^{3}e^{C_{0}C_{1}A}(\log(2+|n|))^{3}$, thanks to Lemma \ref{general} and Proposition \ref{factt}.

Now let us consider the $L^{h}l^{k}$ norm and the $X_{3}$ norm. Let $I_{n}(\xi)$ be the Fourier transform of $\chi(t)e^{\pm\mathrm{i}\Delta_{n}(t)}$, then we conclude that
\begin{eqnarray}\|\langle n\rangle^{-s^{3}}(h')_{n,\xi}\|_{L^{h}l^{k}}&\lesssim &\int_{\mathbb{R}}\|\langle n\rangle^{-s^{3}}h_{n,\xi-\eta}I_{n}(\eta)\|_{L^{h}l^{k}}\,\mathrm{d}\eta\nonumber\\
&\lesssim &\int_{\mathbb{R}}\sup_{n}\langle n\rangle^{-s^{3}}|I_{n}(\eta)|\cdot\|h\|_{L^{h}l^{k}}\,\mathrm{d}\eta,
\end{eqnarray} note the same argument also works for $X_{3}$. Therefore we need to bound the expression
\begin{equation}\int_{\mathbb{R}}\sup_{n}\langle n\rangle^{-s^{3}}|I_{n}(\xi)|\,\mathrm{d}\xi\nonumber\end{equation} by $O_{C_{1}}(1)e^{C_{0}C_{1}A}$. By performing a dyadic summation in $n$, we only need to bound \begin{equation}\int_{\mathbb{R}}\max_{n\sim 2^{d}}|I_{n}(\xi)|\,\mathrm{d}\xi\end{equation} by $O_{C_{1}}(1)e^{C_{0}C_{1}A}(d+2)^{O(1)}$. Now suppose $|\xi|\lesssim 2^{10d}$, then we simply use Proposition \ref{factt} as well as the $L^{\infty}$ estimate of Lemma \ref{general} to bound this contribution by $O_{C_{1}}(1)e^{C_{0}C_{1}A}$ times $(d+2)^{O(1)}\int_{|\xi|\lesssim 2^{20d}}\langle \xi\rangle^{-1}\mathrm{d}\xi=(d+2)^{O(1)}$. If $|\xi|\gg 2^{10d}$, we may replace the ``maximum'' in this expression by summation (during which we lose a power $2^{d}$), then use the $L^{1}$ estimate of Lemma \ref{general} and the largeness of $\xi$ to gain a power $2^{10d}$. Thus in any case we obtain the desired estimate.
\end{proof}
\section{The \emph{a priori} estimate II: Quadratic and cubic terms}\label{mid1}
We now begin the proof of Proposition \ref{finalreduct}, the starting point being (\ref{global2}). The linear term is clearly bounded in $Y_{1}$ by $C_{0}e^{C_{0}A}$, so we only need to bound the $\mathcal{N}^{j}$ terms. There will be a large number of cases, and they are ordered according to the difficulty level. In this section we will be able to treat every term except $\mathcal{N}^{3.5}$.
\begin{proposition}\label{easiest} For each $j\in\{2,3,3.5,4,4.5\}$, define \begin{equation}
\mathcal{M}^{j}=\mathcal{E}\big(\mathbf{1}_{[-T,T]}\mathcal{N}^{j}\big),
\end{equation}where we may write $\mathcal{N}^{2}$ or $\mathcal{N}^{2}(y,y)$ depending on the context. We then have \begin{equation}\label{x4est}\|\mathcal{M}^{2}\|_{X_{4}}\leq O_{C_{1}}(1)e^{C_{0}C_{1}A}T^{0+},\end{equation} as well as
\begin{equation}\label{auxest}\sum_{j\in\{3,3.5,4,4.5\}}\|\langle n\rangle^{-\frac{1}{20}}\langle \xi\rangle^{\kappa}(\mathcal{M}^{j})_{n,\xi}\|_{l^{2}L^{2}}\leq O_{C_{1}}(1)e^{C_{0}C_{1}A}T^{0+}.\end{equation}
\end{proposition}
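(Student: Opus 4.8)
The plan is to exploit the Duhamel estimates of Proposition \ref{linearestimate2} together with the structure of the weights $\Phi_\mu^j$ recorded in Proposition \ref{finalver}. The overarching strategy is: first peel off the factors $e^{\pm\mathrm{i}\Delta_n}$ using Lemma \ref{general} and Proposition \ref{factt} (these cost only $O_{C_1}(1)e^{C_0C_1A}\langle n\rangle^{0+}$ and are harmless since we have a $\langle n\rangle^{-1/20}$ or $\langle n\rangle^{-1}$ weight to spare); then reduce each $\mathcal{M}^j$ to a multilinear sum of the schematic form $\sum_n\int \prod_l (w'_{n_l})\prod_i (u'''_{m_i})/m_i$, with the spatial frequencies constrained by $n_1+\cdots+n_{[j]}+m_{1\mu}=n_0$; then bound this in the relevant norm by distributing the factors into $X_3$ (i.e. $L^6L^6$ via Proposition \ref{stri}) and $X_2$ (for the $m_i$-legs, exactly as in the proof of Proposition \ref{inii}, using $\|\langle m\rangle^{-1}u'''\|_{l^1L^1}\lesssim\|\langle\partial_x\rangle^{-s^3}u'''\|_{X_2}\leq C_1A$ and summing the factorially-decaying $C_\mu$); and finally extract the small power $T^{0+}$ from the cutoff $\mathbf 1_{[-T,T]}$ by Hölder in time (losing $T^{1/k'}$ when we pass from an $L^k_t$ norm to $L^2_t$ on the short interval), which is the reason the estimates are stated with a gain $T^{0+}$ rather than merely $O(1)$.

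For \eqref{x4est}: by the second estimate in \eqref{linnn1}, $\|\mathcal{M}^2\|_{X_4}\lesssim\|\langle\xi\rangle^{-1}\mathbf 1_{[-T,T]}\mathcal{N}^2\|_{X_4}$, so after removing the $e^{\pm\mathrm{i}\Delta}$ factors it suffices to bound $\mathbf 1_{[-T,T]}\mathcal{N}^2(y,y)$ in a norm slightly weaker than $X_4$, i.e. $\|\langle n\rangle^{-1}\langle\xi\rangle^{\kappa-1}\cdot\|_{l^\gamma L^2}$. Here the key structural input is part (i) of Proposition \ref{propneweqn}/Proposition \ref{finalver}: $|\Phi_\mu^2|\lesssim\min\{\langle n_0\rangle,\langle n_1\rangle,\langle n_2\rangle\}$ and the support condition $\min\{\langle n_0\rangle,\langle n_1\rangle,\langle n_2\rangle\}\gg(\mu+1)^2\max_i\langle m_i\rangle$, which forces $\langle n_0\rangle\sim\langle n_1\rangle\sim\langle n_2\rangle$ (the $m$-legs are negligibly low). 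Thus the $\langle n_0\rangle$ from $\Phi_\mu^2$ is absorbed by one of the $\langle n_l\rangle^{-1}$ coming from the $X_4$-type weight, and after writing $\langle\xi\rangle^{\kappa-1}\lesssim\langle\xi_1\rangle^{\kappa-1}+\langle\xi_2\rangle^{\kappa-1}+(\text{modulation})^{\kappa-1}$ and using the high-modulation gain, the whole expression is controlled by a product of two $X_6$-type norms of $y$ (the $m$-legs handled by $X_2$ as above, summed over $\mu$), hence by $\|y\|_{Y_1}^2\lesssim(C_1e^{C_1A})^2$; the extra $T^{0+}$ comes from Hölder in $t$ on $[-T,T]$. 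Since $C_2$ is chosen large, $T\leq C_2^{-1}e^{-C_2A}$ kills the $e^{C_0C_1A}$ factor, so $(C_1e^{C_1A})^2T^{0+}\leq e^{C_0A}$ — this is where the bootstrap closes.

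For \eqref{auxest} the scheme is identical but one uses the per-dyadic-block versions. Splitting dyadically in $n_0\sim2^{d}$, the norm $\|\langle n\rangle^{-1/20}\langle\xi\rangle^\kappa(\mathcal M^j)\|_{l^2L^2}$ is, by \eqref{001}–\eqref{002} of Lemma \ref{linearestimate}, bounded by $\|\langle n\rangle^{-1/20}\langle\xi\rangle^{\kappa-1}\mathbf 1_{[-T,T]}\mathcal N^j\|_{l^2L^2}$ plus an $l^2L^1$ term absorbed the same way; then for $j=3$ one uses part (ii)/(c) of Proposition \ref{finalver} ($|\Phi_\mu^3|\lesssim1$, or $\lesssim(\langle n_0\rangle+\langle n_3\rangle)/\langle n_1\rangle$ in the resonant-type configuration, which is exactly what is needed to beat the $L^6L^6$ trilinear estimate), for $j=4,4.5$ the strong decay $|\Phi_\mu^4|\lesssim(\max\langle n_l\rangle)^{-1/20}$ (resp. the support restriction forcing the $(n_3,n_4)$- and $m$-legs to be much lower than the output), and in each case one closes with $X_3$ for three of the legs and $X_2$ for the $m$-legs. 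The main obstacle is the term $\mathcal N^{3.5}$: its weight only obeys $|\Phi_\mu^{3.5}|\lesssim\min\{\langle n_0\rangle,\langle n_1\rangle,\langle n_2+n_3\rangle\}/\max\{\langle n_2\rangle,\langle n_3\rangle\}$, which is the genuine quadratic-type (KdV-like) interaction that, as explained in the introduction, fails standard $X^{s,b}$ estimates; to put it into $X_4$-type (or $l^2L^2$) norms one must use the cancellation/second-iteration structure. This is precisely why \eqref{auxest} is only claimed in the weaker $\|\langle n\rangle^{-1/20}\langle\xi\rangle^\kappa\cdot\|_{l^2L^2}$ norm — an auxiliary norm tailored to survive the $\mathcal N^{3.5}$ analysis — and why $\mathcal N^{3.5}$ is deferred to the next section rather than being bounded in $Y_1$ here; the $l^2L^2$ bound for $\mathcal M^{3.5}$ in \eqref{auxest} itself is obtained by the crude trilinear $L^6L^6$ argument above, the full $Y_1$ bound for it being the delicate point postponed to Section \ref{mid2}.
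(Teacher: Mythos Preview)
Your outline has the right architecture (duhamel + multilinear + Strichartz + sum the $m$-legs via $X_2$), but there is a genuine gap in the core step for \eqref{x4est}. You assert the expression is ``controlled by a product of two $X_6$-type norms of $y$,'' but $y$ is only known to lie in $Y_1=X_1\cap X_2\cap X_4\cap X_5\cap X_7$, not $X_6$. The distinction is decisive: $X_6$ carries the weight $\langle\xi\rangle^{1/2+s^2}$, while $X_1$ only has $\langle\xi\rangle^b$ with $b=\tfrac12-s^{15/8}<\tfrac12$. With $b<\tfrac12$ the naive bilinear/trilinear $X^{s,b}$ estimate you are implicitly invoking fails, and absorbing $\Phi_\mu^2\lesssim\min_l\langle n_l\rangle$ into the $\langle n_0\rangle^{-1}$ weight does not repair this. (Incidentally, your claim that $\langle n_0\rangle\sim\langle n_1\rangle\sim\langle n_2\rangle$ is also false: the support condition only says the $m_i$ are small relative to $\min_l\langle n_l\rangle$; two of the three $n_l$ are $\sim 2^d$ and the third is $\sim 2^h$ with $h\leq d$ arbitrary.)

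The paper's proof proceeds by duality: pair with $g$, set $f=\mathcal E'g$, and write the pairing $\mathcal S$ as a sum--integral over the hyperplane $\alpha_0=\alpha_{13}+\beta_{1\mu}+\Xi$, where $\Xi=|n_0|n_0-\sum_l|n_l|n_l-\sum_i|m_i|m_i$ is the non-resonance factor. The key observation is $|\Xi|\sim\min_l\langle n_l\rangle\cdot\max_l\langle n_l\rangle\gtrsim 2^{d}$, which forces at least one of $\alpha_0,\alpha_1,\alpha_2,\alpha_3,\beta_i$ to be $\gtrsim 2^d$ (here $\alpha_3$ is the Fourier variable of the sharp cutoff $\mathbf 1_{[-T,T]}$, whose transform $\phi$ satisfies $\|\phi\|_{L^{1+}}\lesssim T^{0+}$ --- this, not H\"older in $t$, is the source of the $T^{0+}$ gain). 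One then runs a case-by-case analysis: if $\alpha_0$ is large, gain $2^{(1-\kappa)d}$ from the dual weight $\langle\alpha_0\rangle^{1-\kappa}$ on $f'$; if $\alpha_1$ or $\alpha_2$ is large, gain by interpolating the $X_1$ and $X_4$ bounds on $y$ into an $L^{6-}L^{6-}$ Strichartz norm; if $\alpha_3$ is large, gain from $\|\phi\|_{L^{1+}(\{|\xi|\gtrsim 2^d\})}$; if $\beta_i$ is large, split on whether $m_i$ is large or small. In each case the gain is $2^{c(1-\kappa)d}$ and the loss at most $2^{O(1/2-b)d}$, and the hierarchy \eqref{hierarchy} ($1-\kappa\gg\tfrac12-b$) makes the net exponent negative and summable in $d$. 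The same scheme, with the appropriate weight bounds from Proposition~\ref{finalver} replacing $|\Xi|\gtrsim 2^d$ in the degenerate configurations, handles \eqref{auxest}. Your proposal compresses this entire case analysis into one line and never confronts the deficit $b<\tfrac12$.
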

\begin{remark} Since we have
\begin{equation}\|\langle n\rangle^{-1}\langle\xi\rangle^{\kappa}u\|_{l^{\gamma}L^{2}}\leq C_{0}\|\langle n\rangle^{-\frac{1}{20}}\langle\xi\rangle^{\kappa}u\|_{l^{2}L^{2}}\end{equation} by H\"{o}lder, the inequalities (\ref{x4est}) and (\ref{auxest}) will imply $\|y\|_{X_{4}}\leq C_{0}e^{C_{0}A}$, due to the restriction $T\leq C_{2}^{-1}e^{-C_{2}A}$.
\end{remark}
\begin{proof} In this proof, as well as the following ones, we will use the $\lesssim$ and $\gtrsim$ symbols, with the convention that all the implicit constants are $\leq O_{C_{1}}(1)e^{C_{0}C_{1}A}$. Note that in the estimate for any possible multilinear term, the total number of appearances of all functions other than $u'''$ is bounded by $10$, thus as long as we only use the norm $\|\langle \partial_{x}\rangle^{-s^{3}}u'''\|_{X_{2}\cap X_{3}\cap X_{4}}$ (which is bounded by $C_{1}A$) for the function $u'''$, the implicit constants will be bounded by
\begin{equation}(O_{C_{1}}(1)e^{C_{0}C_{1}A})^{C_{0}}\sum_{\mu}\frac{C_{0}^{\mu}}{\mu!}(C_{0}C_{1}A)^{\mu}\leq O_{C_{1}}(1)e^{C_{0}C_{1}A}\end{equation} and is thus under control. We also need to be careful with the \emph{sharp} cutoff $\mathbf{1}_{[-T,T]}$. Denote by $\phi_{\xi}=\frac{e^{\mathrm{i}T\xi}-e^{-\mathrm{i}T\xi}}{\mathrm{i}\xi}$ the Fourier transform of $\mathbf{1}_{[-T,T]}$; note that $|\phi_{\xi}|\lesssim\min(T,\frac{1}{\langle \xi\rangle})$, and that $\|\phi\|_{L^{1+}(\{|\xi|\geq K\})}\lesssim T^{0+}\langle K\rangle^{0-}$.

First let us prove $\|\mathcal{M}^{2}\|_{X_{4}}\lesssim T^{0+}$. As argued above, we may fix $\mu\geq 0$ and $\omega\in\{-1,1\}^{2}$ (though we will not add any sub- or superscript for simplicity). Choose a function $g$ such that $\|g\|_{X_{4}'}\leq 1$ and define $f=\mathcal{E}'g$. Also define $f'$ by\footnote[1]{Since $f$ has compact time support, we may insert $\chi(t)$ in the definition of $f'$, so that we can use the arguments in the proof of Proposition \ref{weaken}. The same comment applies also for later discussions.} $(f')_{n}=e^{\mathrm{i}\Delta_{n}}f_{n}$ and $y'$ similarly; these notations will be standard throughout the proof.

From the bound $\|g\|_{X_{4}'}\leq 1$ we obtain by Proposition \ref{linearestimate2} that \begin{equation}\|\langle n_{0}\rangle\langle \alpha_{0}\rangle^{1-\kappa}f_{n_{0},\alpha_{0}}\|_{l^{\gamma'}L^{2}}\lesssim 1,\nonumber\end{equation} which then implies, thanks to (H\"{o}lder and) an argument similar to the proof of Proposition \ref{weaken}, that \begin{equation}\label{weaker0}\|\langle n_{0}\rangle^{1-O(s^{2.5})}\langle \alpha_{0}\rangle^{1-\kappa}(f')_{n_{0},\alpha_{0}}\|_{l^{2}L^{2}}\lesssim 1.\end{equation}

Now we only need to bound the expression\footnote[1]{Here in order to pass from $f$ to $f'$ we have used the identity\begin{equation}\int_{\mathbb{R}}\overline{f_{n_{0},\alpha_{0}}}\mathcal{N}_{n_{0},\alpha_{0}}\,\mathrm{d}\alpha_{0}=\int_{\mathbb{R}}\overline{(e^{\mathrm{i}\Delta_{n_{0}}}f_{n})^{\wedge}(\alpha_{0}-|n_{0}|n_{0})}(e^{\mathrm{i}\Delta_{n_{0}}}\mathcal{N}_{n})^{\wedge}(\alpha_{0}-|n_{0}|n_{0})\,\mathrm{d}\alpha_{0}\nonumber,\end{equation} which is a consequence of Plancherel.}
\begin{eqnarray}\mathcal{S}&=&\sum_{n_{0}}\int_{\mathbb{R}}\overline{f_{n_{0},\alpha_{0}}}\cdot(\mathbf{1}_{[-T,T]}\mathcal{N}^{2})_{n_{0},\alpha_{0}}\,\mathrm{d}\alpha_{0}\nonumber\\
&=&\sum_{n_{0}=n_{1}+n_{2}+m_{1}+\cdots+m_{\mu}}\int_{\mathbb{R}}\Phi^{2}\cdot\overline{f_{n_{0},\alpha_{0}}}\times\nonumber\\
&\times&\bigg(\mathbf{1}_{[-T,T]}e^{\mathrm{i}(\Delta_{n_{1}}+\Delta_{n_{2}}-\Delta_{n_{0}})}\prod_{l=1}^{2}(y^{\omega_{l}})_{n_{l}}\prod_{i=1}^{\mu}\frac{(u''')_{m_{i}}}{m_{i}}\bigg)^{\wedge}(\alpha_{0}-|n_{0}|n_{0})\,\mathrm{d}\alpha_{0}\nonumber\\
&=&\sum_{n_{0}=n_{1}+n_{2}+m_{1}+\cdots+m_{\mu}}\int_{\mathbb{R}}\Phi^{2}\cdot\overline{(f')_{n_{0},\alpha_{0}}}\times\nonumber\\
&\times&\bigg(\mathbf{1}_{[-T,T]}\prod_{l=1}^{2}((y')^{\omega_{l}})_{n_{l}}\prod_{i=1}^{\mu}\frac{(u''')_{m_{i}}}{m_{i}}\bigg)^{\wedge}(\alpha_{0}-|n_{0}|n_{0})\,\mathrm{d}\alpha_{0}\nonumber\\
&=&\sum_{n_{0}=n_{1}+n_{2}+\cdots +m_{\mu}}\int_{(T)}\Phi^{2}\cdot\overline{(f')_{n_{0},\alpha_{0}}}\prod_{l=1}^{2}((y')^{\omega_{l}})_{n_{l},\alpha_{l}}\cdot\phi_{\alpha_{3}}\prod_{i=1}^{\mu}\frac{(u''')_{m_{i},\beta_{i}}}{m_{i}}.\nonumber
\end{eqnarray} Here the integration $(T)$ is interpreted as the integration over the set
\begin{equation}
\big\{(\alpha_{0},\cdots,\alpha_{3},\beta_{1},\cdots,\beta_{\mu}):\alpha_{0}=\alpha_{13}+\beta_{1\mu}+\Xi\big\},\nonumber
\end{equation} which is a hyperplane in $\mathbb{R}^{\mu+4}$ (recall the notation that $\alpha_{13}=\alpha_{1}+\alpha_{2}+\alpha_{3}$), with respect to the standard measure \begin{equation}\prod_{l=1}^{3}\mathrm{d}\alpha_{l}\cdot\prod_{i=1}^{\mu}\mathrm{d}\beta_{i},\nonumber\end{equation} where the non-resonance (NR) factor 
\begin{equation}\label{nlfac}\Xi=|n_{0}|n_{0}-\sum_{l=1}^{2}|n_{l}|n_{l}-\sum_{i=1}^{\mu}|m_{i}|m_{i}.\end{equation} Note that we are using the convention that $u_{n,\alpha}$ stands for $\widetilde{u}_{n,\alpha}$; also we may always restrict to $n_{0}>0$.

Notice that the $m$ variables are all $\ll \min_{0\leq l\leq 2}\langle n_{l}\rangle$ (again here we may have harmless polynomial factors in $\mu$), we can check from (\ref{nlfac}) that \begin{equation}|\Xi|\sim\min_{0\leq l\leq 2}\langle n_{l}\rangle\cdot\max_{0\leq l\leq 2}\langle n_{l}\rangle.\end{equation} We will first take the summation-integration over the set where $\sum_{l=0}^{2}\langle n_{l}\rangle\sim 2^{d}$, and then sum over $d$. In this case, at least one of the $\alpha$ and $\beta$ variables must be $\gtrsim 2^{d}$. Now, with a loss of $2^{O(s^{2.5})d}$, we can replace the $1-O(s^{2.5})$ exponent in (\ref{weaker0}) by $1$. Also notice that $|\Phi^{2}|\lesssim \langle n_{0}\rangle$, we may further (upon taking absolute values) remove this $\Phi$ factor and the $\langle n\rangle$ factor in (\ref{weaker0}) simultaneously.

With these reductions, we then proceed to the estimate of $\mathcal{S}$. First assume $\langle \alpha_{0}\rangle\gtrsim 2^{d}$, thus we gain from the bound (\ref{weaker0}) a power $2^{(1-\kappa)d}$, while after exploiting this, we still have\footnote[1]{Actually it is the modified version of $f'$ that is bounded in $l^{2}L^{2}$ (namely, the $f''$ is defined by $(f'')_{n_{0},\alpha_{0}}=2^{d}\mathbf{1}_{\langle \alpha_{0}\rangle\gtrsim 2^{d}}(f')_{n_{0},\alpha_{0}}$). The same comment applies also for every other reduction we make. Our wording, though imprecise, should not cause any confusion.} $\|f'\|_{l^{2}L^{2}}\lesssim 1$. In the same way, we can use the $X_{1}$ and $X_{4}$ bounds for $y$ to deduce some bound for $y'$ (see Proposition \ref{weaken}), and strengthen the bound to $\|\langle n_{l}\rangle^{s^{2}}\langle \alpha_{l}\rangle^{\frac{1}{2}+s^{2}}(y')_{n_{l},\alpha_{l}}\|_{l^{2}L^{2}}\lesssim 1$ at a price of at most $2^{O(\frac{1}{2}-b)d}$.

We then fix all the $m$ and $\beta$ variables to get a sub-summation-integration that is bounded by (with $C$ being irrelevant constants)
\begin{eqnarray}\mathcal{S}_{sub}&\lesssim&\sum_{n_{0}=n_{1}+n_{2}+C}\int_{\widetilde{\alpha_{0}}=\widetilde{\alpha_{1}}+\widetilde{\alpha_{2}}+\widetilde{\alpha_{3}}+C}|(f')_{n_{0},\widetilde{\alpha_{0}}}||\phi_{0,\widetilde{\alpha_{3}}}|\cdot\prod_{l=1}^{2}|((y')^{\omega_{l}})_{n_{l},\widetilde{\alpha_{l}}}|\nonumber\\
&\lesssim &\big\|\big|\widehat{\overline{f'}}\big|*\big|\widehat{(y')^{\omega_{1}}}\big|*\big|\widehat{(y')^{\omega_{2}}}\big|*\big|\widehat{\phi}\big|\big\|_{l^{\infty}L^{\infty}}\nonumber\\
\label{crucial}&\lesssim&\|f'\|_{l^{2}L^{2}}\|\mathfrak{N}(y')^{\omega_{1}}\|_{L^{6+}L^{6+}}\|\mathfrak{N}(y')^{\omega_{2}}\|_{L^{3}L^{3}}\|\widehat{\phi}\|_{l^{1+}L^{1+}}.
\end{eqnarray} where $\widetilde{\alpha_{l}}=\alpha_{l}-|n_{l}|n_{l}$, and $\phi$ is viewed as a function of $(t,x)$ that is supported at $n=0$ (so that $\widetilde{\alpha_{3}}=\alpha_{3}$); also recall the $\mathfrak{N}$ notation defined in Section \ref{spaces}. The right hand side will be bounded by $T^{0+}$ by our (reduced) assumptions and Strichartz estimates, provided we choose the $6+$ to be $6+cs^{2}$ with some small $c$, and choose $1+$ accordingly.

Now we sum over $m_{i}$ and integrate $\beta_{i}$, exploiting the bound $\|\langle m_{i}\rangle^{-1}u'''\|_{l^{1}L^{1}}\leq C_{1}A$, to bound the whole summation-integration for a single $d$; taking into account the gains and losses from the reductions made before and exploiting (\ref{hierarchy}), we conclude that the part of $\mathcal{S}$ considered above is bounded by $T^{0+}2^{(0-)d}$, which allows us to sum over $d$.

Next, assume that $\langle \alpha_{1}\rangle\gtrsim 2^{d}$ (the case for $\alpha_{2}$ will follow by symmetry). In this case we do not gain from the bound (\ref{weaker0}), so that we still have $\|\langle \xi\rangle^{1-\kappa}f'\|_{l^{2}L^{2}}\lesssim 1$, which, via Strichartz, allows us to control $\|\mathfrak{N}f'\|_{L^{2+}L^{2+}}$, where this $2+$ is some $2+c(1-\kappa)$. Instead, we gain from the bound
\begin{equation}\|\langle n_{1}\rangle^{s^{2}}\langle \alpha_{1}\rangle^{\frac{1}{2}+s^{2}}(y')_{n_{1},\alpha_{1}}\|_{l^{2}L^{2}}\lesssim 1\nonumber\end{equation} as above (with a loss of $2^{O(\frac{1}{2}-b)d}$) and change the exponent $\langle\alpha_{1}\rangle^{\frac{1}{2}+s^{2}}$ to $\langle\alpha_{1}\rangle^{\frac{1}{2}-c(1-\kappa)}$ to gain the power $2^{c(1-\kappa)d}$, and the reduced bound will allow us to control $\mathfrak{N}y'$ (in the form of $\mathfrak{N}(y')^{\omega_{1}}$) in $L^{6-}L^{6-}$ with the $6-$ here being $6-c(1-\kappa)$. Choosing the constants $c$ appropriately, we can then proceed as in (\ref{crucial}), with the $f'$ factor estimated in $L^{2+}L^{2+}$, two $y'$ factors estimated in $L^{6+}L^{6+}$ and $L^{3}L^{3}$ respectively and the $\phi$ factor in $l^{1+}L^{1+}$, to get the desired bound. In the same spirit, if $\langle \alpha_{3}\rangle\gtrsim 2^{d}$, we will use the $L^{2+}L^{2+}$ bound for $\mathfrak{N}f'$ (with $2+$ being $2+c(1-\kappa)$), $L^{6+}L^{6+}$ and $L^{3}L^{3}$ bound for $\mathfrak{N}y'$ (with $6+$ being $6+cs^{2}$) and $l^{1+}L^{1+}$ bound for $\phi$ (with $1+$ being $1+c(1-\kappa)$; note that we gain a power $2^{c(1-\kappa)d}$ here due to the largeness of $\alpha_{3}$) to conclude. Again we gain at least $2^{c(1-\kappa)d}$ and lose at most $2^{O(\frac{1}{2}-b)d}$ so we have enough room for summation in $d$.

Next, assume that $\langle \beta_{i}\rangle\gtrsim 2^{d}$ for some $i$. If for this $i$ we also have $\langle m_{i}\rangle\gtrsim 2^{\frac{d}{30}}$, then we would bound $|m_{i}|^{-1}\lesssim 2^{-\frac{d}{90}}\langle m_{i}\rangle^{-\frac{2}{3}}$ to gain a power of $2^{cd}$ and proceed as above, since we still have 
\begin{equation}\|\langle m_{i}\rangle^{-\frac{2}{3}}(u''')_{m_{i},\beta_{i}}\|_{l^{1}L^{1}}\lesssim \|\langle \partial_{x}\rangle^{-s^{3}}u'''\|_{X_{2}}\leq C_{0}C_{1}A\end{equation} which allows us to sum over $m_{i}$ and integrate over $\beta_{i}$. If instead $\langle m_{i}\rangle\lesssim 2^{\frac{d}{30}}$, we could use the $X_{4}$ bound of $\langle \partial_{x}\rangle^{-s^{3}}u'''$ and Proposition \ref{weaken} to bound \begin{equation}\|\langle m_{i}\rangle^{-\frac{3}{2}}\langle \beta_{i}\rangle^{\frac{9}{10}}(y')_{m_{i},\beta_{i}}\|_{l^{2}L^{2}}\lesssim 1,\nonumber\end{equation} and exploit the largeness of $\beta_{i}$ to gain a power $2^{\frac{d}{20}}$ and reduce the above bound to $\|\langle m_{i}\rangle^{2}\langle \beta_{i}\rangle^{\frac{3}{5}}(y')_{m_{i},\beta_{i}}\|_{l^{2}L^{2}}\lesssim 1$ which would imply $\|y'\|_{l^{1}L^{1}}\lesssim 1$ so that we can still apply the argument above, sum over $m_{i}$ and integrate over $\beta_{i}$. This concludes the proof of (\ref{x4est}).

Now let us prove (\ref{auxest}). Let $g$, $f$ and $f'$ be as before, but with the new bound $\|\langle n_{0}\rangle^{\frac{1}{30}}\langle \alpha_{0}\rangle^{1-\kappa}f'\|_{l^{2}L^{2}}\lesssim 1$. Note that the estimate for $f'$ is again easily deduced from the estimate for $g$ and the same type of arguments as in the proof of Propositions \ref{linearestimate} and \ref{weaken}. To bound $\mathcal{M}^{3}$ and $\mathcal{M}^{3.5}$, we need to bound
\begin{eqnarray}\label{s3expre}\mathcal{S}&=&\sum_{n_{0}=n_{1}+n_{2}+n_{3}+\cdots +m_{\mu}}\int_{(T)}\Phi^{j}\cdot\overline{(f')_{n_{0},\alpha_{0}}}\times\\
&\times&((w')^{\omega_{1}})_{n_{1},\alpha_{1}}\prod_{l=2}^{3}(z^{l})_{n_{l},\alpha_{l}}\cdot\phi_{\alpha_{4}}\prod_{i=1}^{\mu}\frac{(u''')_{m_{i},\beta_{i}}}{m_{i}},\nonumber
\end{eqnarray} with the integration $(T)$ interpreted as the integral over the set 
\begin{equation}\big\{(\alpha_{0},\cdots,\alpha_{4},\beta_{1},\cdots,\beta_{\mu}):\alpha_{0}=\alpha_{14}+\beta_{1\mu}+\Xi\big\},\end{equation} with respect to the standard measure, where the NR factor
\begin{equation}\label{nlfac2}\Xi=|n_{0}|n_{0}-\sum_{l=1}^{3}|n_{l}|n_{l}-\sum_{i=1}^{\mu}|m_{i}|m_{i}.\end{equation}Here $z^{l}$ or $\overline{z^{l}}$ equals $u'$, $v'$ or $w'$ for each $l$. Again we assume $\sum_{l=0}^{3}\langle n_{l}\rangle\sim 2^{d}$. By losing at most $2^{O(\epsilon)d}$, we may assume that $w'$ verifies the same bound as $y'$ before, and $\mathfrak{N}z^{l}$ is bounded in $X_{4}$ and $L^{6}L^{6}$. Also note that $|\Phi^{j}|\lesssim 1$ in any situation.

If $\langle n_{0}\rangle+\langle \alpha_{0}\rangle\gtrsim 2^{\frac{d}{90}}$, we may gain a power $2^{c(1-\kappa)d}$ (note our loss is at most $2^{O(\epsilon)d}$) from the bound of $f'$, and reduce this bound to $\|f'\|_{l^{2}L^{2}}\lesssim 1$. Then we can first fix the $m_{i}$ and $\beta_{i}$ variables and obtain the $\mathcal{S}_{sub}$, estimate in the same was as in (\ref{crucial}), then sum over $m_{i}$ and integrate over $\beta_{i}$. The only difference with (\ref{crucial}) is that now $\mathcal{S}_{sub}$ contains five functions instead of four; however, here we may estimate the $f'$ factor in $L^{2}L^{2}$, the $\mathfrak{N}w'$ factor in $L^{6+}L^{6+}$ with the $6+$ being $6+cs^{2}$, the $\mathfrak{N}z^{l}$ factors in $L^{6}L^{6}$ and the $\phi$ factor in $l^{1+}L^{1+}$ so that we can still close the argument.

If $\langle \alpha_{1}\rangle\gtrsim 2^{\frac{d}{90}}$, we may perform the same reduction as in the estimate of $\|\mathcal{M}^{2}\|_{X_{4}}$ before, gain a power of $2^{c(1-\kappa)d}$ and use Strichartz and the reduced bound to control $\|\mathfrak{N}w'\|_{L^{6-}L^{6-}}$, where the $6-$ is $6-c(1-\kappa)$. We may now control $\mathfrak{N}f'$ in $L^{2+}L^{2+}$ with the $2+$ being $2+c(1-\kappa)$, then control $\mathfrak{N}z^{l}$ in $L^{6}L^{6}$ and $\phi$ in some $l^{1+}L^{1+}$. The exponents will match if we choose the constants $c$ appropriately.

If $\langle \alpha_{2}\rangle\gtrsim 2^{\frac{d}{4}}$ (the $\alpha_{3}$ case being identical), we have two possibilities. If $j=3$ then $z^{2}$ is also taken from $\{w',\overline{w'}\}$ so that we are in the same situation as above. If $j=3.5$ then either $\langle n_{2}\rangle\gtrsim 2^{\frac{d}{89}}$ and we gain a power $2^{cd}$ from the $\Phi$ factor thanks to (\ref{bound3.5}) and the assumption that $\langle n_{0}\rangle\ll 2^{\frac{d}{90}}$, or $\langle n_{2}\rangle\lesssim 2^{\frac{d}{89}}$ and we can exploit the $X_{4}$ bound of $z^{l}$, gain a power $2^{cd}$, and use the reduced estimate to bound $\|\mathfrak{N}z^{2}\|_{L^{6}L^{6}}$ (again, as we already did in the $X_{4}$ estimate before). In any case we gain a power $2^{c(1-\kappa)d}$, lose at most $2^{O(\epsilon)d}$, and can control the reduced $\mathcal{S}_{sub}$ expression.

If $\langle \alpha_{4}\rangle\gtrsim 2^{\frac{d}{90}}$, we can again control $\mathfrak{N}f'$ in $L^{2+}L^{2+}$ with the $2+$ being $2+c(1-\kappa)$, then control the $\mathfrak{N}w'$ in $L^{6+}L^{6+}$ (with $6+$ being $6+cs^{2}$), $\mathfrak{N}z^{l}$ factors in $L^{6}L^{6}$ and $\phi$ in $l^{1+}L^{1+}$ with the $1+$ being $1+c(1-\kappa)$, with the $c$ chosen appropriately. Note that since $\alpha_{4}$ is large, we will gain a power $2^{c(1-\kappa)d}$ from the $l^{1+}L^{1+}$ bound of $\phi$. Moreover, if $\langle m_{i}\rangle\gtrsim 2^{\frac{d}{90}}$ for some $i$, we can repeat the argument made before to gain a (small) $2^{cd}$ power from this factor alone while keeping the ability to sum over $m_{i}$ and integrate over $\beta_{i}$, and reduce to the above cases.

Finally, if none of the above holds, we must have $\langle n_{0}\rangle\ll 2^{\frac{d}{90}}$ and $|\Xi|\ll 2^{\frac{d}{4}}$. We may also assume $\langle m_{i}\rangle\ll 2^{\frac{d}{90}}$ or we are reduced to one of the cases above. Thus from (\ref{nlfac2}) we deduce \begin{equation}\label{smallness}\big||n_{1}|n_{1}+|n_{2}|n_{2}+|n_{3}|n_{3}\big|\ll 2^{\frac{d}{4}}.\end{equation} Note that we may assume $j=3$, since when $j=3.5$, one of $\langle n_{2}\rangle$ and $\langle n_{3}\rangle$ must be $\gtrsim 2^{d}$ and we gain a power $2^{cd}$ from the weight $\Phi$ so that we can proceed as above. Now if the minimum of $\langle n_{l}\rangle$ for $1\leq l\leq 3$ is at least $\gtrsim 2^{\frac{d}{9}}$, then we will be in the same situation as in (\ref{nlfac}) and the expression in (\ref{smallness}) has to be $\gtrsim 2^{d}$. Therefore we may further assume $\langle n_{3}\rangle\ll 2^{\frac{d}{9}}$, and it will be clear that the NR factor can be small only if $n_{1}+n_{2}=0$. However, in this case we gain from the factor $\Phi$ a positive power $2^{cd}$, due to parts (b) and (c) in the requirements for $\mathcal{N}^{3}$ in Proposition \ref{finalver}. This allows us to complete the estimate in the same way as above.

Notice that in estimating $\mathcal{M}^{3}$ above, we have ignored the term where three of $(-n_{0},n_{1},n_{2},n_{3})$ are related by $m$ and we are allowed to have $\upsilon$ instead of $w$ (in the discussion here, they will be $\upsilon'$ and $w'$ respectively). To handle this term, simply fix the $m$ and $\beta$ variables and bound the $\Phi$ factor by $1$ (we may assume $\langle m_{i}\rangle\ll2^{\frac{d}{90}}$ or we gain a power $2^{cd}$ and can proceed as above). We can bound the resulting $\mathcal{S}_{sub}$ (note that we are restricting to $n_{l}=c_{l}\pm n_{0}\sim 2^{d}$)
\begin{eqnarray}\mathcal{S}_{sub}&\lesssim&\sum_{n_{0}}\int_{\alpha_{0}=\alpha_{1}+\cdots+\alpha_{4}+c_{4}(n_{0})}\big|(f')_{n_{0},\alpha_{0}}\big|\cdot\prod_{l=1}^{3}\big|(z^{l})_{c_{l}\pm n_{0},\alpha_{l}}\big|\cdot|\phi_{\alpha_{4}}|\nonumber\\
&\lesssim&T^{0+}\sum_{n_{0}}\big\|\widehat{(f')_{n_{0}}}\big\|_{L^{2}}\prod_{l=1}^{3}\big\|\widehat{(z^{l})_{c_{l}\pm n_{0}}}\big\|_{L^{1}}\nonumber\\
&\lesssim &T^{0+}\big\|f'\big\|_{l_{\sim 2^{d}}^{4}L^{2}}\prod_{l=1}^{3}\big\|z^{l}\big\|_{l_{\sim 2^{d}}^{4}L^{1}}\lesssim T^{0+}2^{-cd},\nonumber\end{eqnarray} where $c_{j}$ are constants (or functions of $n_{0}$). Thus this term is also acceptable.

Now let us bound $\mathcal{M}^{4}$ and $\mathcal{M}^{4.5}$. The quantity we need to control is now
\begin{eqnarray}\label{s4expre}\mathcal{S}&=&\sum_{n_{0}=n_{1}+\cdots+n_{4}+\cdots +m_{\mu}}\int_{(T)}\Phi^{j}\cdot\overline{(f')_{n_{0},\alpha_{0}}}\times\\
&\times&\prod_{l=1}^{4}(z^{l})_{n_{l},\alpha_{l}}\cdot\phi_{\alpha_{5}}\prod_{i=1}^{\mu}\frac{(u''')_{m_{i},\beta_{i}}}{m_{i}},\nonumber
\end{eqnarray} with the integration $(T)$ interpreted as the integral over the set 
\begin{equation}\big\{(\alpha_{0},\cdots,\alpha_{5},\beta_{1},\cdots,\beta_{\mu}):\alpha_{0}=\alpha_{15}+\beta_{1\mu}+\Xi\big\},\end{equation} with respect to the standard measure, where the NR factor
\begin{equation}\label{nlfac3}\Xi=|n_{0}|n_{0}-\sum_{l=1}^{4}|n_{l}|n_{l}-\sum_{i=1}^{\mu}|m_{i}|m_{i}.\end{equation}Here $z^{l}$ or $\overline{z^{l}}$ equals $u'$, $v'$ or $w'$ for each $l$. We assume the maximum of the $n$ variables is $\sim 2^{d}$, and with a loss of at most $2^{O(\epsilon)d}$, we may assume that $\mathfrak{N}w'$ verifies the same estimate as appeared before, and $\mathfrak{N}\upsilon'$ is bounded in $X_{4}$ and $L^{6}L^{6}$ (again, it is the modified versions of $w'$ and $\upsilon'$ that verifies the estimates).

If $j=4$ we may assume (up to a permutation) that $|\Phi|\lesssim 2^{-\frac{d}{90}}\langle n_{3}\rangle^{-\frac{2}{3}}$. Due to the presence of the $\langle n_{3}\rangle^{-\frac{2}{3}}$ factor, we may also fix $n_{3}$ and $\alpha_{3}$ when we fix the $m_{i}$ and $\beta_{i}$ variables. Once these variables are fixed, we only need to control the resulting $\mathcal{S}_{sub}$. But since we gain a power $2^{\frac{d}{90}}$ from the $\Phi$ factor, the estimate for $\mathcal{S}_{sub}$ will be easy; we simply bound $\mathfrak{N}f'$ in $L^{2+}L^{2+}$, bound $\mathfrak{N}z^{l}$ in $L^{6+}L^{6+}$ for $l\in\{1,2,4\}$, and bound $\phi$ in $l^{1+}L^{1+}$. This proves (\ref{auxest}) for $j=4$.

If $j=4.5$, then all the $m_{i}$ variables, as well as $n_{3}$ and $n_{4}$, are $\ll 2^{\frac{d}{10}}$. This in particular implies that either $\langle n_{0}\rangle\gtrsim 2^{\frac{d}{10}}$ so that we gain from the $\langle n_{0}\rangle^{\frac{1}{30}}$ factor in the bound for $f'$, or the NR factor $|\Xi|\gtrsim 2^{d}$ (note that $n_{1}+n_{2}\neq 0$) so we can gain from one of the $\alpha$ or $\beta$ variables. Note that $|\Phi|\lesssim (\langle n_{3}\rangle+\langle n_{4}\rangle)^{-1}$, thus when we fix the $m_{i}$ and $\beta_{i}$ variables, we always has the choice of also fixing $(n_{3},\alpha_{3})$ or $(n_{4},\alpha_{4})$. This means that though the $\mathcal{S}_{sub}$ expression seems to involve six factors, in practice we will always use only five of them. The rest will be basically the same as before. If we gain at least $2^{c(1-\kappa)d}$ from $n_{0}$, $\alpha_{0}$, $\alpha_{3}$ (or similarly $\alpha_{4}$) or some $\beta_{i}$, then we will fix (and then sum and integrate over) $(m_{j},\beta_{j})$ and $(n_{3},\alpha_{3})$ to produce $\mathcal{S}_{sub}$, then control $\mathfrak{N}f'$ in $L^{2+}L^{2+}$, $\mathfrak{N}z^{l}(l\in\{1,2,4\})$ in $L^{6}L^{6}$, $\phi$ in $l^{1+}L^{1+}$ with the $2+$ being $2+c(1-\kappa)$ and $1+$ defined accordingly. If we gain from $\alpha_{l}$ for $l\in\{1,2\}$ (say $l=1$), we will again fix $(m_{j},\beta_{j})$ and $(n_{3},\alpha_{3})$. To estimate $\mathcal{S}_{sub}$, we control $\mathfrak{N}f'$ in $L^{2+}L^{2+}$ with $2+$ being $2+c(1-\kappa)$, $\mathfrak{N}z^{2}$ and $\mathfrak{N}z^{4}$ in $L^{6}L^{6}$, $\phi$ in $l^{1+}L^{1+}$, and $\mathfrak{N}z^{1}$ in $L^{6-}L^{6-}$ with $6-$ being $6-c(1-\kappa)$. Here, if $\langle n_{1}\rangle\gtrsim 2^{\frac{d}{10}}$, $z^{1}$ will be either $w'$ or $\overline{w'}$ so that we can get the $L^{6-}L^{6-}$ bound from the same arguments made before; otherwise $\langle n_{1}\rangle\lesssim 2^{\frac{d}{10}}$, and we can use the $X_{4}$ bound for $\mathfrak{N}z^{1}$ to deduce the $L^{6-}L^{6-}$ bound with a gain of $2^{cd}$. This concludes the proof of Proposition \ref{easiest}.
\end{proof}
\begin{proposition}\label{easier} We have \begin{equation}\label{cubicest}
\sum_{j\in\{3,4,4.5\}}\sum_{j'\in\{1,2,5,7\}}\|\mathcal{M}^{j}\|_{X_{j'}}\lesssim T^{0+}.
\end{equation}
\end{proposition}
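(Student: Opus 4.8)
The plan is to bound each term $\mathcal{M}^{j}=\mathcal{E}(\mathbf{1}_{[-T,T]}\mathcal{N}^{j})$ for $j\in\{3,4,4.5\}$ in the four norms $X_{j'}$, $j'\in\{1,2,5,7\}$, by the same duality-plus-Strichartz machinery developed in the proof of Proposition \ref{easiest}, with two simplifications: first, by Proposition \ref{linearestimate2} the norms $X_{1},X_{2},X_{5},X_{7}$ applied to $\mathcal{E}(\cdot)$ can all be traded (via (\ref{linnn2}), (\ref{linnn3}), and for the high-modulation piece (\ref{linnn4})) for placing $\langle\xi\rangle^{-1}\mathbf{1}_{[-T,T]}\mathcal{N}^{j}$ in $X_{1}\cap X_{2}$, or more crudely in $X_{10}$; second, since $X_{10}$ carries only a $\langle\xi\rangle^{-1/8}$ weight, we have a full power of $\langle\xi\rangle^{7/8}$ of room on the modulation, so all the delicate $2^{c(1-\kappa)d}$ bookkeeping of the $X_{4}$ estimate becomes unnecessary. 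Concretely, I would fix $\mu$ and $\omega$ (absorbing the $\sum_\mu C^\mu/\mu!$ as explained), pass from $f,y,\upsilon$ to $f',y',\upsilon'$ using Proposition \ref{weaken} at the cost of $\langle\partial_x\rangle^{s^3}$ and a factor $e^{C_0C_1A}$, and reduce to estimating the $\mathcal{S}$-type multilinear integral of (\ref{s3expre})/(\ref{s4expre}) against a dual function $f$ with $\|f\|_{X_{j'}'}\leq 1$.

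The core of each estimate is then the localization $\sum_{l}\langle n_l\rangle\sim 2^d$ followed by summation in $d$. As in the proof of Proposition \ref{easiest}, on this dyadic block at least one of the modulation variables $\alpha_l,\beta_i$ or the output modulation $\alpha_0$ is $\gtrsim$ a small power of $2^d$ whenever the non-resonance factor $\Xi$ of (\ref{nlfac2}) or (\ref{nlfac3}) is large; when $\Xi$ is small one exploits the vanishing of $\Phi^j$ guaranteed by Proposition \ref{finalver} (parts (b),(c) for $j=3$; the support condition and the $(\langle n_3\rangle+\langle n_4\rangle)^{-1}$ decay for $j=4.5$; the $(\max\langle n_l\rangle^{1/20}+\min\langle n_l\rangle)^{-1}$ decay for $j=4$). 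The $u'''$ factors are always summed using $\|\langle m_i\rangle^{-1}u'''\|_{l^1L^1}\lesssim\|\langle\partial_x\rangle^{-s^3}u'''\|_{X_2}\leq C_1A$ (or the variant with $\langle m_i\rangle^{-2/3}$ when $\langle m_i\rangle\gtrsim 2^{cd}$, or the $X_4$ bound of $u'''$ when $\langle m_i\rangle$ is small but $\langle\beta_i\rangle$ is large), exactly as before. Fixing all $m_i,\beta_i$ and — for $j=4,4.5$ — also one of the low frequencies $(n_3,\alpha_3)$ using the explicit decay of $\Phi^j$, one is left with an $\mathcal{S}_{sub}$ involving $f'$, one $\mathfrak{N}w'$ and at most two further $\mathfrak{N}\upsilon'$ (or $\mathfrak{N}w'$) factors plus the cutoff multiplier $\phi$; this is estimated by Young's inequality in the $l^\infty L^\infty$ form of (\ref{crucial}) distributing the factors over $L^2L^2$, $L^{6+}L^{6+}$, $L^6L^6$ and $l^{1+}L^{1+}$ via the Strichartz estimates of Proposition \ref{stri} and their interpolants, with the small parameters chosen so the exponents match and a net gain $2^{-cd}$ survives, permitting the $d$-summation. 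The overall $T^{0+}$ comes from $\|\phi\|_{L^{1+}(\{|\xi|\geq K\})}\lesssim T^{0+}\langle K\rangle^{0-}$ together with the extra room from the $\langle\xi\rangle^{7/8}$ weight.

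The one genuinely new point, as opposed to a mechanical replay of Proposition \ref{easiest}, is the $X_2$ (and $X_7$) estimate, because these are $l^\infty$-in-dyadic-block norms and the dual space $X_2'$ is correspondingly an $l^1$-type space; here one cannot afford to lose $(d+1)$ factors carelessly and must use the fact, built into (\ref{linnn2}), that $\|\langle\xi\rangle^{-1}u\|_{X_2}\lesssim\|u\|_{X_{10}}$ so that the output block is controlled by a single $\sup_d$ while the input is summed in $\ell^p$ within each block — this is where the identity $\|\langle\xi\rangle^{-1}u\|_{X_2}\lesssim\|u\|_{X_{10}}$ from Proposition \ref{linearestimate2} does the real work, and one just has to check the multilinear $X_{10}$ norm of $\mathbf{1}_{[-T,T]}\mathcal{N}^j$ closes, which it does with room to spare since $\tau=8-s^{13/8}$ is large. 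I expect the main obstacle to be purely organizational: keeping track, across the five values of $j$, four values of $j'$, and the half-dozen sub-cases of which modulation variable is large, that in \emph{every} branch one genuinely gains a positive power $2^{cd}$ after accounting for the $2^{O(\epsilon)d}$ and $2^{O(1/2-b)d}$ losses incurred in the reductions — but since every individual branch is strictly easier than its counterpart in Proposition \ref{easiest} (more modulation room, and the $X_{j'}$ norms for $j'\neq 6$ are all dominated by $Y_1$), no new analytic input beyond Propositions \ref{stri}, \ref{linearestimate2}, \ref{weaken}, \ref{finalver} is needed, and the proof is a disciplined case check.
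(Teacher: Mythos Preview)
Your proposal has a genuine gap in the $j=3$ case. You claim that when the non-resonance factor $\Xi$ is small one exploits ``the vanishing of $\Phi^{j}$ guaranteed by Proposition \ref{finalver} (parts (b),(c) for $j=3$)''. But part (a) of the $j=3$ conditions in Proposition \ref{finalver} is precisely the case where \emph{no} two of $(-n_{0},n_{1},n_{2},n_{3})$ add to zero and the weight satisfies only $|\Phi^{3}|\lesssim 1$ with no decay whatsoever. In this regime all four frequencies are $\gtrsim 2^{0.9d}$, the NR factor $\Xi$ can be $\ll 2^{d/40}$, every modulation variable can be small, and there is nothing to gain from either the weight or any $\langle\alpha_{l}\rangle$ factor. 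Your Strichartz-and-H\"older machinery simply does not close here: after fixing $(m_{i},\beta_{i})$ you are left with a trilinear sum over $(n_{1},n_{2},n_{3})$ with $n_{0}$ determined, no smallness from $\Phi^{3}$, and no large modulation to exploit.

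The paper handles this branch by an entirely different mechanism: it proves and invokes Lemma \ref{divisor}, a divisor-counting statement asserting that for fixed $n_{0}$ (and fixed $K_{1},K_{2}$ coming from the $m$-variables and $\Xi$) there are at most $2^{s^{3}d}$ choices of $(n_{1},n_{2},n_{3})$. This arithmetic input is then fed into an interpolation argument (inequality (\ref{nonsharp})) between the parameter sets $(\tfrac{2}{5},2)$ and $(3,4)$ for $\|\langle\alpha\rangle^{\sigma}\cdot\|_{l^{k}L^{2}}$, the second endpoint being where the divisor bound is actually used. This is new analytic input beyond Propositions \ref{stri}, \ref{linearestimate2}, \ref{weaken}, \ref{finalver}, contrary to your final claim. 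A second, smaller gap: the cases $n_{0}\in\{n_{1},n_{2},n_{3}\}$ for $j=3$ and $n_{0}\in\{n_{1},n_{2}\}$ for $j=4.5$ cannot be routed through $X_{6}$ via (\ref{relation2}) and require a separate argument using the dual bound $\|f\|_{X_{9}'}\lesssim 1$ (equation (\ref{festt})) together with block-by-block $l^{p'}\times l^{p}$ pairings in $n_{0}$; your proposal does not address this either.
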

\begin{proof} Note that $\mathcal{M}_{j}$ involves a sum over the $n_{l}$ and $m_{i}$ variables. We shall first prove the bound for the terms where $j=4$, or $j=3$ and $n_{0}\not\in\{n_{1},n_{2},n_{3}\}$, or $j=4.5$ and $n_{0}\not\in\{n_{1},n_{2}\}$. By (\ref{relation2}), we only need to bound this part of $\mathcal{M}^{j}$ in $X_{6}$.

Let the functions $g$ and $f$, $f'$ be as usual, with $\|g\|_{X_{6}'}\leq 1$. This would imply \begin{equation}\|\langle n_{0}\rangle^{-s-O(s^{3})}\langle \alpha_{0}\rangle^{\frac{1}{2}-O(s^{2})}(f')_{n_{0},\alpha_{0}}\|_{l^{2}L^{2}}\lesssim 1\nonumber\end{equation} (we have done this kind of reduction many times before). What we need to control is the same quantity $\mathcal{S}$ with $j\in\{3,4,4.5\}$ as in (\ref{s3expre}) and (\ref{s4expre}), and we assume the maximal $n_{l}$ variable is $\sim 2^{d}$ as usual. With a loss of $2^{O(\epsilon)d}$, we may assume that $w'$ and $\upsilon'$ verifies the good bounds appearing in the proof of Proposition \ref{easiest}. Using Strichartz, we can deduce from the bound for $f'$ as above the $L^{2}L^{2}\cap L^{4}L^{4}$ bound for $\mathfrak{N}f'$ with a loss of $2^{O(s)d}$. 

Now we will be able to bound the $\mathcal{S}$ expression in (\ref{s4expre}) easily. In fact, if we gain from anything except $\alpha_{0}$, we can repeat the argument in the proof of (\ref{auxest}), but with the $c(1-\kappa)$ involved in various $2+$ or $6-$ replaced by $c$ (since we now have the $L^{2+c}L^{2+c}$ control for $\mathfrak{N}f'$), and check that in these cases we always gain a power $2^{cd}$, which will be enough to cover the loss $2^{O(s)d}$. If we gain from $\alpha_{0}$, this gain will be $2^{cd}$, with a loss of at most $2^{O(s)d}$, and we can bound the reduced $\mathfrak{N}f'$ factor in $L^{2+c}L^{2+c}$, so this contribution will be acceptable. On the other hand, if we do not gain anything from any of the variables or weights, it must be the case that $j=4.5$ and $|\Xi|\ll2^{\frac{d}{4}}$. Since all the $m$ variables as well as $n_{3}$ and $n_{4}$ are assumed to be $\ll 2^{\frac{d}{10}}$, we then deduce that
\begin{equation}\big||n_{0}|n_{0}-|n_{1}|n_{1}-|n_{2}|n_{2}\big|\ll 2^{\frac{d}{4}}.\nonumber\end{equation} By repeating the argument in the proof of Proposition \ref{easiest}, we see that this can happen only if $n_{1}+n_{2}=0$, or $n_{0}=n_{1}$, or $n_{0}=n_{2}$, but all these possibilities contradict our assumptions.

Next, assume $j=3$. Recall that $\sum_{l=0}^{3}\langle n_{l}\rangle\sim 2^{d}$, and that the $\mathcal{S}$ we need to estimate is bounded by
\begin{eqnarray}|\mathcal{S}|&\lesssim &\sum_{n_{0}=n_{1}+n_{2}+n_{3}+m_{1}+\cdots+m_{\mu}}\int_{(T)}\big|\overline{(f')_{n_{0},\alpha_{0}}}\big|\times\nonumber\\
&\times&\prod_{l=1}^{3}\big|((w')^{\omega_{l}})_{n_{l},\alpha_{l}}\big|\cdot|\phi_{\alpha_{4}}|\prod_{i=1}^{\mu}\bigg|\frac{(u''')_{m_{i},\beta_{i}}}{m_{i}}\bigg|.\nonumber
\end{eqnarray} First assume that some of the $\alpha$ or $\beta$ variables is at least $2^{\frac{d}{90}}$. Then, by the same argument we made before (notice that the $n_{l}$ variables for $1\leq l\leq 3$ correspond to the function $w'$ or $\overline{w'}$, which, up to a loss of $2^{O(s)d}$, verifies the estimate $\|\langle n_{l}\rangle^{s^{2}}\langle \alpha_{l}\rangle^{\frac{1}{2}+s^{2}}w'\|_{l^{2}L^{2}}\lesssim 1$), we can gain a power $2^{cd}$ from the corresponding factor, then fix the $m_{i}$ and $\beta_{i}$ variables (and sum and integrate over them afterwards), produce the $\mathcal{S}_{sub}$ term, and estimate it by controlling $\mathfrak{N}f'$ in $L^{2+}L^{2+}$, $\mathcal{N}(w')^{\omega_{l}}$ in $L^{6-}L^{6-}$ with $2+$ and $6-$ being $2+c$ and $6-c$ respectively, and finally control $\phi$ in $l^{1+}L^{1+}$.

Now let us assume that all $\alpha$ and $\beta$ variables are $\ll 2^{\frac{d}{90}}$; we may assume that all $m_{i}$ variables are $\ll 2^{\frac{d}{90}}$ also. Thus, the variables $(-n_{0},n_{1},n_{2},n_{3})$ will verify the conditions in the following lemma.
\begin{lemma}\label{divisor}Suppose four numbers $n_{0},\cdots,n_{3}$ satisfy
\begin{equation}n_{0}+n_{1}+n_{2}+n_{3}=K_{1};\,\,\,\,|n_{0}|n_{0}+|n_{1}|n_{1}+|n_{2}|n_{2}+|n_{3}|n_{3}=K_{2},\nonumber\end{equation} where $K_{j}$ are constants such that
\begin{equation}|K_{1}|+|K_{2}|\ll 2^{\frac{d}{40}},\,\,\,\,\,\,\,\,\max_{0\leq l\leq 3}\langle n_{l}\rangle\sim 2^{d},\nonumber\end{equation} then one of the followings must hold.

(i) Up to some permutation, we have $n_{0}+n_{1}=n_{2}+n_{3}=0$. In particular, this can happen only if $K_{1}=K_{2}=0$.

(ii) Up to some permutation, we have $n_{0}+n_{1}=0$, $\langle n_{0}\rangle\sim 2^{d}$, and that $\langle n_{2}\rangle+\langle n_{3}\rangle\ll 2^{\frac{d}{40}}$. Note that it is possible that (say) $n_{1}+n_{2}=0$ and $n_{0},n_{3}$ are small.

(iii) No two of $n_{l}$ add to zero. \emph{Under this restriction} we must have $\langle n_{l}\rangle\gtrsim 2^{0.9d}$ for each $l$; moreover, if we fix $K_{1}$, $K_{2}$ and any single $n_{l}$, there will be at most $\lesssim 2^{s^{3}d}$ choices for the quadruple $(n_{0},n_{1},n_{2},n_{3})$.
\end{lemma}
\begin{proof}[Proof of Lemma \ref{divisor}] Suppose some $\langle n_{l}\rangle\ll 2^{0.9d}$ (say $l=0$), then one of $\langle n_{l}\rangle$ for $1\leq l\leq 3$ must also be $\ll 2^{0.9d}$, since otherwise we would have \begin{equation}\big||n_{1}|n_{1}+|n_{2}|n_{2}+|n_{3}|n_{3}\big|\gtrsim\max_{1\leq l\leq 3}\langle n_{l}\rangle\cdot\min_{1\leq l\leq 3}\langle n_{l}\rangle\gtrsim 2^{1.9d}\nonumber\end{equation} while $|n_{0}|^{2}\lesssim 2^{1.8d}$, which is impossible. Now assume that $\langle n_{1}\rangle\ll 2^{0.9d}$, then in particular $\langle n_{2}+n_{3}\rangle\ll 2^{d}$, thus $n_{2}n_{3}<0$ as well as $|n_{2}-n_{3}|\sim 2^{d}$. Suppose $n_{0}+n_{1}=k$ and $n_{2}+n_{3}=l$, we will have $|k+l|\leq c2^{\frac{d}{40}}$ and\begin{equation}2^{d}|l|\leq 2^{0.9d}\langle k\rangle+2^{\frac{d}{40}}.\nonumber\end{equation} Now if $l\neq 0$, this inequality cannot hold, since it would require\footnote[1]{Here we may assume that $2^{d}$ is larger than some constant (which is polynomial in $\mu$) such that $2^{0.9d}\ll 2^{d}$, since the summation for small values of $2^{d}$ will be trivial. This comment also applies to some of the arguments below.} $|k|\gg |l|$, which implies $\langle k\rangle\lesssim 2^{\frac{d}{40}}$, so that the right hand side will be at most $2^{(0.9+1/40)d}$ and the left hand side is at least $2^{d}$. Therefore we must have $n_{2}+n_{3}=0$. If also $n_{0}+n_{1}=0$, we will be in case (i); otherwise $k\neq 0$, so that we always have $\big||n_{0}|n_{0}+|n_{1}|n_{1}\big|\gtrsim |n_{0}|+|n_{1}|$, which then implies that $\langle n_{0}\rangle+\langle n_{1}\rangle\ll 2^{\frac{d}{40}}$ and we will be in case (ii).

Now assume that $\langle n_{l}\rangle\gtrsim 2^{0.9d}$ for each $l$. By the discussion above, we cannot have any $n_{h}+n_{l}=0$ (unless we are in case (i)), so we will be in case (iii). Finally, suppose we fix $K_{1}$, $K_{2}$ and $n_{0}$. The requirement $n_{h}+n_{l}\neq 0$ implies that each $\langle n_{l}\rangle\gtrsim 2^{0.9d}$, so without loss of generality we may assume $n_{0}>0>n_{1}$. Now $n_{2}$ and $n_{3}$ cannot have the same sign since $|K_{1}|\lesssim 2^{\frac{d}{40}}$, thus we may assume $n_{2}>0$ and $n_{3}<0$. Therefore we will have
\begin{equation}n_{0}+n_{1}+n_{2}+n_{3}=K_{1},\,\,\,\,\,\,n_{0}^{2}-n_{1}^{2}+n_{2}^{2}-n_{3}^{2}=K_{2},\nonumber\end{equation} which implies
\begin{equation}(n_{2}+n_{1})(n_{2}+n_{3})=\frac{1}{2}\big(K_{1}^{2}-2K_{1}n_{0}+K_{2}\big).\nonumber\end{equation} By our assumptions, the right hand side is a nonzero constant whose absolute value does not exceed $2^{2d}$. The result now follows from the standard divisor estimate, since knowing $n_{2}+n_{1}$ and $n_{2}+n_{3}$ will allow us to recover the whole quadraple.
\end{proof}Proceeding to the estimate of the $\mathcal{M}^{3}$ term, we can see that the only possibility is case (iii) in Lemma \ref{divisor} (since we have required $n_{0}\not\in\{n_{1},n_{2},n_{3}\}$; also if $n_{1}+n_{2}=0$ and $n_{0},n_{3}$ are small, we will gain a power $2^{cd}$ from the weight $\Phi$ so we can argue as above to close the estimate). In this case we will use a completely different argument.

Recall that up to a loss of $2^{O(s^{3})d}$ we may assume that with small $c$,
\begin{equation}\label{estforf'}\big\|\langle n_{0}\rangle^{-s}\langle\alpha_{0}\rangle^{\frac{1}{2}-c}f'\big\|_{l^{2}L^{2}}\lesssim 1;\end{equation} also by invoking the $X_{1}$ norm of $w$ we obtain the estimate
\begin{equation}\label{estforw'}\big\|\langle n_{l}\rangle^{s}\langle\alpha_{l}\rangle^{\frac{1}{2}-c}w'\big\|_{l^{p}L^{2}}\lesssim 1\end{equation} with a loss of at most $2^{O(s^{3})d}$. Since we now have $2^{0.9d}\lesssim n_{l}\lesssim 2^{d}$, we may remove the $\langle n_{0}\rangle^{-s}$ and $\langle n_{l}\rangle^{s}$ factors in (\ref{estforf'}) and (\ref{estforw'}), and through this process we gain at least $2^{1.7sd}$. Therefore, by fixing $m_{i}$ and $\beta_{i}$ first, we will be able to get the desired result if we can prove the following inequality:
\begin{eqnarray}\label{nonsharp}
\mathcal{S}_{sub}&=&\sum_{n_{0}=n_{1}+n_{2}+n_{3}+K_{1}}\int_{(T)}\prod_{l=0}^{3}\big|(A^{l})_{n_{l},\alpha_{l}}\big|\cdot\min\bigg\{T,\frac{1}{\langle \alpha_{4}\rangle}\bigg\}\\
&\lesssim &2^{O(s^{3})d}T^{0+}\prod_{l=0}^{3}\big\|\langle \alpha_{l}\rangle^{\frac{1}{2}-c}A^{l}\big\|_{l^{2+c}L^{2}},\nonumber
\end{eqnarray} provided $c$ is a small absolute constant, where the integral $(T)$ is taken over the set
\begin{equation}\bigg\{(\alpha_{0},\cdots,\alpha_{4}):\alpha_{0}=\alpha_{14}+|n_{0}|n_{0}-\sum_{l=1}^{3}|n_{l}|n_{l}+K_{2}\bigg\},\end{equation} and we restrict to the region where no two of $(-n_{0},n_{1}.n_{2},n_{3})$ add to zero, $\sum_{l}\langle n_{l}\rangle\sim 2^{d}$, the NR factor $\big||n_{0}|n_{0}-\sum_{l=1}^{3}|n_{l}|n_{l}\big|\ll 2^{\frac{d}{40}}$ and $|K_{1}|+|K_{2}|\ll 2^{\frac{d}{40}}$.

We will use an interpolation argument to prove (\ref{nonsharp}); in fact, if will suffice to prove the estimate when we replace the parameter set $(\frac{1}{2}-c,2+c)$ with $(\frac{2}{5},2)$ or $(3,4)$. When we have $(\frac{2}{5},2)$ we will be able to control $\mathfrak{N}A^{l}$ in $L^{4+}L^{4+}$ for each $l$, so that we can control the $\alpha_{4}$ factor in $l^{1+}L^{1+}$, and invoke the argument used many times before to conclude. When we have $(3,4)$, assuming the norm of each $A^{l}$ is one, we will get that
\begin{equation}\big\|\langle\widetilde{\alpha_{l}}+|n_{l}|n_{l}\rangle ((A^{l})_{n_{l}})^{\wedge}(\widetilde{\alpha_{l}})\big\|_{L^{1}}\lesssim\big\|\langle\widetilde{\alpha_{l}}+|n_{l}|n_{l}\rangle^{3} ((A^{l})_{n_{l}})^{\wedge}(\widetilde{\alpha_{l}})\big\|_{L^{2}}:=A_{n_{l}}^{l},\nonumber\end{equation} with \begin{equation}\|A_{n_{l}}^{l}\|_{l^{4}}\lesssim\|\langle\widetilde{\alpha_{l}}+|n_{l}|n_{l}\rangle^{3}A^{l}\|_{l^{4}\widetilde{L}^{2}}\lesssim 1.\end{equation} Therefore when we fix $(n_{0},\cdots,n_{3})$ and integrate over $(\alpha_{0},\cdots,\alpha_{4})$, we get
\begin{eqnarray}\mathcal{S}_{sub}'&\lesssim&T^{0+}\int_{\mathbb{R}^{4}}\bigg\langle\widetilde{\alpha_{0}}-\sum_{l=1}^{3}\widetilde{\alpha_{l}}-K_{2}\bigg\rangle^{-1+s^{3}}\prod_{l=0}^{3}\langle \widetilde{\alpha_{l}}+|n_{l}|n_{l}\rangle^{-1}\times\nonumber\\
&\times& \prod_{l=0}^{3}\langle\widetilde{\alpha_{l}}+|n_{l}|n_{l}\rangle \big|((A^{l})_{n_{l}})^{\wedge}(\widetilde{\alpha_{l}})\big|\cdot\prod_{l=1}^{4}\mathrm{d}\widetilde{\alpha_{l}}\nonumber\\
&\lesssim &T^{0+}\bigg\langle|n_{0}|n_{0}-\sum_{l=1}^{3}|n_{l}|n_{l}+K_{2}\bigg\rangle^{-1+s^{3}}\prod_{l=0}^{3}A_{n_{l}}^{l}.\nonumber\end{eqnarray} We then sum this over $(n_{l})$; by H\"{o}lder, we only need to bound the sum
\begin{equation}\sum_{(n_{0},\cdots,n_{3})}\bigg\langle|n_{0}|n_{0}-\sum_{l=1}^{3}|n_{l}|n_{l}+K_{2}\bigg\rangle^{-1+s^{4}}(A_{n_{0}}^{0})^{4}.\nonumber\end{equation} If we fix $|n_{0}|n_{0}-\sum_{l=1}^{3}|n_{l}|n_{l}=K_{3}$ with $|K_{3}|\ll 2^{\frac{d}{40}}$, the corresponding sum will be $\lesssim2^{O(s^{3})d}$, since each $n_{0}$ appears at most this many times due to Lemma \ref{divisor}; also the sum over $K_{3}$ will contribute at most $\sum_{|K_{3}|\lesssim 2^{\frac{d}{40}}}\langle K_{3}-K_{2}\rangle^{-1+s^{3}}=2^{O(s^{3})d}$. This completes the proof for $\mathcal{M}^{3}$.

What remains is when $j\in\{3,4.5\}$ and (say) $n_{0}=n_{1}$. Note that the case when three of $n_{l}$ are related by $m$ will be treated at the end of the proof. In both cases we will use the expressions (\ref{s3expre}) and (\ref{s4expre}), but with $f'$  with $f$, and $(w')^{\omega_{1}}$ replaced by $(w'')^{\omega_{1}}$ (if $j=3$), $z^{1}$ replaced by $y^{1}$ (if $j=4.5$). This is easily justified by definition and the fact that $n_{0}=n_{1}$. We will assume $\sum_{l\geq 2}\langle n_{l}\rangle\sim 2^{d'}$, then fix $d$ and $d'$. Here we will use a new bound for $f$. Recall from Proposition \ref{linearestimate2} that $\|g\|_{X_{j}'}\lesssim 1$ for some $j\in\{1,2,5,7\}$ implies $\|f\|_{X_{9}'}\lesssim 1$, or equivalently
\begin{equation}\label{festt}\|f\|_{L^{q}l_{\sim 2^{d}}^{p'}}\lesssim 2^{rd}T_{d},
\end{equation} 
where the $T_{d}$ is such that
\begin{equation}\sum_{d\geq 0}T_{d}\lesssim 1.\end{equation}

In the easier case $j=4.5$, we will be able to fix $m_{i}$ and $\beta_{i}$, then estimate $\mathcal{S}_{sub}$ by (note we have all the restrictions made above, say $\langle n_{0}\rangle\sim 2^{d}$)
\begin{eqnarray}\mathcal{S}_{sub}&\lesssim&\sum_{n_{0};n_{2}+n_{3}+n_{4}=c_{1}}\big(\langle n_{3}\rangle+\langle n_{4}\rangle\big)^{-1}\int_{(T)}\big|f_{n_{0},\alpha_{0}}\big|\times\nonumber\\
&\times&\big|(y^{1})_{n_{0},\alpha_{1}}\big|\prod_{l=2}^{4}\big|(z^{l})_{n_{l},\alpha_{l}}\big|\cdot|\phi_{\alpha_{5}}|\nonumber\\
&\lesssim&T^{0+}\sum_{n_{0};n_{2}+n_{3}+n_{4}=c_{1}}\big(\langle n_{3}\rangle+\langle n_{4}\rangle\big)^{-1}\times\nonumber\\
&\times&\big\|\widehat{f_{n_{0}}}\big\|_{L^{q}}\big\|\widehat{(y^{1})_{n_{0}}}\big\|_{L^{1}}\prod_{l=2}^{4}\big\|\widehat{(z^{l})_{n_{l}}}\big\|_{L^{1}}\nonumber\\
\label{luckystar}&\lesssim&T^{0+}2^{rd}T_{d}\cdot 2^{-rd}\cdot\big\|z^{2}\big\|_{l^{3}L^{1}}\prod_{l=3}^{4}\big\|\langle n_{l}\rangle^{-\frac{1}{2}}z^{l}\big\|_{l^{\frac{6}{5}}L^{1}}\\
&\lesssim& T^{0+}2^{-csd'}T_{d},\nonumber\end{eqnarray} using (\ref{festt}) for $f$, $X_{2}$ bound for $y^{1}$, and slightly weaker bounds for $z^{l}$ that follows from Proposition \ref{weaken}. Here $c_{j}$ are constants, and the integral $(T)$ is taken over the set
\begin{equation}\bigg\{(\alpha_{0},\cdots,\alpha_{5}):\alpha_{0}=\alpha_{15}-\sum_{l=2}^{4}|n_{l}|n_{l}+c_{2}\bigg\}.\end{equation} The reason we can gain $2^{csd'}$ is that, in (\ref{luckystar}) we can restrict some $n_{l}$, where $2\leq l\leq 4$, to be $\sim 2^{d'}$ before using the corresponding control for $z^{l}$ (for example, when $n_{2}\sim 2^{d}$ we will have $\|z^{2}\|_{l_{n_{2}\sim 2^{d'}}^{3}L^{1}}\lesssim 2^{-csd'}$). If we then sum over $m_{i}$, integrate over $\beta_{i}$, and sum over $d,d'$, we will get the desired estimate.

In the harder case $j=3$, we will assume $\langle m_{i}\rangle+\langle \beta_{i}\rangle\ll 2^{\frac{d'}{90}}$. In fact, if this does not hold, we will gain a power $2^{cd'}$ from this term and estimate the $\mathcal{S}_{sub}$ as above, except that we estimate $(w')^{\omega_{2}}$ and $(w')^{\omega_{3}}$ in $l^{2}L^{1}$ with a loss of $2^{O(s)d'}$ (note in particular we estimate $f$ and $(w'')^{\omega_{1}}$ exactly as above, so we do not gain or lose any power of $2^{d}$), to conclude. In the same way, we may assume $\langle \alpha_{4}\rangle\ll 2^{\frac{d'}{90}}$ in (\ref{s3expre}). Now if $n_{2}+n_{3}=0$, we must have $|\Phi|\lesssim 2^{-|d-d'|}$. Also\footnote[1]{Since $j=3$, we are actually replacing $(w')^{\omega_{l}}$ with $(w'')^{\omega_{l}}$ for $l\in\{2,3\}$; the fact that $z^{l}$ comes from $w''$ will be used later in the estimates.} we may replace $z^{2}$ and $z^{3}$ in (\ref{s3expre}) with $y^{2}$ and $y^{3}$ (in the same way we replace $f'$ and $(w')^{\omega_{1}}$ with $f$ and $(w'')^{\omega_{1}}$; note that we have not made any restrictions for $\alpha_{2}$ and $\alpha_{3}$). Then we may fix $m_{i}$ and $\beta_{i}$ (here the $m$ variables satisfy some linear relation which we ignore) and bound $\mathcal{S}_{sub}$ by
\begin{eqnarray}
\mathcal{S}_{sub}&\lesssim&2^{-|d-d'|}\sum_{n_{0},n_{2}}\int_{\alpha_{0}=\alpha_{1}+\cdots+\alpha_{4}+c_{2}}\big|f_{n_{0},\alpha_{0}}\big|\times\nonumber\\
&\times&\big|((w'')^{\omega_{1}})_{n_{0},\alpha_{1}}\big|\cdot\big|(y^{2})_{n_{2},\alpha_{2}}\big|\cdot\big|(y^{3})_{-n_{2},\alpha_{3}}\big|\cdot|\phi_{\alpha_{4}}|\nonumber\\
&\lesssim &T^{0+}2^{-|d-d'|}\sum_{n_{0},n_{2}}\big\|\widehat{f_{n_{0}}}\big\|_{L^{q}}\times\nonumber\\
&\times&\big\|(((w^{''})^{\omega_{1}})_{n_{0}})^{\wedge}\big\|_{L^{1}}\big\|\widehat{(y^{2})_{n_{2}}}\big\|_{L^{1}}\big\|\widehat{(y^{3})_{-n_{2}}}\big\|_{L^{1}}\nonumber\\
&\lesssim&T^{0+}2^{-|d-d'|}\big\|f\big\|_{l_{\sim 2^{d}}^{p'}L^{q}}\big\|w''\big\|_{l_{\sim 2^{d}}^{p}L^{1}}\times\nonumber\\
&\times&\big\|y^{2}\big\|_{l_{\sim 2^{d'}}^{2}L^{1}}\big\|y^{3}\big\|_{l_{\sim 2^{d'}}^{2}L^{1}}\nonumber\\
&\lesssim & T^{0+}2^{-|d-d'|}T_{d},\nonumber
\end{eqnarray} where $c_{j}$ are constants, and we are restricting to $n_{0}\sim 2^{d}$, $n_{2}\sim 2^{d'}$. Then we may sum and integrate over $(m_{i},\beta_{i})$, and sum over $d,d'$ to bound this part by $T^{0+}$.

Now assume $j=3$, $n_{2}+n_{3}\neq 0$, and all the restrictions made before hold. In particular we have $n_{2}\sim n_{3}\sim 2^{d'}$ and $|\Xi'|\gtrsim 2^{d'}$ where $\Xi'=|n_{2}|n_{2}+|n_{3}|n_{3}$ (again we may assume $2^{d'}$ is large, otherwise we proceed as before). Fixing $m_{i}$ and $\beta_{i}$, we then need to bound
\begin{equation}
\mathcal{S}_{sub}=\sum_{n_{0},n_{2}}\int_{(T)}\big|f_{n_{0},\alpha_{0}}\big|\cdot\big|((w'')^{\omega_{1}})_{n_{0},\alpha_{1}}\big|\cdot\big|(z^{2})_{n_{2},\alpha_{2}}\big|\cdot\big|(z^{3})_{c_{1}-n_{2},\alpha_{3}}\big|\cdot|\phi_{\alpha_{4}}|,\nonumber
\end{equation} where $c_{1}-n_{2}=n_{3}$, the integral $(T)$ is over the set\begin{equation}\big\{(\alpha_{0},\cdots,\alpha_{4}):\alpha_{0}=\alpha_{14}-\Xi'+c_{2}\big\},\end{equation} and $c_{j}\ll 2^{\frac{d'}{10}}$ are constants. Also each $z^{l}$ \emph{here} is either $w'$ or $\overline{w'}$. Now, by Proposition \ref{linearestimate2}, we can show that $\|g\|_{X_{j}'}\leq 1$ for some $j\in\{1,2,5,7\}$ implies \begin{equation}\|f\|_{X_{10}'}=\big\|\langle n_{0}\rangle^{-r}\langle\alpha_{0}\rangle^{\frac{1}{8}}f\big\|_{l_{d\geq 0}^{1}L^{\tau'}l_{n_{0}\sim 2^{d}}^{p'}}\lesssim 1.\end{equation} For the $w''$ we will use the $X_{7}$ bound, and for $z^{l}$ we will simply use the $X_{1}$ bound. Now, since at least one $\alpha_{l}$ must be $\gtrsim 2^{d'}$, we will gain some $2^{cd'}$ from the $\langle\alpha_{l}\rangle$ weight in one of the above bounds. If $l=0$, we can then estimate $f$ in $l^{p'}L^{\tau'}$ by $2^{rd}T_{d}$, $w''$ in $l^{p}L^{1}$ by $2^{-rd}$ (recall we are restricting to $n_{0}\sim 2^{d}$ and $n_{2}\sim 2^{d'}$), and $z^{2,3}$ in $l^{2}L^{1}$ with a loss of $2^{O(s)d'}$, so that we can use H\"{o}lder to conclude. If $l=1$, we simply replace the $l^{p}L^{1}$ bound by the $l^{p}L^{2}$ bound and argue as in the case $l=0$. If $l\in\{2,3\}$, we may replace the $l^{2}L^{1}$ bound for $z^{l}$ by the $l^{2}L^{2}$ bound and argue as in the case $l=0$. If $l=4$ we simply gain from the $\phi$ factor. This completes the proof for the $n_{0}=n_{1}$ case.

Finally, assume that $j=3$, and three of $n_{l}$ are related by $m$. We may assume that $\langle m_{i}\rangle \ll2^{\frac{d}{90}}$, so that $n_{l}\sim 2^{d}$ for each $l$. Then we fix $m_{i}$ and $\beta_{i}$, so that $n_{l}$ are uniquely determined by $n_{0}$. The corresponding $\mathcal{S}_{sub}$ will be bounded by\begin{equation}T^{0+}\|f'\|_{l_{\sim 2^{d}}^{4}L^{q}}\|z^{1}\|_{l_{\sim 2^{d}}^{4}L^{1}}\|z^{2}\|_{l_{\sim 2^{d}}^{4}L^{1}}\|z^{3}\|_{l_{\sim 2^{d}}^{4}L^{1}}\lesssim T^{0+}2^{-sd},\nonumber\end{equation} due to a similar computation as in the proof of Proposition \ref{easiest}.
\end{proof}
Now we start to consider the $\mathcal{M}^{2}$ term. Fixing the functions $g,f,f'$ and the scale $d$ as before, we need to bound the expression
\begin{eqnarray}\label{m2control}\mathcal{S}&=&\sum_{n_{0}=n_{1}+n_{2}+m_{1}+\cdots+m_{\mu}}\int_{(T)}\Phi^{2}\cdot\overline{f_{n_{0},\alpha_{0}}}\prod_{l=1}^{2}(y^{\omega_{l}})_{n_{l},\alpha_{l}}\cdot\phi_{\alpha_{3}}\times\\
&\times&\big(\chi e^{\mathrm{i}(\Delta_{n_{1}}+\Delta_{n_{2}}-\Delta_{n_{0}})}\big)^{\wedge}(\alpha_{4})\prod_{i=1}^{\mu}\frac{(u''')_{m_{i},\beta_{i}}}{m_{i}}.\nonumber\end{eqnarray} Here the integration $(T)$ is over the set
\begin{equation}\big\{(\alpha_{0},\cdots,\alpha_{4},\beta_{1},\cdots,\beta_{\mu}):\alpha_{0}=\alpha_{14}+\beta_{1\mu}+\Xi\big\},\nonumber\end{equation} where the NR factor
\begin{equation}\label{nrm2}\Xi=|n_{0}|n_{0}-\sum_{l=1}^{2}|n_{l}|n_{l}-\sum_{i=1}^{\mu}|m_{i}|m_{i}.\end{equation} Note that we may insert $\chi$ since $f$ has compact time support. Suppose the minimum of $\langle n_{l}\rangle$ is $\sim 2^{h}$ and also fix $h$, then we have $\langle m_{i}\rangle \ll 2^{h}$, so that $|\Xi|\gtrsim 2^{d+h}$; also we have $h\leq d+O(1)$ and $|\Phi^{2}|\lesssim 2^{h}$. Note that one of $\alpha$ or $\beta$ variables must be $\gtrsim 2^{d+h}$; we first treat the easy cases, which we collect in the following proposition.
\begin{proposition}\label{easter} Let $\mathcal{S}$ be defined in (\ref{m2control}), where all the restrictions made above are assumed. Then if we have $h<0.9d$, or
\begin{equation}\langle\alpha_{0}\rangle+\langle\alpha_{3}\rangle+\langle\alpha_{4}\rangle+\sum_{i=1}^{\mu}\big(\langle m_{i}\rangle+\langle\beta_{i}\rangle\big)\gtrsim 2^{\frac{d}{90}},\end{equation} the corresponding contribution will be bounded by $T^{0+}2^{(0-)d}$.
\end{proposition}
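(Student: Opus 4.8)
The plan is to run the same dualization scheme as in the proofs of Propositions~\ref{easiest} and~\ref{easier}. Since by~(\ref{relation2}) it suffices to bound $\mathcal{M}^{2}$ in $X_{6}$, I would pair $\mathcal{S}$ against a function $g$ with $\|g\|_{X_{6}'}\leq 1$, set $f=\mathcal{E}'g$ and let $f'$ be its exponentially modulated version, so that $f'$ obeys an $l^{2}L^{2}$ bound carrying a small negative power of $\langle n_{0}\rangle$ together with the time-regularity weight $\langle\alpha_{0}\rangle^{\frac12-O(s^{2})}$; the $Y_{1}$ bound on $y$ is transferred to $y'$ by Proposition~\ref{weaken}, and all Besov-type bounds are reduced, at a cost of at most $2^{O(s)d}$, to plain weighted $l^{2}L^{2}$ bounds. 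Throughout, the implicit constants are $\leq O_{C_{1}}(1)e^{C_{0}C_{1}A}$, the combinatorial sum over $\mu$ and $\omega$ is absorbed by the factorial decay $|C_{\mu}|\lesssim C^{\mu}/\mu!$, the factors $(u''')_{m_{i}}/m_{i}$ are summed and integrated against $\|\langle m_{i}\rangle^{-1}u'''\|_{l^{1}L^{1}}\leq C_{1}A$ (using the stronger $X_{3},X_{4}$ bounds when some $\beta_{i}$ is large), and $\phi=\widehat{\mathbf 1}_{[-T,T]}$ is placed in $l^{1+}L^{1+}$, which supplies both the factor $T^{0+}$ and, when $\alpha_{3}$ is large, an extra $\langle\alpha_{3}\rangle^{0-}$. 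Recall from the discussion preceding the statement that $|\Phi^{2}|\lesssim 2^{h}$, that $|\Xi|\gtrsim 2^{d+h}$, and hence that at least one of the modulation variables $\alpha_{0},\alpha_{1},\alpha_{2},\alpha_{3},\alpha_{4},\beta_{1},\dots,\beta_{\mu}$ must be $\gtrsim 2^{d+h}$.

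Consider first the range $h<0.9d$. Here the derivative loss $|\Phi^{2}|\lesssim 2^{h}$ is strictly subcritical relative to the total frequency scale $2^{d}$ (and at least two of $n_{0},n_{1},n_{2}$ are $\sim 2^{d}$ while the third is $\sim 2^{h}$, since $n_{0}=n_{1}+n_{2}+m_{1\mu}$ with the $m_{i}$ tiny). Whichever of the modulation variables carries the non-resonance $\gtrsim 2^{d+h}$, I would extract the corresponding modulation gain: $\langle\alpha_{0}\rangle^{\frac12-O(s^{2})}$ from $f'$, $\langle\xi\rangle^{\kappa}$ (with $\kappa=1-s^{5/4}$ close to $1$) from a $y'$ factor or a $u'''$ factor via the $X_{4}$ component, or $\langle\cdot\rangle^{-1}$ from $\widehat\phi$ or from the exponential factor $\bigl(\chi e^{\mathrm{i}(\Delta_{n_{1}}+\Delta_{n_{2}}-\Delta_{n_{0}})}\bigr)^{\wedge}(\alpha_{4})$ (Lemma~\ref{general} together with Proposition~\ref{factt}). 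The gain is at least $2^{-c(d+h)}$, which against $2^{h}$ and the $2^{O(s)d}$ losses leaves a net power $\lesssim 2^{-c(d-h)+O(s)d}\leq 2^{(0-)d}$; the remaining four or five factors are then distributed by H\"older and the Strichartz estimates of Proposition~\ref{stri} exactly as in~(\ref{crucial}), and one sums over the dyadic parameters $d$ and $h$.

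Now suppose $h\geq 0.9d$, so that $n_{0},n_{1},n_{2}$ are all $\sim 2^{d}$, $|\Phi^{2}|\lesssim 2^{d}$, and $|\Xi|\gtrsim 2^{1.8d}$, whence some modulation variable is $\gtrsim 2^{1.8d}$; by hypothesis one of $\alpha_{0},\alpha_{3},\alpha_{4},m_{i},\beta_{i}$ is also $\gtrsim 2^{d/90}$. I would split according to which variable carries the large modulation. If it is $\alpha_{3}$, $\alpha_{4}$ or some $\beta_{i}$, we extract an essentially full inverse power $\approx 2^{-1.8d}$ (from the $\langle\cdot\rangle^{-1+}$ decay of $\widehat\phi$, the $\langle\cdot\rangle$-times-Fourier-transform bound of the exponential factor, or the $\langle\xi\rangle^{\kappa}$ weight of the $X_{4}$ bound on $u'''$, the accompanying $\langle m_{i}\rangle^{-1}$ being controlled by the $\langle n\rangle^{-1}$ already built into $X_{4}$), which comfortably beats $2^{d}$. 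If it is $\alpha_{0}$, we extract $\approx 2^{-0.9d}$ from the $\langle\alpha_{0}\rangle^{\frac12-O(s^{2})}$ weight of $f'$; and if it is $\alpha_{1}$ or $\alpha_{2}$, we extract $\approx 2^{-1.8d\kappa}$ from the $\langle\xi\rangle^{\kappa}$ weight in the $X_{4}$ component of the $Y_{1}$ bound on that $y'$ factor, the other $y$ factor then having small modulation and being absorbed by a Strichartz space built from the $X_{1}$ or $X_{5}$ component. In every case the extracted gain, combined with the small negative power of $\langle n_{0}\rangle$ from $f'$ and of $\langle n_{l}\rangle$ from the $y'$ factors, and set against the $2^{O(s)d}$ losses and the derivative loss $2^{d}$, nets $2^{(0-)d}$; the remaining factors are distributed as before and summed over $d$.

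The main obstacle is the configuration excluded here, namely $h\geq 0.9d$ with all of $\alpha_{0},\alpha_{3},\alpha_{4},m_{i},\beta_{i}$ forced to be $\ll 2^{d/90}$: then the non-resonance $|\Xi|\gtrsim 2^{1.8d}$ is absorbed entirely by $\alpha_{1}+\alpha_{2}$, which is precisely the resonant, KdV-type quadratic interaction for which the standard bilinear $X^{s,b}$ estimate fails (as noted in the introduction), and which is handled by the second-iteration argument of the sections following this excerpt. Within Proposition~\ref{easter} itself, the only delicate points are (i) verifying that in the $\alpha_{1}$-or-$\alpha_{2}$ sub-case of the range $h\geq 0.9d$ the single available $X_{4}$-type modulation weight on one $y$ factor — whose exponent $\kappa=1-s^{5/4}$ is close to $1$, in contrast with the $\approx\frac12$ of $X_{1}$ — really does dominate the full derivative loss $2^{d}$, and (ii) keeping track of the many $2^{O(s)d}$ losses from the Besov $l^{p}\to l^{2}$ conversions, the subcritical Strichartz exponents and Proposition~\ref{weaken}, so that they are all dominated by the $2^{-cd}$ gains. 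Neither is a genuine difficulty, as the hierarchy~(\ref{hierarchy}) leaves ample room.
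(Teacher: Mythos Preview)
Your overall dualization scheme and the treatment of the cases where $\alpha_{3}$, $\alpha_{4}$ or some $\beta_{i}$ carries the large modulation $\gtrsim 2^{d+h}$ are fine, and match the paper. There is, however, a genuine gap in the case $h\geq 0.9d$ with $\langle\alpha_{0}\rangle\gtrsim 2^{d+h}$.

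In that case the hypothesis of the proposition is already fulfilled by $\alpha_{0}$ itself, so you cannot assume any \emph{additional} variable is $\gtrsim 2^{d/90}$. From $\|g\|_{X_{6}'}\leq 1$ the dual Duhamel estimate gives only the weight $\langle n_{0}\rangle^{-r}\langle\alpha_{0}\rangle^{\frac12-s^{2}}$ on $f$ in $l^{2}L^{2}$: the exponent on $\alpha_{0}$ is essentially $\frac12$, and the $\langle n_{0}\rangle^{-r}$ sits in the \emph{norm}, so it is a \emph{loss} of $\langle n_{0}\rangle^{r}\sim 2^{rd}$ on $f$, not the gain you seem to claim. Extracting $\langle\alpha_{0}\rangle^{1/2}$ when $\alpha_{0}\gtrsim 2^{d+h}$ gains only $2^{(d+h)/2}$, so against $|\Phi^{2}|\lesssim 2^{h}$ the balance is $2^{(h-d)/2}$; at $h=d$ (which is allowed) this is exactly break-even, and the inevitable $2^{O(s)d}$ losses from the $l^{p}\!\to\! l^{2}$ conversions and the $\langle n_{0}\rangle^{r}$ loss make it a net positive power of $2^{d}$. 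The hierarchy (\ref{hierarchy}) does \emph{not} leave room here.

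The paper does not reduce to $X_{6}$ in this case. Instead it works with each $X_{j}'$, $j\in\{1,2,5,7\}$, separately: for $j\in\{5,7\}$ one uses the support condition $n_{0}\lesssim 2^{d}$, $\alpha_{0}\gtrsim 2^{d}$ together with (\ref{linnn4}) to reduce to $j\in\{1,2\}$, and then to $j=1$. This yields $\|\langle\alpha_{0}\rangle f\|_{X_{1}'}\lesssim 1$, i.e.\ the \emph{full} weight $\langle n_{0}\rangle^{-s}\langle\alpha_{0}\rangle^{1-b}$ on $f$ in $l^{p'}L^{2}$, with $1-b=\tfrac12+s^{15/8}$. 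The extra $s^{15/8}$ in the $\alpha_{0}$ exponent is exactly what saves the day, but it comes at the price of an $l^{p'}$ (not $l^{2}$) structure, so a Strichartz closure is no longer available. The paper instead runs a direct bilinear argument: the one-dimensional product rule $\|fg\|_{H^{2b-1/2}}\lesssim\|f\|_{H^{b}}\|g\|_{H^{b}}$ in the $\xi$-variable, combined with the sparse-sum bound $\sum_{n_{1}+n_{2}=c}\langle\alpha_{0}-\Xi'-c'\rangle^{-3/4}\lesssim 1$ (which exploits that $\Xi'$ is quadratic in $n_{1}$). The resulting exponent $(b-s)h+(b-\tfrac12-\tfrac1p)d+(\tfrac12-\tfrac1p)d$ is then shown to be $\leq -c(\tfrac12-b)d$ via $r-s\ll \tfrac12-b$ from (\ref{hierarchy}); this is tight and is not recoverable by your approach.

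A smaller issue: in your $\alpha_{1}$ (or $\alpha_{2}$) sub-case you use the $X_{4}$ weight $\langle\xi\rangle^{\kappa}$ on $y$, but this carries a loss $\langle n_{1}\rangle\sim 2^{d}$, and with $|\Phi^{2}|\sim 2^{d}$ and $\alpha_{1}\sim 2^{2d}$ the balance is again $2^{2(1-\kappa)d}=2^{2s^{5/4}d}$, a small loss. The paper closes this by explicitly invoking the hypothesis --- some \emph{other} variable among $\alpha_{0},\alpha_{3},\alpha_{4},m_{i},\beta_{i}$ is $\gtrsim 2^{d/90}$, which supplies the missing $2^{cd}$ --- and you should do the same.
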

\begin{proof}
First assume $h\geq 0.9d$. If $\beta_{i}\gtrsim 2^{d+h}$ for some $i$, we may use the $X_{4}$ bound for $\langle \partial_{x}\rangle^{-s^{3}}u'''$ to gain a power $2^{0.99(d+h)}$ and then estimate this $(u''')_{m_{i},\beta_{i}}$ factor in $L^{2}L^{2}$. Next we may bound 
\begin{equation}\label{expcon}\big|\big(\chi e^{\mathrm{i}(\Delta_{n_{1}}+\Delta_{n_{2}}-\Delta_{n_{0}})}\big)^{\wedge}(\alpha_{4})\big|\lesssim 2^{s^{3}d}\langle\alpha_{4}\rangle^{-1}\end{equation} by Lemma \ref{general} and Proposition \ref{factt}, estimate the right hand side (again, viewed as a function of space-time supported at $n=0$) as well as the $\phi_{\alpha_{3}}$ factor in $l^{1+}L^{1+}$. We then fix $(m_{j},\beta_{j})$ for $j\neq i$ to produce an $\mathcal{S}_{sub}$ involving $(u''')_{m_{i},\beta_{i}}$, which we estimate by controlling $\mathfrak{N}f$ in $L^{6-}L^{6-}$, $\mathfrak{N}y^{\omega_{l}}$ in $L^{6+}L^{6+}$ (using the $X_{1}$ bound for $y$ and the norm for $f$ deduced from the $X_{6'}$ bound for $g$; here the $6-$ and $6+$ are $6+O(s)$). In this process we lose at most $2^{O(s)d}$, but the gain $2^{0.99(d+h)}$ (even after canceling the $2^{h}$ loss coming from the $\Phi^{2}$ weight) will allow us to cancel the $\Phi^{2}$ factor and still gain $2^{cd}$.

Next, suppose $\langle\alpha_{3}\rangle\gtrsim 2^{d+h}$. By using (\ref{expcon}) and losing a harmless $2^{O(s)d}$ factor, the argument for $\alpha_{4}$ can be done in the same way. Let $c_{j}$ be constants (or functions of $n_{l}$), and recall we are restricting to $\sum_{l}\langle n_{l}\rangle\sim 2^{d}$, we may fix $m_{i}$ and $\beta_{i}$, and bound the $\mathcal{S}_{sub}$ term by
\begin{eqnarray}\mathcal{S}_{sub}&\lesssim&T^{0+}\sum_{n_{0}=n_{1}+n_{2}+c_{1}}\int_{\alpha_{0}=\alpha_{1}+\cdots+\alpha_{4}+c_{2}(n_{0},\cdots,n_{2})}2^{h}\times\nonumber\\
&\times&\big|f_{n_{0},\alpha_{0}}\big|\cdot\prod_{l=1}^{2}\big|(y^{\omega_{l}})_{n_{l},\alpha_{1}}\big|\cdot\frac{\mathbf{1}_{\{\alpha_{3}\gtrsim 2^{d+h}\}}}{\langle\alpha_{3}\rangle^{0.9}\langle\alpha_{4}\rangle}\nonumber\\
&\lesssim &T^{0+}\sum_{n_{0}=n_{1}+n_{2}+c_{1}}2^{-0.62d}\big\|\widehat{f_{n_{0}}}\big\|_{L^{q}}\big\|\widehat{(y^{\omega_{1}})_{n_{1}}}\big\|_{L^{1}}\big\|\widehat{(y^{\omega_{2}})_{n_{2}}}\big\|_{L^{1}}\nonumber\\
&\lesssim &T^{0+}2^{-cd}\big\|\langle n_{0}\rangle^{-0.2}f\big\|_{l^{\frac{3}{2}}L^{q}}\big\|\langle n_{1}\rangle^{-0.2}y\big\|_{l^{\frac{3}{2}}L^{1}}\big\|\langle n_{2}\rangle^{-0.2}y\big\|_{l^{\frac{3}{2}}L^{1}}\nonumber\\
&\lesssim& T^{0+}2^{-cd}.\nonumber\end{eqnarray} Thus this term is also acceptable.

Next, assume that $\langle\alpha_{1}\rangle\gtrsim 2^{d+h}$ (the $\alpha_{2}$ case is proved in the same way), and that one of $\alpha_{0}$, $\alpha_{3}$, $\alpha_{4}$, $m_{i}$ or $\beta_{i}$ is $\gtrsim 2^{\frac{d}{90}}$. We then use (\ref{expcon}) to bound the exponential factor and fix $(m_{i},\beta_{i})$. To estimate the resulting $\mathcal{S}_{sub}$, we use the $\langle\alpha_{1}\rangle^{b}$ factor in the $X_{1}$ bound for $y$ to cancel the $\Phi^{2}$ factor which is at most $2^{h}$ and bound the resulting $y^{\omega_{1}}$ factor in $L^{2}L^{2}$, then bound $\mathfrak{N}f$ and $\mathfrak{N}y^{\omega_{2}}$ in $L^{4+}L^{4+}$, and bound the factors involving $\alpha_{3}$ and $\alpha_{4}$ in $l^{1+}L^{1+}$, where $4+$ some $4+c$. In this process we may lose $2^{O(s)d}$, but since another $\alpha_{l}$ or $(m_{i},\beta_{i})$ is $\gtrsim 2^{\frac{d}{90}}$, we will be able to gain $2^{cd}$ from this factor (since the $L^{4+}L^{4+}$ Strichartz estimate allows for some room), we will find this term acceptable.

The only remaining case is when $\langle\alpha_{0}\rangle\gtrsim 2^{d+h}$. By basically the same argument as above, we may assume that the other $\alpha_{l}$ and  $(m_{i},\beta_{i})$ are all $\ll 2^{\frac{d}{90}}$. Also recall that two of $n_{l}(0\leq l\leq 2)$ are $\sim 2^{d}$ and the third is $\sim 2^{h}$. Now we may use the bound (\ref{expcon}), then fix $(\alpha_{3},\alpha_{4})$ and all $(m_{i},\beta_{i})$ to produce \begin{equation}
|\mathcal{S}_{sub}|\lesssim2^{(b-s)h-(1-b)d}\sum_{n_{0}=n_{1}+n_{2}+c_{1}}\int_{\alpha_{0}=\alpha_{1}+\alpha_{2}+\Xi'+c_{2}}A_{n_{0},\alpha_{0}}B_{n_{1},\alpha_{1}}C_{n_{2},\alpha_{2}},
\end{equation} where $c_{j}\ll 2^{\frac{d}{10}}$ are constants, the factor
\begin{equation}\Xi'=|n_{0}|n_{0}-|n_{1}|n_{1}-|n_{2}|n_{2},\end{equation} and the relevant functions are defined by
\begin{equation}A_{n_{0},\alpha_{0}}=\langle n_{0}\rangle^{-s}\langle\alpha_{0}\rangle^{1-b}|f_{n_{0},\alpha_{0}}|;\nonumber\end{equation} 
\begin{equation}B_{n_{1},\alpha_{1}}=\langle n_{1}\rangle^{s}|(y^{\omega_{1}})_{n_{1},\alpha_{1}}|;\,\,\,\,C_{n_{2},\alpha_{2}}=\langle n_{2}\rangle^{s}|(y^{\omega_{2}})_{n_{2},\alpha_{2}}|.\nonumber\end{equation}Also note that when we sum over $m_{i}$, integrate over $\beta_{i}$ and $(\alpha_{3},\alpha_{4})$, we will gain $T^{0+}$ and lose at most $2^{O(s^{3})d}$.

Now we estimate $\mathcal{S}_{sub}$. If $\|g\|_{X_{m}'}\leq 1$ for some $m\in\{1,2\}$, by using Proposition \ref{linearestimate2}, we may assume\footnote[1]{Note that in proving $\|\mathcal{E}u\|_{X_{1}}\lesssim\|u\|_{W_{1}}+\|u\|_{W_{2}}$ with $\|u\|_{W_{j}}=\|\langle\xi\rangle^{-1}u\|_{X_{j}}$, we actually break $\mathcal{E}$ into two linear operators that are bounded from each $W_{j}$ to $X_{1}$ separately. Therefore, when $g\in X_{1}$, we can write $\mathcal{E}'g=f$ as a sum of two functions, each bounded in one $W_{j}$. The same applies for $X_{2}$, $X_{5}$ and $X_{7}$ norms.} that $\|\langle\alpha_{0}\rangle f\|_{X_{j}'}\lesssim 1$ for some $j\in\{1,2\}$. If $\|g\|_{X_{m}'}\lesssim 1$ for some $m\in\{5,7\}$, since we may insert a $\mathbf{1}_{E}$ factor to $\overline{f_{n_{0},\alpha_{0}}}$ with $E=\{n_{0}\sim 2^{d'},\alpha_{0}\gtrsim 2^{d'}\}$ with $d'\in\{d,h\}$, we can use (\ref{linnn4}) and again assume $\|\langle\alpha_{0}\rangle f\|_{X_{j}'}\lesssim 1$ for some $j\in\{1,2\}$. Next, notice that $|\alpha_{0}-\Xi'|\lesssim 2^{\frac{d}{10}}$, so $\alpha_{0}$ is also restricted to some set of measure $O(2^{1.1d})$ for each fixed $n_{0}$. Since $\alpha_{0}$ is restricted to be $\gtrsim 2^{1.9d}$ and $n_{0}\lesssim 2^{d}$, we will have
\begin{eqnarray}\|\langle\alpha_{0}\rangle f\|_{X_{1}'}&\lesssim& 2^{O(s)d}\|\langle\alpha_{0}\rangle^{0.6}f\|_{l^{2}L^{2}}\lesssim 2^{(O(s)+0.55)d}\|\langle\alpha_{0}\rangle^{0.6}f\|_{l^{2}L^{\infty}}\nonumber\\
&\lesssim&2^{(0.6-0.4\times1.9)d}\|\langle\alpha_{0}\rangle f\|_{l^{2}L^{\infty}}\lesssim 2^{-cd}\|\langle\alpha_{0}\rangle f\|_{X_{2}'},\nonumber\end{eqnarray} thus we may furthermore assume $j=1$.

Now, using this bound for $f$ and the $X_{1}$ bound for $y$, we deduce that
\begin{equation}\|A\|_{l^{p'}L^{2}}+\|\langle\alpha_{1}\rangle^{b}B\|_{l^{p}L^{2}}+\|\langle \alpha_{2}\rangle^{b}C\|_{l^{p}L^{2}}\lesssim 2^{O(s^{2})d}.\nonumber\end{equation} Let us define
\begin{equation}\mathbf{B}_{n_{1}}=\big\|\langle\widetilde{\alpha_{1}}+|n_{1}|n_{1}\rangle^{b}\widehat{B_{n_{1}}}(\widetilde{\alpha_{1}})\big\|_{L^{2}}\nonumber\end{equation} and $\mathbf{C}_{n_{3}}$ similarly, so that $\|\mathbf{B}\|_{l^{p}}+\|\mathbf{C}\|_{l^{p}}\lesssim 2^{O(s^{2})d}$, then we will have the estimate
\begin{equation}\big\|\langle\alpha_{0}-\Xi'-c_{2}\rangle^{2b-\frac{1}{2}}(\widehat{B_{n_{1}}}*\widehat{C_{n_{2}}})(\alpha_{0}-|n_{0}|n_{0}-c_{2})\big\|_{L^{2}}\lesssim\mathbf{B}_{n_{1}}\mathbf{C}_{n_{2}},\nonumber\end{equation} which, after taking Fourier transform, follows from the standard one dimensional inequality $\|fg\|_{H^{2b-\frac{1}{2}}}\lesssim\|f\|_{H^{b}}\|g\|_{H^{b}}$. Now we will be able to control $\mathcal{S}_{sub}$ by
\begin{equation}\mathcal{S}_{sub} \lesssim2^{\lambda}\bigg(\sum_{n_{0}}\bigg\|\sum_{n_{1}+n_{2}=n_{0}-c_{1}}\big(\widehat{B_{n_{1}}}*\widehat{C_{n_{2}}}\big)(\alpha_{0}-|n_{0}|n_{0}-c_{2})\bigg\|_{L_{\alpha_{0}}^{2}}^{p}\bigg)^{\frac{1}{p}},\nonumber
\end{equation} where $\lambda=(b-s)h+(b-1+O(s^{2}))d$, and the square of the inner $L^{2}$ norm is bounded by
\begin{eqnarray}
\mathcal{J}^{2}&=&\int_{\mathbb{R}}\bigg|\sum_{n_{1}+n_{2}=n_{0}-c_{1}}\big(\widehat{B_{n_{1}}}*\widehat{C_{n_{2}}}\big)(\alpha_{0}-|n_{0}|n_{0}-c_{2})\bigg|^{2}\,\mathrm{d}\alpha_{0}\nonumber\\
&\lesssim&\int_{\mathbb{R}}\bigg(\sum_{n_{1}+n_{2}=n_{0}-c_{1}}\langle \alpha_{0}-\Xi'-c_{2}\rangle^{1-4b}\bigg)\mathrm{d}\alpha_{0}\times\nonumber\\
&\times&\bigg(\sum_{n_{1}+n_{2}=n_{0}-c_{1}}\langle \alpha_{0}-\Xi'-c_{2}\rangle^{4b-1}\big|\big(\widehat{B_{n_{1}}}*\widehat{C_{n_{2}}}\big)(\alpha_{0}-|n_{0}|n_{0}-c_{2})\big|^{2}\bigg)\nonumber\\&\lesssim&\sup_{\alpha_{0}}\bigg(\sum_{n_{1}+n_{2}=n_{0}-c_{1}}\langle \alpha_{0}-\Xi'-c_{2}\rangle^{1-4b}\bigg)\times\nonumber\\
&\times&\sum_{n_{1}+n_{2}=n_{0}-c_{1}}\int_{\mathbb{R}}\langle \alpha_{0}-\Xi'-c_{2}\rangle^{4b-1}\big|\big(\widehat{B_{n_{1}}}*\widehat{C_{n_{2}}}\big)(\alpha_{0}-|n_{0}|n_{0}-c_{2})\big|^{2}\mathrm{d}\alpha_{0}\nonumber\\
&\lesssim&\sup_{\alpha_{0}}\bigg(\sum_{n_{1}+n_{2}=n_{0}-c_{1}}\langle \alpha_{0}-\Xi'-c_{2}\rangle^{4b-1}\bigg)\cdot\sum_{n_{1}+n_{2}=n_{0}-c_{1}}\mathbf{B}_{n_{1}}^{2}\mathbf{C}_{n_{2}}^{2}.\nonumber
\end{eqnarray} Next we claim that for fixed $n_{0}$ and $\alpha_{0}$ we have \begin{equation}\label{sparsesum}\sum_{n_{1}+n_{2}=n_{0}-c_{1}}\langle \alpha_{0}-\Xi'-c_{2}\rangle^{-\frac{3}{4}}\lesssim 1.\end{equation} In fact, if $n_{1}n_{2}<0$, then $\alpha_{0}-\Xi'-c_{2}$ is a linear expression in $n_{1}$ with leading coefficient $k=\pm(n_{0}-c_{1})/2\gtrsim 2^{0.9d}$ (we assume $d$ is large enough), so any two summands in (\ref{sparsesum}) differ by at least $k$, while there are $\lesssim 2^{d}$ summands. The sum is thus bounded by
\begin{equation}1+\sum_{h=1}^{2^{d}}(kh)^{-\frac{3}{4}}\lesssim 1+k^{-\frac{3}{4}}2^{\frac{d}{4}}\lesssim 1.\end{equation} If $n_{1}n_{2}>0$, then $\alpha_{0}-\Xi'-c_{2}$ equals $\pm\frac{1}{2}(n_{1}-n_{2})^{2}$ plus a constant, so similarly we only need to prove
\begin{equation}\sum_{k\in\mathbb{Z}}\langle \alpha- k^{2}\rangle^{-\frac{3}{4}}\lesssim 1\nonumber\end{equation} for each $\alpha$, but this is again easily proved by separating the cases $\langle k\rangle^{2}\lesssim\langle\alpha\rangle$ and otherwise, and applying elementary inequalities.

Now we are able to bound
\begin{eqnarray}\mathcal{S}_{sub}&\lesssim & 2^{\lambda}\bigg(\sum_{n_{0}}\bigg(\sum_{n_{1}+n_{2}=n_{0}-c_{1}}\mathbf{B}_{n_{1}}^{2}\mathbf{C}_{n_{2}}^{2}\bigg)^{\frac{p}{2}}\bigg)^{\frac{1}{p}}\nonumber\\
&\lesssim &2^{\lambda+(\frac{1}{2}-\frac{1}{p})d}\bigg(\sum_{n_{0}}\sum_{n_{1}+n_{2}=n_{0}-c_{1}}\mathbf{B}_{n_{1}}^{p}\mathbf{C}_{n_{2}}^{p}\bigg)^{\frac{1}{p}}\nonumber\\
&\lesssim & 2^{\lambda+(\frac{1}{2}-\frac{1}{p})d}\|\mathbf{B}\|_{l^{p}}\|\mathbf{C}\|_{l^{p}},\nonumber
\end{eqnarray} where we notice that
\begin{eqnarray}\lambda+\bigg(\frac{1}{2}-\frac{1}{p}\bigg)d&=&(b-s)h+\bigg(b-\frac{1}{2}-\frac{1}{p}+O(s^{2})\bigg)d\nonumber\\
&\lesssim &(2b-1)d+\bigg(\frac{1}{2}-s-\frac{1}{p}+O(s^{2})\bigg)d,\nonumber\end{eqnarray} and this is $\leq-c(1/2-b)d$ by (\ref{hierarchy}). We may then sum and integrate over the previously fixed variables to get a desirable estimate for $\mathcal{S}$.

Finally, suppose $h<0.9d$. Since at least one $\alpha_{l}$ or $\beta_{i}$ will be $\gtrsim 2^{d+h}$, we may repeat the arguments above; using the inequality $2^{b(d+h)}\gtrsim 2^{cd+h}$ that holds for $h<0.9d$, we will be able to gain an additional power of $2^{cd}$ after canceling the $\Phi^{2}$ weight, which will allow us to close the estimate as above. This completes the proof.
\end{proof}

What remains to be bounded, denoted by $\mathcal{S}^{E}$, is actually the same summation-integration as $\mathcal{S}$, but restricted to the region $h\geq 0.9d$ and with the additional factor $\mathbf{1}_{E}$, where
\begin{equation}E=\big\{\langle\alpha_{0}\rangle\vee\langle\alpha_{3}\rangle\vee\langle\alpha_{4}\rangle\vee\big\langle m_{i}\rangle\vee\langle\beta_{i}\rangle\ll 2^{\frac{d}{90}},\,\forall i\big\},\nonumber\end{equation} with $a\vee b$ meaning $\max\{a,b\}.$ Now let $E_{l}=\big\{\langle\alpha_{l}\rangle\ll 2^{\frac{d}{90}}\big\}$ for $l\in\{1,2\}$, we have
\begin{equation}\mathbf{1}_{E}=\mathbf{1}_{E\cap E_{1}}+\mathbf{1}_{E\cap E_{2}}+\mathbf{1}_{E-(E_{1}\cup E_{2})}.\end{equation} By symmetry, we need to bound $\mathcal{S}^{E\cap E_{1}}$ and $\mathcal{S}^{E-(E_{1}\cup E_{2})}$ (whose meaning is obvious). In the latter case, we may assume that $\alpha_{1}\gtrsim 2^{d+h}$, and also $\alpha_{2}\gtrsim 2^{\frac{d}{90}}$, so we can estimate this part in the same was as in the proof of Proposition \ref{easter}.

It remains to bound $\mathcal{S}^{E\cap E_{1}}$. Let $E\cap E_{1}=F$, using (\ref{computation}) and (\ref{global2}) we may compute
\begin{eqnarray}(y^{\omega_{2}})_{n_{2},\alpha_{2}}&=&\big(\chi(t)e^{-H\partial_{xx}}w^{\omega_{2}}(0)\big)_{n_{2},\alpha_{2}}+\big(\mathcal{E}(\mathbf{1}_{[-T,T]}\cdot\mathcal{N}^{2}(y,y))^{\omega_{2}}\big)_{n_{2},\alpha_{2}}\nonumber\\
&+&\sum_{j\in\{3,3.5,4,4.5\}}(\mathcal{E}(\mathbf{1}_{[-T,T]}\mathcal{N}^{j})^{\omega_{2}})_{n_{2},\alpha_{2}}\nonumber\\
\label{secondit}&=&\sum_{j\in\{0,3,3.5,4,4.5\}}((\mathcal{M}^{j})^{\omega_{2}})_{n_{2},\alpha_{2}}+(\mathcal{L}^{1})_{n_{2},\alpha_{2}}+(\mathcal{L}^{2})_{n_{2},\alpha_{2}}.
\end{eqnarray} Here we denote $\mathcal{M}^{0}=\chi(t)e^{\mathrm{i}\partial_{xx}}w(0)$, and
\begin{eqnarray}(\mathcal{L}^{1})_{n_{2},\alpha_{2}}&=&c_{1}\int_{\mathbb{R}^{2}}\frac{\widehat{\chi}(\alpha_{2}-\gamma_{2})\widehat{\chi}(\gamma_{2}-\gamma_{1})}{\gamma_{2}}\mathcal{I}_{n_{2},\gamma_{1}}\,\mathrm{d}\gamma_{1}\mathrm{d}\gamma_{2};\nonumber\\
(\mathcal{L}^{2})_{n_{2},\alpha_{2}}&=&c_{2}\widehat{\chi}(\alpha_{2})\cdot\int_{\mathbb{R}^{2}}\frac{\widehat{\chi}(\gamma_{2}-\gamma_{1})}{\gamma_{2}}\mathcal{I}_{n_{2},\gamma_{1}}\,\mathrm{d}\gamma_{1}\mathrm{d}\gamma_{2},\nonumber\end{eqnarray} where $\mathcal{I}=(\mathbf{1}_{[-T,T]}\mathcal{N}^{2}(y,y))^{\omega_{2}}$. Interpreting the singular integral as a principal value, we may compute that\begin{equation}\bigg|\int_{\mathbb{R}}\frac{\widehat{\chi}(\gamma_{2}-\gamma_{1})}{\gamma_{2}}\,\mathrm{d}\gamma_{2}\bigg|\lesssim\frac{1}{\langle\gamma_{1}\rangle};\nonumber\end{equation}\begin{equation}\bigg|\int_{\mathbb{R}}\frac{\widehat{\chi}(\alpha_{2}-\gamma_{2})\widehat{\chi}(\gamma_{2}-\gamma_{1})}{\gamma_{2}}\,\mathrm{d}\gamma_{2}\bigg|\lesssim\frac{1}{(\langle\alpha_{2}\rangle+\langle\gamma_{1}\rangle)\langle\alpha_{2}-\gamma_{1}\rangle^{\frac{1}{s}}};\nonumber\end{equation}\begin{equation}\bigg|\nabla_{\alpha_{2},\gamma_{1}}\int_{\mathbb{R}}\frac{\widehat{\chi}(\alpha_{2}-\gamma_{2})\widehat{\chi}(\gamma_{2}-\gamma_{1})}{\gamma_{2}}\,\mathrm{d}\gamma_{2}\bigg|\lesssim\frac{1}{(\langle\alpha_{2}\rangle+\langle\gamma_{1}\rangle)^{2}\langle\alpha_{2}-\gamma_{1}\rangle^{\frac{1}{s}}},\nonumber\end{equation} where the third inequality can be proved by integrating by parts in $\gamma_{2}$. Now, to treat the first three terms in (\ref{secondit}), we may use Proposition \ref{easiest}, the easy observation that \begin{equation}\big\|\langle n_{2}\rangle^{-\frac{1}{20}}\langle\alpha_{2}\rangle^{\kappa}(\mathcal{M}^{0})_{n_{2},\alpha_{2}}\big\|_{l^{2}L^{2}}\lesssim\big\|\langle n\rangle^{-\frac{1}{20}}(w(0))_{n}\big\|_{l^{2}}\lesssim 1,\nonumber\end{equation} together with the following
\begin{proposition}If we consider the sum (\ref{m2control}) with the factor $\mathbf{1}_{F}$, and $y^{\omega_{2}}$ replaced by some function $\zeta$ verifying
\begin{equation}\big\|\langle n_{2}\rangle^{-\frac{1}{20}}\langle\alpha_{2}\rangle^{\kappa}\zeta_{n_{2},\alpha_{2}}\big\|_{l^{2}L^{2}}\lesssim 1,\end{equation} then this contribution can be bounded by $T^{0+}$.
\end{proposition}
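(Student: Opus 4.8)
The plan is to exploit a single structural fact: on the region $F=E\cap E_{1}$ (keeping the running restriction $h\geq 0.9d$, where $2^{h}=\min_{0\leq l\leq 2}\langle n_{l}\rangle$ and $\sum_{l}\langle n_{l}\rangle\sim 2^{d}$), the non-resonance factor $\Xi$ of (\ref{nrm2}) — hence the modulation variable $\alpha_{2}$ carried by $\zeta$ — is forced to have size $\gtrsim 2^{1.9d}$, and the near-full weight $\langle\alpha_{2}\rangle^{\kappa}$ in the hypothesis on $\zeta$ turns this into a decisive gain. First I would record this fact. Since $h\geq 0.9d$, all three of $\langle n_{0}\rangle,\langle n_{1}\rangle,\langle n_{2}\rangle$ lie in $[2^{0.9d},O(2^{d})]$; since moreover every $m_{i}\neq 0$ (because $u'''$ has mean zero) and $\langle m_{i}\rangle\ll 2^{h}$ with $\mu\ll 2^{h/2}$ on the support of $\Phi^{2}$, we get $|\Xi|\sim\min_{0\leq l\leq 2}\langle n_{l}\rangle\cdot\max_{0\leq l\leq 2}\langle n_{l}\rangle\sim 2^{d+h}\geq 2^{1.9d}$. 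On the constraint set of (\ref{m2control}) we have $\alpha_{2}=\alpha_{0}-\alpha_{1}-\alpha_{3}-\alpha_{4}-\beta_{1\mu}-\Xi$, and on $F$ every term on the right except $\Xi$ is $\ll 2^{d/90}$ in absolute value (with $|\beta_{1\mu}|\leq\mu\cdot 2^{d/90}\ll 2^{d}$); hence $\langle\alpha_{2}\rangle\sim|\Xi|\gtrsim 2^{d+h}\geq 2^{1.9d}$ throughout the region.

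Next I would extract the gain. Writing $\zeta^{*}=\langle n_{2}\rangle^{-1/20}\langle\alpha_{2}\rangle^{\kappa}\zeta$, so that $\|\zeta^{*}\|_{l^{2}L^{2}}\lesssim 1$, and using $\langle n_{2}\rangle\lesssim 2^{d}$ together with $\langle\alpha_{2}\rangle\gtrsim 2^{d+h}$ on $F$, one has $|\zeta_{n_{2},\alpha_{2}}|\lesssim 2^{d/20}\,2^{-\kappa(d+h)}\,|\zeta^{*}_{n_{2},\alpha_{2}}|$; combined with $|\Phi^{2}|\lesssim 2^{h}\leq 2^{d}$ this produces an overall prefactor $2^{(1+\frac{1}{20})d}2^{-\kappa(d+h)}\leq 2^{(1.05-1.9\kappa)d}$, which by $\kappa=1-s^{5/4}$ and the hierarchy (\ref{hierarchy}) is $\leq 2^{-0.8d}$, with room to spare. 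After that the remaining multilinear sum is handled crudely. Following the bookkeeping in the proof of Proposition \ref{easiest} I would fix the $m_{i}$ and $\beta_{i}$, produce the sub-sum $\mathcal{S}_{sub}$, and restore them at the end via $\|\langle\partial_{x}\rangle^{-s^{3}}u'''\|_{X_{2}}\leq C_{1}A$ and $|C_{\mu}|\leq C^{\mu}/\mu!$, keeping every implicit constant $\leq O_{C_{1}}(1)e^{C_{0}C_{1}A}$. In $\mathcal{S}_{sub}$ each of $f$ (from $\|g\|_{X_{j'}'}\lesssim 1$ and Proposition \ref{linearestimate2}), $y^{\omega_{1}}$ (from its $X_{1}$ bound, the $l^{p}\to l^{2}$ passage costing $2^{rd}=2^{O(s)d}$), and $\zeta^{*}$ is bounded in $l^{2}L^{2}$ up to losses at most $2^{O(s)d}2^{O(d/90)}$ (the latter from the modulation weights restricted to $\{\langle\alpha\rangle\ll 2^{d/90}\}$); the factor $\phi_{\alpha_{3}}$ is $\lesssim T^{0+}$ on $F$; and $(\chi e^{\mathrm{i}(\Delta_{n_{1}}+\Delta_{n_{2}}-\Delta_{n_{0}})})^{\wedge}(\alpha_{4})$ is $\lesssim 2^{s^{3}d}$ on $F$ by (\ref{expcon}). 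A Cauchy--Schwarz in $(\alpha_{0},\alpha_{1})$ — which range over sets of measure $\ll 2^{d/90}$, $\alpha_{2}$ being then determined — followed by Cauchy--Schwarz and Young in $(n_{1},n_{2})$ (a cost of at most $2^{(1/2+O(1/90))d}$) gives $\mathcal{S}_{sub}\lesssim T^{0+}2^{(1/2+O(s)+O(1/90))d}$; multiplying by the prefactor $2^{-0.8d}$ leaves $T^{0+}2^{(0-)d}$, which sums over $d\geq 0$ to the asserted $T^{0+}$. One could instead run the Strichartz/convolution argument of (\ref{crucial}), but the margin here is far too wide for that to be needed.

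I do not expect a genuine obstacle. The only real content is the first paragraph: on $F$ the resonance function, hence $\alpha_{2}$, is pinned at size $\gtrsim 2^{1.9d}$. This is precisely what the second Picard iteration was built to exploit — replacing $y^{\omega_{2}}$ by the $\mathcal{M}^{j}$ pieces of (\ref{secondit}), which (via (\ref{auxest})) carry the almost-full weight $\langle\alpha_{2}\rangle^{\kappa}$ with $\kappa$ close to $1$, beats the worst-case size $2^{d}$ of $\Phi^{2}$, whereas the bare $X_{1}$ bound on $y^{\omega_{2}}$ — carrying only $\langle\alpha_{2}\rangle^{b}$ with $b<1/2$ — would leave the losing exponent $2^{(1-1.9b)d}$. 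The one place calling for mild care is the polynomial (really, sub-$2^{d/2}$) dependence on $\mu$ hidden in "$|\Xi|\sim 2^{d+h}$", "$\langle m_{i}\rangle\ll 2^{h}$" and "$|\beta_{1\mu}|\ll 2^{d}$", but as everywhere in the paper these factors are $O(1)^{\mu}$ and are swallowed by $C_{\mu}$.
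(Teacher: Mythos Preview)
Your proof is correct and rests on the same key observation as the paper's: on $F$ one has $\langle\alpha_{2}\rangle\gtrsim 2^{d+h}\geq 2^{1.9d}$, so the hypothesis on $\zeta$ yields a gain of order $2^{\kappa(d+h)}$ that overwhelms the $2^{h}$ from $\Phi^{2}$ and the $2^{d/20}$ from $\langle n_{2}\rangle^{1/20}$. The only difference is in how the residual sum is closed: the paper fixes $(m_{i},\beta_{i})$, bounds $\zeta$ in $L^{2}L^{2}$, $\mathfrak{N}f$ and $\mathfrak{N}y$ in $L^{4+}L^{4+}$ via Strichartz, and the $\alpha_{3},\alpha_{4}$ factors in $l^{1+}L^{1+}$, incurring a total loss $\lesssim 2^{(1.1+O(s))d}$; you instead exploit that $\alpha_{0},\alpha_{1},\alpha_{3},\alpha_{4}$ are confined to intervals of length $\ll 2^{d/90}$ on $F$ and close by Cauchy--Schwarz plus a crude $l^{2}$ count in $(n_{1},n_{2})$, incurring a loss $\lesssim 2^{(1/2+O(s)+O(1/90))d}$. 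Both close comfortably, and you correctly note that the Strichartz route of (\ref{crucial}) is available but unnecessary given the margin.
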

\begin{proof}Since in $F$ we will have $\langle\alpha_{2}\rangle\gtrsim 2^{d+h}$, we can gain a power $2^{0.999(d+h)}$ from the $\langle\alpha_{2}\rangle^{\kappa}$ factor in the bound for $\zeta$. After exploiting this, we may then estimate $\zeta$ in $L^{2}L^{2}$ with a loss $2^{(\frac{1}{20}+O(s))d}$. Then we fix $(m_{i},\beta_{i})$ as usual, and use the inequality (\ref{expcon}) to bound the factor involving $\alpha_{4}$. To bound the resulting $\mathcal{S}_{sub}$ term, we estimate $\zeta$ in $L^{2}L^{2}$, $\mathfrak{N}f$ and $\mathfrak{N}y$ in $L^{4+}L^{4+}$ (where $4+$ equals $4+c$) with a loss of $2^{O(s)d}$, the $\alpha_{3}$ and $\alpha_{4}$ factors in $l^{1+}L^{1+}$. Note that here we will gain a power $T^{0+}$, and the total power of $2^{d}$ we may lose is at most $2^{(1.1+O(s))d}$, which is smaller than the gain $2^{0.999(d+h)}$. Then we sum over $m_{i}$ and integrate over $\beta_{i}$ to conclude.
\end{proof}
Next consider the contribution of $\mathcal{L}^{2}$. Since we are in $F$ (thus $\alpha_{2}\gtrsim 2^{d}$), the gain from $\widehat{\chi}(\alpha_{2})$ will overwhelm any possible loss in terms of $2^{d}$. Therefore we may even fix all the $n$, $m$ and $\beta$ variables and estimate the integral in $\alpha$ variables and $\gamma_{1}$ only; but we can easily estimate this integral by controlling all the factors except $\langle\gamma_{1}\rangle^{-1}|\mathcal{I}_{n_{2},\gamma_{1}}|$  in $L^{1+}$ (since the expression now has a convolution structure in the $\alpha$ variables), and estimate the $\langle\gamma_{1}\rangle^{-1}|\mathcal{I}_{n_{2},\gamma_{1}}|$ factor in $L^{1}$. This last estimate is due to (the proof of) Proposition \ref{easiest}, which implies
\begin{equation}\|\langle\gamma_{1}\rangle^{-1}\mathcal{I}_{n_{2},\gamma_{1}}\|_{L^{1}}\lesssim \|\langle\gamma_{1}\rangle^{\kappa-1}\mathcal{I}_{n_{2},\gamma_{1}}\|_{L^{2}}\lesssim 2^{O(1)d}.\nonumber\end{equation}

It then remains to bound the $\mathcal{L}^{1}$ contribution. After integrating over $\gamma_{2}$, we may rename the variable $\alpha_{2}-\gamma_{1}$ as $\gamma_{2}$, and reduce to estimating (up to a constant)
\begin{eqnarray}\mathcal{S}^{F}&=&\sum_{n_{0}=n_{1}+n_{2}+m_{1}+\cdots+m_{\mu}}\int_{(T)}\mathbf{1}_{F}\cdot\Phi^{2}\cdot\overline{f_{n_{0},\alpha_{0}}}\cdot(y^{\omega_{1}})_{n_{1},\alpha_{1}}\cdot\phi_{\alpha_{3}}\times\nonumber\\
&\times&\big(\chi e^{\mathrm{i}(\Delta_{n_{1}}+\Delta_{n_{2}}-\Delta_{n_{0}})}\big)^{\wedge}(\alpha_{4})\cdot\prod_{i=1}^{\mu}\frac{(u''')_{m_{i},\beta_{i}}}{m_{i}}\cdot\eta(\gamma_{1},\gamma_{2})\cdot\mathcal{I}_{n_{2},\gamma_{1}},\nonumber\end{eqnarray} where $\eta$ is some function bounded by\begin{equation}|\eta(\gamma_{1},\gamma_{2})|\lesssim\frac{1}{\langle\gamma_{1}\rangle\langle \gamma_{2}\rangle^{\frac{1}{s}}};\,\,\,\,|\partial_{\gamma_{1}}\eta(\gamma_{1},\gamma_{2})|\lesssim\frac{1}{\langle \gamma_{1}\rangle^{2}\langle\gamma_{2}\rangle^{\frac{1}{s}}}.\nonumber\end{equation} and the integral $(T)$ is taken over the set
\begin{equation}\big\{(\alpha_{0},\alpha_{1},\alpha_{3},\alpha_{4},\beta_{1},\cdots,\beta_{\mu},\gamma_{1},\gamma_{2}):\alpha_{0}=\alpha_{1}+\alpha_{34}+\beta_{1\mu}+\gamma_{12}+\Xi\big\},\nonumber\end{equation} with the NR factor is as in (\ref{nrm2}). Clearly we may also assume $\langle\gamma_{2}\rangle\ll 2^{\frac{d}{90}}$ and add this restriction into $F$ (or we simply gain a large power of $2^{d}$ and proceed as above); after doing this we will have $F\subset\{\langle\gamma_{1}\rangle\gtrsim 2^{d+h}\}$.

Next, note that $\overline{\mathcal{N}^{2}(y,y)}=\overline{\mathcal{N}^{2}}(\overline{y},\overline{y})$, where $\mathcal{N}^{2}$ is another bilinear form that differ from $\mathcal{N}^{2}$ only in the $\Phi^{2}$ weights; moreover, the $\Phi$ weight for $\overline{\mathcal{N}^{2}}$ will verify all the bounds we have for the $\Phi$ weight for $\mathcal{N}^{2}$. Thus we only need to bound the above expression with $\mathcal{I}_{n_{2},\gamma_{1}}$ replaced by $(\mathbf{1}_{[-T,T]}\mathcal{N}^{2}(y^{\omega_{2}},y^{\omega_{2}}))_{n_{2},\gamma_{1}}$. Clearly we may also fix the parameters $\mu'$ and $\omega'$ in $\mathcal{N}_{\mu'}^{\omega'2}$ and reduce to estimating
\begin{eqnarray}
\mathcal{S}'&=&\sum_{n_{0}=n_{1}+n_{5}+n_{6}+m_{1}+\cdots+m_{\nu}}\int_{(T)}\Phi^{2}(\Phi^{2})'\cdot \overline{f_{n_{0},\alpha_{0}}}\times\nonumber\\
&\times&\prod_{l\in\{1,5,6\}}(y^{\omega_{l}})_{n_{l},\alpha_{l}}\cdot\phi_{\alpha_{3}}\phi_{\alpha_{7}}\cdot\prod_{i=1}^{\nu}\frac{(u''')_{m_{i},\beta_{i}}}{m_{i}}\times\nonumber\\
&\times&\int_{\alpha_{4}+\alpha_{8}=\alpha_{9}}\mathbf{1}_{F}\eta(\gamma_{1},\gamma_{2})\cdot\big(\chi e^{\mathrm{i}(\Delta_{n_{1}}+\Delta_{n_{2}}-\Delta_{n_{0}})}\big)^{\wedge}(\alpha_{4})\times\nonumber\\
&\times&\big(\chi e^{\mathrm{i}(\Delta_{n_{5}}+\Delta_{n_{6}}-\Delta_{n_{2}})}\big)^{\wedge}(\alpha_{8}),\nonumber
\end{eqnarray} where $\nu=\mu+\mu'$, the integral $(T)$ is taken over the set
\begin{eqnarray}&&\big\{(\alpha_{0},\alpha_{1},\alpha_{3},\alpha_{5},\alpha_{6},\alpha_{7},\alpha_{9},\beta_{1},\cdots,\beta_{\nu},\gamma_{2}):\nonumber\\
&&\alpha_{0}=\alpha_{1}+\alpha_{3}+\alpha_{57}+\alpha_{9}+\beta_{1\nu}+\gamma_{2}+\Xi'\big\},\nonumber\end{eqnarray} with the new NR factor
\begin{equation}\Xi'=|n_{0}|n_{0}-|n_{1}|n_{1}-|n_{5}|n_{5}-|n_{6}|n_{6}-\sum_{i=1}^{\nu}|m_{i}|m_{i}.\nonumber\end{equation} The $\Phi^{2}$ and $(\Phi^{2})'$ are functions of the $n$ and $m$ variables that are bounded by $\min_{l\in\{0,1,2\}}\langle n_{l}\rangle$ and $\min_{l\in\{2,5,6\}}\langle n_{l}\rangle$ respectively. The other implicit variables are $n_{2}=n_{5}+n_{6}+m_{\mu+1,\nu}$ and
\begin{equation}\gamma_{1}=\alpha_{0}-\alpha_{1}-\alpha_{34}-\beta_{1\mu}-\gamma_{2}-\Xi=\alpha_{58}+\beta_{\mu+1,\nu}+(\Xi'-\Xi),\nonumber\end{equation} where $\Xi$ is the same as in (\ref{nrm2}). Also recall from the definition of $\mathcal{N}^{2}$ that $n_{0}\neq n_{1}$ and $n_{5}+n_{6}\neq 0$.

Next, let $\max\{\langle n_{2}\rangle,\langle n_{5}\rangle,\langle n_{6}\rangle\}\sim 2^{d'}$ so that $d'\geq h\geq 0.9d$, and fix $d'$ also. In the expression for $\mathcal{S}'$, we may assume
\begin{equation}\label{notona4}\langle m_{i}\rangle+\langle \beta_{i}\rangle+\langle\alpha_{j}\rangle\ll2^{\frac{d'}{70}}\end{equation} for all $\mu+1\leq i\leq \nu$ and $j\in\{5,6,7,9\}$ (note we already have this for $1\leq i\leq \mu$ and $j\in\{0,1,3\}$ due to the factor $\mathbf{1}_{F}$). In fact, if any one of these does not hold, we may bound $|\eta|\lesssim 2^{-(d+h)}\langle\gamma_{2}\rangle^{-10}$ and $|\Phi^{2}(\Phi^{2})'|\lesssim 2^{d+h}$ (so that the weight is cancelled by the part of the $\eta$ factor), then use (\ref{expcon}) to bound the $\alpha_{4}$ and $\alpha_{8}$ factors by $\langle\alpha_{4}\rangle^{-1}$ and $\langle\alpha_{8}\rangle^{-1}$ respectively with a loss $2^{O(s^{3})d'}$. Then we fix $(m_{i},\beta_{i})$ to produce $\mathcal{S}_{sub}'$, and estimate it by bounding the $\gamma_{2}$ and $\alpha_{l}$ factors for $l\in\{3,4,7,8\}$ in $l^{1+}L^{1+}$, and bounding the $\mathfrak{N}f$ and $\mathfrak{N}y$ factors in $L^{4+}L^{4+}$ (with $4+$ being $4+c$). Note that in the whole process we lose at most $2^{O(s)d'}$; but by our assumptions at least one $(m_{i},\beta_{i})$ or $\alpha_{l}$ must be $\gtrsim 2^{\frac{d'}{70}}$, so we will be able to gain some $2^{cd'}$ power from the corresponding factor (again using the room available for $L^{4+}L^{4+}$ Strichartz estimate) to complete the estimate.

Now we may assume all the variables mentioned above are small. This in particular implies that $|\Xi'|\ll 2^{\frac{d'}{70}}$. By Lemma \ref{divisor} (combined with the restrictions made above, such as $h\geq 0.9d$), we can conclude that either (i) $n_{0}=n_{5}$ and $n_{1}+n_{6}=0$ (or with $5$ and $6$ switched); or (ii) no two of $(-n_{0},n_{1},n_{5},n_{6})$ add to zero, and $\langle n_{l}\rangle\gtrsim 2^{0.9d'}$ for $l\in\{0,1,5,6\}$. In case (ii), we have in particular
\begin{equation}\langle n_{1}\rangle^{s}\langle n_{5}\rangle^{s}\langle n_{6}\rangle^{s}\gtrsim 2^{1.5sd}\langle n_{0}\rangle^{r},\end{equation} thus we may gain a power $2^{csd}$ from the $\langle n_{l}\rangle$ weights (after canceling $\Phi^{2}(\Phi^{2})'$ by the $\eta$ factor) if we use the $X_{2}$ bound for $y$ and the bound for $f$ deduced from the $X_{6}'$ bound for $g$. Then we simply bound the $\alpha_{4}$ and $\alpha_{8}$ factors using (\ref{expcon}) with $2^{O(s^{2})d}$ loss, take absolute value of everything, then fix $m_{i}$ and $\beta_{i}$ to produce a term $\mathcal{S}_{sub}$ that has basically the same form as the left hand side of (\ref{nonsharp}), with possibly some additional loss of $2^{O(s^{2})d}$, and with the $\min\{T,\langle\alpha_{4}\rangle^{-1}\}$ factor in (\ref{nonsharp}) replaced by $T^{0+}\langle\alpha_{4}\rangle^{-1+s^{4}}$, which is due to the estimate
\begin{equation}\int_{\alpha_{3}+\alpha_{4}+\alpha_{7}+\alpha_{8}+\gamma_{2}=\alpha_{10}}\min\big\{T,\langle\alpha_{3}\rangle^{-1}\big\}\cdot\langle\gamma_{2}\rangle^{-10}\prod_{l\in\{4,7,8\}}\langle\alpha_{l}\rangle^{-1}\lesssim T^{0+}\langle\alpha_{10}\rangle^{-1+s^{4}}.\nonumber\end{equation} We can then repeat the proof of (\ref{nonsharp}) to conclude (notice that every variable is now $\lesssim 2^{O(1)d'}$).

Now we consider case (i), so that $d'=d$. We will first replace the $\eta(\gamma_{1},\gamma_{2})$ factor appearing in the expression of $\mathcal{S}'$ by $\eta(\gamma_{1}',\gamma_{2})$, where $\gamma_{1}'=\gamma_{1}-\alpha_{8}$. Note that $\gamma_{1}'$ depends on $\alpha_{4}$ only through $\alpha_{9}=\alpha_{4}+\alpha_{8}$. When we estimate the difference caused by this substitution, since we still have the restriction $\mathbf{1}_{F}$, we will have $\gamma_{1}\sim 2^{d+h}$, so we will gain a power $2^{2(d+h)-\frac{d'}{70}}$, which is more than enough to cancel $\Phi^{2}(\Phi^{2})'$, thus this part will be acceptable. We also note that the assumption (\ref{notona4}) allows us to insert another characteristic function which depends on $\alpha_{4}$ only through $\alpha_{9}$; the presence of this function (as well as the part of $\mathbf{1}_{F}$ independent of $\alpha_{4}$) will allow us to conclude $\langle\gamma_{1}'\rangle\sim 2^{d+h}$. Therefore, if we \emph{remove} the part in $\mathbf{1}_{F}$ depending on $\alpha_{4}$, the error we create will be a summation-integration of the type $\mathcal{S}'$, but restricted to some set on which we have $|\eta|\lesssim 2^{-(d+h)}\langle \gamma_{2}\rangle^{-10}$ (note that here we already have $\eta(\gamma_{1}',\gamma_{2})$ instead of $\eta(\gamma_{1},\gamma_{2})$), \emph{as well as} $\langle\alpha_{4}\rangle \gtrsim2^{\frac{d'}{90}}$. Then we will be able to take absolute values, cancel $\Phi^{2}(\Phi^{2})'$ by the $\eta$ factor, and gain a power $2^{cd'}$ from the assumption about $\alpha_{4}$, and proceed exactly as above.

After we have made the above substitutions, the integral with respect to $\alpha_{4}$ (or $\alpha_{8}$) will be \emph{exactly}\begin{equation}\int_{\mathbb{R}}\big(\chi e^{\mathrm{i}(\Delta_{n_{1}}+\Delta_{n_{2}}-\Delta_{n_{0}})}\big)^{\wedge}(\alpha_{4})\big(\chi e^{\mathrm{i}(\Delta_{n_{0}}-\Delta_{n_{1}}-\Delta_{n_{2}})}\big)^{\wedge}(\alpha_{9}-\alpha_{4})\,\mathrm{d}\alpha_{4}=\widehat{\chi^{2}}(\alpha_{9}).\nonumber\end{equation} Then we will get rid of this integration, then take absolute values, fix $(m_{i},\beta_{i})$ (again we ignore the restriction that the $m_{i}$ must add to zero) to obtain an expression
\begin{eqnarray}
\mathcal{S}_{sub}&\lesssim&\sum_{n_{0},n_{1}}\int_{(T)}2^{2h}\big|f_{n_{0},\alpha_{0}}\big|\cdot\prod_{l\in\{1,5,6\}}\big|(y^{\omega_{l}})_{n_{l},\alpha_{l}}\big|\times\nonumber\\
&\times&\prod_{l\in\{3,7\}}\min\bigg\{T,\frac{1}{\langle\alpha_{l}\rangle}\bigg\}\cdot 2^{-d-h}\langle \alpha_{9}\rangle^{-10}\langle\gamma_{2}\rangle^{-10}\nonumber\\
&\lesssim& 2^{-|d-h|}T^{0+}\sum_{n_{0},n_{1}}\big\|\widehat{f_{n_{0}}}\big\|_{L^{q}}\prod_{l\in\{1,5,6\}}\big\|\widehat{(y^{\omega_{l}})_{n_{l}}}\big\|_{L^{1}}.\nonumber
\end{eqnarray} where $c_{j}$ are constants, $n_{5}=n_{0}$, $n_{6}=-n_{1}$, the summation is restricted to the set
\begin{equation}\big\{(n_{0},n_{1}):\max\{\langle n_{0}\rangle,\langle n_{1}\rangle\sim 2^{d},\,\,\min\{\langle n_{0}\rangle,\langle n_{1}\rangle,\langle n_{0}-n_{1}\rangle\}\sim 2^{h}\big\},\nonumber\end{equation} and the integration $(T)$ is taken over the set
\begin{equation}\big\{(\alpha_{0},\alpha_{1},\alpha_{3},\alpha_{5},\alpha_{6},\alpha_{7},\alpha_{9},\gamma_{2}):\alpha_{0}=\alpha_{1}+\alpha_{3}+\alpha_{57}+\alpha_{9}+\gamma_{2}+c_{2}\big\};\nonumber\end{equation}note that the restriction we make here is enough to guarantee that \begin{equation}|\eta|\lesssim 2^{-(d+h)}\langle\gamma_{2}\rangle^{-10}.\nonumber\end{equation}

Now, if we restrict to $n_{0}\sim 2^{d''}$ and $n_{1}\sim 2^{d'''}$, then up to an additive constant they are between $h$ and $d$, and the restricted $\mathcal{S}_{sub}$ is bounded by $2^{-|d-h|}T_{d''}$ due to (\ref{festt}). We may sum over $d$ and $h$ for fixed $d''$ and $d'''$ to obtain a bound $2^{-|d''-d'''|}T_{d''}$, then sum over $d''$ and $d'''$ to conclude.

\section{The \emph{a priori} estimate III: A special term}\label{mid2}
In this section we prove the following proposition, with which we will be able to close the proof of Proposition \ref{finalreduct}.
\begin{proposition}\label{easy} We have
\begin{equation}\label{n3.5est}\sum_{j\in\{1,2,5,7\}}\|\mathcal{M}^{3.5}\|_{X_{j}}\lesssim T^{0+}.\end{equation}
\end{proposition}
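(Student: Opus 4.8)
The plan is to dualize and reduce, exactly as in Propositions \ref{easiest} and \ref{easier}, to estimating the multilinear expression coming from $\mathcal{N}^{3.5}$. We fix $\mu$, $\omega\in\{-1,1\}^{3}$, pick a test function $g$ with $\|g\|_{X_{j}'}\leq 1$ for some $j\in\{1,2,5,7\}$, set $f=\mathcal{E}'g$ and pass to $f'$ via the usual conjugation by $e^{\mathrm{i}\Delta_{n}}$ (Proposition \ref{weaken}); by \eqref{relation2} it suffices to control the corresponding pairing in the $X_{6}$ sense, i.e.\ with the bound $\|\langle n_{0}\rangle^{-s-O(s^{3})}\langle\alpha_{0}\rangle^{\frac{1}{2}-O(s^{2})}f'\|_{l^{2}L^{2}}\lesssim1$. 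The quantity to estimate is the expression $\mathcal{S}$ with $j=3.5$ from \eqref{s3expre}, with $\Phi^{3.5}$ obeying the key bound \eqref{bound3.5}, the replacement of $\upsilon$ in $(\upsilon^{\omega_{1}})_{n_{1}}$ by $w'$, and the sign constraint on $n_{2},n_{3}$ from part (iii) of Proposition \ref{intereqn}. As always we localize $\sum_{l=0}^{3}\langle n_{l}\rangle\sim2^{d}$, allow a loss $2^{O(\epsilon)d}$ to upgrade $w'$ to the good $X_{1}\cap X_{4}$-type bound and $\mathfrak{N}\upsilon'$ (for the $n_2,n_3$ slots) to the $X_{4}$ and $L^{6}L^{6}$ bounds, and dispose of the sharp cutoff factor $\phi_{\alpha_{4}}$ via $\|\widehat{\phi}\|_{L^{1+}(\{|\xi|\geq K\})}\lesssim T^{0+}\langle K\rangle^{0-}$.

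The core dichotomy is driven by the NR factor \eqref{nlfac2}: since all $m_i\ll\min_{0\le l\le 3}\langle n_l\rangle$ (polynomial-in-$\mu$ factors absorbed), one has $|\Xi|\sim\min\{\langle n_1\rangle,\langle n_2\rangle,\langle n_3\rangle,\langle n_0\rangle\}\cdot(\text{second largest})$ in the generic non-resonant regime, so at least one of the $\alpha$ or $\beta$ variables is $\gtrsim2^{cd}$ whenever no cancellation occurs. When we gain $2^{c(1-\kappa)d}$ from $\langle\alpha_0\rangle$, $\langle\alpha_1\rangle$, $\langle\alpha_4\rangle$, or some $\langle\beta_i\rangle$ (the latter using the $X_4$ or the $\langle m_i\rangle^{-2/3}$ trick exactly as in Proposition \ref{easiest}), we fix the $m_i,\beta_i$ variables, produce $\mathcal{S}_{sub}$, and close with Strichartz: $\mathfrak{N}f'$ in $L^{2+}L^{2+}$, $\mathfrak{N}w'$ in $L^{6\mp}L^{6\mp}$, $\mathfrak{N}\upsilon'$ in $L^{6}L^{6}$, and $\phi$ in $l^{1+}L^{1+}$, with the $0+$'s chosen so the exponents match as in \eqref{crucial}. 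The genuinely $3.5$-specific case is when we gain from $\langle\alpha_2\rangle$ or $\langle\alpha_3\rangle$ (say $\alpha_2\gtrsim2^{d/4}$): here, unlike in $j=3$, the slot $n_2$ carries $\upsilon'$ rather than $w'$, so we cannot directly invoke the strong $X_1$-type bound. But \eqref{bound3.5} saves us: if $\langle n_2\rangle\gtrsim2^{d/89}$ we gain a positive power $2^{cd}$ from the $\Phi^{3.5}$ weight (using $\langle n_0\rangle\ll2^{d/90}$ in this subcase), while if $\langle n_2\rangle\lesssim2^{d/89}$ we use the $X_4$ bound of the $\upsilon'$ in that slot to trade the large $\alpha_2$ for an $L^{6}L^{6}$ bound on $\mathfrak{N}z^2$ — precisely the maneuver already carried out in the $\mathcal{M}^{3.5}$ portion of the proof of Proposition \ref{easiest}.

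Finally, the residual regime is $\langle n_0\rangle\ll2^{d/90}$, all $m_i\ll2^{d/90}$, and $|\Xi|$ small; then \eqref{smallness}-type reasoning forces $|{|n_1|n_1+|n_2|n_2+|n_3|n_3}|\ll2^{d/4}$. If $\min_{1\le l\le3}\langle n_l\rangle\gtrsim2^{d/9}$ this contradicts the lower bound on $|\Xi|$ (as in Proposition \ref{easiest}), so we may assume $\langle n_3\rangle\ll2^{d/9}$, whence the near-resonance can only hold if $n_1+n_2=0$; but then, by \eqref{bound3.5}, $|\Phi^{3.5}|\lesssim(\langle n_0\rangle+\langle n_3\rangle)/\langle n_1\rangle$ yields a gain $2^{cd}$, closing the estimate. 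The one point deserving care — and the main obstacle — is the $n_2,n_3$-sign restriction combined with \eqref{bound3.5}: one must verify that in every subcase where the weight does \emph{not} provide the gain, either $(\max_l\langle n_l\rangle)^{1/2}\ll\min_l\langle n_l\rangle$ (so the sign constraint makes $\Xi$ large and we gain from some $\alpha$) or some $n_l$ slot is large enough to carry $w'$ and the strong Strichartz exponents. This bookkeeping, together with summability in $d$ (always a net $2^{c(1-\kappa)d}$ gain against a $2^{O(\epsilon)d}$ loss, then $T\le C_2^{-1}e^{-C_2A}$), gives \eqref{n3.5est}.
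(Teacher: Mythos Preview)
Your argument has a genuine gap in the residual case. You assume the leftover regime is $\langle n_{0}\rangle\ll 2^{d/90}$, but that assumption is borrowed from the proof of Proposition~\ref{easiest}, where the bound on $f'$ carried a \emph{positive} power $\langle n_{0}\rangle^{1/30}$ (so large $n_{0}$ was a gain). Here the relevant bound is \eqref{weakest}, i.e.\ $\|\langle n_{0}\rangle^{-s}\langle\alpha_{0}\rangle^{1/2-O(s^{2})}f'\|_{l^{2}L^{2}}\lesssim 1$, which means large $n_{0}$ costs you $2^{sd}$. Consequently the genuinely hard region --- and the reason this proposition occupies its own section --- is when \emph{all} four $\langle n_{l}\rangle\gtrsim 2^{2d/3}$, the weight $|\Phi^{3.5}|\sim 1$ (take $\langle n_{2}+n_{3}\rangle\sim\langle n_{2}\rangle\sim\langle n_{3}\rangle\sim 2^{d}$), and $|\Xi|$ is only $\lesssim 2^{(1+1.01s)d}$ near the resonance $n_{0}\approx n_{2}$, $n_{1}\approx -n_{3}$ (with $n_{2},n_{3}$ of opposite sign, as forced by part (iii)). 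In this region no $\alpha$ or $\beta$ variable is large enough to beat the $2^{sd}$ loss, the weight gives nothing, and your scheme does not close.

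The paper handles this regime with machinery you do not invoke: first it switches to the alternate expression \eqref{3.5e2} with the explicit phase factor $(\chi e^{\mathrm{i}(\Delta_{n_{1}}+\Delta_{n_{2}}+\Delta_{n_{3}}-\Delta_{n_{0}})})^{\wedge}(\alpha_{5})$ and uses Proposition~\ref{factt} to show this factor is essentially $\widehat{\chi}(\alpha_{5})$ up to a gain of $2^{-cd}$; then some careful algebra forces $|n_{0}-n_{2}|+|n_{1}+n_{3}|\ll 2^{1.9sd}$; and finally --- crucially --- it expands the $n_{3}$-slot using the \emph{atomic $X_{8}$ decomposition} \eqref{another22} of $y^{3}$, followed by a divisor-counting argument on the near-resonant set. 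The $X_{8}$ norm was introduced precisely for this step; a proof that does not use it is not going to work. There is also a separate endgame when some $\langle n_{l}\rangle\ll 2^{2d/3}$, which collapses to $n_{0}=n_{1}$ and is handled via \eqref{festt} as in Proposition~\ref{easier}; your outline does not cover that case either.
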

\begin{proof}Define the functions $g$, $f$ and $f'$, and fix the scale $2^{d}$ as usual. Note in particular that $\|g\|_{X_{6}'}\leq 1$, so that \begin{equation}\label{weakest}\big\|\langle n_{0}\rangle^{-s}\langle\alpha_{0}\rangle^{\frac{1}{2}-O(s^{2})}f'\big\|_{l^{2}L^{2}}\lesssim 1.\end{equation}Now, according to a computation similar to those did before (for example, in proof of Propositions \ref{easiest} and \ref{easier}), we can write the expression $\mathcal{S}$ we need to bound in two ways\footnote[1]{Note that from the arguments made before, we can switch between these two expressions even if we fix the $n_{l}$, $m_{i}$, $\beta_{i}$ or $\alpha_{4}$ variables. Also we may insert $\chi$ since $f$ has compact time support.}:
\begin{eqnarray}\label{3.5e1}\mathcal{S}&=&\sum_{n_{0}=n_{1}+n_{2}+n_{3}+m_{1}+\cdots+m_{\mu}}\int_{(T)}\Phi^{3.5}\cdot\overline{(f')_{n_{0},\alpha_{0}}}\times\\
&\times&((w')^{\omega_{1}})_{n_{1},\alpha_{1}}\prod_{l=2}^{3}(z^{l})_{n_{l},\alpha_{l}}\cdot\phi_{\alpha_{4}}\prod_{i=1}^{\mu}\frac{(u''')_{m_{i},\beta_{i}}}{m_{i}};\nonumber
\end{eqnarray}
\begin{eqnarray}\label{3.5e2}\mathcal{S}&=&\sum_{n_{0}=n_{1}+n_{2}+n_{3}+m_{1}+\cdots+m_{\mu}}\int_{(T)}\Phi^{3.5}\cdot\overline{f_{n_{0},\alpha_{0}}}((w'')^{\omega_{1}})_{n_{1},\alpha_{1}}\times\\
&\times&\prod_{l=2}^{3}(y^{l})_{n_{l},\alpha_{l}}\cdot\phi_{\alpha_{4}}\big(\chi e^{\mathrm{i}(\Delta_{n_{1}}+\Delta_{n_{2}}+\Delta_{n_{3}}-\Delta_{n_{0}})}\big)^{\wedge}(\alpha_{5})\prod_{i=1}^{\mu}\frac{(u''')_{m_{i},\beta_{i}}}{m_{i}}.\nonumber
\end{eqnarray} Here the integration $(T)$ in (\ref{3.5e1}) is the integration over the set
\begin{equation}\label{hyperplanee}\big\{(\alpha_{0},\cdots,\alpha_{4},\beta_{1},\cdots,\beta_{\mu}):\alpha_{0}=\alpha_{14}+\beta_{1\mu}+\Xi\big\},\end{equation} while the integration $(T)$ in (\ref{3.5e2}) is over the set
\begin{equation}\label{hyperplanee2}\big\{(\alpha_{0},\cdots,\alpha_{5},\beta_{1},\cdots,\beta_{\mu}):\alpha_{0}=\alpha_{15}+\beta_{1\mu}+\Xi\big\},\end{equation} where the NR factor
\begin{equation}\label{nrfac}\Xi=|n_{0}|n_{0}-\sum_{l=1}^{3}|n_{l}|n_{l}-\sum_{i=1}^{\mu}|m_{i}|m_{i}.\end{equation} Also each $z^{l}$ or $\overline{z^{l}}$ equals $u'$, $v'$ or $w'$, and $y^{l}$ or $\overline{y^{l}}$ equals $u''$, $v''$ or $w''$.

First we treat the case when \begin{equation}\label{notdeviate}\min_{0\leq l\leq 3}\langle n_{l}\rangle\gtrsim 2^{\frac{2d}{3}}.\end{equation}In this situation, $n_{2}$ and $n_{3}$ must have opposite sign (note that here we are again assuming $2^{d}$ is large enough). By symmetry, we may assume $n_{2}>0$ and $n_{3}<0$; also note that $n_{0}>0$. 

Next, we may assume that $\langle m_{i}\rangle+\langle\beta_{i}\rangle+\langle\alpha_{4}\rangle\ll 2^{\frac{d}{90}}$ for all $i$, since otherwise we will be able to gain a power $2^{cd}$ from the corresponding factor alone, and estimate the expression (\ref{3.5e1}) by controlling $\mathfrak{N}f'$ in $L^{2+}L^{2+}$, $\mathfrak{N}w'$ in $L^{6+}L^{6+}$, $\mathfrak{N}z^{l}$ in $L^{6}L^{6}$ and $\phi$ in $l^{1+}L^{1+}$ with a loss of at most $2^{(s+O(\epsilon))d}$. Notice that the loss from the $\langle n_{0}\rangle^{-s}$ factor in (\ref{weakest}) is at most $2^{sd}$, while the loss from other places is at most $2^{O(\epsilon)d}$. In the same way, we will also be done if $\langle m_{i}\rangle\gtrsim 2^{1.2sd}$ for some $i$, or when $|\Xi|\gtrsim 2^{(1+1.01s)d}$. In fact, in the former case we invoke the $X_{3}$ norm for $\langle\partial_{x}\rangle^{-s^{3}}u'''$ to gain a power of $2^{(1+c)sd}$ to cancel the $2^{sd}$ loss, then fix $m_{j}$ and $\beta_{j}$ for $j\neq i$ to produce $\mathcal{S}_{sub}$, which is estimated by controlling $\mathfrak{N}f'$ in $L^{4}L^{4}$, $\mathfrak{N}w'$, $\mathfrak{N}z^{l}$ and $\mathfrak{N}u'''$ in $L^{6}L^{6}$, $\phi$ in some $l^{1+}L^{1+}$ with a loss of at most $2^{O(\epsilon)d}$. In the latter case at least one of $\alpha_{l}$ must be $\gtrsim 2^{(1+1.01s)d}$. If $l\in\{0,1\}$, we could gain $2^{cd}$ from the corresponding factor and proceed as above (since $2^{cd}$ gain will overwhelm any loss). If $l\in\{2,3\}$ (say $l=2$), we invoke the $X_{4}$ norm for $z^{2}$; notice that $1-\kappa=s^{\frac{5}{4}}$, we will gain at least $2^{1.001sd}$ from $z^{2}$ and estimate the reduced function in $l^{2}L^{2}$. This will cancel the $2^{sd}$ loss from $f'$ and we can fix all $m_{i}$ and $\beta_{i}$, then bound $\mathcal{S}_{sub}$by controlling $\mathfrak{N}f'$ in $L^{6-}L^{6-}$, $\mathfrak{N}w'$ in $L^{6+}L^{6+}$ (where $6-$ and $6+$ differ from $6$ by $cs^{2}$ with appropriately chosen $c$), $\mathfrak{N}z^{2}$ in $L^{2}L^{2}$, $\mathfrak{N}z^{3}$ in $L^{6}L^{6}$, $\phi$ in $l^{1+}L^{1+}$ with a loss of at most $2^{O(\epsilon)d}$.

Now, we have $\langle m_{i}\rangle\ll 2^{1.2sd}$ and $|\Xi|\ll 2^{(1+1.01s)d}$. Since $n_{0},n_{2}>0>n_{3}$ and $\langle n_{1}\rangle\gtrsim 2^{\frac{2d}{3}}$, we can easily see that $n_{1}>0$, which implies
\begin{equation}\label{resss}\big|n_{0}^{2}-n_{1}^{2}-n_{2}^{2}+n_{3}^{2}\big|\ll 2^{(1+1.01s)d}.\end{equation} Note that $|n_{2}+n_{3}|\gg 2^{\frac{d}{2}}$ (otherwise we gain $2^{cd}$ from the weight and everything will again be easy; also this will imply $n_{0}\neq n_{1}$), we write $n_{2}+n_{3}=k$ and $n_{0}-n_{1}=l$ so that $l-k=O(2^{1.2sd})$. We deduce from (\ref{resss}) and elementary algebra that
\begin{equation}\label{resss2}|k|\cdot|n_{0}+n_{1}-n_{2}+n_{3}|\lesssim 2^{(1+1.2s)d},\end{equation} which implies that $\max\{\langle n_{2}\rangle,\langle n_{3}\rangle\}\sim 2^{d}$. Since we will be done we we gain $2^{(1+c)sd}$ from the weight, we may then assume $|n_{2}+n_{3}|\gtrsim 2^{(1-1.01s)d}$.

Next, we claim that we may assume $|n_{0}-n_{2}|+|n_{1}+n_{3}|\ll 2^{1.9sd}$. In fact, the difference between $n_{0}-n_{2}$ and $n_{1}+n_{3}$ is already $O(2^{1.2sd})$, so if one of them is $\gtrsim 2^{1.9sd}$, the factor $n_{0}+n_{1}-n_{2}+n_{3}$ in (\ref{resss2}) will be at least $2^{1.9sd}$ also. This would force $k$ to be $\ll 2^{(1-0.7s)d}$. Note that $\max\{\langle n_{2}\rangle,\langle n_{3}\rangle\}\sim 2^{d}$, we will gain $2^{0.7sd}$ from the weight $\Phi^{3.5}$. Therefore, we will still be able to close the estimate if we can gain more than $2^{0.3sd}$ elsewhere, for example, when $|\Xi|\gtrsim 2^{(1+0.31s)d}$ or when $\langle m_{i}\rangle\gtrsim 2^{0.4sd}$ for some $i$. If we assume further that $|\Xi|\ll 2^{(1+0.31s)d}$ and $\langle m_{i}\rangle\ll 2^{0.4sd}$, then (\ref{resss2}) will hold with the right hand side replaced by $2^{(1+0.4s)d}$. This would then force $|k|\lesssim 2^{(1-1.5s)d}$, which is impossible since we have already had $|k|\gtrsim 2^{(1-1.01s)d}$.

Note that all the restrictions made above concerns only the $n_{l}$, $m_{i}$, $\beta_{i}$ and $\alpha_{4}$ variables, so to this point we still have the freedom of choosing (\ref{3.5e1}) or (\ref{3.5e2}). After making these restrictions, we will now choose (\ref{3.5e2}) and analyze the exponential factor first. Note that
\begin{equation}\|(\delta_{n_{1}}+\delta_{n_{2}}+\delta_{n_{3}}-\delta_{n_{0}})^{\wedge}\|_{L^{1}}\lesssim 2^{-\frac{d}{4}}\end{equation} by Proposition \ref{factt}, we deduce from Lemma \ref{general} that
\begin{equation}\big\|\langle \alpha_{5}\rangle J_{(n)}(\alpha_{5})\big\|_{L^{\mu}}\lesssim 2^{-\frac{d}{8}}\end{equation} for all $1\leq\mu\leq\infty$ with \begin{equation}\label{J(n)}J_{(n)}(\alpha_{5})=\big(\chi(t)\cdot(e^{\mathrm{i}(\Delta_{n_{1}}+\Delta_{n_{2}}+\Delta_{n_{3}}-\Delta_{n_{0}})}-1\big))^{\wedge}(\alpha_{5}).\end{equation} This implies, by a similar argument as in the proof of Proposition \ref{weaken}, we deduce that (where, of course, the supreme is taken over $(n)$ such that $\sum_{l}\langle n_{l}\rangle\sim 2^{d}$)
\begin{equation}\int_{\mathbb{R}}\sup_{n_{0},\cdots,n_{3}}|J_{(n)}(\alpha_{5})|\,\mathrm{d}\alpha_{5}\lesssim 2^{-\frac{d}{9}}.\end{equation}Therefore, if we replace in (\ref{3.5e2}) the exponential factor by $J_{(n)}$, we will be able to first fix $\alpha_{5}$ and then integrate over it, and gain a power $2^{cd}$ from this process. Once $\alpha_{5}$ is fixed and the $J_{(n)}$ factor is removed with a $2^{cd}$ gain, we will be in the same situation as considered before. We can then fix $m_{i}$ and $\beta_{i}$ to produce $\mathcal{S}_{sub}$, and estimate it by controlling $\mathfrak{N}f$ in $L^{2+}L^{2+}$, $\mathfrak{N}w''$ and $\mathfrak{N}y^{l}$ in $L^{6}L^{6}$, $\phi$ in $l^{1+}L^{1+}$ with a loss $2^{O(s)d}$.

Now we may replace the exponential factor in (\ref{3.5e2}) by $\widehat{\chi}(\alpha_{5})$. We can actually get rid of this factor since $f$ and $f'$ is supposed to have compact $t$ support. Therefore, we are reduced to estimate \begin{eqnarray}\label{3.5e3}\mathcal{S}&=&\sum_{n_{0}=n_{1}+n_{2}+n_{3}+m_{1}+\cdots+m_{\mu}}\int_{(T)}|\Phi^{3.5}|\cdot\big|\overline{f_{n_{0},\alpha_{0}}}\big|\times\\&\times&\big|((w'')^{\omega_{1}})_{n_{1},\alpha_{1}}\big|
\prod_{l=2}^{3}\big|(y^{l})_{n_{l},\alpha_{l}}\big|\cdot|\phi_{\alpha_{4}}|\prod_{i=1}^{\mu}\bigg|\frac{(u''')_{m_{i},\beta_{i}}}{m_{i}}\bigg|,\nonumber
\end{eqnarray} where the integral $(T)$ is taken over the set (\ref{hyperplanee}). Starting from this point we will no longer use the equivalence of (\ref{3.5e1}) and (\ref{3.5e2}), so we will assume here that each $\langle \alpha_{l}\rangle\ll 2^{(1+1.01s)d}$, since otherwise we may proceed as above (note that the bounds for $f$, $w''$ and $y^{l}$ are better than those for $f'$, $w'$ and $z^{l}$). For the same reason, we may assume $\langle \alpha_{0}\rangle+\langle \alpha_{1}\rangle\ll 2^{\frac{d}{900}}$ (otherwise we may gain $2^{cd}$, then control $\mathfrak{N}f'$ in $L^{2+}L^{2+}$, $\mathfrak{N}w''$ in $L^{6-}L^{6-}$, $\mathfrak{N}y^{l}$ in $L^{6}L^{6}$ and $\phi$ in $l^{1+}L^{1+}$ with $2+$ and $6-$ being $2+c$ and $6-c$ respectively).

To estimate (\ref{3.5e3}), we recall the bound (\ref{festt}) in the proof of Proposition \ref{easier}. Suppose that $n_{0}\sim n_{2}\sim 2^{d'}$ and $n_{1}\sim n_{3}\sim 2^{d''}$ (note that $|n_{0}-n_{2}|$ ans $|n_{1}+n_{3}|$ are small), then we have $\max\{d',d''\}=d+O(1)$, as well as \begin{equation}\label{boundphi}|\Phi^{3.5}|\lesssim 2^{-|d'-d''|}.\end{equation} Here, instead of fixing $d$, we will fix all of $d,d',d''$, then sum over $d'$ and $d''$. By (\ref{boundphi}), we may assume \begin{equation}\min\{d',d''\}\geq (1-1.01s)d,\nonumber\end{equation}so in particular $d'\sim d''\sim d$. Once we fix $\langle n_{3}\rangle\sim 2^{d''}$, we can invoke the $X_{8}$ norm of $y^{3}$ to write $y^{3}$ (now restricted to frequency $\sim 2^{d''}$) as a sum \begin{equation}\label{another22}y^{3}=\sum_{j}\gamma_{j}\pi_{k_{j}}y^{(j)},\,\,\,\,\,\,\,\,\sum_{j}\langle k_{j}\rangle^{s^{\frac{1}{2}}}|\gamma_{j}|\lesssim 1,\end{equation} such that $\|y^{(j)}\|_{L^{q}l^{2}}\lesssim 1$ for each $j$. See Section \ref{another}. We only need to consider a single $j$; namely we need to bound $\mathcal{S}$ provided $y^{3}=\langle k\rangle^{-s^{1/2}}\pi_{k}y''$, where $y''$ is some function verifying $\|y''\|_{L^{q}l^{2}}\lesssim 1$. Next, if $\langle k\rangle\gtrsim 2^{\frac{d}{90}}$, we will gain a power $2^{cs^{1/2}d}$ from the coefficient in $y^{3}$; we then fix $m_{i}$ and $\beta_{i}$. To estimate the resulting $\mathcal{S}_{sub}$, we can control $\mathfrak{N}f$ in $L^{6+}L^{6+}$, $\mathfrak{N}w''$ and $\mathfrak{N}y^{2}$ in $L^{6}L^{6}$ and $\phi$ in $l^{1+}L^{1+}$ with a loss of at most $2^{O(s)d}$ (where the $6+$ is $6+cs$ and $1+$ defined accordingly; also note that we have assumed $\langle\alpha_{0}\rangle\lesssim 2^{2d}$, as well as the bound for $f$ deduced from the $X_{6}'$ bound for $g$). We can then close the estimate if we can control $y''$ (and hence $\pi_{k}y''$) in $l^{2}L^{2}$. This can be achieved by inserting a $\chi(t)$ factor to every term in (\ref{another22}), which, while doing nothing to the equality and the $L^{q}l^{2}$ norms, allows us to control the $L^{2}l^{2}$ norm by the $L^{q}l^{2}$ norm. Thus here we also get the desired estimate.

We now assume $\langle k\rangle\ll 2^{\frac{d}{90}}$. We will fix $k$ and each $(m_{i},\beta_{i})$ to obtain some constants $K_{1}\ll 2^{1.2sd}$ and $K_{2}\ll 2^{\frac{d}{90}}$, and produce
\begin{eqnarray}2^{-|d'-d''|}\mathcal{S}_{sub} &=&2^{-|d'-d''|}\sum_{n_{0}=n_{1}+n_{2}+n_{3}+K_{1}}\int_{(T)}\big|\overline{f_{n_{0},\alpha_{0}}}\big|\times\nonumber\\
&\times&\big|((w'')^{\omega_{1}})_{n_{1},\alpha_{1}}\big|\cdot\big|(y^{2})_{n_{2},\alpha_{2}}\big|\cdot\big|(\pi_{k}y'')_{n_{3},\alpha_{3}}\big|\cdot|\phi_{\alpha_{4}}|,\nonumber
\end{eqnarray} where the integral $(T)$ is taken over the set
\begin{equation}\bigg\{(\alpha_{0},\cdots,\alpha_{4}):\alpha_{0}=\alpha_{14}+|n_{0}|n_{0}-\sum_{l=1}^{3}|n_{l}|n_{l}+K_{2}\bigg\},\end{equation} with all the restrictions made above taking effect. Now if $n_{0}-n_{2}\in\{0,K_{1}-k\}$, we can bound $\mathcal{S}_{sub}$ by
\begin{eqnarray}\mathcal{S}_{sub}&\lesssim &T^{0+}\sum_{n_{0}\sim 2^{d'}}\sum_{n_{1}\sim 2^{d''}}\big\|\widehat{f_{n_{0}}}\big\|_{L^{q}}\big\|(((w'')^{\omega_{1}})_{n_{1}})^{\wedge}\big\|_{L^{1}}\times\nonumber\\
&\times&\big\|((y^{2})_{n_{0}+c_{0}})^{\wedge}\big\|_{L^{1}}\big\|((y'')_{n_{1}+c_{1}})^{\wedge}\big\|_{L^{q}}\nonumber\\
&\lesssim &T^{0+}T_{d'}\big\|\langle n_{2}\rangle^{r}y^{2}\big\|_{l_{n_{2}\sim 2^{d'}}^{p}L^{1}}\cdot\big\|(w'')^{\omega_{1}}\big\|_{l_{n_{1}\sim 2^{d''}}^{2}L^{1}}\big\|y''\big\|_{l_{n_{1}\sim 2^{d''}}^{2}L^{q}}\nonumber\\
&\lesssim& T^{0+}T_{d'},\nonumber
\end{eqnarray}
using the bound (\ref{festt}) for $f$, the $X_{2}$ bound for $w''$ and $y^{2}$, and the $L^{q}l^{2}$ bound for $y''$, where $c_{j}$ are constants, small compared to $2^{d'}$ and $2^{d''}$, such that $n_{j}\sim n_{0}+c_{j}$ for $j\in\{0,1\}$. If we then sum and integrate over $m_{i}$ and $\beta_{i}$, then multiply by $2^{-|d'-d''|}$ and sum over $d'$ and $d''$, we will get a quantity bounded by $T^{0+}$.

Assume $n_{0}-n_{2}\not\in\{0,K_{1}-k\}$. Let $\lambda=n_{0}-n_{2}$, we can rewrite the expression for $\mathcal{S}_{sub}$ as
\begin{eqnarray}\mathcal{S}_{sub}&\lesssim&T^{0+}2^{-0.999sd+O(s)|d'-d''|}\sum_{n_{0},n_{1},\lambda}\int_{\mathbb{R}^{4}}A_{n_{0},\alpha_{0}}B_{n_{1},\alpha_{1}}C_{n_{0}-\lambda,\alpha_{2}}\times\nonumber\\
&\times &D_{\lambda-n_{1}+c_{1},\alpha_{3}'}\big\langle\alpha_{0}-\alpha_{1}-\alpha_{2}-\alpha_{3}'-\Xi'+c_{2}\big\rangle^{-1-s^{2}}\prod_{l=0}^{2}\,\mathrm{d}\alpha_{l}\cdot\mathrm{d}\alpha_{3}',\nonumber
\end{eqnarray} where $c_{j}\ll 2^{\frac{d}{90}}$ are constants, and the summation-integration is restricted to the subset where all the restrictions made above are satisfied by $(n_{0},\cdots,n_{3},\alpha_{0},\cdots,\alpha_{4})$ which is defined in terms of our new variables (as well as the intermediate variable $n_{3}'$) by
\begin{equation}n_{2}=n_{0}-\lambda,\,\,\,n_{3}=n_{3}'-k,\,\,\,\alpha_{3}=\alpha_{3}'-|n_{3}'|n_{3}'+|n_{3}|n_{3};\nonumber\end{equation}
\begin{equation}n_{3}'=\lambda-n_{1}-K_{1}+k,\,\,\,\alpha_{4}=\alpha_{0}-\alpha_{13}-|n_{0}|n_{0}+\sum_{l=1}^{3}|n_{l}|n_{l}-K_{2}.\nonumber\end{equation} We can check from the assumptions made above that no two of $(-n_{0},n_{1}.n_{2},n_{3}')$ add to zero. Moreover, the $\Xi'$ is defined by
\begin{equation}\Xi'=\Xi'(n_{0},n_{1},\lambda)=|n_{0}|n_{0}-|n_{1}|n_{1}-|n_{2}|n_{2}-|n_{3}'|n_{3}',\nonumber\end{equation} and the relevant functions are defined by
\begin{equation}A_{n_{0},\alpha_{0}}=\langle n_{0}\rangle^{-r}\big|f_{n_{0},\alpha_{0}}\big|,\,\,\,B_{n_{1},\alpha_{1}}=\langle n_{1}\rangle^{r}\big|((w'')^{\omega_{1}})_{n_{1},\alpha_{1}}\big|;\nonumber
\end{equation}
\begin{equation}
C_{n_{2},\alpha_{2}}=\langle n_{2}\rangle^{r}\big|(y^{2})_{n_{2},\alpha_{2}}\big|,\,\,\,\,D_{n_{3}',\alpha_{3}'}=\big|(y'')_{n_{3}',\alpha_{3}'}\big|
.\nonumber\end{equation} When restricted to appropriate subsets (for example, we must have $n_{0}\sim n_{2}\sim 2^{d'}$ and $n_{1}\sim n_{3}\sim 2^{d''}$), these functions will verify
\begin{equation}\label{ultimatebound}
\|A\|_{L^{1}l^{p'}}+\|B\|_{L^{1}l^{p}}+\|C\|_{l^{p}L^{1}}+\|D\|_{L^{1}l^{2}}\lesssim 2^{0.002sd}.\end{equation} In fact, due to the restrictions we made, we can bound all the variables by $2^{O(1)d}$; so when we replace $L^{q}$ norm by $L^{1}$ norm we lose (by H\"{o}lder) at most $2^{O(q-1)d}$. Thus the bound for $A$ follows from (\ref{festt}), and the bound for $D$ follows from our assumption about $y''$. The bound for $C$ follows from the $X_{2}$ bound for $y^{2}$, while for $B$ we simply estimate (note that $\langle \alpha_{1}\rangle\ll 2^{\frac{d}{900}}$)
\begin{eqnarray}
\|B\|_{L^{1}l^{p}}&\lesssim & 2^{O(1-q)d}\big\|\langle n_{1}\rangle^{r}\langle \alpha_{1}\rangle^{b}(w'')^{\omega_{1}}\big\|_{L^{2}l^{p}}\nonumber\\
&\lesssim &2^{O(1-q)d}2^{\frac{sd}{800}}\|\langle n_{1}\rangle^{s}\langle \alpha_{1}\rangle^{b}(w'')^{\omega_{1}}\big\|_{L^{p}l^{p}}\nonumber\\
&\lesssim &2^{0.002sd},\nonumber
\end{eqnarray} using the $X_{1}$ bound for $w''$. Note that by inserting $\chi(t)$ to $w''$, we may control the $l^{p}L^{p}$ norm by the $l^{p}L^{2}$ norm.

Now we need to estimate $\mathcal{S}_{sub}$ under the assumption of (\ref{ultimatebound}). First replace the bounds in (\ref{ultimatebound}) by $1$, so that we only need to bound the summation-integration part of $\mathcal{S}_{sub}$ by $2^{0.998sd}$. Fix $\alpha_{0}$ and $\alpha_{1}$ which are $\ll 2^{0.02d}$ (then integrate over them), we may assume $A$ and $B$ are functions of $n_{0}$ and $n_{1}$ only, and are bounded in $l^{p'}$ and $l^{p}$ respectively. We then bound (with $c_{j}\ll 2^{\frac{d}{90}}$ being constants)
\begin{eqnarray}\mathcal{S}_{sub}'&=&\int_{\mathbb{R}^{2}}\mathrm{d}\alpha_{2}\mathrm{d}\alpha_{3}'\cdot\sum_{n_{0},n_{1},\lambda}A_{n_{0}}B_{n_{1}}C_{n_{0}-\lambda,\alpha_{2}}\times\nonumber\\
&\times&D_{\lambda-n_{1}+c_{1},\alpha_{3}'}\big\langle\alpha_{2}+\alpha_{3}'+\Xi'+c_{3}\big\rangle^{-1-s^{2}}\nonumber\\
&\lesssim&\sum_{\rho\in\mathbb{Z}}\langle\rho\rangle^{-1-s^{2}}\int_{\mathbb{R}^{2}}\mathrm{d}\alpha_{2}\mathrm{d}\alpha_{3}'\sum_{(n_{0},n_{1},\lambda):\lfloor \Xi''\rfloor=\rho}A_{n_{0}}B_{n_{1}}C_{n_{0}-\lambda,\alpha_{2}}D_{\lambda-n_{1}+c_{1},\alpha_{3}'}\nonumber\\
\label{trilin}&\lesssim&\sup_{\rho}\int_{\mathbb{R}^{2}}\mathrm{d}\alpha_{2}\mathrm{d}\alpha_{3}'\cdot\bigg(\sum_{(n_{0},n_{1},\lambda):\lfloor \Xi''\rfloor=\rho}A_{n_{0}}^{2}C_{n_{0}-\lambda,\alpha_{2}}^{2}\bigg)^{\frac{1}{2}}\times\\
&\times&\bigg(\sum_{(n_{0},n_{1},\lambda):\lfloor \Xi''\rfloor=\rho}B_{n_{1}}^{4}\bigg)^{\frac{1}{4}}\bigg(\sum_{(n_{0},n_{1},\lambda):\lfloor \Xi''\rfloor=\rho}D_{\lambda-n_{1}+c_{1},\alpha_{3}'}^{4}\bigg)^{\frac{1}{4}},\nonumber
\end{eqnarray} where we write $\alpha_{2}+\alpha_{3}'+\Xi'+c_{3}=\Xi''$ for simplicity. Now for any positive function $E_{n_{1}}$ of $n_{1}$, when $\rho$ and $\alpha_{2},\alpha_{3}'$ are fixed, we may bound
\begin{equation}\sum_{(n_{0},n_{1},\lambda):\lfloor \Xi''\rfloor=\rho}E_{n_{1}}\lesssim\sum_{n_{1}}E_{n_{1}}\sum_{(n_{0},\lambda):\Xi'=c''}1
\lesssim2^{O(s^{4})d}\|E\|_{l^{1}}\nonumber\end{equation} (where $c'$ and $c''$ are constants depending on $\alpha_{2},\alpha_{3}'$ and $\rho$), thanks to part (iii) of Lemma \ref{divisor} (or actually, an argument similar to the proof of that part). The same inequality holds if we replace $n_{1}$ by $\lambda-n_{1}+c_{1}$ (which equals $n_{3}'$ plus a constant). Therefore we can bound the second factor in (\ref{trilin}) by $2^{O(s^{4})d}\|B\|_{l^{4}}\lesssim 2^{O(s^{4})d}$, and the third factor by $2^{O(s^{4})d}\|D_{\cdot,\alpha_{3}'}\|_{l^{4}}$. Ignoring the $2^{O(s^{4})d}$ factors, we thus bound (\ref{trilin}) by
\begin{eqnarray}
\mathcal{S}_{sub}'&\lesssim&\sup_{\rho}\int_{\mathbb{R}^{2}}\mathrm{d}\alpha_{2}\mathrm{d}\alpha_{3}'\cdot\big\|D_{\cdot,\alpha_{3}'}\big\|_{l^{4}}\cdot\bigg(\sum_{(n_{0},n_{1},\lambda):\lfloor \Xi''\rfloor=\rho}A_{n_{0}}^{2}C_{n_{0}-\lambda,\alpha_{2}}^{2}\bigg)^{\frac{1}{2}}\nonumber\\
&\lesssim &\sup_{\rho,\alpha_{3}'}\int_{\mathbb{R}}\bigg(\sum_{(n_{0},n_{1},\lambda):\lfloor \Xi''\rfloor=\rho}A_{n_{0}}^{2}C_{n_{0}-\lambda,\alpha_{2}}^{2}\bigg)^{\frac{1}{2}}\,\mathrm{d}\alpha_{2}\nonumber\\
&\lesssim &\sup_{\rho,\alpha_{3}'}\int_{\mathbb{R}}\,\mathrm{d}\alpha_{2}\cdot\sum_{(n_{0},n_{1},\lambda):\lfloor \Xi''\rfloor=\rho}A_{n_{0}}C_{n_{0}-\lambda,\alpha_{2}}.\nonumber
\end{eqnarray} Now we fix $\rho$ and $\alpha_{3}'$. Notice $n_{0}-\lambda=n_{2}$, and that
\begin{equation}A_{n_{0}}\leq F_{n_{2}}:=\bigg(\sum_{|m-n_{2}|\lesssim 2^{1.9sd}}A_{m}^{p'}\bigg)^{\frac{1}{p'}}\end{equation} which is because $|n_{0}-n_{2}|\lesssim 2^{1.9sd}$, we proceed to estimate
\begin{eqnarray}
\mathcal{S}_{sub}''&\lesssim &\int_{\mathbb{R}}\,\mathrm{d}\alpha_{2}\cdot\sum_{(n_{1},n_{2},\lambda):\lfloor \Xi''\rfloor=\rho}F_{n_{2}}C_{n_{2},\alpha_{2}}\nonumber\\
&\lesssim &2^{O(s^{4})d}\sum_{n_{2}}F_{n_{2}}\int_{\mathbb{R}}C_{n_{2},\alpha_{2}}\,\mathrm{d}\alpha_{2}\nonumber\\
&\lesssim & 2^{O(s^{4})d}\|F\|_{l^{p'}}\|C\|_{l^{p}L^{1}}.\nonumber
\end{eqnarray} Here we have again used the divisor estimate as above. Finally, notice that
\begin{equation}\|F\|_{l^{p'}}^{p'}=\sum_{n_{2}}\sum_{|m-n_{2}|\lesssim 2^{1.9sd}}A_{m}^{p'}\lesssim 2^{1.9sd}\|A\|_{l^{p'}}^{p'},\nonumber\end{equation} so we deduce that $\mathcal{S}_{sub}''\lesssim 2^{\frac{1.91sd}{p'}}\lesssim 2^{0.998sd}$, as desired.

It remains to consider the case where $\langle n_{l}\rangle\ll 2^{\frac{2d}{3}}$ for some $l$. Note that if $\langle m_{i}\rangle+\langle\beta_{i}\rangle+\langle\alpha_{4}\rangle\gtrsim 2^{\frac{d}{90}}$ for some $i$, the weight $|\Phi^{3.5}|\lesssim 2^{-cd}$ or the NR factor (as defined in (\ref{nrfac})) satisfies $|\Xi|\gtrsim 2^{(1+c)d}$, we will be done using the same arguments as before. This in particular includes the cases when (i) three of the $n_{l}$ are $\gtrsim 2^{\frac{3d}{4}}$ and the remaining one is $\ll 2^{\frac{2d}{3}}$; (ii) at least two of the $n_{l}$ are $\ll 2^{\frac{3d}{4}}$, and $\langle n_{2}\rangle+\langle n_{3}\rangle\gtrsim 2^{\frac{4d}{5}}$; (iii) both $n_{2}$ and $n_{3}$ are $\ll 2^{\frac{4d}{5}}$, and $n_{0}n_{1}<0$.

Now we assume that $n_{0}n_{1}>0$, and $\langle n_{2}\rangle+\langle n_{3}\rangle\ll 2^{\frac{4d}{5}}$. Let $n_{0}-n_{1}=k$ and $n_{2}+n_{3}=l$, so that $|k-l|\ll 2^{\frac{d}{90}}$. If $l\lesssim 2^{\frac{d}{80}}$, we must have $\langle n_{2}\rangle+\langle n_{3}\rangle\ll 2^{\frac{d}{70}}$ (or we gain from the $\Phi$ factor). These two variables being small means that we will be able to repeat the argument made before and gain $2^{cd}$ even if $|\Xi|$ is bounded below by $2^{0.99d}$ instead of $2^{(1+c)d}$. But when $|\Xi|\ll 2^{0.99d}$, it is clear that we must have $k=0$. If $l\gg 2^{\frac{d}{80}}$, we will have $k\sim l$, so that \begin{equation}\big||n_{0}|n_{0}-|n_{1}|n_{1}\big|\gtrsim 2^{d}|k|\gg 2^{\frac{4d}{5}}|l|\gg\big||n_{2}|n_{2}+|n_{3}|n_{3}\big|,\nonumber\end{equation} which implies $|\Xi|\gtrsim 2^{\frac{81d}{80}}$, contradicting our assumptions. Thus in any case we deduce that $n_{0}=n_{1}\sim 2^{d}$. Now we may use the expression (\ref{3.5e1}) for $\mathcal{S}$, but with $f'$ and $w'$ replaced with $f$ and $w''$ respectively (see the proof of Proposition \ref{easier}; note that we have made no restrictions for $\alpha_{0}$ or $\alpha_{1}$).

Next, suppose $\langle n_{2}\rangle+\langle n_{3}\rangle\sim 2^{d'}$, we may assume $d'<\frac{d}{10}$, otherwise we will gain a power $2^{cd}$ from the weight $\Phi^{3.5}$ (note that $n_{2}+n_{3}$ equals a linear combination of the $m$ variables since $n_{0}=n_{1}$). We will fix $d$ and $d'$ (then sum over them). If $\langle m_{i}\rangle\ll 2^{\frac{d'}{2}}$ for all $i$, then we gain a power $2^{cd'}$ from the weight $\Phi^{3.5}$; otherwise we have $\langle m_{i}\rangle\gtrsim 2^{\frac{d'}{2}}$ for some $i$, so we may extract a power $2^{cd'}$ from the $\frac{1}{m_{i}}$ factor (without affecting summability in $m_{i}$). In any case, we will be able to fix $m_{i}$ and $\beta_{i}$ and sum over them later, and the $\mathcal{S}_{sub}$ term can be bounded by
\begin{eqnarray}\mathcal{S}_{sub}&\lesssim &2^{-cd'}\sum_{n_{0},n_{2}}\int_{(T)}\big|f_{n_{0},\alpha_{0}}\big|\cdot\big|((w'')^{\omega_{1}})_{n_{0},\alpha_{1}}\big|\times\nonumber\\
&\times&\big|(z^{2})_{n_{2},\alpha_{2}}\big|\cdot\big|(z^{3})_{c_{1}-n_{2},\alpha_{3}}\big|\cdot\min\bigg\{T,\frac{1}{\langle \alpha_{4}\rangle}\bigg\}\nonumber\\
&\lesssim &2^{-cd'}T^{0+}\sum_{n_{0},n_{2}}\big\|\widehat{f_{n_{0}}}\big\|_{L^{q}}\big\|(((w'')^{\omega_{1}})_{n_{0}})^{\wedge}\big\|_{L^{1}}\prod_{l=2}^{3}\big\|\langle n_{l}\rangle^{-c}\widehat{(z^{l})_{n_{l}}}\big\|_{L^{1}}\nonumber\\
&\lesssim &2^{-cd'}T^{0+}\cdot 2^{rd}T_{d}\cdot 2^{-rd}\nonumber\\
&\lesssim& 2^{-cd'}T^{0+}T_{d}\nonumber.
\end{eqnarray} using the bound (\ref{festt}) for $f$ and the $X_{2}$ bound for $w''$, where $c_{j}$ are constants, $n_{3}=c_{1}-n_{2}$, and the integral $(T)$ is over the set
\begin{equation}\big\{(\alpha_{0},\cdots,\alpha_{4}:\alpha_{0}=\alpha_{14}-|n_{2}|n_{2}-|c_{1}-n_{2}|(c_{1}-n_{2})+c_{2})\big\}.\nonumber\end{equation} Now we can (sum over $m_{i}$ and integrate over $\beta_{i}$ and then) sum over $d$ and $d'$ to conclude that $\mathcal{S}$ is bounded by $T^{0+}$. This proves Proposition \ref{easy}.
\end{proof}
\section{The \emph{a priori} estimate IV: The remaining estimates}\label{mid3} In this section we will construct appropriate extensions of $u^{*}$, $v^{*}$ and $u$ so that the improved versions of (\ref{output3}) and (\ref{output4}) holds. Note that we have already constructed a function, denoted by $w^{(4)}$, that coincides with $w^{*}$ on $[-T,T]$, and verifies $\|w^{(4)}\|_{Y_{1}}\leq C_{0}e^{C_{0}A}$. We will fix this function in later discussions. In particular, we may (starting from this point) redefine the $\delta_{n}$ and $\Delta_{n}$ factors as in (\ref{factor0}) and (\ref{factor1}) by replacing $w^{*}$ with $w^{(4)}$ (instead of $w''$) and $u$ with $u'''$.
\subsection{The extension of $u$}\label{controllu} Fix a scale $K$ so that $K=C_{1.5}e^{C_{1.5}A}$ where $C_{1.5}$ is large enough depending on $C_{1}$, and the $C_{2}$ defined before is large enough depending on $C_{1.5}$. In order to construct a function $u^{(5)}$ that coincides with $u$ on $[-T,T]$ and satisfies\begin{equation}\label{loosebound}\|\langle\partial_{x}\rangle^{-s^{3}}u^{(5)}\|_{X_{2}}+\|\langle\partial_{x}\rangle^{-s^{3}}u^{(5)}\|_{X_{3}}+\|\langle\partial_{x}\rangle^{-s^{3}}u^{(5)}\|_{X_{4}}\leq C_{0}A,\end{equation} we only need to construct $\mathbb{P}_{>K}u^{(5)}$ and $\mathbb{P}_{\leq K}u^{(5)}$ separately.

To construct $\mathbb{P}_{>K}u^{(5)}$, simply note that $u''$ coincides with $u^{*}$ on $[-T,T]$, and we have $\|u''\|_{Y_{2}}\leq C_{1}e^{C_{1}A}$; thus if we define $(u^{(5)})_{n}=e^{\mathrm{i}\Delta_{n}}(u'')_{n}$, where $\Delta_{n}$ is redefined as above, then $\mathbb{P}_{>K}u^{(5)}$ will equal $\mathbb{P}_{>K}u$ on $[-T,T]$, and we have
\begin{equation}\label{bigucontrol}
\|\langle\partial_{x}\rangle^{-s^{4}}u^{(5)}\|_{X_{j}}\lesssim O_{C_{1}}(1)e^{C_{0}C_{1}A},
\end{equation} for $j\in\{2,3,4\}$, thanks to Proposition \ref{weaken}. Here note that the $s^{3}$ exponent in that proposition can actually be replaced by $s^{4}$ (which is clear from the proof), and the current $(\delta_{n},\Delta_{n})$ also verifies Proposition \ref{factt} (in the same way as the $(\delta_{n},\Delta_{n})$ defined in Section \ref{begin} does). Since we are restricting to high frequencies, the inequality (\ref{bigucontrol}) will easily imply
\begin{equation}\|\langle \partial_{x}\rangle^{-s^{3}}\mathbb{P}_{>K}u^{(5)}\|_{X_{j}}\leq A\nonumber\end{equation} for $j\in\{2,3,4\}$, which is what we need for $\mathbb{P}_{>K}u^{(5)}$.

Now let us construct $\mathbb{P}_{\leq K}u^{(5)}$. Recall that the function $u$ verifies the equation (\ref{smoothtrunc}), and the $Y_{2}$ norm of $\chi(t)e^{-tH\partial_{xx}}u(0)$ is clearly bounded by $C_{0}A$, we only need to prove
\begin{equation}\label{lowforu}\bigg\|\int_{0}^{t}e^{-(t-t')H\partial_{xx}}\mathbb{P}_{\neq 0}((S_{N}u(t'))^{2})\,\mathrm{d}t'\bigg\|_{(X^{-\frac{1}{s},\kappa})^{T}}\lesssim T^{0+},\end{equation} with the implicit constants bounded by $O_{C_{1.5}}(1)e^{C_{0}C_{1.5}A}$, where $X^{\sigma,\beta}$ is the standard space normed by $\|\langle n\rangle^{\sigma}\langle\xi\rangle^{\beta}\ast\|_{l^{2}L^{2}}$. Define the function $u^{(7)}$ by equations (\ref{possub}) and (\ref{negsub}), with the $u$ appearing on the right hand side replaced by $u'''$, and $v$ replaced by $\mathbb{P}_{\leq 0}v'''+w'''$ with $v'''$ defined by\footnote[1]{Note that the $\Delta_{n}$ here is different from the $\Delta_{n}$ defined in Section \ref{begin}. Later we will further modify the definition of $\Delta_{n}$, and this will be clearly stated at that time.} $(v''')_{n}=e^{\mathrm{i}\Delta_{n}}(v'')_{n}$ and $w'''$ similarly, so that $u^{(7)}$ coincides with $u$ on $[-T,T]$. We claim that
\begin{equation}\label{lowforu2}\big\|\mathcal{E}\big(\mathbf{1}_{[-T,T]}\mathbb{P}_{\neq 0}((S_{N}u^{(7)})^{2})\big)\big\|_{X^{-\frac{1}{s},\kappa}}\lesssim T^{0+}.\end{equation} This implies (\ref{lowforu}), since the two functions on the left hand side of (\ref{lowforu}) and (\ref{lowforu2}) coincide on $[-T,T]$.

Let $\mathcal{N}=\mathbb{P}_{\neq 0}((S_{N}u^{(7)})^{2})$, we will have
\begin{eqnarray}\label{expansofn}\mathcal{N}_{n_{0}}&=&\sum_{(\omega_{1},\omega_{2})\in\{-1,1\}^{2}}\sum_{\mu_{1},\mu_{2}}\frac{\omega_{1}^{\mu_{1}}\omega_{2}^{\mu_{2}}}{2^{\mu_{12}}\mu_{1}!\mu_{2}!}\times\\
&\times&\sum_{n_{1}+n_{2}+m_{1}+\cdots+m_{\mu_{12}}=n_{0}}\Psi\cdot\prod_{l=1}^{2}(z^{\omega_{l}})_{n_{l}}\prod_{i=1}^{\mu_{12}}\frac{(u''')_{m_{i}}}{m_{i}},\nonumber
\end{eqnarray} where $z=\mathbb{P}_{\leq 0}v'''+w'''$, $\Psi$ is the product of some $\psi$ factors and two characteristic functions $\mathbf{1}_{E_{1}}\mathbf{1}_{E_{2}}$, where\begin{equation}E_{1}=\big\{\omega_{1}(n_{1}+m_{1\mu_{1}})<0\big\},\,\,\,\,\,\,E_{2}=\big\{\omega_{2}(n_{2}+m_{\mu_{1}+1,\mu_{12}})<0\big\}.\nonumber\end{equation} Now, by the same argument as in the proof of Proposition \ref{intereqn} (note that $n_{0}\neq 0$), we can rewrite the right hand side of (\ref{expansofn}) as a sum of the same form, but either with $\Psi$ bounded by $1$ and $n_{1}+n_{2}\neq 0$, or with $\Psi$ bounded by $\frac{\langle m_{i}\rangle+\langle n_{0}\rangle}{\langle n_{1}\rangle+\langle m_{i}\rangle+\langle n_{0}\rangle}$ for some $i$.

To prove (\ref{lowforu2}), we will use the function $g$ and $f$ as in the previous sections, and fix the scale $d$ as before; we are then reduced to estimate (with $\mu=\mu_{12}$)\begin{equation}\mathcal{S}=\sum_{n_{0}=n_{1}+n_{2}+m_{1}+\cdots +m_{\mu}}\int_{(T)}\overline{f_{n_{0},\alpha_{0}}}\phi_{\alpha_{3}}\prod_{l=1}^{2}(z^{\omega_{l}})_{n_{l},\alpha_{l}}\cdot\prod_{i=1}^{\mu}\frac{(u''')_{m_{i},\beta_{i}}}{m_{i}}\nonumber,\end{equation} where $\phi$ is the Fourier transform of $\mathbf{1}_{[-T,T]}$, the integration $(T)$ is taken over the set
\begin{equation}\big\{(\alpha_{0},\cdots,\alpha_{3},\beta_{1},\cdots,\beta_{\mu}):\alpha_{0}=\alpha_{13}+\beta_{1\mu}+\Xi\big\},\nonumber\end{equation} where the NR factor \begin{equation}\Xi=|n_{0}|n_{0}-|n_{1}|n_{1}-|n_{2}|n_{2}-\sum_{i=1}^{\mu}|m_{i}|m_{i}.\nonumber\end{equation}We may assume that $\langle n_{0}\rangle$ and $\langle m_{i}\rangle$ are all $\ll 2^{\frac{d}{90}}$; otherwise, since we can gain some small power of $\langle m_{i}\rangle$ and any large power of $\langle n_{0}\rangle$ (because of the $\frac{-1}{s}$ index), we will be able to gain some power $2^{cd}$. Then we simply fix $(m_{i},\beta_{i})$ to produce $\mathcal{S}_{sub}$, then bound $f$ in $L^{2}L^{2}$, $\mathfrak{N}z$ in $L^{6}L^{6}$ and $\phi_{\alpha_{3}}$ in $l^{1+}L^{1+}$ with $2^{O(s)d}$ loss to conclude. Now, since $n_{0}$ and all $m_{i}$ are small, we have either $n_{1}+n_{2}\neq 0$ (which implies $|\Xi|\gtrsim 2^{d}$) or $|\Psi|\lesssim 2^{-cd}$ (so we can proceed as above). In this case at least one of the $\alpha$ or $\beta$ variables must be $\gtrsim 2^{d}$; since we will also have $\omega_{l}n_{l}<0$ and hence $z=w'''$ which is bounded in $Y_{1}$ by $C_{1}e^{C_{1}A}$, we will always gain a power of at least $2^{c(1-\kappa)d}$ from the corresponding factor, then proceed as before to estimate $\mathcal{S}_{sub}$ and then $\mathcal{S}$, with a loss of at most $2^{O(\epsilon)d}$. Finally, note that we always gain a power $T^{0+}$ which overwhelms any loss $O_{C_{1.5}}(1)e^{O_{C_{1.5}}(1)A}$, we have already proved (\ref{lowforu2}).

Next, note that $(u^{*})_{n}=e^{-\mathrm{i}\Delta_{n}}u_{n}$ on the interval $[-T,T]$, we have
\begin{equation}(\partial_{t}+H\partial_{xx})(u^{*})_{n}=e^{-\mathrm{i}\Delta_{n}}(\partial_{t}+H\partial_{xx})u_{n}-\mathrm{i}e^{-\mathrm{i}\Delta_{n}}(\delta_{n}u_{n}).\nonumber\end{equation} The first term on the right hand side can be bounded in $X^{-\frac{2}{s},\kappa-1}$ using Proposition \ref{weaken} and what we proved above, while the second term is easily bounded in the stronger space $X^{-10,0}$, by $O_{C_{1.5}}(1)e^{O_{C_{1.5}}(1)A}$. Therefore by the same argument, we can construct an extension of $\mathbb{P}_{\leq K}u^{*}$ that verifies (\ref{output3}).
\subsection{The extensions of $u^{*}$ and $v^{*}$}\label{controllv} Now, in order to construct appropriate extensions of $\mathbb{P}_{>K}u^{*}$ and $v^{*}$, we need the following
\begin{proposition}\label{recoverlemma}Let $\delta_{n}$ and $\Delta_{n}$ be redefined using (\ref{factor0}) and (\ref{factor1}), this time with $w^{*}$ replaced by $w^{(4)}$ and $u$ replaced by $u^{(5)}$, then the new factors will verify Proposition \ref{factt} with the constants being $C_{0}e^{C_{0}A}$ instead of $O_{C_{1}}(1)e^{C_{0}C_{1}A}$.

Now suppose $h$, $k$ and $h'$, $k'$ are four functions, supported in $|t|\lesssim 1$, that are related by $(h')_{n}=e^{\mathrm{i}\Delta_{n}}h_{n}$ and $(k')_{n}=e^{\mathrm{i}\Delta_{n}}k_{n}$. Assume that
\begin{equation}\label{fu}(h')_{n_{0}}=\sum_{\mu}C_{\mu}\sum_{n_{0}=n_{1}+m_{1}+\cdots+m_{\mu}}\Psi\cdot(k')_{n_{1}}\prod_{i=1}^{\mu}\frac{(u^{(5)})_{m_{i}}}{m_{i}},\end{equation} with $\Psi$ bounded, we will have $\|h\|_{Y_{2}}\lesssim C_{0}e^{C_{0}A}\|k\|_{Y_{2}}$. Moreover, if $\Psi$ is nonzero only when $\langle m_{i}\rangle\gtrsim K$ for some $i$ (again, the constant here may involve polynomial factors of $\mu$), then we have $\|h\|_{Y_{2}}\lesssim K^{0-}\|k\|_{Y_{2}}$.
\end{proposition}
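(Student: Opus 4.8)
The plan is to reduce everything to the two structural inputs we already have: first, that the redefined factors $\delta_n,\Delta_n$ obey Proposition \ref{factt} (with the improved constant $C_0e^{C_0A}$, which follows by exactly the same computation as in the proof of Proposition \ref{factt}, now using $\|\langle\partial_x\rangle^{-s^3}u^{(5)}\|_{X_2\cap X_3\cap X_4}\leq C_0A$ from \eqref{loosebound} and $\|w^{(4)}\|_{Y_1}\leq C_0e^{C_0A}$ in place of the bootstrap bounds), and second, the multiplier boundedness in Proposition \ref{weaken}. Combining these, the map $k\mapsto k'$ with $(k')_n=\chi(t)e^{\mathrm i\Delta_n}k_n$ is bounded on each $X_j$, $1\le j\le 7$, with operator norm $\le O(1)e^{C_0A}\langle\cdot\rangle^{s^4}$-type losses; in particular it is bounded on the norms making up $Y_2$ (namely $X_2,X_3,X_4$ and, via the convolution characterization from Section \ref{another}, $X_8$) up to a factor $C_0e^{C_0A}$. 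So once we show $\|h'\|_{Y_2}\lesssim C_0e^{C_0A}\|k'\|_{Y_2}$, applying the inverse map $h'\mapsto h$ and the forward map $k\mapsto k'$ gives $\|h\|_{Y_2}\lesssim C_0e^{C_0A}\|k\|_{Y_2}$, with the same reasoning yielding the $K^{0-}$ variant. Thus the whole statement is really an estimate for the ``gauge-conjugated power series'' \eqref{fu} on the primed side.

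Next I would estimate \eqref{fu} directly. Expanding, $(h')_{n_0}$ is a sum over $\mu$ of $\mu$-linear-in-$u^{(5)}$ expressions times one factor of $k'$, with bounded weight $\Psi$ and the frequency constraint $n_0=n_1+m_{1\mu}$. For the $X_2,X_3,X_4$ pieces of $Y_2$ this is handled exactly as in the proof of Proposition \ref{inii}: one writes $\langle n_0\rangle\le C_0^\mu\langle n_1\rangle\langle m_1\rangle\cdots\langle m_\mu\rangle$, peels off $\mu$ copies of $\|\langle\partial_x\rangle^{-s^3}u^{(5)}\|_{X_2}$ (each bounded by $C_0A$, absorbing the $\langle m_i\rangle^{-1}$ and the polynomial-in-$\mu$ factors into the geometrically-convergent $\sum_\mu C_0^\mu A^\mu/\mu!$), and is left with a shift operator $k'\mapsto (k'_{n+\text{(linear in $m$)}})_n$, whose boundedness on the $l^p$-Besov-type and $L^q l^2$ / $L^6L^6$ norms with an admissible $\langle m\rangle^{0+}$ loss is the translation estimate already proved at the end of Proposition \ref{inii} (and the analogous trivial translation-invariance of $X_3$, $X_4$). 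The $X_8$ component of $Y_2$ is the only genuinely new point: here one uses that the $\mathcal Y$-norm, hence $X_8$, is \emph{built} to be invariant (up to the weight $\langle n_i\rangle^{s^{1/2}}$ in \eqref{definition}) precisely under the operators $\pi_{n_0}$, i.e.\ under exactly the frequency-dependent shifts produced by the convolution structure $u_{m}/m$; so one atomic-decomposes $k'$ restricted to each dyadic block, transports each atom by the relevant $\pi$-shift coming from $m_{1\mu}$, pays $\langle m_{1\mu}\rangle^{s^{1/2}}\le C_0^\mu\prod\langle m_i\rangle^{s^{1/2}}$, and absorbs this into the $u^{(5)}$ factors (which have derivatives to spare). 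For the $K^{0-}$ refinement, the hypothesis that $\Psi$ vanishes unless some $\langle m_i\rangle\gtrsim K$ lets us extract $\langle m_i\rangle^{0-}\lesssim K^{0-}\langle m_i\rangle^{0+}$ from that one factor, and $K^{0-}$ times $C_0e^{C_0A}$ is $\le K^{0-}$ since $K=C_{1.5}e^{C_{1.5}A}$ with $C_{1.5}$ chosen large depending on $C_1$ (so $e^{C_0A}$ is a tiny power of $K$).

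The main obstacle I anticipate is bookkeeping rather than a single hard estimate: one must check that \emph{every} norm entering $Y_2$ — in particular the atomic $X_8$ norm — transforms correctly under both the exponential conjugation and the $u$-power-series, and that all the $\mu$-dependent constants (from $\langle n_0\rangle\le C_0^\mu\prod\langle n_i\rangle$, from the polynomial-in-$\mu$ losses in the non-resonance/weight bounds, and from the $\langle m_{1\mu}\rangle^{s^{1/2}}$ loss in $X_8$) stay of size $O(1)^\mu$ so that the sum $\sum_\mu C_0^\mu A^\mu/\mu!=e^{C_0A}$ converges; the exponential factor $e^{C_0A}$ must be tracked explicitly so that in the $K^{0-}$ case it is genuinely dominated by a power of $K$. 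None of the individual steps are new — they recombine Proposition \ref{factt}, Proposition \ref{weaken}, and the translation/atomic arguments already used in Proposition \ref{inii} and Section \ref{another} — but assembling them cleanly across all four constituent norms of $Y_2$ is where the care is needed.
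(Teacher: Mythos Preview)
Your reduction to the primed side is where the argument breaks. Proposition \ref{weaken} only gives
\[
\|\langle\partial_x\rangle^{-s^{3}}(k')\|_{X_j}\lesssim e^{C_0A}\|k\|_{X_j},\qquad 1\le j\le 7,
\]
so the conjugation $k\mapsto k'$ and its inverse each carry an unavoidable $\langle\partial_x\rangle^{-s^{3}}$ loss, and the statement is \emph{not} available for $j=8$ at all. Hence ``prove $\|h'\|_{Y_2}\lesssim\|k'\|_{Y_2}$ and conjugate back'' cannot yield the clean $\|h\|_{Y_2}\lesssim C_0e^{C_0A}\|k\|_{Y_2}$ that the proposition asserts. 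The paper avoids this by never decoupling: it rewrites \eqref{fu} on the unprimed side as
\[
h_{n_0}=\sum_{\mu}C_{\mu}\sum_{n_0=n_1+m_{1\mu}}\Psi\, e^{\mathrm{i}(\Delta_{n_1}-\Delta_{n_0})}\,k_{n_1}\prod_{i}\frac{(u^{(5)})_{m_i}}{m_i},
\]
and exploits that the phase appears only as a \emph{difference}. Since $n_0-n_1=m_{1\mu}$, Proposition \ref{factt} (the $\langle n\rangle^{-1/2}$ bound on $\delta_{n+1}-\delta_n$) together with Lemma \ref{general} gives the improved estimate
\[
\big\|\langle\alpha_2\rangle\big(\chi e^{\mathrm{i}(\Delta_{n_1}-\Delta_{n_0})}\big)^{\wedge}(\alpha_2)\big\|_{L^{\sigma}}\lesssim C_0e^{C_0A}\sum_{i}\langle m_i\rangle^{s^{5}},
\]
so the exponential factor costs only a tiny power of $\langle m_i\rangle$, absorbable into the $u^{(5)}$ weights, with \emph{no} loss in $\langle n_0\rangle$ or $\langle n_1\rangle$. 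This is the mechanism that makes the estimate lossless in derivatives; your plan bypasses it entirely.

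There is a second, independent gap in the $X_4$ step. The norm $\|\langle n\rangle^{-1}\langle\xi\rangle^{\kappa}\cdot\|_{l^{\gamma}L^2}$ is \emph{not} translation invariant in the relevant sense: under the frequency shift $n_1=n_0-m_{1\mu}$ the twisted variable $\xi$ moves by $|n_0|n_0-|n_1|n_1$, which is comparable to $2^{d}$ even when $m_{1\mu}$ is small, so the $\langle\xi\rangle^{\kappa}$ weight is genuinely disturbed. The paper handles $X_4$ by a duality argument: pair against $g\in X_4'$, pass to the summation--integration $\mathcal S=(g,h)$, and split into cases according to whether $\langle\alpha_0\rangle$ is dominated by the non-resonance factor $\Xi$, by $\langle\alpha_1\rangle$, by $\langle\alpha_2\rangle$ (the exponential-difference variable), or by some $\langle\beta_i\rangle$; each case requires a different placement of the $\langle n\rangle^{-1}$ weight and a different Strichartz/H\"older pairing. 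Your claim that $X_4$ follows from ``trivial translation-invariance'' would only work for a norm without the $\langle\xi\rangle^{\kappa}$ weight. For $j\in\{2,3,8\}$ your shift/atomic description is closer to what the paper does, but there too the paper first fixes the $\alpha_2$ variable using the difference bound above before reducing to a pure shift in $n$.
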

\begin{proof}The estimates about $\delta_{n}$ and $\Delta_{n}$ are proved in the same way as in Proposition \ref{factt}; notice that all the relevant norms bounded by $O_{C_{1}}(1)e^{C_{0}C_{1}A}$ there are now bounded by $C_{0}e^{C_{0}A}$ in this updated version, thanks to the construction of $w^{(4)}$ in previous sections and the construction of $u^{(5)}$ above.

Now we need to bound $\|h\|_{X_{j}}$ for $j\in\{2,3,4,8\}$. By fixing and then summing over $\mu$, we may assume that
\begin{equation}\big|h_{n_{0},\alpha_{0}}\big|\leq C_{0}\sum_{n_{0}=n_{1}+m_{1}\cdots+m_{\mu}}\int_{(T)}\big|k_{n_{1},\alpha_{1}}\big|\cdot\big(\chi e^{\mathrm{i}(\Delta_{n_{1}}-\Delta_{n_{0}})}\big)^{\wedge}(\alpha_{2})\prod_{i=1}^{\mu}\bigg|\frac{(u^{(5)})_{m_{i},\beta_{i}}}{m_{i}}\bigg|,\nonumber\end{equation} where the integration is taken over the set
\begin{equation}\big\{(\alpha_{1},\alpha_{2},\beta_{1},\cdots,\beta_{\mu}):\alpha_{0}=\alpha_{1}+\alpha_{2}+\beta_{1\mu}+\Xi\big\},\nonumber\end{equation} and the NR factor is
\begin{equation}\Xi=|n_{0}|n_{0}-|n_{1}|n_{1}-\sum_{i=1}^{\mu}|m_{i}|m_{i}.\nonumber\end{equation} Throughout the proof we will only use the $X_{j'}$ norm for $\langle\partial_{x}\rangle^{-s^{3}}u^{(5)}$ for $j'\in\{2,3,4\}$, and it is important to notice that these norms are bounded by $C_{0}A$ instead of $C_{1}A$.

First assume $j=4$. We introduce the function $g$ with $\|g\|_{X_{4}'}\lesssim 1$, so that we only need to estimate $\mathcal{S}:=(g,h)$. This is a summation-integration we have seen many times before; to analyze it, we notice that either $\langle\Xi\rangle$, or one of $\langle\alpha_{l}\rangle$ (where $l\in\{1,2\}$) or $\langle\beta_{i}\rangle$, must be $\gtrsim \langle\alpha_{0}\rangle$.

Suppose $\langle\alpha_{0}\rangle\lesssim\langle\Xi\rangle$. Let the maximum of $\langle n_{0}\rangle$, $\langle n_{1}\rangle$ and all $\langle m_{i}\rangle$ be $\sim 2^{d}$ (and we fix $d$), then $\langle\alpha_{0}\rangle\lesssim 2^{2d}$. If among the variables $n_{0}$ and $m_{i}$, at least two are $\gtrsim 2^{(1-s^{2})d}$, then we will gain a net power $2^{c(1-\kappa)d}$ from the weights in the $X_{4}'$ bound for $g$, or from the $|m_{i}|^{-1}$ weights appearing in $\mathcal{S}$. Then we will be able to bound the $\big(\chi e^{\mathrm{i}(\Delta_{n_{1}}-\Delta_{n_{0}})}\big)^{\wedge}(\alpha_{2})$ factor using some inequality similar to (\ref{expcon}), fix the irrelevant $(m_{j},\beta_{j})$ variables to produce $\mathcal{S}_{sub}$, then estimate it by bounding $\mathfrak{N}g$ in $L^{2+}L^{2+}$, $\mathfrak{N}k$ and the two $\mathfrak{N}u^{(5)}$ factors in $L^{6}L^{6}$ and the $\big(\chi e^{\mathrm{i}(\Delta_{n_{1}}-\Delta_{n_{0}})}\big)^{\wedge}(\alpha_{2})$ factor in $l^{1+}l^{1+}$, where $2+$ is some $2+cs^{2}$, with a further loss of at most $2^{O(\epsilon)d}$. We then sum over the $(m_{j},\beta_{j})$ variables and sum over $d$ to conclude the estimate for $\mathcal{S}$. If instead only one of them can be $\gtrsim 2^{(1-s^{2})d}$ (again, assume $d$ is large enough), then this variable and $n_{1}$ must both be $\sim 2^{d}$. Let the maximum of all the remaining variables be $\sim 2^{d'}$ where $d'\leq (1-s^{2})d$ is also fixed, then we will have $|\alpha_{0}|\lesssim 2^{d+d'}$. Since we will be able to gain a power $2^{c(1-\kappa)(d+d')}$ from the weights, we can proceed in the same way as above.

Next, suppose $\langle\alpha_{0}\rangle\lesssim\langle\alpha_{2}\rangle$. By invoking (\ref{factt2}) we may get an estimate better than (\ref{expcon}) for the $\alpha_{2}$ factor, namely
\begin{equation}\label{betcon}\big\|\langle\alpha_{2}\rangle\big(\chi e^{\mathrm{i}(\Delta_{n_{1}}-\Delta_{n_{0}})}\big)^{\wedge}(\alpha_{2})\big\|_{L^{\sigma}}\lesssim C_{0}e^{C_{0}A}\sum_{i=1}^{\mu}\langle m_{i}\rangle^{s^{5}},\end{equation} for all $1\leq\sigma\leq\infty$; the $\lesssim$ here allows for a polynomial factor in $\mu$. Therefore, by losing a tiny power of some $m_{i}$, we may cancel the $\alpha_{0}$ weight in the $X_{4}'$ bound for $g$ and still bound the $\alpha_{2}$ factor in $L^{2}$, then fix $(m_{i},\beta_{i})$ and produce $\mathcal{S}_{sub}$, and estimate it by
\begin{eqnarray}
\mathcal{S}_{sub}&\lesssim&\sum_{n_{0}}\langle n_{0}\rangle^{-1}\big\|\langle n_{0}\rangle\langle\alpha_{0}\rangle^{-\kappa}g_{n_{0},\alpha_{0}}\big\|_{L_{\alpha_{0}}^{2}}\big\|k_{n_{0}+c_{1},\alpha_{1}}\big\|_{L_{\alpha_{1}}^{1}}\nonumber\\
&\lesssim &\big\|\langle n_{0}\rangle^{-1}\langle n_{0}\rangle\langle\alpha_{0}\rangle^{-\kappa}g_{n_{0},\alpha_{0}}\big\|_{l^{\frac{3}{2}}L^{2}}\cdot\| k_{n_{1},\alpha_{1}}\|_{l^{3}L^{1}}\lesssim 1 \nonumber,
\end{eqnarray} where $c_{j}$ are constants. If instead $\langle\alpha_{0}\rangle\lesssim\langle\alpha_{1}\rangle$, we can invoke the $\alpha_{1}$ weight in $X_{4}$ norm for $k$ to cancel the $\alpha_{0}$ weight, then notice that $\langle n_{1}\rangle\lesssim\langle n_{0}\rangle+\langle m_{i}\rangle$ for some $i$, then bound the $\alpha_{2}$ factor in $L^{1}$ and fix all the other $(m_{j},\beta_{j})$ to produce $\mathcal{S}_{sub}$. If $\langle n_{1}\rangle\lesssim\langle m_{i}\rangle$ we will estimate $\mathcal{S}_{sub}$ by
\begin{eqnarray}\mathcal{S}_{sub}&\lesssim&\sum_{n_{0}=n_{1}+m_{i}+c_{1}}\frac{\langle n_{1}\rangle}{\langle n_{0}\rangle\langle m_{i}\rangle}\big\|\langle n_{0}\rangle\langle\alpha_{0}\rangle^{-\kappa}g_{n_{0},\alpha_{0}}\big\|_{L_{\alpha_{0}}^{2}}\times\nonumber\\
&\times&\big\|\langle n_{1}\rangle^{-1}\langle\alpha_{1}\rangle^{\kappa}k_{n_{1},\alpha_{1}}\big\|_{L_{\alpha_{1}}^{2}}\big\|(u^{(5)})_{m_{i},\beta_{i}}\big\|_{L_{\beta_{i}}^{1}}\nonumber\\&\lesssim&\big\|\langle\alpha_{0}\rangle^{-\kappa}g\big\|_{l^{1}L^{2}}\big\|\langle n_{1}\rangle^{-1}\langle\alpha_{1}\rangle^{\kappa}k\big\|_{l^{\gamma}L^{2}}\cdot\|u^{(5)}\|_{l^{\gamma'}L^{1}}\lesssim1\nonumber,
\end{eqnarray} where $c_{j}$ are constants; note that $\|u^{(5)}\|_{l^{\gamma'}L^{1}}$ can be controlled by the $X_{2}$ norm of $\langle\partial_{x}\rangle^{-4s^{3}}u^{(5)}$ due to (\ref{hierarchy}). If $\langle n_{1}\rangle\lesssim\langle n_{0}\rangle$ we will instead estimate the $g$ factor above in $l^{\gamma'}L^{2}$, the $k$ factor in $l^{\gamma}L^{2}$, and the $u^{(5)}$ factor with weight $\langle m_{i}\rangle^{-1}$ in $l^{1}L^{1}$. Finally, if $\langle\alpha_{0}\rangle\lesssim\langle\beta_{i}\rangle$ for some $i$, we will cancel the $\langle\alpha_{0}\rangle$ weight by the $\langle\beta_{i}\rangle$ weight, then fix $(m_{j},\beta_{j})$ and again get $\mathcal{S}_{sub}$, which we estimate by
\begin{eqnarray}\mathcal{S}_{sub}&\lesssim &\sum_{n_{0}=n_{1}+m_{i}+c_{1}}\langle c_{1}\rangle^{-s}\langle n_{0}\rangle^{-s}\langle n_{1}\rangle^{-c(2-\gamma)}\big\|\langle n_{0}\rangle^{s}\langle \alpha_{0}\rangle^{-\kappa}g_{n_{0},\alpha_{0}}\big\|_{L_{\alpha_{0}}^{2}}\times\nonumber\\
&\times&\big\|\langle n_{1}\rangle^{c(2-\gamma)}k_{n_{1},\alpha_{1}}\big\|_{L_{\alpha_{1}}^{1}}\big\|\langle m_{i}\rangle^{-1}\langle\beta_{i}\rangle^{\kappa}(u^{(5)})_{m_{i},\beta_{i}}\big\|_{L_{\beta_{i}}^{2}}\nonumber\\
&\lesssim&\big\|\langle n_{0}\rangle^{s}\langle \alpha_{0}\rangle^{-\kappa}g\big\|_{l^{1}L^{2}}\|\langle n_{1}\rangle^{c(2-\gamma)}k\|_{l^{\gamma'}L^{1}}\cdot\big\|\langle\partial_{x}\rangle^{-4s^{3}}u^{(5)}\big\|_{X_{4}}\lesssim 1,\nonumber\end{eqnarray} where $c_{j}$ are constants, and again note that we can gain any small power of $c_{1}$, since $\pm c_{1}$ is the sum of all $m_{j}$ where $j\neq i$.

Next, let us assume $j\in\{2,3,8\}$. In this case we only use the $l^{1}L^{1}$ norm of $m_{i}^{-1}(u^{(5)})_{m_{i},\beta_{i}}$, so we will be free to lose any power $\langle m_{i}\rangle^{c}$ for small $c$. Therefore we may fix each $(m_{i},\beta_{i})$, invoke (\ref{betcon}) to fix $\alpha_{2}$ also (by an argument similar to the proof of Proposition \ref{weaken}), then reduce to bounding $\|z\|_{X_{j}}$ in terms of $\|k\|_{X_{j}}$, provided \begin{equation}\big|z_{n_{0},\alpha_{0}}\big|\leq\big|k_{n_{0}+c_{1},\alpha_{0}+|n_{0}+c_{1}|(n_{0}+c_{1})-|n_{0}|n_{0}+c_{2}}\big|.\nonumber\end{equation} But since the bound we get is allowed to grow like $\langle c_{1}\rangle^{s^{1/3}}$ (note that $-c_{1}$ is the sum of all $m_{i}$, and we are allowed to lose $\langle m_{i}\rangle^{c}$ for small $c$), this will be easy if we examine $X_{2}$, $X_{3}$ and $\mathcal{Y}$ separately (in particular, we will use the definition of the $\mathcal{Y}$ norm). The only thing we need to address is the $\langle n\rangle$ weights in the definition of $X_{2}$ and $X_{3}$, and the step of taking supremum when obtaining $X_{8}$ norm from $\mathcal{Y}$ norm; however, by a standard argument we can show that through these we will lose at most $\langle c_{1}\rangle^{O(s)}$ power, which is acceptable.

Finally, we may check that throughout the above proof, we only need to use the $X_{j}'$ norms of $\langle \partial_{x}\rangle^{-2s^{3}}u^{(5)}$ instead of $\langle \partial_{x}\rangle^{-s^{3}}u^{(5)}$; thus we will gain a power $K^{0+}$ if we make the restriction $m_{i}\gtrsim K$ for some $i$.
\end{proof}
To see how Proposition \ref{recoverlemma} allows us to construct extensions of $\mathbb{P}_{>K}u^{*}$ and $v^{*}$, we first note that $u^{*}$ is real valued, so we only need to construct an extension of $\mathbb{P}_{>+K}u^{*}$ (which is an abbreviation of $\mathbb{P}_{+}\mathbb{P}_{>K}u^{*}$). Now, in Proposition \ref{recoverlemma} we may choose $k$ to be an arbitrary extension of $v^{*}$ and $h$ to be some extension of $u^{*}$ (and choose $h'$ and $k'$ accordingly) so that (\ref{fu}) holds with appropriate coefficients (cf. (\ref{possub}) and (\ref{negsub})).

Exploiting the freedom in the choice of $k$, we will set $\mathbb{P}_{+}k=w^{(4)}$ and $\mathbb{P}_{\leq 0}k=\mathbb{P}_{\leq 0}v''$. The part coming from $\mathbb{P}_{+}k$ is bounded in $Y_{2}$ (before or after the $\mathbb{P}_{>+K}$ projection) by $C_{0}e^{C_{0}A}$ due to Proposition \ref{recoverlemma}, since we already have $\|w^{(4)}\|_{Y_{2}}\lesssim \|w^{(4)}\|_{Y_{1}}\leq C_{0}e^{C_{0}A}$. As for the part coming from $\mathbb{P}_{\leq 0}k$, we must have $n_{0}>K$ and $n_{1}\leq 0$ in (\ref{fu}), so the $\Psi$ factor will be nonzero only when $\langle m_{i}\rangle\gtrsim (\mu+2)^{-2}K$ for some $i$, thus we may again use Proposition \ref{recoverlemma} to bound this part in $Y_{2}$ by $O_{C_{1}}(1)e^{C_{0}C_{1}A}K^{0-}\leq 1$, since we have $\|v''\|_{Y_{2}}\leq C_{1}e^{C_{1}A}$. This completes the construction for the extension of $u^{*}$.

Now, to construct the extension of $v^{*}$, simply set the $k$ in Proposition \ref{recoverlemma} to be $u^{(4)}$ (which is the extension of $u^{*}$ we just constructed) and $h$ to be some extension of $v^{*}$ so that (\ref{fu}) holds with appropriate coefficients. Then this extension will do the job, since we already have $\|u^{(4)}\|_{Y_{2}}\leq C_{0}e^{C_{0}A}$. This finally completes the proof of Proposition \ref{uniformest}.
\section{The \emph{a priori} estimate V: Controlling the difference}\label{end}
The main purpose of this section is to provide necessary estimates for differences of two solutions to (\ref{smoothtrunc}). First we need to introduce some notations, including the definition of the metric space $\mathcal{BO}^{T}$, which will be used also in Section \ref{lwp}.
\subsection{Preparations}
\begin{definition}\label{bott}Suppose $\mathcal{Q}=(u'',v'',w'',u''')$ and $\mathcal{Q}'=(u^{\dagger\dagger},v^{\dagger\dagger},w^{\dagger\dagger},u^{\dagger\dagger\dagger})$ are two quadruples of functions defined on $\mathbb{R}\times\mathbb{T}$, we define their distance by
\begin{eqnarray}\mathfrak{D}_{\sigma}(\mathcal{Q},\mathcal{Q}')&=&\big\|\langle \partial_{x}\rangle^{-\sigma}(w''-w^{\dagger\dagger})\big\|_{Y_{1}}+\big\|\langle \partial_{x}\rangle^{-\sigma}(v''-v^{\dagger\dagger})\big\|_{Y_{2}}+\nonumber\\
&+&\big\|\langle \partial_{x}\rangle^{-\sigma}(u''-u^{\dagger\dagger})\big\|_{Y_{2}}+\big\|\langle\partial_{x}\rangle^{-s^{3}-\sigma}(u'''-u^{\dagger\dagger\dagger})\big\|_{X_{2}\cap X_{3}\cap X_{4}},\nonumber\end{eqnarray} for $\sigma\in\{0,s^{5}\}$. In particular, if $\sigma=\mathcal{Q}'=0$, we define the triple norm\begin{eqnarray}
|\!|\!|\mathcal{Q}|\!|\!|:=\mathfrak{D}_{0}(\mathcal{Q},0)&=&\big\|w''\big\|_{Y_{1}}+\big\|v''\big\|_{Y_{2}}+\big\|u''\big\|_{Y_{2}}+\big\|\langle\partial_{x}\rangle^{-s^{3}}u'''\big\|_{X_{2}\cap X_{3}\cap X_{4}}\nonumber.\end{eqnarray} Next, suppose $u$ and $u^{-}$ are functions defined on $I\times\mathbb{T}$ for some interval $I$, we will define\footnote[1]{Note that the definition depends on the choice of the origin in $\Delta_{n}(t)=\int^{t}\delta_{n}(t')\mathrm{d}t'$, but this will not affect the triple norm $|\!|\!|\cdot|\!|\!|$. This does affect estimates for differences, but we need them only when $I=[-T,T]$ or its translation, in which case the choice of origin is canonical.} the functions $(u^{*},v^{*},w^{*})$ corresponding to $u$, and $(u^{+},v^{+},w^{+})$ corresponding to $u^{-}$, as in Sections \ref{gaugetransform} and \ref{gaugetransform3}, and then define
\begin{equation}\mathfrak{D}_{\sigma}^{I}(u,u^{-})=\inf_{\mathcal{Q},\mathcal{Q}'}\mathfrak{D}_{\sigma}(\mathcal{Q},\mathcal{Q}'),\end{equation} where the infimum is taken over all quadruples $\mathcal{Q}$ and $\mathcal{Q}'$ that extends $(u^{*},v^{*},w^{*},u)$ and $(u^{+},v^{+},w^{+},u^{-})$ from $I\times\mathbb{T}$ to $\mathbb{R}\times\mathbb{T}$, respectively. We will also define $|\!|\!|u|\!|\!|_{I}=\mathfrak{D}_{0}(u,0)=\inf_{\mathcal{Q}}|\!|\!|\mathcal{Q}|\!|\!|$; these notations can be written in a more familiar way as
\begin{eqnarray}\mathfrak{D}_{\sigma}^{I}(u,u^{-})&=&\big\|\langle\partial_{x}\rangle^{-\sigma}(w^{*}-w^{+})\big\|_{Y_{1}^{I}}+\big\|\langle\partial_{x}\rangle^{-\sigma}(v^{*}-v^{+})\big\|_{Y_{2}^{I}}+\nonumber\\
&+&\big\|\langle\partial_{x}\rangle^{-\sigma}(u^{*}-u^{+})\big\|_{Y_{2}^{I}}+\big\|\langle\partial_{x}\rangle^{-s^{3}-\sigma}(u-u^{-})\big\|_{(X_{2}\cap X_{3}\cap X_{4})^{I}};\nonumber
\end{eqnarray}
\begin{eqnarray}|\!|\!|u|\!|\!|_{I}&=&\big\|w^{*}\big\|_{Y_{1}^{I}}+\big\|v^{*}\big\|_{Y_{2}^{I}}+\big\|u^{*}\big\|_{Y_{2}^{I}}+\big\|\langle\partial_{x}\rangle^{-s^{3}}u\big\|_{(X_{2}\cap X_{3}\cap X_{4})^{I}}.\nonumber
\end{eqnarray}
Now we can define the metric space
\begin{equation}\mathcal{BO}^{I}=\{u:|\!|\!|u|\!|\!|_{I}<\infty\},\end{equation} with the distance function given by $\mathfrak{D}_{0}^{I}$ (we will also use $\mathfrak{D}_{s^{5}}^{I}$, which is also well-defined on $\mathcal{BO}^{I}$). Finally, when $I=[-T,T]$, we may use $T$ in place of $I$ in sub- or superscripts, so this contains the definition of $\mathcal{BO}^{T}$.
\end{definition}
\begin{remark}\label{rmk00} If $u\in\mathcal{BO}^{T}$, we may define $uu_{x}=\frac{1}{2}\partial_{x}(\mathbb{P}_{\neq 0}u^{2})$ as a distribution on $[-T,T]$ through an argument similar to the one in Section \ref{mid3}. More precisely, we may uniquely define the function
\begin{equation}\label{evoo}h(t)=\int_{0}^{t}e^{-(t-t')H\partial_{xx}}\big(u(t')\partial_{x}u(t')\big)\,\mathrm{d}t'\end{equation} as an element of $(X^{-\frac{1}{s},\kappa})^{T}$.

In particular, we may define $u\in \mathcal{BO}^{T}$ to be a solution to (\ref{bo}) on $[-T,T]$, if $u$ verifies the integral version of (\ref{bo}) with the evolution term defined as in (\ref{evoo}). Clearly this definition is independent of the choice of origin, and $[-T,T]$ may be replaced by any interval $I$.

Moreover, since the arguments in Section \ref{mid3} allow for some room, the map sending $u$ to $h$ in (\ref{evoo}) is continuous with respect to the \emph{weak} distance function $\mathfrak{D}_{s^{5}}^{T}$ (or $\mathfrak{D}_{s^{5}}$ if we consider the map sending the quadruple $\mathcal{Q}$ to $h$). This fact will be important in the proof of Theorem \ref{main'}.
\end{remark}
\begin{proposition}\label{embed000} Suppose $u$ and $u^{-}$ are two functions defined on $I\times\mathbb{T}$, and choose corresponding extensions $\mathcal{Q}=(u'',v'',w'',u''')$ and $\mathcal{Q}'=(u^{\dagger\dagger},v^{\dagger\dagger},w^{\dagger\dagger},u^{\dagger\dagger\dagger})$. We then have\begin{equation}\label{embed01}\|u\|_{C_{t}^{0}(I\to Z_{1})}\lesssim|\!|\!|\mathcal{Q}|\!|\!|;\,\,\,\,\,\,\,\,\,\,\|u\|_{C_{t}^{0}(I\to Z_{1})}\lesssim|\!|\!|u|\!|\!|_{I}.\end{equation} Concerning differences, we only have the weaker estimate
\begin{equation}\label{embed03}\|\langle\partial_{x}\rangle^{-s^{5}}(u-u^{-})\|_{C_{t}^{0}(I\to Z_{1})}\lesssim O_{|\!|\!|\mathcal{Q}|\!|\!|,|\!|\!|\mathcal{Q}'|\!|\!|}(1)\cdot\mathfrak{D}_{s^{5}}(\mathcal{Q},\mathcal{Q}');\end{equation}
\begin{equation}\label{embed04}\|\langle\partial_{x}\rangle^{-\theta}(u-u^{-})\|_{C_{t}^{0}(I\to Z_{1})}\lesssim O_{\theta,|\!|\!|\mathcal{Q}|\!|\!|,|\!|\!|\mathcal{Q}'|\!|\!|}(1)\cdot\mathfrak{D}_{0}(\mathcal{Q},\mathcal{Q}'),\end{equation} for all $\theta>0$, where the constant may also depend on the upper bound of the length of $I$. Note that the $\theta$ in (\ref{embed04}) cannot be removed, thus $C_{t}^{0}([-T,T]\to Z_{1})$ cannot be embedded into $\mathcal{BO}^{T}$ as a \emph{subspace}.
\end{proposition}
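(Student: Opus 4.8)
The plan is to exploit that the passage $u\mapsto u^{*}$ is, frequency by frequency and at each fixed time, multiplication by the unimodular factor $e^{-\mathrm{i}\Delta_{n}(t)}$, so it preserves the $Z_{1}$ norm \emph{exactly}, while $u^{*}$ — unlike $u$ — is controlled with no derivative loss. First I would record the elementary embedding $\sup_{t}\|h(t)\|_{Z_{1}}\leq\|h\|_{X_{2}}$, valid for any $h$: since $|\widehat{h}(t,n)|\leq\|\widetilde{h}_{n,\cdot}\|_{L^{1}_{\xi}}$ by the Fourier inversion formula, one has $\|\langle n\rangle^{r}\widehat{h}(t,n)\|_{l^{p}_{n\sim 2^{d}}}\leq\|\langle n\rangle^{r}\|\widetilde{h}_{n,\cdot}\|_{L^{1}_{\xi}}\|_{l^{p}_{n\sim 2^{d}}}$ for every $t$ and $d$, and taking suprema gives the claim. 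As $X_{2}\subset Y_{2}$ and, on $I$, $\widehat{u}(t,n)=e^{\mathrm{i}\Delta_{n}(t)}\widehat{u^{*}}(t,n)$, we get $\|u(t)\|_{Z_{1}}=\|u^{*}(t)\|_{Z_{1}}$ there; choosing the component $u''$ of $\mathcal{Q}$ (which extends $u^{*}$) yields $\sup_{t\in I}\|u(t)\|_{Z_{1}}\leq\|u''\|_{X_{2}}\leq|\!|\!|\mathcal{Q}|\!|\!|$, which is (\ref{embed01}) (the second inequality following by taking the infimum over $\mathcal{Q}$). Continuity of $t\mapsto u(t)\in Z_{1}$ is routine: $t\mapsto\widehat{u}(t,n)$ is continuous for each $n$ by dominated convergence, and together with the uniform bound over dyadic blocks and a standard approximation this places $u$ in $C^{0}_{t}(I\to Z_{1})$.

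For the difference estimates I would split, on $I$,
\begin{equation}
\widehat{u}(t,n)-\widehat{u^{-}}(t,n)=e^{\mathrm{i}\Delta_{n}(t)}\big(\widehat{u^{*}}(t,n)-\widehat{u^{+}}(t,n)\big)+\big(e^{\mathrm{i}\Delta_{n}(t)}-e^{\mathrm{i}\Delta_{n}^{-}(t)}\big)\widehat{u^{+}}(t,n).\nonumber
\end{equation}
The first summand has $Z_{1}$ norm at time $t$ equal to $\|(u^{*}-u^{+})(t)\|_{Z_{1}}$, hence after the relevant $\langle\partial_{x}\rangle^{-\sigma}$ is dominated by $\|\langle\partial_{x}\rangle^{-\sigma}(u''-u^{\dagger\dagger})\|_{X_{2}}\leq\mathfrak{D}_{\sigma}(\mathcal{Q},\mathcal{Q}')$ for $\sigma\in\{0,s^{5}\}$. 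For the second summand, ``multiplication by $e^{\mathrm{i}\Delta_{n}}-e^{\mathrm{i}\Delta_{n}^{-}}$'' acts only in $t$ and commutes with the spatial projections, so it suffices to bound its operator norm on weighted $L^{h}_{t}$ spaces, block by block in $n\sim 2^{d}$. Here I would use Lemma \ref{general} with $h_{1}=\delta_{n}$, $h_{0}=\delta_{n}^{-}$, together with the difference analogue of Proposition \ref{factt} — obtained by expanding $\delta_{n}-\delta_{n}^{-}$ from the formula in Proposition \ref{finalver}, telescoping the product, and using that the combinatorial weight $\Gamma$ is the \emph{same} function for the two solutions so there is no $\Gamma$-difference term: the $u$-difference contributions are controlled by $\|\langle m\rangle^{-1}(u-u^{-})\|_{l^{1}L^{1}}\lesssim\|\langle\partial_{x}\rangle^{-s^{3}-\sigma}(u-u^{-})\|_{X_{2}}$ with no net $\langle n\rangle$-loss (the margin $r-s^{3}-\sigma>0$ is enormous), while the $|w_{k}|^{2}$-difference contribution (note $|w^{*}_{k}|=|w_{k}|$), coming from the sum over $k\lesssim\langle n\rangle$ and the $\langle\partial_{x}\rangle^{-\sigma}$-weighted difference norm of $w^{*}-w^{+}$ (available since $Y_{1}$ dominates both $X_{2}$ and $X_{3}$), is bounded by $O_{|\!|\!|\mathcal{Q}|\!|\!|,|\!|\!|\mathcal{Q}'|\!|\!|}(1)\,\langle 2^{d}\rangle^{\sigma}(d+2)^{O(1)}\mathfrak{D}_{\sigma}$. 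Hence the operator norm on $n\sim 2^{d}$ is $\lesssim O_{|\!|\!|\mathcal{Q}|\!|\!|,|\!|\!|\mathcal{Q}'|\!|\!|}(1)\,(d+2)^{O(1)}\langle 2^{d}\rangle^{\sigma}\mathfrak{D}_{\sigma}(\mathcal{Q},\mathcal{Q}')$, and the contribution of the second summand to $\|\langle\partial_{x}\rangle^{-\rho}(u-u^{-})\|_{Z_{1}}$ is
\begin{equation}
\lesssim\sup_{d\geq 0}2^{-\rho d}(d+2)^{O(1)}\langle 2^{d}\rangle^{\sigma}\cdot O_{|\!|\!|\mathcal{Q}|\!|\!|,|\!|\!|\mathcal{Q}'|\!|\!|}(1)\,\mathfrak{D}_{\sigma}(\mathcal{Q},\mathcal{Q}')\,\|u^{+}\|_{X_{2}}.\nonumber
\end{equation}

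For (\ref{embed04}) one takes $\sigma=0$ and $\rho=\theta>0$ arbitrary; then $2^{-\theta d}(d+2)^{O(1)}$ is bounded in $d$ and the estimate follows with constant $O_{\theta,|\!|\!|\mathcal{Q}|\!|\!|,|\!|\!|\mathcal{Q}'|\!|\!|}(1)$, while the obstruction at $\theta=0$ is precisely that $(d+2)^{O(1)}$ is not summable — this is what prevents embedding $C^{0}_{t}(I\to Z_{1})$ as a subspace. For (\ref{embed03}) one takes $\sigma=\rho=s^{5}$, and here the gain $2^{-s^{5}d}$ from the left-hand weight must simultaneously absorb the $\langle 2^{d}\rangle^{s^{5}}$ created by un-weighting the $\langle\partial_{x}\rangle^{-s^{5}}$-controlled $w$-difference \emph{and} the logarithmic powers $(d+2)^{O(1)}$; carrying out this balance is the main obstacle. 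I expect it to require a more careful accounting than the crude block estimate above — one should spend only a fraction of the available $s^{5}$ on un-weighting $w^{*}-w^{+}$, exploiting the genuine gaps in the hierarchy (\ref{hierarchy}) (e.g.\ the large margin in the $u$-difference factors and the slack in the Strichartz estimates), so that a residual power $2^{(0-)d}$ survives and makes $\sup_{d}$ finite. Granting this refinement, the second summand is $\lesssim O_{|\!|\!|\mathcal{Q}|\!|\!|,|\!|\!|\mathcal{Q}'|\!|\!|}(1)\,\mathfrak{D}_{s^{5}}(\mathcal{Q},\mathcal{Q}')$, which together with the first summand gives (\ref{embed03}); the $C^{0}_{t}$ continuity of the differences is inherited exactly as in the first paragraph.
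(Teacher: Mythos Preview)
Your decomposition and the treatment of (\ref{embed01}) match the paper's exactly, and your argument for (\ref{embed04}) is fine. The gap is in (\ref{embed03}), and the ``main obstacle'' you describe is self-inflicted.

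The target norm is $C^{0}_{t}(I\to Z_{1})$, so you are estimating a purely spatial norm at each \emph{fixed} time $t\in I$. There is no reason to pass through operator norms on weighted $L^{h}_{t}$ spaces or to invoke Lemma \ref{general}: that lemma bounds $\|\langle\xi\rangle(J_{1}-J_{0})^{\wedge}\|_{L^{k}}$ and necessarily carries the extra factor $(1+\|\widehat{\delta_{n}}\|_{L^{1}}+\|\widehat{\delta_{n}^{-}}\|_{L^{1}})^{2}\sim(\log\langle n\rangle)^{O(1)}$, which is precisely the $(d+2)^{O(1)}$ you cannot absorb. Instead, use the elementary pointwise bound
\[
\big|e^{\mathrm{i}\Delta_{n}(t)}-e^{\mathrm{i}\Delta_{n}^{-}(t)}\big|\leq\big|\Delta_{n}(t)-\Delta_{n}^{-}(t)\big|\leq\int_{I}\big|\delta_{n}-\delta_{n}^{-}\big|\,dt'\lesssim|I|\cdot\|\delta_{n}-\delta_{n}^{-}\|_{L^{\infty}_{t}}.
\]
To control the last quantity you expand (\ref{factor1}) and telescope exactly as you propose. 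For the $w$-difference piece, block-by-block one has $\|(w''-w^{\dagger\dagger})\|_{l^{2}_{k\sim 2^{d'}}L^{1}}\lesssim 2^{s^{5}d'}\mathfrak{D}_{s^{5}}$ (since $r=\tfrac12-\tfrac1p$), and the sum over $d'\le d=\log_{2}\langle n\rangle$ is \emph{geometric}, yielding $\lesssim\langle n\rangle^{s^{5}}\mathfrak{D}_{s^{5}}$ with no logarithm. The $u$-difference piece gives $\lesssim(\log\langle n\rangle)\,\mathfrak{D}_{s^{5}}$ (the $|w''_{k}|^{2}$ sum over $k\lesssim\langle n\rangle$ costs one logarithm, but there is no $\langle n\rangle^{s^{5}}$ loss since the $u$-norms have the much larger margin $r-s^{3}-s^{5}$). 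Altogether
\[
\big|e^{\mathrm{i}\Delta_{n}(t)}-e^{\mathrm{i}\Delta_{n}^{-}(t)}\big|\lesssim O_{|\!|\!|\mathcal{Q}|\!|\!|,|\!|\!|\mathcal{Q}'|\!|\!|}(1)\,\langle n\rangle^{s^{5}}\,\mathfrak{D}_{s^{5}}(\mathcal{Q},\mathcal{Q}'),
\]
since $\log\langle n\rangle\ll\langle n\rangle^{s^{5}}$. Multiplying by $\langle n\rangle^{-s^{5}}|\widehat{u^{+}}(t,n)|$ and taking the $Z_{1}$ norm (using $\|u^{+}(t)\|_{Z_{1}}\le\|u^{\dagger\dagger}\|_{X_{2}}\le|\!|\!|\mathcal{Q}'|\!|\!|$) closes (\ref{embed03}) immediately. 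This is exactly the paper's route; no ``careful accounting'' of slack in the hierarchy is needed.
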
 
\begin{proof} We may assume $I=[-T,T]$ with $T\lesssim 1$. The inequalities in (\ref{embed01}) follow directly from definition (and the fact that $u(t)$ and $u''(t)$ have the same $Z_{1}$ norm for $t\in [-T,T]$); the proof of (\ref{embed03}) and (\ref{embed04}) are similar, so we only prove (\ref{embed03}). Assume $|\!|\!|\mathcal{Q}|\!|\!|+|\!|\!|\mathcal{Q}'|\!|\!|\lesssim 1$ and $\mathfrak{D}_{s^{5}}(\mathcal{Q},\mathcal{Q}')\leq \varepsilon$, we will define $\Delta_{n}$ and $\Delta_{n}^{-}$ corresponding to $\mathcal{Q}$ and $\mathcal{Q}'$ as in (\ref{factor0}) and (\ref{factor1}) using functions $(w'',u''')$ and $(w^{\dagger\dagger},u^{\dagger\dagger\dagger})$ respectively, then set $u'$ and $u^{\dagger}$ be extensions of $u$ and $u^{-}$, defined by $(u')_{n}=\chi(t)e^{\mathrm{i}\Delta_{n}}(u'')_{n}$ and similarly for $u^{\dagger}$. Since 
\begin{equation}\big\|\langle \partial_{x}\rangle^{-s^{5}}(u''-u^{\dagger\dagger})(t)\big\|_{Z_{1}}\lesssim\mathfrak{D}_{s^{5}}(\mathcal{Q},\mathcal{Q}')\lesssim\varepsilon,\end{equation}we only need to estimate the function $z$ defined by $z_{n}=(u'')_{n}(e^{\mathrm{i}\Delta_{n}}-e^{\mathrm{i}\Delta_{n}^{-}})$. Due to the bound $|\!|\!|\mathcal{Q}|\!|\!|\lesssim 1$ which implies the bound the the $Z_{1}$ norm of each $u''(t)$, we only need to prove 
\begin{equation}\big|\chi(t)(e^{\mathrm{i}\Delta_{n}}-e^{\mathrm{i}\Delta_{n}^{-}})(t)\big|\lesssim \varepsilon\langle n\rangle^{s^{5}}\end{equation} for each $n$ and $t$. Using the arguments in Lemma \ref{general}, it suffices to prove the bound for $\delta_{n}-\delta_{n}^{-}$, but if we use (\ref{factor1}), this will be clear from the strong bounds on $w''$ and $w^{\dagger\dagger}$, and the weak bound on their difference.
\end{proof}
\subsection{Statement and proof}
Now suppose $u$ is a smooth function solving (\ref{smoothtrunc}) on $[-T,T]$. The arguments in Sections \ref{begin}-\ref{mid2} actually give us a way to update a given quadruple $\mathcal{Q}=(u'',v'',w'',u''')$ extending $(u^{*},v^{*},w^{*},u)$ to a new quadruple $\mathcal{Q}'=(u^{(4)},v^{(4)},w^{(4)},u^{(5)})$, which remains to be an extension, and verifies better bounds. We will define $\mathfrak{I}$ to be the map from the set of extensions to itself, that sends $\mathcal{Q}$ to $\mathcal{Q}'$. Using the arguments from Sections \ref{begin}-\ref{mid2}, we can prove
\begin{proposition}\label{reform}Let $C_{1}$ be large enough, $C_{2}$ large enough depending on $C_{1}$, and $0<T\leq C_{2}^{-1}e^{C_{2}A}$. Suppose $u$ is a smooth function solving (\ref{smoothtrunc}) on $[-T,T]$, and $\mathcal{Q}$ is an extension satisfying
\begin{equation}\label{singleest}|\!|\!|\mathcal{Q}|\!|\!|\leq C_{1}e^{C_{1}A},\,\,\,\,\,\,\,\,\big\|\langle\partial_{x}\rangle^{-s^{3}}u'''\big\|_{X_{2}\cap X_{3}+\cap X_{4}}\leq C_{1}A,\end{equation} then the same estimate will hold if we replace $\mathcal{Q}$ by $\mathfrak{I}\mathcal{Q}$.
\end{proposition}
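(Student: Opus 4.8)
The plan is to unwind the definition of the map $\mathfrak{I}$: given an extension $\mathcal{Q}=(u'',v'',w'',u''')$ satisfying the bootstrap hypotheses \eqref{singleest}, the construction in Sections \ref{begin}--\ref{mid2} produces $w^{(4)}$ (via Proposition \ref{finalreduct}, i.e. the bound for $y$ solving \eqref{global2}), then $u^{(5)}$ (via Section \ref{controllu}), then $u^{(4)}$ and $v^{(4)}$ (via Section \ref{controllv} and Proposition \ref{recoverlemma}). So $\mathfrak{I}\mathcal{Q}=(u^{(4)},v^{(4)},w^{(4)},u^{(5)})$ by definition, and the point is simply that \emph{every estimate already proved in those sections used only the hypothesis \eqref{singleest} on $\mathcal{Q}$}, never any property of $u$ beyond its being a smooth solution of \eqref{smoothtrunc} on $[-T,T]$. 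First I would recall that Proposition \ref{uniformest}, whose proof is exactly Sections \ref{begin}--\ref{mid3}, gives $|\!|\!|\mathfrak{I}\mathcal{Q}|\!|\!|\le Ce^{CA}$ and $\|\langle\partial_x\rangle^{-s^3}u^{(5)}\|_{X_2\cap X_3\cap X_4}\le CA$; one then observes that the constant $C$ there is $C_0$ in the refined notation, so the conclusion holds with $C_1$ replaced by $C_0$ provided one re-reads the proof keeping $\mathcal{Q}$ (not the output) as the object on which \eqref{output3}--\eqref{output4} are assumed.

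Concretely, the steps are: (1) verify that the hypotheses \eqref{singleest} are exactly the a priori assumptions \eqref{output3}--\eqref{output4} used throughout Section \ref{begin}, so that Proposition \ref{boot}/\ref{finalreduct} applies and yields $\|w^{(4)}\|_{Y_1}\le C_0e^{C_0A}$; (2) note that Section \ref{controllu} builds $u^{(5)}$ from $u'''$ (and $w''$, $v''$ via $v'''$) using only the $X_2\cap X_3\cap X_4$ bound on $u'''$ and the $Y_1$/$Y_2$ bounds on $w'',v''$ — all contained in \eqref{singleest} — and delivers \eqref{loosebound}, i.e. $\|\langle\partial_x\rangle^{-s^3}u^{(5)}\|_{X_2\cap X_3\cap X_4}\le C_0A$; (3) note that Section \ref{controllv}, via Proposition \ref{recoverlemma}, builds $u^{(4)}$ from $w^{(4)}$ and $\mathbb P_{\le 0}v''$ (gaining $K^{0-}$ on the low-frequency piece), then $v^{(4)}$ from $u^{(4)}$, each time using only the already-established strong bounds, giving $\|u^{(4)}\|_{Y_2},\|v^{(4)}\|_{Y_2}\le C_0e^{C_0A}$; (4) assemble these into $|\!|\!|\mathfrak{I}\mathcal{Q}|\!|\!|\le C_0e^{C_0A}$, which is \eqref{singleest} with $C_1$ improved to $C_0$.

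The one genuine subtlety — and the step I expect to require the most care — is the redefinition of the phase factors $\delta_n,\Delta_n$ that occurs along the way: in Section \ref{begin} they are defined using $(w'',u''')$, in Section \ref{controllu} using $(w^{(4)},u^{(5)})$ partway through, and in Proposition \ref{recoverlemma} again using $(w^{(4)},u^{(5)})$. One must check that at each stage the relevant version of Proposition \ref{factt} holds with constants $C_0e^{C_0A}$ rather than $O_{C_1}(1)e^{C_0C_1A}$; this is precisely the content of the first paragraph of the proof of Proposition \ref{recoverlemma}, and it works because once $w^{(4)}$ and $u^{(5)}$ are known to satisfy the $C_0e^{C_0A}$ (resp. $C_0A$) bounds from steps (1)--(2), the $l^1L^1$-type estimates feeding into $\|\widehat{\delta_n}\|_{L^1}$ improve accordingly. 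So the proof is really a bookkeeping argument: re-examine Sections \ref{begin}--\ref{mid3} with the substitution ``assumptions on $\mathcal{Q}$, conclusions on $\mathfrak{I}\mathcal{Q}$,'' confirm no hidden use of smallness of $C_1$, and track the constants through the three redefinitions of $(\delta_n,\Delta_n)$. I would therefore state the proof as: "This is a restatement of Propositions \ref{inii}--\ref{uniformest}; the only point to check is that the constant obtained is $C_0e^{C_0A}$, which follows as in the proof of Proposition \ref{recoverlemma} from the improved bounds on $w^{(4)}$ and $u^{(5)}$," and then give the four-step breakdown above in a sentence or two each.
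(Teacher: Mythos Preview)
Your proposal is correct and matches the paper's approach: the paper does not give a separate proof of Proposition \ref{reform} but simply states that it follows from ``the arguments from Sections \ref{begin}--\ref{mid2}'' (really \ref{begin}--\ref{mid3}), i.e., it is a reformulation of the bootstrap step Proposition \ref{boot} together with the constructions of Section \ref{mid3}. Your detailed unpacking of the map $\mathfrak{I}$ and the tracking of constants through the redefinitions of $(\delta_n,\Delta_n)$ is exactly the bookkeeping the paper leaves implicit.
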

Now we can state the main proposition in this section, namely
\begin{proposition}\label{difference0}Let $C_{1}$, $C_{2}$ and $T$ be as in Proposition \ref{reform}. Suppose $u$ and $u^{-}$ are two smooth functions solving (\ref{smoothtrunc}) with truncation $S_{N}$ and $S_{M}$ respectively, where $1\ll N\leq M\leq\infty$, $\mathcal{Q}$ and $\mathcal{Q}'$ are two quadruples corresponding to $u$ and $u^{-}$ respectively, such that (\ref{singleest}) holds, and that
\begin{equation}\label{doubleest}\mathfrak{D}_{s^{5}}(\mathcal{Q},\mathcal{Q}')\leq B\end{equation} for some $B>0$, then we have 
\begin{equation}\label{doubleest2}\mathfrak{D}_{s^{5}}(\mathfrak{I}\mathcal{Q},\mathfrak{I}\mathcal{Q}')\leq\frac{B}{2}+O_{C_{1}}(1)e^{C_{0}C_{1}A}\big(\big\|\langle\partial_{x}\rangle^{-s^{5}}(u(0)-u^{-}(0))\big\|_{Z_{1}}+N^{0-}\big),\end{equation} where $C_{0}$ is any constant appearing in previous sections. In particular, we have 
\begin{equation}\label{differencestate}\mathfrak{D}_{s^{5}}^{T}(u,u^{-})\leq O_{C_{2},A}(1)\big(\big\|\langle\partial_{x}\rangle^{-s^{5}}(u(0)-u^{-}(0))\big\|_{Z_{1}}+N^{0-}\big),\end{equation}provided $\|u(0)\|_{Z_{1}}+\|u^{-}(0)\|_{Z_{1}}\leq A$ for some large $A$. Moreover, if $M=N$, we may replace the $\mathfrak{D}_{s^{5}}$ distance by the $\mathfrak{D}_{0}$ distance and remove the $N^{0-}$ term on the right hand side of (\ref{differencestate}).
\end{proposition}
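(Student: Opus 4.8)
The plan is to prove Proposition~\ref{difference0} by reducing it, via the bootstrap/contraction structure already set up in Sections~\ref{begin}--\ref{mid2}, to a set of multilinear estimates for the \emph{difference} of two solutions, exploiting the multilinearity of all the nonlinearities $\mathcal{N}^{j}$. First I would establish the algebraic input: if $u$ solves \eqref{smoothtrunc} with truncation $S_N$ and $u^{-}$ with $S_M$, $N\le M$, then the difference quadruple $\mathcal{Q}-\mathcal{Q}'$ satisfies an equation obtained by subtracting the two copies of \eqref{neweqnfin} (and its analogues for $u^{*},v^{*}$). Because every term $\mathcal{N}_\mu^{\omega j}$ is multilinear in $(w^{*},u^{*},v^{*},u,e^{\pm i\Delta_n})$, the difference expands by a telescoping/Leibniz identity into a sum of multilinear terms, each containing at least one ``difference factor'' $w^{*}-w^{+}$, $u^{*}-u^{+}$, $v^{*}-v^{+}$, $u-u^{-}$, or $e^{-i\Delta_{n}}-e^{-i\Delta^{-}_{n}}$ (this last one controlled via Lemma~\ref{general} and the analogue of Proposition~\ref{factt} for $\delta_n-\delta^-_n$, whose $L^1$ norm is bounded by $B$ thanks to \eqref{doubleest}). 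In addition there are ``truncation-error'' terms coming from $S_N\neq S_M$: these involve a $\psi(\cdot/N)-\psi(\cdot/M)$ factor forcing some frequency $\gtrsim N$; the miraculous cancellation of Section~\ref{gaugetransform2}, which is already built into the structure of the $\mathcal{K}^j$ and hence $\mathcal{N}^j$ terms, ensures these do not destroy the estimate and ultimately contribute only the $N^{0-}$ loss.

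Next I would run the same case analysis as in Propositions~\ref{easiest}, \ref{easier}, \ref{easter}, and \ref{easy}, but now estimating the difference terms. The point is that \emph{exactly one} factor in each term is estimated in the weak (i.e.\ $\langle\partial_x\rangle^{-s^5}$-shifted) norm, while all other factors are estimated in the strong norms, which are bounded by $C_1 e^{C_1 A}$ via the hypothesis \eqref{singleest} and Proposition~\ref{reform}. Since $T\le C_2^{-1}e^{-C_2A}$ with $C_2$ huge relative to $C_1$, the $T^{0+}$ gain present in every one of those estimates beats the $O_{C_1}(1)e^{C_0C_1A}$ constants, producing a net factor $\le 1/2$ multiplying $B=\mathfrak{D}_{s^5}(\mathcal{Q},\mathcal{Q}')$; this is the source of the $B/2$ in \eqref{doubleest2}. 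The inhomogeneous term $\chi(t)e^{it\partial_{xx}}(w(0)-w^{+}(0))$ in \eqref{global2} is bounded in $Y_1$ (at the $\langle\partial_x\rangle^{-s^5}$ level) by $O_{C_1}(1)e^{C_0 C_1 A}\|\langle\partial_x\rangle^{-s^5}(u(0)-u^-(0))\|_{Z_1}$ using Proposition~\ref{embed000}-type reasoning and the construction of $w^{*}$ from $u$. Collecting: $\mathfrak{D}_{s^5}(\mathfrak{I}\mathcal{Q},\mathfrak{I}\mathcal{Q}') \le \tfrac12 B + O_{C_1}(1)e^{C_0C_1A}(\|\langle\partial_x\rangle^{-s^5}(u(0)-u^-(0))\|_{Z_1}+N^{0-})$, which is \eqref{doubleest2}. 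Iterating this contraction (starting from the extensions produced by Proposition~\ref{uniformest}, for which $\mathfrak{I}$ leaves the strong bounds invariant) and summing the geometric series yields \eqref{differencestate} with the constant absorbed into $O_{C_2,A}(1)$. When $M=N$ the truncation-error terms are absent, so the $N^{0-}$ term drops and, moreover, no weak norm is actually needed to close the truncation issue, so one can work with $\mathfrak{D}_0$ throughout.

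The main obstacle I anticipate is bookkeeping the truncation-error terms correctly: one must verify that after subtracting the two gauge transforms, the ``extra'' terms carrying $\psi(n/N)-\psi(n/M)$ factors still obey the structural properties (the $SV_1$ bounds on $\Theta$/$\Phi$ factors, the frequency restrictions in Proposition~\ref{propneweqn}(i)--(iii) and Proposition~\ref{finalver}, and especially the resonant cancellations $\sum_j\mathcal{C}^j=0$, $\mathcal{D}^2+\mathcal{D}^3=0$) \emph{up to} a term that manifestly forces a frequency $\gtrsim N$ on some leg and hence gains $N^{0-}$ when we trade an $\langle\partial_x\rangle^{0+}$ against it. Concretely, one has to rerun the derivation of Sections~\ref{gaugetransform}--\ref{gaugetransform3} with $S_N$ and $S_M$ in parallel and track the differences; the key is that $\psi(n/N)-\psi(n/M)$ is supported in $|n|\gtrsim N$ and is itself $SV_1$-good (bounded, with difference bound $\langle n\rangle^{-1}$), so it fits into Proposition~\ref{gene}. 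Once this is in place, the multilinear estimates themselves are essentially verbatim repetitions of the single-solution arguments with one factor downgraded, and the contraction closes exactly as above. A secondary but routine technical point is handling the exponential-difference factor $e^{-i\Delta_n}-e^{-i\Delta^-_n}$: here one uses the pointwise-in-$t$ bound on $|\chi(t)(e^{i\Delta_n}-e^{i\Delta^-_n})|\lesssim B\langle n\rangle^{s^5}$ derived as in the proof of Proposition~\ref{embed000}, which is why the difference estimate is forced to live at the $\langle\partial_x\rangle^{-s^5}$ level (unless $M=N$).
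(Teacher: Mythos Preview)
Your approach is essentially the same as the paper's: telescope the multilinear nonlinearities into terms carrying exactly one difference factor, rerun the estimates of Sections~\ref{mid1}--\ref{mid3} with that one factor in the $\langle\partial_x\rangle^{-s^5}$-weakened norm, and use the $T^{0+}$ gain against $O_{C_1}(1)e^{C_0C_1A}$ to get the contraction factor $\tfrac12$ on $B$.

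Two points where your sketch diverges from, or underplays, what the paper actually does. First, your ``main obstacle'' is overcomplicated: you do not need to rerun the derivation of Sections~\ref{gaugetransform}--\ref{gaugetransform3} with $S_N$ and $S_M$ in parallel or re-verify the resonant cancellations for the difference. Each solution separately already has its nonlinearity in the form of Proposition~\ref{finalver}; when you subtract, the weight-difference contribution is simply $\Phi^{j}_{(N)}-\Phi^{j}_{(M)}$, which obeys the \emph{same} bounds as $\Phi^j$ and is nonzero only when some $n_l$ or $m_i$ is $\gtrsim N$. Thus this case amounts to inserting a $\mathbb{P}_{\gtrsim N}$ on one factor and estimating that factor in the weakened norm to gain $N^{0-}$---no structural re-derivation is needed.

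Second, you gloss over the genuine technical point: why can you afford to lose $\langle n_l\rangle^{s^5}$ on one input when you only gain $\langle n_0\rangle^{s^5}$ on the output? The paper handles this by a trichotomy: (i) in most estimates there is already a spare gain $2^{cs^{2.5}d}$ with $\langle n_l\rangle\lesssim 2^d$, which absorbs the $s^5$ loss; (ii) in the ``$n_0=n_l$'' type cases one has $\langle n_0\rangle\gtrsim\langle n_l\rangle$, so the output weakening directly compensates; (iii) in the resonant cases requiring \eqref{festt} there is a $2^{-|d-d'|}$ factor that eats the $2^{s^5|d-d'|}$ mismatch. You should also note that the $\mathcal{M}^2$ term requires redoing the second-iteration argument for the difference, and that the $u^{(5)},u^{(4)},v^{(4)}$ parts of $\mathfrak{I}\mathcal{Q}$ (built in Section~\ref{mid3}) must be differenced as well, using the analogue of Proposition~\ref{recoverlemma}.
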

\begin{proof} When we take differences in the case $M=N$ the right hand side will involve only factors like $u-u^{-}$ and not the ones like $\mathbb{P}_{\geq N}u$, thus we will not have an $N^{0-}$ term on the right hand side. Also, it is easy to see from the proof below that removing the $\langle n\rangle^{-s^{5}}$ weight will only make arguments easier. Thus we will focus on (\ref{differencestate}) now. By an iteration using Proposition \ref{reform}, we only need to prove (\ref{doubleest2}) assuming (\ref{singleest}) and (\ref{doubleest}).

Recall the functions $\delta_{n}$, $\delta_{n}^{-}$, $\Delta_{n}$, $\Delta_{n}^{-}$ and $y$, $y^{-}$ that come from the two quadruples $\mathcal{Q}$ and $\mathcal{Q}'$ in the same way as in Section \ref{theextensions}. The two functions $y$ and $y^{-}$ will verify two equations with the form of (\ref{global2}) separately. Clearly we may also assume all relevant functions are supported in $|t|\lesssim 1$. To bound the first part of $\mathfrak{D}_{s^{5}}(\mathfrak{I}\mathcal{Q},\mathfrak{I}\mathcal{Q}')$ requires to prove
\begin{equation}\label{bound010}\big\|\langle\partial_{x}\rangle^{-s^{5}}(y-y^{-})\big\|_{Y_{1}}\leq\frac{B}{10}+O_{C_{1}}(1)e^{C_{0}C_{1}A}(\theta+N^{0-}),\end{equation} where we denote $\|\langle \partial_{x}\rangle^{-s^{5}}(u(0)-u^{-}(0))\|_{Z_{1}}=\theta$ for simplicity.

By another bootstrap argument, we may assume (\ref{bound010}) holds with right hand side multiplied by $O_{C_{1}}(1)$. Recall the equations
\begin{equation}\label{eqn0001}y=\chi(t)e^{\mathrm{i}t\partial_{xx}}w(0)+\mathcal{E}\big(\mathbf{1}_{[-T,T]}\mathcal{N}^{2}(y,y)\big)+\sum_{j\in\{3,3.5,4,4.5\}}\mathcal{E}(\mathbf{1}_{[-T,T]}\mathcal{N}^{j});\end{equation}
\begin{equation}\label{eqn0002}y^-{}=\chi(t)e^{\mathrm{i}t\partial_{xx}}w^{-}(0)+\mathcal{E}\big(\mathbf{1}_{[-T,T]}\mathcal{N}^{2-}(y^{-},y^{-})\big)+\sum_{j\in\{3,3.5,4,4.5\}}\mathcal{E}(\mathbf{1}_{[-T,T]}\mathcal{N}^{j-}),\end{equation} where $\mathcal{N}^{j}$ and $\mathcal{N}^{j-}$ are suitable nonlinearities; to bound $y-y^{-}$, we will first bound
\begin{equation}\sum_{j\in\{0,3,3.5,4,4.5\}}\big\|\langle\partial_{x}\rangle^{-s^{5}}(\mathcal{M}^{j}-\mathcal{M}^{j-})\big\|_{Y_{1}},\nonumber\end{equation} where the definitions of $\mathcal{M}^{j}$ and $\mathcal{M}^{j-}$ are clear (the term $j=0$ corresponds to the linear term which can be bounded by $\theta+N^{0-}$, so we will omit this below).

Here it is important to note that \emph{all the bounds in the previous sections are proved directly using multilinear estimates}, thus they will automatically imply the corresponding estimates for differences. In fact, when we try to estimate $\mathcal{M}^{j}-\mathcal{M}^{j-}$ by introducing some $(g,f)$ and forming an $\mathcal{S}$ expression, there are a few possibilities:

(1) Suppose we take the difference $y-y^{-}$, or (for example) some $v''-v^{\dagger\dagger}$ directly. Then one of the $y$ or $v''$ factors appearing in the previous sections will be replaced by this difference. Note that if we estimate this difference in the weakened norm $\|\langle\partial_{x}\rangle^{-s^{5}}\cdot\|_{Y_{j}}$ (we use $X_{2}\cap X_{3}\cap X_{4}$ norm for $u'''-u^{\dagger\dagger\dagger}$, but the proof will be the same), we will get a bound $O_{C_{1}}(1)e^{C_{0}C_{1}A}(B+\theta+N^{0-})$ which is what we need; the loss coming from using this weaker norm can be recovered from the fact that we only need to estimate the weaker norm of $\mathcal{M}^{j}-\mathcal{M}^{j-}$. To be precise, for each multilinear estimate we prove in the previous sections, suppose the term we bound in the weaker norm (i.e. the norm involving $\langle \partial_{x}\rangle^{-s^{5}}$) corresponds to the variable $n_{l}$, then one of the followings must hold: (i) we can gain a power $2^{(0+)d}$ in the estimate, where the $0+$ is at least $cs^{2.5}$, and we also have $\langle n_{l}\rangle\lesssim 2^{d}$. In this case it will suffice to use this weaker norm in all the discussions before, so this part will be acceptable; (ii) we have $\langle n_{0}\rangle\gtrsim\langle n_{l}\rangle$ (for example, when $n_{0}=n_{l}$ and the other variables are small compared to them). In this case, since we only need to estimate the output $y-y^{-}$ in the weaker norm, we will gain a power $\langle n_{0}\rangle^{s^{5}}$ compared to the proof in the previous sections, which is enough to cancel the loss $\langle n_{l}\rangle^{s^{5}}$, thus this part is also acceptable; (iii) we have $\langle n_{0}\rangle\sim 2^{d}$ and $\langle n_{l}\rangle\sim 2^{d'}$, and the expression $\mathcal{S}$ involves the factor $2^{-|d-d'|}$ (this appears, for example, in various ``resonant'' cases in Section \ref{mid1} and Proposition \ref{easy}, and is characterized by the need to use (\ref{festt})). In this case we lose at most $2^{s^{5}|d-d'|}$ from the additional weights compared to the proof in the previous sections, which can be cancelled by the $2^{-|d-d'|}$ factor, so it will still be acceptable. To conclude, we can estimate this part of $y-y^{-}$ in the weaker norm as
\begin{equation}T^{0+}O_{C_{1}}(1)e^{C_{0}C_{1}A}(B+\theta+N^{0-}),\nonumber\end{equation} by repeating the arguments in the previous sections, with minor modifications illustrated above.

(2) Suppose we take the difference of the $\Phi$ weights. The difference will verify the same bounds as the weights themselves; moreover it is nonzero only when some $m$ or $n$ variable is $\gtrsim N$. Therefore we may replace one of the $y$ or $v''$ factors appearing in the previous sections by $\mathbb{P}_{\geq N}y$ or $\mathbb{P}_{\geq N}v''$. We then proceed as in case (1), estimating this particular factor in the weakened norm to gain a power $N^{0-}$, and bound the whole expression in the same way as in case (1).

(3) Suppose we take (for example) the difference $v'-v^{\dagger}$, where $(v')_{n}=e^{\mathrm{i}\Delta_{n}}(v'')_{n}$ and $(v^{\dagger})_{n}=e^{\mathrm{i}\Delta_{n}^{-}}(v^{\dagger\dagger})_{n}$; alternatively, suppose we take the difference
\begin{equation}e^{\mathrm{i}(\pm \Delta_{n_{0}}\pm\Delta_{n_{1}}\pm\cdots)}-e^{\mathrm{i}(\pm \Delta_{n_{0}}^{-}\pm\Delta_{n_{1}}^{-}\pm\cdots)}\nonumber.\end{equation} It turns out that whenever we need to estimate these factors, we will always gain (from these factors themselves, or from elsewhere) some power $2^{(0+)d}$ where the $0+$ is at least $cs^{2.5}$, and $2^{d}$ controls every relevant variable (for typical examples, see the estimate of $J_{(n)}(\alpha_{5})$ as defined in (\ref{J(n)}) in the proof of Proposition \ref{easy}, as well as the last part of Section \ref{mid1}). Here we may use Proposition \ref{general} to reduce the estimation of the difference of these exponential factors to the estimation of the differences $\delta_{n}-\delta_{n}^{-}$ themselves. Since we can bound functions like $w''-w^{\dagger\dagger}$ in the weaker norm by $O_{C_{1}}(1)e^{C_{0}C_{1}A}(B+\theta+N^{0-})$, we will be able to obtain estimates similar to the ones in Proposition \ref{factt}, but with the coefficient $C_{0}C_{1}e^{C_{0}C_{1}A}$ on the right hand side replaced by $O_{C_{1}}(1)e^{C_{0}C_{1}A}(B+\theta+N^{0-})$, with a loss of at most $\langle n\rangle^{O(s^{5})}$ which is dwarfed by the power we gain. Finally we may use the $T^{0+}$ gain coming from the evolution to cancel the $O_{C_{1}}(1)e^{C_{0}C_{1}A}$ factor, thus this part is also acceptable.

Next we need to control the difference of the $\mathcal{M}^{2}$ terms. We will follow the proof in Section \ref{mid1}, and the part of the proof where no second iteration is needed can be completed in the same way as above. As for the remaining part, what we do in Section \ref{mid1} is basically rewriting
\begin{equation}\mathcal{N}^{6}(y,y)=\sum_{j\in\{0,3,3.5,4,4.5\}}\mathcal{N}^{6}(y,\mathcal{M}^{j})+\mathcal{N}^{6}(y,\mathcal{E}(\mathbf{1}_{[-T,T]}\mathcal{N}^{2}(y,y)))\nonumber\end{equation} where $\mathcal{N}^{6}$ is the part of $\mathcal{N}^{2}$ uner consideration; we may also rewrite $\mathcal{N}^{6-}(y^{-},y^{-})$ in the same way. When we take difference, we may control the first term on the right hand side using the bound for $\mathcal{M}^{j}-\mathcal{M}^{j-}$ as in Proposition \ref{easiest} (actually we have a slightly weaker version, but this will suffice); as for the second term, since it is bounded in Section \ref{mid1} via multilinear estimates, we can again treat the difference in the same way as above. This completes the proof for the bound of $w^{*}-w^{+}$.

Next, recall that the other parts of $\mathfrak{I}\mathcal{Q}$ and $\mathfrak{I}\mathcal{Q}'$ such as $u^{(5)}$ and $u^{[5]}$, $u^{(4)}$, $u^{[4]}$, $v^{(4)}$ and $v^{[4]}$ are constructed in the same way as in Section \ref{mid3}, where the scale $K$ is taken to be $K=C_{1.5}e^{C_{1.5}A}$ with $C_{1.5}$ large enough depending on $C_{1}$, but small compared to $C_{2}$. Note that we may redefine $\Delta_{n}$ and $\Delta_{n}^{-}$ when necessary. Now to prove (\ref{doubleest}), we need to bound the differences such as $u^{(4)}-u^{[4]}$ in the weaker norm by $O_{C_{1}}(1)e^{C_{0}C_{1}A}(K^{0-}B+\theta+N^{0-})$. But this can again be achieved by combining the argument above with the proof in Section \ref{mid3}, if we notice two things:

(1) In the proof of Proposition \ref{recoverlemma}, we can always gain some power $\langle m_{i}\rangle^{cs^{2.5}}$ for each $m_{i}$, so we will be able to cover the loss coming from using only the weaker norm if we take the difference of the exponential factors (cf. (\ref{betcon})), of if we take $u^{(5)}-u^{[5]}$. For the same reason, if we lose a power $\langle n_{1}\rangle^{s^{5}}$ we will be able to recover it from the gain $\langle n_{0}\rangle^{s^{5}}$. 

(2) From the above we already know that the weaker norm of $w^{(4)}-w^{[4]}$ can be bounded by $O_{C_{1}}(1)e^{C_{0}C_{1}A}(T^{0+}B+\theta+N^{0-})$. We may then prove the same bound (possibly with some $O_{C_{1}}(1)$ factors) for $\mathbb{P}_{>K}(u^{(5)}-u^{[5]})$, $\mathbb{P}_{\leq K}(u^{(5)}-u^{[5]})$, $\mathbb{P}_{\leq K}(u^{(4)}-u^{[4]})$, $\mathbb{P}_{>K}(u^{(4)}-u^{[4]})$ and $v^{(4)}-v^{[4]}$ \emph{in that order}, in the same way as in Section \ref{mid3} (note $T^{-1}$ is assumed to be larger than any power of $K$).

Therefore we will be able to bound all the differences and thus complete the proof of Proposition \ref{difference0}.
\end{proof}
\section{Proof of the main results}\label{lwp}With Propositions \ref{uniformest} and \ref{difference0}, it is now easy to prove our main results. Since the argument in this section will be more or less standard, we may present only the most important steps. 
\subsection{Local well-posedness and stability}\label{llwp}
\begin{theorem}[Precise version of Theorem \ref{main}]\label{main'} There exists a constant $C$ such that, when we choose any $A>0$ and $0<T\leq C^{-1}e^{-CA}$, the followings will hold.

(1) Existence: for any $f\in\mathcal{V}$ with $\|f\|_{Z_{1}}\leq A$, there exists some $u\in \mathcal{BO}^{T}$ such that $|\!|\!|u|\!|\!|_{T}\leq Ce^{CA}$, and it verifies the equation (\ref{bo}), in the sense described in Remark \ref{rmk00}, with initial data $u(0)=f$.

(2) Continuity: let the solution described in part (1) be $u=\Phi f=(\Phi_{t}f)_{t}$. Suppose $\|f\|_{Z_{1}}\leq A$ and $\|g\|_{Z_{1}}\leq A$, then each $\varepsilon>0$, we have 
\begin{equation}\sup_{|t|\leq T}\big\|\langle \partial_{x}\rangle^{-s^{5}}(\Phi_{t}f-\Phi_{t}g)\big\|_{Z_{1}}+\mathfrak{D}_{s^{5}}^{T}(\Phi f,\Phi g)\leq O_{C,A}(1)\big\|\langle \partial_{x}\rangle^{-s^{5}}(f-g)\big\|_{Z_{1}};\nonumber\end{equation}
\begin{equation}\sup_{|t|\leq T}\big\|\langle \partial_{x}\rangle^{-\varepsilon}(\Phi_{t}f-\Phi_{t}g)\big\|_{Z_{1}}+\mathfrak{D}_{0}^{T}(\Phi f,\Phi g)\leq O_{\varepsilon,C,A}(1)\big\|f-g\big\|_{Z_{1}}.\nonumber\end{equation}

(3) Short-time stability: let $u=\Phi f$ as in part (2), and let $\Phi^{N}$ be the solution flow of (\ref{smoothtrunc}) and $u^{N}=\Phi^{N}\Pi_{N}f$. Then we have
\begin{equation}\lim_{N\to\infty}\bigg(\mathfrak{D}_{s^{5}}^{T}(u^{N},u)+\sup_{|t|\leq T}\big\|\langle \partial_{x}\rangle^{-s^{5}}\big(u^{N}(t)-u(t)\big)\big\|_{Z_{1}}\bigg)=0.\nonumber\end{equation}

(4) Uniqueness: for any \emph{other} time $T'$, suppose $u$ and $u^{-}$ are two elements of $\mathcal{BO}^{T'}$ with the same initial data, and they both solve (\ref{bo}), then we must have $u=u^{-}$ (on $[-T',T']$).

(5) Long-time existence: consider any $f\in Z_{1}$, and define the functions $u^{N}$ as in part (3). Suppose for some \emph{other} time $T'$ and some subsequence $\{N_{k}\}$ that
\begin{equation}\sup_{k}|\!|\!|u^{N_{k}}|\!|\!|_{T'}<\infty,\end{equation} then there exists a solution $u\in\mathcal{BO}^{T'}$ to (\ref{bo}) with initial data $f$.
\end{theorem}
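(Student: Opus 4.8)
\textbf{Proof proposal for Theorem \ref{main'}(5).}

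The plan is to extract a limit from the sequence $(u^{N_k})$ on the fixed interval $[-T',T']$ using the uniform bound $\Lambda := \sup_k |\!|\!|u^{N_k}|\!|\!|_{T'} < \infty$, and to identify the limit as a solution of (\ref{bo}) in the sense of Remark \ref{rmk00}. First I would partition $[-T',T']$ into $O(e^{C\Lambda}T')$ subintervals $I_1,\dots,I_L$ each of length $\leq C^{-1}e^{-C\Lambda'}$ for a suitable $\Lambda' = \Lambda'(\Lambda)$, chosen so that the local theory of Proposition \ref{uniformest} and the difference estimate Proposition \ref{difference0} both apply on each $I_j$ to every $u^{N_k}$ simultaneously (the bound $|\!|\!|u^{N_k}|\!|\!|_{I_j} \leq \Lambda$ in particular controls $\|u^{N_k}(t)\|_{Z_1}$ uniformly for $t\in I_j$ via Proposition \ref{embed000}). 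On the first subinterval $I_1$, whose left endpoint we may translate to be the origin, all initial data $u^{N_k}(0) = \Pi_{N_k}f$ converge in $Z_1$ (indeed $\langle\partial_x\rangle^{-s^5}\Pi_{N_k}f \to \langle\partial_x\rangle^{-s^5}f$ in $Z_1$, and even $\Pi_{N_k}f\to f$ in $Z_1$ is false in general, so I must work with the weak metric). Thus for $N\le M$ among the $N_k$, Proposition \ref{difference0}, specifically (\ref{differencestate}), gives
\begin{equation}
\mathfrak{D}_{s^5}^{I_1}(u^{N},u^{M}) \leq O_{C,\Lambda}(1)\big(\|\langle\partial_x\rangle^{-s^5}(\Pi_N f - \Pi_M f)\|_{Z_1} + N^{0-}\big),\nonumber
\end{equation}
which tends to $0$ as $N,M\to\infty$. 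Hence $(u^{N_k})$ is Cauchy in the complete metric space $(\mathcal{BO}^{I_1},\mathfrak{D}_{s^5}^{I_1})$ and converges to some $u^{(1)}\in\mathcal{BO}^{I_1}$ with $|\!|\!|u^{(1)}|\!|\!|_{I_1}\leq\Lambda$ (the triple norm is lower semicontinuous under $\mathfrak{D}_{s^5}^{I_1}$-convergence, since each $Y_j^{I_1}$, $X_j^{I_1}$ norm is, being built from $l^p$, $L^k$ and atomic norms — this is essentially the argument of Proposition \ref{initialboot}).

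Next I would iterate: having produced $u^{(j)}\in\mathcal{BO}^{I_j}$ as the $\mathfrak{D}_{s^5}^{I_j}$-limit of $u^{N_k}|_{I_j}$, the endpoint values $u^{N_k}(t_j)$ converge in the weak $Z_1$ sense to $u^{(j)}(t_j)$ by (\ref{embed03}) of Proposition \ref{embed000}, and they remain bounded in $Z_1$ by $\Lambda$; so on $I_{j+1}$ (translating its left endpoint to the origin) the same application of Proposition \ref{difference0} shows $(u^{N_k}|_{I_{j+1}})$ is Cauchy in $\mathfrak{D}_{s^5}^{I_{j+1}}$, with limit $u^{(j+1)}$. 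Note the subtlety flagged in the footnote to Definition \ref{bott}: the gauge factors $\Delta_n$ depend on the choice of origin, so the correct statement is that on each translated subinterval the canonical choice is used; continuity of the endpoint map and the fact that overlapping values agree as spacetime distributions (which follows from $\mathfrak{D}_{s^5}$-convergence implying distributional convergence, cf. the proof of Proposition \ref{initialboot}) let me glue the $u^{(j)}$ into a single function $u$ on $[-T',T']$ with $|\!|\!|u|\!|\!|_{[-T_j,T_j]}$ controlled on each partial interval, hence $u\in\mathcal{BO}^{T'}$.

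Finally I would verify that $u$ solves (\ref{bo}) in the sense of Remark \ref{rmk00}. Each $u^{N_k}$ solves (\ref{smoothtrunc}), i.e. the integral equation with nonlinearity $S_{N_k}(S_{N_k}u^{N_k}\cdot S_{N_k}u^{N_k}_x) = \tfrac12\partial_x S_{N_k}\mathbb{P}_{\ne 0}((S_{N_k}u^{N_k})^2)$. Remark \ref{rmk00} records that the map sending $v\in\mathcal{BO}^{T}$ to the Duhamel integral of $v\partial_x v$ is continuous in the weak metric $\mathfrak{D}_{s^5}^{T}$; I would extend this observation to show that the truncated nonlinearities, applied to $u^{N_k}$, converge in $(X^{-1/s,\kappa})^{I_j}$ to the untruncated Duhamel integral of $u\partial_x u$ — the two sources of error being $\mathfrak{D}_{s^5}^{I_j}(u^{N_k},u)\to 0$ and the replacement of $S_{N_k}$ by the identity, the latter costing a factor $N_k^{0-}$ exactly as in case (2) of the proof of Proposition \ref{difference0} and as in the low-frequency estimate (\ref{lowforu2}) of Section \ref{mid3}. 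Passing to the limit in the integral equation on each $I_j$ and using that $u^{N_k}(t)\to u(t)$ as distributions then shows $u$ satisfies (\ref{bo}) with $u(0)=f$. The main obstacle I anticipate is bookkeeping the gauge-transform origin dependence across the glued subintervals: Proposition \ref{difference0} is stated with the canonical origin choice on $[-T,T]$-type intervals, and I must make sure that at each gluing step the endpoint data enters with the right normalization and that the resulting estimates for $u^{N_k} - u^{N_\ell}$ genuinely telescope over $j$ rather than accumulating an uncontrolled constant; the constant does grow like $(O_{C,\Lambda}(1))^{L}$ with $L\sim e^{C\Lambda}T'$ subintervals, but since $L$ is finite and fixed this is harmless for existence (it would not be for any uniform-in-$T'$ statement, which is why only existence, not a quantitative bound, is claimed in part (5)).
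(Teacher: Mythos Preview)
Your approach is correct but takes a different route from the paper's. The paper exploits parts (1) and (3), already established, as black boxes: with $A$ an upper bound for all $|\!|\!|u^{N_k}|\!|\!|_{T'}$ (hence for all $\|u^{N_k}(t)\|_{Z_1}$ via Proposition \ref{embed000}), choose $\tau=\tau(A)$, use part (1) to \emph{construct} the solution $u$ to (\ref{bo}) on $[-\tau,\tau]$ with data $f$, then use part (3) to get $\|\langle\partial_x\rangle^{-s^5}(u^{N_k}-u)(\pm\tau)\|_{Z_1}\to 0$; lower semicontinuity of $Z_1$ under this weak convergence gives $\|u(\pm\tau)\|_{Z_1}\leq\limsup\|u^{N_k}(\pm\tau)\|_{Z_1}\leq C_0A$, so one can restart and iterate to cover $[-T',T']$.

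The gain is that $u$ is born as a solution at every step, so three of your technical obligations disappear: you need not prove completeness of $(\mathcal{BO}^{I_j},\mathfrak{D}_{s^5}^{I_j})$ on bounded sets, you need not pass to the limit in the equation (the $S_{N_k}\to I$ error and the continuity from Remark \ref{rmk00}), and the origin-dependence bookkeeping for $\Delta_n$ across subintervals never arises since part (1) is simply reapplied with fresh, honest initial data. Your route essentially re-proves part (1) on each subinterval with moving data $u^{N_k}(t_j)$ and then separately verifies the limit is a solution; it works, but since (1)--(3) are already available the paper's shortcut is substantially lighter.
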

\begin{proof}Suppose $f\in Z_{1}$ and $\|f\|_{Z_{1}}\leq A$, and let $0<T\leq C_{2}^{-1}e^{-C_{2}A}$ with constants as in Propositions \ref{reform} and \ref{difference0}. Consider $u^{N}$ as defined in (3); using Proposition \ref{uniformest}, we may choose for each $N$ some quadruple $\mathcal{Q}_{N}$ corresponding to $u^{N}$ that verifies (\ref{singleest}). We then define
\begin{equation}\mathcal{Q}^{N}=\mathfrak{I}^{N}\mathcal{Q}_{N},\end{equation}
then it will be clear from Propositions \ref{reform} and \ref{difference0} that
\begin{equation}|\!|\!|\mathcal{Q}^{N}|\!|\!|\leq C_{1}e^{C_{1}A};\end{equation}
\begin{equation}\lim_{N,M\to\infty}\mathfrak{D}_{s^{5}}(\mathcal{Q}^{M},\mathcal{Q}^{N})=0.\end{equation} By a simple completeness argument we can then find some $\mathcal{Q}$ so that $\mathfrak{D}_{s^{5}}(\mathcal{Q}^{N},\mathcal{Q})\to 0$ (in particular $\mathcal{Q}$ will have initial data $f$), and by an argument similar to the proof of Proposition \ref{initialboot} we deduce that $|\!|\!|\mathcal{Q}|\!|\!|\leq C_{1}e^{C_{1}A}$. By using Remark \ref{rmk00}, we can now pass to the limit and show that the quadruple $\mathcal{Q}$ gives a solution $u\in\mathcal{BO}^{T}$ of (\ref{bo}) on the interval $[-T,T]$. This proves existence.

Parts (2) and (3) will follow from basically the same argument. In fact, for each $(f,g)$, we may construct $\mathcal{Q}^{N}$ and $\mathcal{Q}^{N-}$ corresponding to $\Phi^{N}\Pi_{N}f$ and $\Phi^{N}\Pi_{N}g$ as above, so that they have uniformly bounded triple norm, and moreover \[\mathfrak{D}_{s^{5}}(\mathcal{Q}^{N},\mathcal{Q}^{N-})\lesssim\|\langle \partial_{x}\rangle^{-s^{5}}(f-g)\|_{Z_{1}}+N^{0-}.\] Using Proposition \ref{embed000} and passing to the limit, we obtain the result in (2). The result in (3) follows from comparing $\mathcal{Q}^{N}$ with $\mathcal{Q}$ and using Proposition \ref{embed000} also.

As for part (5), we will deduce it merely from the condition that $|\!|\!|u^{N_{k}}|\!|\!|_{T'}\leq A$ and \begin{equation}\label{diffff0}\big\|\langle\partial_{x}\rangle^{-s^{5}}(u^{N_{k}}-u)(0)\big\|_{Z_{1}}\to 0,\end{equation} which is clearly satisfied in our setting. Choose some $\tau$ small enough depending on $A$, then $\|u(0)\|_{Z_{1}}\leq C_{0}A$ implies we can solve (\ref{bo}) on $[-\tau,\tau]$, and from (Proposition \ref{difference0} and) what we just proved, we also have 
\begin{equation}\label{diffff1}\big\|\langle\partial_{x}\rangle^{-s^{5}}(u^{N_{k}}-u)(\pm\tau)\big\|_{Z_{1}}\to 0,\end{equation} and therefore
\begin{equation}\|u(\pm\tau)\|_{Z_{1}}\leq \limsup_{N\to\infty}\|u^{N_{k}}(\pm\tau)\|_{Z_{1}}\leq C_{0}A.\end{equation} This information will allow us to restart from time $\pm\tau$, and thus obtain a solution to (\ref{bo}) on $[-2\tau,2\tau]$. Repeating this, we will finally get a solution on $[-T',T']$, which we can prove to be in $\mathcal{BO}^{T'}$ using partitions of unity. This proves (conditional) global existence.

Finally we need to prove uniqueness. Let $u$ and $u^{-}$ be to solutions to (\ref{bo}) that both belong to $\mathcal{BO}^{T'}$ and have the same initial data. Let their strong norms be bounded by $A$, and choose $\tau$ small enough depending on $A$. To prove that $u=u^{-}$ on $[-\tau,\tau]$, we need to prove the following claim: if for quadruples $\mathcal{Q}$ and $\mathcal{Q}^{'}$ corresponding to $u$ and $u^{-}$ respectively, we have \begin{equation}\label{uniqueness}|\!|\!|\mathcal{Q}|\!|\!|+|\!|\!|\mathcal{Q}'|\!|\!|\leq A,\,\,\,\,\,\,\mathfrak{D}_{s^{5}}(\mathcal{Q},\mathcal{Q}')\leq K,\end{equation} then with $\mathcal{Q}$ replaced by $\mathfrak{I}\mathcal{Q}$ and $\mathcal{Q}'$ by $\mathfrak{I}\mathcal{Q}'$, the inequalities will hold with $A$ unchanged and $K$ replaced by $K/2$. Thus we need to repeat the whole argument from Section \ref{begin} to Section \ref{end} \emph{without} the smoothness assumption. Fortunately, since we have chosen $\tau\leq\tau(A)$, we do not need  the bootstrap argument (which requires \emph{a priori} smoothness) in bounding the evolution term; however, we do need this in Section \ref{begin} when we try to obtain a first bound for $\|y\|_{Y_{1}}$. 

This difficulty can be overcome as follows: first, we may check every part of Sections \ref{begin}, \ref{mid1} and \ref{mid2} that in order to bound $y$ in $Y_{1}$ using the evolution equation (\ref{global2}), it will suffice to bound $y$ in some weaker space $Y_{1}^{w}$ defined by (cf. Section \ref{mainspace})
\begin{equation}\|u\|_{Y_{1}^{w}}=\|u\|_{X_{1}^{w}}+\|u\|_{X_{2}^{w}}+\|u\|_{X_{4}^{w}}+\|u\|_{X_{5}^{w}}+\|u\|_{X_{7}^{w}}.\end{equation} Here to obtain the $X_{j}^{w}$ norm, we weaken the $X_{j}$ by decreasing the powers $b$ in (\ref{norm0001}), $\kappa$ in (\ref{norm0004}) and $\frac{1}{8}$ in (\ref{norm0007}) by $s^{5}$, and increasing the indices $1$ in (\ref{norm0003}) and $q$ in (\ref{norm0005}) by $s^{5}$. Notice that any power of $n$ and any norm $l^{p}$ remain unchanged. Therefore, we only need to show that the linear map $L$ defining $y$ from $w''$ (see Section \ref{theextensions}) is bounded from $Y_{1}$ to $Y_{1}^{w}$, since this combined with the proof fro Section \ref{begin} to Section \ref{mid2} will give us a stronger bound of $y$ in $Y_{1}$ and close the estimate (note that after the end of Section \ref{mid2}, all the arguments will not depend on smoothness, and we will be able to finish just as Sections \ref{mid3} and \ref{end}).

Now, suppose $\|u\|_{Y_{1}}\leq 1$, we can easily show that $\|Lu\|_{X_{2}^{w}}+\|Lu\|_{X_{5}^{w}}\lesssim 1$ using the decomposition
\begin{eqnarray}\label{decomposition001}Lu&=&u\cdot\mathbf{1}_{[-T,T]}(t)+\chi(t)\mathbf{1}_{[T,+\infty)}(t)e^{-(t-T)H\partial_{xx}}u(T)\\
&+&\chi(t)\mathbf{1}_{(-\infty,-T]}(t)e^{-(t+T)H\partial_{xx}}u(-T)\nonumber.\end{eqnarray} In fact the last two terms in (\ref{decomposition001}) is bounded in $X_{2}^{w}$ and $X_{5}^{w}$ because $u(\pm T)$ is bounded in $Z_{1}$, and the Fourier transform of $\chi(t)\mathbf{1}_{[T,+\infty)}(t)$ is in $L^{k}$ for $k>1$; the first term is bounded because convolution with the Fourier transform of $\mathbf{1}_{[-T,T]}$ (which decays like $\langle\xi\rangle^{-1}$ uniformly for $T\lesssim 1$) is bounded from $L_{\xi}^{k}$ to $L_{\xi}^{k'}$ for all $k'>k$. Now to bound $Lu$ in $X_{j}^{w}$ form $j\in\{1,4,7\}$, we only need to bound the operator 
\begin{equation}\widetilde{L}:f(t)\mapsto\mathbf{1}_{[-T,T]}(t)f(t)+\chi(t)\mathbf{1}_{[T,+\infty)}f(T)+\chi(t)\mathbf{1}_{(-\infty,-T]}f(-T)\end{equation}from $H_{t}^{h}$ to $H_{t}^{h-\theta}$ for any $\theta>0$. By direct computations we can bound $\widetilde{L}$ from $H^{1}$ to itself, thus (by interpolation) it suffices to bound $\widetilde{L}$ from $H^{1/2+\theta}$ to $H^{1/2-\theta}$. But this result is well-known for the first part of $\widetilde{L}$, and trivial (given the decay for the Fourier transform of $\chi(t)\mathbf{1}_{[T,+\infty)}(t)$) for the last two parts.
\end{proof}
\subsection{The Hamiltonian structure and global well-posedness} In this section we will denote any constant by $C$, since they no longer make any difference. We fix some large time $T$, and recall the energy functional 
\begin{equation}E_{N}[f]=\int_{\mathbb{T}}\frac{1}{2}|\partial_{x}^{1/2}f|^{2}-\frac{1}{6}(S_{N}f)^{3}
\end{equation}defined in Section \ref{gibbs}. If we introduce the symplectic form
\begin{equation}\omega(u,v)=\int_{\mathbb{T}}u\cdot (\partial_{x}^{-1}v)\nonumber\end{equation} in the (finite dimensional) space $\mathcal{V}_{N}$, then a simple computation shows that the Hamiltonian equation with respect to the symplectic form $\omega$ and the functional $E_{N}$ is (up to a sign depending on the convention) the truncated equation (\ref{smoothtrunc}). By Liouville's Theorem, the solution flow $\{\Phi_{t}^{N}\}_{t\in\mathbb{R}}$ will preserve the measure $\mathcal{L}_{N}$ which corresponds to the Lebesgue measure on $\mathbb{R}^{2N}$ (see Section \ref{gibbs}). Since this flow also preserves the $L^{2}$ norm as well as the Hamiltonian $E_{N}$, we see that
\begin{equation}\label{appr.inv}\nu_{N}^{\circ}(E)=\nu_{N}^{\circ}\big(\Phi_{t}^{N}(E)\big)\end{equation} for all time $t$ and all Borel set $E\subset \mathcal{V}_{N}$.

Next, for any $f\in\mathcal{V}$, consider the functions $u^{N}(t)=\Phi_{t}^{N}\Pi_{N}f$, which are the solutions to (\ref{smoothtrunc}) with initial data $u^{N}(0)=\Pi_{N}f$. Thus $f\mapsto u^{N}$ is a map from $\mathcal{V}$ to $\mathcal{BO}^{T}$ depending on $N$, therefore we may denote $|\!|\!|u^{N}|\!|\!|_{T}=J_{N}(f)$.

Choose a large positive integer $M$, a parameter $A$ depending on $M$, and define
\begin{equation}\Omega_{N,A}=\big\{g\in \mathcal{V}_{N}:\|g\|_{Z_{1}}>A\big\},\nonumber\end{equation} then we have that
\begin{equation}\nu_{N}^{\circ}(\Omega_{N,A})=\nu_{N}(\Pi_{N}^{-1}(\Omega_{N,A}))\leq\nu_{N}\big(\big\{f\in\mathcal{V}:\|f\|_{Z_{1}}>A\big\}\big)\leq Ce^{-C^{-1}A^{2}},\end{equation} where the last inequality follows from Proposition \ref{compat}, Cauchy-Schwartz, and the fact that $\|\theta_{N}\|_{L^{2}(\mathrm{d}\rho)}=O(1)$ (which is part of Proposition \ref{convergence2}). Therefore if we introduce
\begin{equation}\Omega_{N,M,A}=\bigcup_{j=-M}^{M}\big(\Phi_{\frac{jT}{M}}^{N}\big)^{-1}(\Omega_{N,A}),\nonumber\end{equation} we will have
\begin{equation}\label{partition}\nu_{N}^{\circ}(\Omega_{N,M,A})\leq CMe^{-C^{-1}A^{2}}.\end{equation} If we choose $A=A(M)=C'\sqrt{\log M}$ with some sufficiently large $C'$, then the inequality (\ref{partition}) will imply $\nu_{N}^{\circ}(\Omega_{N,M,A})\leq CM^{-3}$. Now if $g\not\in \Omega_{N,M,A}$, we must have
\begin{equation}\Phi_{\frac{jT}{M}}^{N}(g)\not\in\Omega_{N,A(M)},\nonumber\end{equation} for all $|j|\leq M$. By Proposition \ref{uniformest}, this implies
\begin{equation}\label{totalll}\max_{|j|\leq M}|\!|\!|(\Phi_{t}^{N}g)_{t}|\!|\!|_{[\frac{(j-1)T}{M},\frac{(j+1)T}{M}]}\leq Ce^{CC'\sqrt{\log M}},\end{equation} provided $T/M\leq C^{-1}e^{-CA(M)}$, which is clearly true when $M$ is large enough depending on $T$. Using partitions of unity, we easily see that (\ref{totalll}) implies
\begin{equation}|\!|\!|(\Phi_{t}^{N}g)_{t}|\!|\!|_{T}\leq CM^{C},\nonumber\end{equation} again when $M$ is large enough depending on $T$. Therefore we have proved\begin{equation}\nu_{N}\big(\big\{f\in\mathcal{V}:J_{N}(f)>CM^{C}\big\}\big)\leq CM^{-3}\end{equation} for all $M>M(T)$, and hence (recall Section \ref{gibbs} for the definition of $\theta_{N}$)
\begin{equation}\sup_{N}\int_{\mathcal{V}}\log(J_{N}(f)+2)\theta_{N}(f)\,\mathrm{d}\rho(f)<\infty.\end{equation} Since $\theta_{N}(f)$ converges to $\theta(f)$ almost surely after passing to a subsequence, we may use Fatou's Lemma to conclude that except for a set with zero $\rho$ measure, for each $f$ with $\theta(f)>0$, there exists a sequence $N_{k}\uparrow\infty$ so that $J_{N_{k}}(f)\leq C$ for some $C$. By part (5) of Theorem \ref{main'}, this would imply the existence of a solution $u\in \mathcal{BO}^{T}$ to (\ref{bo}) on $[-T,T]$ with initial data $f$. Finally, by Remark \ref{rmk4.3} we may choose a sequence of Gibbs measures $\{\theta^{R}\}$ so that for almost every $f\in\mathcal{V}$ we have at least one $\theta^{R}(f)>0$ ; then we take another countable intersection with respect to $T$, to arrive at the following
\begin{proposition}\label{finalprop}
For almost every $f\in\mathcal{V}$ with respect to the Wiener measure $\rho$, there exists a unique global solution $u$ to (\ref{bo}) with initial data $f$, such that $u\in\mathcal{BO}^{T}$ for each $T>0$.
\end{proposition}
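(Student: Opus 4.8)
\textbf{Proof proposal for Proposition \ref{finalprop}.}

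The plan is to assemble the finite-dimensional approximation scheme with the a priori bounds of Section \ref{begin} and the probabilistic estimates of Section \ref{probab}, exactly as sketched in the running text just above the statement. Fix a large time $T>0$ first; at the end I will take a countable intersection over $T\in\mathbb{N}$. For each $N$ and each $f\in\mathcal{V}$, let $u^{N}(t)=\Phi_{t}^{N}\Pi_{N}f$ solve the truncated equation (\ref{smoothtrunc}) with data $\Pi_{N}f$, and set $J_{N}(f)=|\!|\!|u^{N}|\!|\!|_{T}$. The first step is the Hamiltonian one: the equation (\ref{smoothtrunc}) is Hamiltonian for the symplectic form $\omega$ and the functional $E_{N}$, so by Liouville's theorem $\Phi_{t}^{N}$ preserves $\mathcal{L}_{N}$, and since it also preserves both the $L^{2}$ norm and $E_{N}$, it preserves $\nu_{N}^{\circ}$; hence $\nu_{N}^{\circ}(E)=\nu_{N}^{\circ}(\Phi_{t}^{N}E)$ for all Borel $E\subset\mathcal{V}_{N}$ and all $t$.

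Next I would run the large-deviation argument. With $A=A(M)=C'\sqrt{\log M}$ and $\Omega_{N,A}=\{g\in\mathcal{V}_{N}:\|g\|_{Z_{1}}>A\}$, Proposition \ref{compat} (compatibility of $\rho$ with $Z_{1}$), Cauchy--Schwarz, and the uniform $L^{2}(\mathrm{d}\rho)$ bound on $\theta_{N}$ from Proposition \ref{convergence2} give $\nu_{N}^{\circ}(\Omega_{N,A})\le Ce^{-C^{-1}A^{2}}$. Unioning over the $2M+1$ times $jT/M$, $|j|\le M$, and using the invariance (\ref{appr.inv}), the set $\Omega_{N,M,A}=\bigcup_{|j|\le M}(\Phi_{jT/M}^{N})^{-1}(\Omega_{N,A})$ has $\nu_{N}^{\circ}$-measure $\le CMe^{-C^{-1}A^{2}}\le CM^{-3}$ once $C'$ is large enough. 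For $g\notin\Omega_{N,M,A}$, every $\Phi_{jT/M}^{N}g$ has $Z_{1}$ norm $\le A(M)$, so Proposition \ref{uniformest} applies on each interval $[(j-1)T/M,(j+1)T/M]$ (the length is $\le C^{-1}e^{-CA(M)}$ for $M$ large depending on $T$), and a partition-of-unity gluing upgrades the local bounds to $|\!|\!|u^{N}g|\!|\!|_{T}\le CM^{C}$. Thus $\nu_{N}(\{f:J_{N}(f)>CM^{C}\})\le CM^{-3}$ for all $M>M(T)$, whence $\sup_{N}\int_{\mathcal{V}}\log(J_{N}(f)+2)\,\theta_{N}(f)\,\mathrm{d}\rho(f)<\infty$.

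Finally, passing to a subsequence along which $\theta_{N}\to\theta$ $\rho$-almost surely (Proposition \ref{convergence2}), Fatou's lemma gives $\int\log(\liminf_{k}J_{N_{k}}(f)+2)\,\theta(f)\,\mathrm{d}\rho(f)<\infty$, so for $\rho$-a.e.\ $f$ with $\theta(f)>0$ there is a sequence $N_{k}\uparrow\infty$ with $\sup_{k}|\!|\!|u^{N_{k}}|\!|\!|_{T}<\infty$. By part (5) of Theorem \ref{main'} this yields a solution $u\in\mathcal{BO}^{T}$ of (\ref{bo}) on $[-T,T]$ with data $f$, and part (4) of Theorem \ref{main'} gives uniqueness. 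To remove the dependence on the cutoff $\zeta$ hidden in $\theta$, invoke Remark \ref{rmk4.3}: choose the countable family $\{\zeta^{R},\theta^{R}\}$ with $\bigcup_{R}\{\theta^{R}>0\}$ of full $\rho$ measure, run the above for each $R$, and then intersect over $T\in\mathbb{N}$ as well; on the resulting full-measure set the solution is global and lies in $\mathcal{BO}^{T}$ for every $T$. The main obstacle is not any single estimate here but making sure the gluing across the $2M+1$ short intervals is legitimate in the metric space $\mathcal{BO}^{T}$ — i.e.\ that the locally constructed solutions on overlapping intervals agree (this is exactly where uniqueness from Theorem \ref{main'}(4) and the partition-of-unity characterization of $|\!|\!|\cdot|\!|\!|_{T}$ are used) and that the number of intervals $M(T)$ stays compatible with the exponential smallness of the local time $T(A(M))$, which forces the particular choice $A(M)\sim\sqrt{\log M}$.
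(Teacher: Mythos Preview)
Your proposal is correct and follows essentially the same argument as the paper: the invariance of $\nu_{N}^{\circ}$ under $\Phi_{t}^{N}$, the large-deviation bound on $\Omega_{N,A}$ via Proposition \ref{compat} and Cauchy--Schwarz, the union over $M$ time steps with the choice $A(M)\sim\sqrt{\log M}$, the local-to-global gluing through Proposition \ref{uniformest} and partitions of unity, the Fatou argument to extract a bounded subsequence of $J_{N_{k}}(f)$, the appeal to Theorem \ref{main'}(5) for existence and (4) for uniqueness, and the final countable intersection over the cutoffs $\zeta^{R}$ from Remark \ref{rmk4.3} and over $T\in\mathbb{N}$. The only cosmetic slip is the notation ``$|\!|\!|u^{N}g|\!|\!|_{T}$'', which should read $|\!|\!|(\Phi_{t}^{N}g)_{t}|\!|\!|_{T}$.
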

\subsection{The global flow and invariance of Gibbs measure}
In this section we will restate and prove Theorem \ref{main2}.
\begin{theorem}[Restatement of Theorem \ref{main2}]\label{main2'} Let the Wiener measure $\rho$ be defined as in Section \ref{gibbs}. There exists a subset $\Sigma\subset \mathcal{V}$ such that $\rho(\mathcal{V}-\Sigma)=0$, and the following holds: for any $f\in\Sigma$ there exists a unique global solution $u$ to (\ref{bo}) with initial data $f$ such that $u\in\mathcal{BO}^{T}$ for all $T>0$. Moreover, let $u=\Phi f=(\Phi_{t}f)_{t}$, then these $\Phi_{t}$ form a measurable transformation group from $\Sigma$ to itself. Finally, suppose the Gibbs measure $\nu$ is defined as in Section \ref{gibbs} (using some cutoff function $\zeta$), then each $\Phi_{t}$ keeps $\nu$ invariant.
\end{theorem}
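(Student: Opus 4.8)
\textbf{Proof proposal for Theorem \ref{main2'}.}

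The plan is to combine the finite-dimensional invariance identity \eqref{appr.inv} with the short-time stability and continuity provided by Theorem \ref{main'} and Proposition \ref{difference0}, together with the measure-theoretic convergence in Proposition \ref{convergence2}. First I would fix the full-measure set $\Sigma$. Using Proposition \ref{finalprop} (and Remark \ref{rmk4.3}), outside a $\rho$-null set each $f$ has a unique global solution $u=\Phi f$ with $u\in\mathcal{BO}^{T}$ for every $T$; uniqueness from part (4) of Theorem \ref{main'} immediately gives the group law $\Phi_{t}\circ\Phi_{s}=\Phi_{t+s}$ on $\Sigma$, and one checks $\Phi_{t}(\Sigma)\subset\Sigma$ by noting that $\Phi_{t}f$ again has a global solution (namely $\tau\mapsto\Phi_{t+\tau}f$) and lies in every $\mathcal{BO}^{T}$. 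Measurability of $f\mapsto\Phi_{t}f$ follows from part (3) of Theorem \ref{main'}: on the set where global solutions exist, $\Phi_{t}f$ is an almost-everywhere pointwise limit (in $\langle\partial_{x}\rangle^{-s^{5}}Z_{1}$, hence in $\mathcal{D}'(\mathbb{T})$) of the measurable maps $f\mapsto\Phi_{t}^{N_{k}}\Pi_{N_{k}}f$ along a suitable subsequence.

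The core of the argument is the invariance. I would fix $t\in\mathbb{R}$ and a bounded continuous function $G$ on $\mathcal{V}$ (say, continuous for the $H^{-\delta}$ topology, which suffices to test $\nu$), and aim to show $\int G(\Phi_{t}f)\,\mathrm{d}\nu(f)=\int G(f)\,\mathrm{d}\nu(f)$. The finite-dimensional identity \eqref{appr.inv}, written in terms of $\nu_{N}$ rather than $\nu_{N}^{\circ}$ (using $\mathrm{d}\nu_{N}=\mathrm{d}\nu_{N}^{\circ}\times\mathrm{d}\rho_{N}^{\perp}$ and the fact that $\Phi_{t}^{N}$ acts only on the $\mathcal{V}_{N}$-coordinates while preserving $\theta_{N}$), gives
\begin{equation}\int_{\mathcal{V}}G\big(\Phi_{t}^{N}\Pi_{N}f\big)\,\mathrm{d}\nu_{N}(f)=\int_{\mathcal{V}}G\big(\Pi_{N}f\big)\,\mathrm{d}\nu_{N}(f).\end{equation}
The right-hand side converges to $\int G(f)\,\mathrm{d}\nu(f)$: indeed $\Pi_{N}f\to f$ in $H^{-\delta}$ for $\rho$-a.e.\ $f$, $G$ is bounded and continuous, and $\theta_{N}\to\theta$ in $L^{1}(\mathrm{d}\rho)$ by Proposition \ref{convergence2}, so dominated convergence applies after passing to the subsequence along which $\theta_{N_{k}}\to\theta$ a.e. For the left-hand side I would split $\mathcal{V}$ into a good set $\mathcal{G}_{N}$ on which $J_{N_{k}}(f)$ stays bounded and $\Phi_{t}^{N_{k}}\Pi_{N_{k}}f\to\Phi_{t}f$, and its complement; on $\mathcal{G}_{N}$ use part (3)/(5) of Theorem \ref{main'} (local stability iterated over $O(M)$ steps, exactly as in Section \ref{lwp}) to get the pointwise convergence $G(\Phi_{t}^{N_{k}}\Pi_{N_{k}}f)\to G(\Phi_{t}f)$, and on the complement use the large-deviation bound \eqref{partition} — more precisely the estimate $\nu_{N}(\{J_{N}>CM^{C}\})\le CM^{-3}$ derived in Section \ref{lwp} — to see that the bad set carries asymptotically negligible $\nu_{N}$-mass, uniformly. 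Combining, $\int G(\Phi_{t}^{N_{k}}\Pi_{N_{k}}f)\,\mathrm{d}\nu_{N_{k}}(f)\to\int G(\Phi_{t}f)\,\mathrm{d}\nu(f)$, which yields the claimed identity. Since the $\Phi_{t}$ form a group, invariance for all $t$ follows, and this completes the proof.

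The main obstacle I expect is controlling the left-hand limit uniformly in $N$: one must quantify, using Proposition \ref{difference0}, that on the set where $\|\Phi_{jT/M}^{N}\Pi_{N}f\|_{Z_{1}}\le A(M)$ for all $|j|\le M$, the difference $\mathfrak{D}_{s^{5}}^{T}(\Phi^{N}\Pi_{N}f,\Phi^{M'}\Pi_{M'}f)$ (and hence the $Z_{1}$-distance of the flows at time $t$, via Proposition \ref{embed000}) is bounded by $O_{M}(1)(\|\langle\partial_{x}\rangle^{-s^{5}}(\Pi_{N}f-\Pi_{M'}f)\|_{Z_{1}}+N^{0-})$, which tends to $0$; this is where the $N^{0-}$ gain from the truncation cancellation of Section \ref{gaugetransform2} is essential, and where one must be careful that the constants $O_{M}(1)$ (growing polynomially in $M$ via $A(M)=C'\sqrt{\log M}$) are beaten by the $M^{-3}$ smallness of the bad set. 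The bookkeeping of ``subsequence along which $\theta_{N_{k}}\to\theta$ a.e.'' versus ``subsequence along which $J_{N_{k}}(f)$ is bounded'' must also be organized so that a single subsequence works for $\rho$-a.e.\ $f$; this is handled exactly as in the proof leading to Proposition \ref{finalprop}, by a diagonal argument over a countable family of times $T$ and cutoffs $\zeta^{R}$.
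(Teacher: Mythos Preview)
Your approach is correct and is the ``test function'' route to invariance, standard in the Bourgain-style literature. The paper takes a different but equally standard route: instead of testing against bounded continuous $G$, it reduces to proving the one-sided inequality $\nu(\Phi_{t}(E))\geq\nu(E)$ for Borel $E$ (which suffices by the group property), then restricts by iteration to $E\subset\{\|f\|_{Z_{1}}\le A\}$ and $|t|\le t(A)$, introduces the metric $d(f,g)=\|\langle n\rangle^{-s^{6}+r}(f-g)\|_{l^{p}}$ to make this ball a Polish space, and uses inner regularity of $\nu$ to reduce further to \emph{compact} $E$. Invariance of $\nu_{N}^{\circ}$ then gives $\nu_{N}(\{g:\Pi_{N}g=\Phi_{t}^{N}(\Pi_{N}h),\,h\in E\})\ge\nu_{N}(E)$, and the proof concludes by the set-theoretic inclusion $\limsup_{N}\{g:\Pi_{N}g=\Phi_{t}^{N}(\Pi_{N}h),\,h\in E\}\subset\Phi_{t}(E)$: compactness of $E$ lets one extract a subsequence $h^{N_{k}}\to h\in E$ in the metric $d$, and then short-time stability (Proposition \ref{difference0} plus Theorem \ref{main'}) is applied \emph{once}, since $|t|\le t(A)$.

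The advantage of the paper's route is that it completely sidesteps the subsequence bookkeeping you flag in your last paragraph. Because it works one compact set at a time and only needs a $\limsup$ inclusion, there is no need for a single subsequence valid for $\rho$-a.e.\ $f$, no good/bad splitting with uniform bad-set smallness, and no iteration of stability over $O(M)$ steps while balancing $O_{M}(1)$ constants against $M^{-3}$. Your approach, by contrast, must control $\int[G(\Phi_{t}^{N}\Pi_{N}f)-G(\Phi_{t}f)]\,\mathrm{d}\nu$ directly for arbitrary $|t|\le T$, which does require (essentially) convergence in $\nu$-measure of $\Phi_{t}^{N}\Pi_{N}f$ to $\Phi_{t}f$ and hence the iterated stability argument with the interplay between the $f$-dependent bound $\sup_{|t|\le T}\|\Phi_{t}f\|_{Z_{1}}$ and the $N$-dependent good set. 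This is doable but heavier; the substance in both proofs is the same---finite-dimensional Liouville, $L^{1}$-convergence of $\theta_{N}$, and the $N^{0-}$ stability from Proposition \ref{difference0}.
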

\begin{proof} We define $\Sigma$ to be the set of all $f\in \mathcal{V}$ such that there exists a solution $u$ to (\ref{bo}) with initial data $f$ that belongs to $\mathcal{BO}^{T}$ for all $T>0$. Proposition \ref{finalprop} guarantees that $\rho(\mathcal{V}-\Sigma)=0$; also the map $\Phi$ is well-defined on $\Sigma$, and each $\Phi_{t}$ maps $\Sigma$ to itself. Note that from part (4) of Theorem \ref{main'}, any two solutions to (\ref{bo}) that belong to $\mathcal{BO}^{T}$ and agree at one time must coincide, thus $u$ will be unique for each fixed $f\in\Sigma$. Now fix a Gibbs measure $\nu$; to prove the invariance of $\nu$, we only need to show that
\begin{equation}\label{oneside}\nu(\Phi_{t}(E))\geq\nu(E)\end{equation} for each Borel subset $E$ and each $|t|\leq 1$, since the rest can be done by iteration.

Define the set
\begin{equation}\Sigma_{A}=\big\{f\in\Sigma:\sup_{|t|\leq 2}\|\Phi_{t}f\|_{Z_{1}}\leq A\big\}\nonumber\end{equation} for each $A$, we then have $\Sigma=\cup_{A}\Sigma_{A}$, so we only need to prove (\ref{oneside}) assuming $E\subset \Sigma_{A}$ for some $A$. By iteration, it then suffices to prove (\ref{oneside}) under the assumption that $E\subset\{f:\|f\|_{Z_{1}}\leq A\}$ and $|t|\leq t(A)$. Next, we introduce on the set $\{f:\|f\|_{Z_{1}}\leq A\}$ the metric
\begin{equation}d(f,g)=\big\|\langle n\rangle^{-s^{6}+r}(f-g)\big\|_{l^{p}},\nonumber\end{equation}making it a complete separable metric space. By a well-known theorem in measure theory, the restriction of $\nu$ to this set is a finite Borel measure on this metric space, and thus is regular (meaning every Borel set can be approximated from inside by compact sets). Therefore we may further assume $E$ is compact with respect to the metric $d$. Recall the solution flow $\{\Phi_{t}^{N}\}$ for (\ref{smoothtrunc}); for each $N$ we have
\begin{equation}\nu_{N}\big(\big\{g:\Pi_{N}g=\Phi_{t}^{N}(\Pi_{N}h),\,\,h\in E\big\}\big)\geq \nu_{N}(E)\end{equation} by the invariance of $\nu_{N}^{\circ}$ under the flow $\Phi_{t}^{N}$. To prove (\ref{oneside}) it thus suffices to show\begin{equation}\limsup_{N\to\infty}\big\{g:\Pi_{N}g=\Phi_{t}^{N}(\Pi_{N}h),\,\,h\in E\big\}\subset\Phi_{t}(E),\end{equation} since we already know that the total variation of $\nu_{N}-\nu$ tends to zero.

Now suppose for some $g\in\mathcal{V}$ we have a subsequence $N_{k}\uparrow\infty$ and $h^{N_{k}}\in E$ such that $\Pi_{N_{k}}g=\Phi_{t}^{N_{k}}(\Pi_{N_{k}}h^{N_{k}})$ for each $k$. By compactness we may assume $h^{N_{k}}\to h$ with respect to the metric $d$ for some $h\in E$. Since every function involved here is bounded in $Z_{1}$ norm by $O_{A}(1)$, and we are assuming $|t|\leq t(A)$, we may use Propositions \ref{difference0} and \ref{main'}, as well as the limit
\begin{equation}\big\|\langle \partial_{x}\rangle^{-s^{5}}(h^{N_{k}}-h)\big\|_{Z_{1}}\lesssim d(h^{N_{k}},h)\to 0\end{equation}to conclude that
\begin{eqnarray}\big\|\langle\partial_{x}\rangle^{-s^{5}}(\Phi_{t}h-\Pi_{N_{k}}g)\big\|_{Z_{1}}&\leq&\big\|\langle\partial_{x}\rangle^{-s^{5}}(\Phi_{t}h-\Phi_{t}^{N_{k}}\Pi_{N_{k}}h)\big\|_{Z_{1}}\nonumber\\
&+&\big\|\langle\partial_{x}\rangle^{-s^{5}}(\Phi_{t}^{N_{k}}\Pi_{N_{k}}h-\Phi_{t}^{N_{k}}\Pi_{N_{k}}h^{N_{k}})\big\|_{Z_{1}}\to 0\nonumber.\end{eqnarray} This implies $g=\Phi_{t}h\in\Phi_{t}(E)$, so the proof is complete.
\end{proof}

\end{document}